\documentclass{amsart}
\pdfoutput=1
\usepackage[margin=1in]{geometry}
\usepackage[utf8]{inputenc}
\usepackage{graphicx,cite}
\usepackage{amsmath,amssymb}
\usepackage{amsthm}
\usepackage{color}
\usepackage{tikz}
\usetikzlibrary{arrows.meta,positioning,calc}
\usepackage{pdflscape}
\usetikzlibrary{arrows.meta,calc}
\usepackage{adjustbox}
\usetikzlibrary{decorations.pathreplacing}
\usepackage{float}
\usepackage{calc}
\usepackage{hyperref, url}
\usepackage{caption}
\allowdisplaybreaks

\makeatletter
\def\l@section{\@tocline{1}{12pt plus2pt}{0pt}{}{\bfseries}}
\def\l@subsection{\@tocline{2}{0pt}{2pc}{2pc}{}}
\makeatother
\makeatletter
\def\subsection{\@startsection{subsection}{2}{\z@}%
	{-3.25ex\@plus -1ex \@minus -.2ex}%
	{1.5ex \@plus .2ex}%
	{\normalfont\bfseries\boldmath}}
\def\subsubsection{\@startsection{subsubsection}{3}%
	\z@{.5\linespacing\@plus.7\linespacing}{-.5em}%
	{\normalfont\bfseries\boldmath}}
\renewcommand\paragraph{\@startsection{paragraph}{4}{\z@}%
	{3.25ex \@plus1ex \@minus.2ex}%
	{-1em}%
	{\normalfont\normalsize\bfseries}}
\makeatother

\theoremstyle{plain}
\newtheorem{thm}{Theorem}[section]

\newtheorem{lem}[thm]{Lemma}
\newtheorem{prop}[thm]{Proposition}

\theoremstyle{definition}
\newtheorem{defn}[thm]{Definition}

\theoremstyle{remark}
\newtheorem{rem}[thm]{Remark}
\newtheorem{obs}[thm]{Observation}
\newtheorem{exa}[thm]{Example}
\theoremstyle{plain}

\numberwithin{equation}{section}
\newtheorem{ques}[thm]{Question}

\theoremstyle{plain} 
\newcommand{\thistheoremname}{}
\newtheorem{genericthm}[thm]{\thistheoremname}

  \newtheorem*{genericthm*}{\thistheoremname}
\newenvironment{namedthm*}[1]
  {\renewcommand{\thistheoremname}{#1}%
   \begin{genericthm*}}
  {\end{genericthm*}}

\newcommand{\del}{\delta}

\newcommand{\B}{{\mathbb B}}
\newcommand{\D}{{\mathbb D}}

\newcommand{\R}{{\mathbb R}}

\newcommand{\C}{{\mathbb C}}

\newcommand{\N}{{\mathbb N}}

\newcommand{\calD}{{\mathcal D}}

\newcommand{\calG}{{\mathcal G}}

\newcommand{\calA}{{\mathcal A}}

\newcommand{\calP}{{\mathcal P}}

\newcommand{\calS}{{\mathcal S}}
\newcommand{\calH}{{\mathcal H}}
\newcommand{\calI}{{\mathcal I}}

\makeatletter
\newcommand{\vast}{\bBigg@{4}}
\newcommand{\Vast}{\bBigg@{5}}
\makeatother

\def\udot#1{\ifmmode\oalign{$#1$\crcr\hidewidth.\hidewidth
    }\else\oalign{#1\crcr\hidewidth.\hidewidth}\fi}

\def\R{\mathbb{R}}

\def\T{\mathbb{T}}
\def\C{\mathbb{C}}

\def\beq{\begin{equation}}
\def\eeq{\end{equation}}
\makeatletter
\newcommand{\doublewidetilde}[1]{{%
  \mathpalette\double@widetilde{#1}%
}}
\newcommand{\double@widetilde}[2]{%
  \sbox\z@{$\m@th#1\widetilde{#2}$}%
  \ht\z@=.9\ht\z@
  \widetilde{\box\z@}%
}
\makeatother

\def\one{\mbox{1\hspace{-4.25pt}\fontsize{12}{14.4}\selectfont\textrm{1}}}

\usepackage{tikz}

\makeatletter
\def\@makefnmark{%
  \leavevmode
  \raise.9ex\hbox{\fontsize\sf@size\z@\normalfont\tiny\@thefnmark}}
\makeatother

\begin{document}
	
\title[]{Cancellation of complex kernels and sharp critical lines and endpoint theory for the Forelli--Rudin operators I: the purely hypersingular case}

\author{Bingyang Hu}
\address{(Bingyang Hu) Department of Mathematics and Statistics\\
       Auburn University\\
       Auburn, Alabama, U.S.A, 36849}
\email{bzh0108@auburn.edu}

\author{Zipeng Wang}
\address{(Zipeng Wang) College of Mathematics and Statistics \\
Chongqing University \\
Chongqing, China, 401331}
\email{zipengwang2012@gmail.com; zipengwang@cqu.edu.cn}

\author{Kenan Zhang}
\address{(Kenan Zhang) School of Mathematical Sciences\\
Fudan University\\
Shanghai, China, 200433}
\email{knzhang21@m.fudan.edu.cn}

\author{Xiaojing Zhou}
\address{(Xiaojing Zhou) Department of Mathematics and Statistics\\
         Auburn University\\
         Auburn, Alabama, U.S.A, 36849}
\email{xiz0003@auburn.edu}

\begin{abstract}
For $a,b,c\in\mathbb R$, we consider the Forelli--Rudin operators
$$
T_{a,b,c}f(z):=(1-|z|^2)^a\int_{\mathbb D}\frac{(1-|w|^2)^b}{(1-z\overline w)^c}f(w)\,dA(w)
$$
and their positive counterparts
$$
S_{a,b,c}f(z):=(1-|z|^2)^a\int_{\mathbb D}\frac{(1-|w|^2)^b}{|1-z\overline w|^c}f(w)\,dA(w).
$$
We obtain a complete and sharp classification of their weak- and restricted weak-type mapping properties in the hypersingular regime
$$
\Omega_{\mathcal H}:=\{(p,q):1\leq p,q\leq\infty,\ p>q\}, 
$$
thereby substantially extending the recent work of the first and fourth authors on hypersingular Bergman projections.
One of the main discoveries of this work is an intrinsic cancellation phenomenon associated with the complex Forelli--Rudin kernel: at certain critical endpoints, cancellation creates a sharp separation between the two operators, with $T_{a,b,c}$ remaining bounded while its positive counterpart $S_{a,b,c}$ fails to be bounded. Perhaps surprisingly, this cancellation is invisible in the strong $L^p$--$L^q$ theory established by Zhao and Zhou in 2022, where the two operators have the same boundedness range, and emerges only at the weak- and restricted weak-type levels.

Allowing the parameters $a,b,c$ to vary, we show that the collection of all weak-type Forelli--Rudin pairs in $\Omega_{\calH}$ is precisely
$$
\mathcal{FR}_w=\{(p,q)\in\Omega_{\mathcal H}:1<p\leq2\},
$$
whereas the collection of all restricted weak-type Forelli--Rudin pairs is
$$
\mathcal{FR}_{rw}=\{(p,q)\in\Omega_{\mathcal H}:p\neq\infty\}.
$$
These ranges, together with all corresponding endpoint estimates and failures, are sharp. Our approach combines dyadic decompositions, probabilistic constructions, and weak-type Hardy estimates.
\end{abstract}

\date{\today}
\subjclass[2020]{Primary 32A25; Secondary 30H20, 42B20, 46E30}

\keywords{Forelli--Rudin operators, Bergman-type integral operators, Bergman projections, weak-type estimates, restricted weak-type estimates, Lorentz spaces, dyadic analysis}

\maketitle
\tableofcontents

\section{Introduction} \label{Sec01} 
The purpose of this paper is to address an open question raised in the recent work \cite{HZ2025} of the first and fourth authors concerning sharp endpoint and critical-line estimates for general Bergman-type operators, also known as Forelli--Rudin operators. More precisely, for $a,b,c\in\mathbb{R}$, we consider the \emph{Forelli--Rudin type operators}
\begin{equation} \label{20260514eq02}
T_{a,b,c}f(z):=(1-|z|^2)^a\int_{\mathbb D}
\frac{(1-|w|^2)^b}{(1-z\overline w)^c}f(w)\,dA(w),
\end{equation} 
together with their positive counterparts, the Berezin-type operators
\begin{equation} \label{20260514eq03}
S_{a,b,c}f(z):=(1-|z|^2)^a\int_{\mathbb D}
\frac{(1-|w|^2)^b}{|1-z\overline w|^c}f(w)\,dA(w), 
\end{equation} 
where $\D$ is the unit disc\footnote{For simplicity and clarity of exposition, we first formulate our main results in the setting of the unit disc. See Section \ref{Sec14} for the extension of these results to the higher-dimensional setting.} in $\C$ and $dA$ is the normalized area measure on $\D$. The operators $T_{a,b,c}$ and $S_{a,b,c}$, dating back to the early work of
Forelli and Rudin \cite{FR1974}, arise naturally as far-reaching generalizations
of the classical Bergman projection and its positive counterpart,
\begin{equation} \label{20260513eq00}
Pf(z):=\int_{\mathbb D}\frac{f(w)}{(1-z\overline w)^2}\,dA(w),
\qquad
P^{+}f(z):=\int_{\mathbb D}\frac{f(w)}{|1-z\overline w|^2}\,dA(w).
\end{equation} 
These operators play a fundamental role in classical complex function theory; see, e.g.,  \cite{HKZ2000,DurenSchuster2004,Zhu2005}. A fruitful recent line of research uses modern developments in dyadic harmonic analysis to study the Bergman projection $P$ and its positive counterpart $P^+$, as well as their fractional analogues. Topics in this direction include, but are not limited to,
\begin{enumerate}
	\item weighted estimates and B\'ekoll\'e--Bonami theory; see, e.g., \cite{PottReguera2013,RTW2017,Sehba2018};
	
	\item applications of dyadic decompositions and sparse domination to Bergman-type operators on domains in several complex variables; see, e.g., \cite{GHK2022,HWW2021,WW2021};
	
	\item commutator and BMO/VMO theory; see, e.g., \cite{Zhu1992,LiLuecking1994,HHLW2024}.
\end{enumerate}

One \emph{key} reason why dyadic harmonic analysis enters the theory of the
Bergman projection is that $P$ and $P^+$ can be viewed as generalized
Calder\'on--Zygmund operators (see, e.g., \cite{McNeal94}). However, this Calder\'on--Zygmund viewpoint does \emph{not} apply in full generality:
the Forelli--Rudin operators $T_{a,b,c}$ and $S_{a,b,c}$ may exhibit substantially
more singular behavior. A model example is given by the hypersingular Bergman
projection and its positive counterpart. For $1<t<3/2$, define
\begin{equation} \label{20260514eq30}
K_{2t}f(z):=\int_{\mathbb D}\frac{f(w)}{(1-z\overline w)^{2t}}\,dA(w),
\qquad
K_{2t}^{+}f(z):=\int_{\mathbb D}\frac{f(w)}{|1-z\overline w|^{2t}}\,dA(w).
\end{equation} 
These operators are more singular than the classical Bergman projection $P$ and
its positive counterpart $P^+$. More precisely, it is known that $K_{2t}$ and
$K_{2t}^{+}$
\begin{enumerate}
    \item[$\bullet$] map $L^p(\mathbb D)$ boundedly into $L^q(\mathbb D)$ whenever
    $$
    \frac{1}{q}-\frac{1}{p}>2t-2,
    \qquad 1\le p,q\le \infty;
    $$
    \item[$\bullet$] map $L^p(\mathbb D)$ boundedly into $L^{q,\infty}(\mathbb D)$ whenever
    \begin{equation} \label{20260513eq01}
    \frac{1}{q}-\frac{1}{p}=2t-2,
    \qquad 1\le p,q\le \infty.
    \end{equation} 
    \end{enumerate}
Moreover, the above $(p,q)$-admissible ranges are sharp. In particular, on the
critical line \eqref{20260513eq01}, the corresponding strong-type estimates for $K_{2t}$ and $K_{2t}^{+}$ fail. This
is in sharp contrast with the classical Calder\'on--Zygmund theory, as well as
with the $L^p$ theory of the Bergman projection.

We briefly recall the literature on the study of $K_{2t}$ and $K_{2t}^{+}$. To the best of our knowledge, the strong-type estimates for $K_{2t}$ were first established by Cheng, Fang, the second author, and Yu \cite{CFWY2017}. These estimates were later extended by Zhao and Zhou \cite{ZZ2022} to the general Forelli--Rudin operators $T_{a,b,c}$ and $S_{a,b,c}$. It is worth noting that the methods in these works rely primarily on complex-analytic and functional-analytic techniques (due in part to the failure of strong-type estimates on the critical line; see \eqref{20260513eq01}). Only very recently, inspired by recent developments in modern dyadic harmonic analysis, the first and fourth authors \cite{HZ2025} introduced a real-variable framework, referred to as the Forelli--Rudin method, and used it to establish critical-line weak-type estimates for $K_{2t}$ and $K_{2t}^{+}$.

\vspace{0.1cm}

The first \emph{theme} of the present paper is to further the line of research
initiated in \cite{HZ2025} by studying sharp endpoint and critical-line behavior
for $T_{a,b,c}$ and $S_{a,b,c}$. We emphasize that the analysis of the general Forelli--Rudin operators is substantially \emph{more} delicate: in addition to the critical line, as in \eqref{20260513eq01}, one must also take into account two additional cut-off lines that naturally enter the picture and play an essential role in determining the admissible range. As a consequence, the present paper requires a richer toolbox. Beyond the Forelli--Rudin method, our arguments bring together techniques from probability, complex function theory, and dyadic harmonic analysis, whose interaction is
essential to the analysis. We refer the reader to Section~\ref{subsechighlights} for the highlights of the present paper.

\vspace{0.1cm}

The second \emph{theme}, which emerged along the way in our study of the endpoint
and critical-line behavior of $T_{a,b,c}$ and $S_{a,b,c}$, is a \emph{surprising
intrinsic fact} about the \emph{cancellation of the complex kernel}, which we
refer to as the \emph{Forelli--Rudin phenomenon}. We first take a brief detour
to explain the motivation behind this phenomenon.

\vspace{0.1cm}

\noindent \textbf{$\clubsuit$ \underline{A Folklore Puzzle in Classical Complex Function Theory: }} Consider
the classical Bergman projection $P$ and its positive counterpart $P^+$, defined
as in \eqref{20260513eq00}. On the one hand, it is easy to see that
\[
(P1)(z)=\int_{\mathbb D}\frac{dA(w)}{(1-z\overline w)^2}=1,
\]
whereas
\[
(P^+1)(z)=\int_{\mathbb D}\frac{dA(w)}{|1-z\overline w|^2}
\simeq 1+\log\frac{1}{1-|z|},
\qquad z\in\mathbb D.
\]
In particular, $P1\in L^\infty(\mathbb D)$, while\footnote{Here, $\mathrm{BMO}(\mathbb D)$ can either be interpreted as  the usual Euclidean BMO space on
$\mathbb D$, or the BMO spaces over Bergman metric balls in the spirit of
B\'ekoll\'e, Berger, Coburn, and Zhu \cite{BBCZ1990}.}
$P^+1\in \mathrm{BMO}(\mathbb D)$.

\vspace{0.1cm}

On the other hand, it is well known that
\[
\text{$P$ and $P^+$ have the same strong-type $L^p$ boundedness range.}
\]
Moreover, the same conclusion holds for the general Forelli--Rudin operators
$T_{a,b,c}$ and $S_{a,b,c}$ (see, e.g., \cite{ZZ2022}). Heuristically, this
suggests that, \emph{at the level of strong-type estimates, the cancellation of
the complex kernel $(1-z\overline w)^{-2}$ is invisible}. This differs sharply from the
situation for the Hilbert transform, and more generally for Calder\'on--Zygmund
operators, where cancellation assumptions on the kernel are essential for the operator to be well-defined and bounded.

\vspace{0.1cm}

This naturally leads to the following question.

\begin{ques} \label{20260520ques01}
Where does the cancellation of the complex kernel become visible, and through what mechanism?
\end{ques}

This motivates the following definition, which should be viewed as an intrinsic
complex-analytic property of the Forelli--Rudin operators.

\begin{defn}
Let $a,b,c\in\mathbb R$, and let $T_{a,b,c}$ and $S_{a,b,c}$ be defined as in \eqref{20260514eq02} and \eqref{20260514eq03}, respectively. 
For $1\le p,q\le \infty$, we say that $(p,q)$ is a \emph{Forelli--Rudin pair} for the parameters $(a,b,c)$ if $T_{a,b,c}$ and $S_{a,b,c}$ have different boundedness behavior at $(p,q)$ with respect to a type of estimate. 
We refer to this phenomenon as the \emph{Forelli--Rudin phenomenon}.
\end{defn}

\begin{rem} \label{20260521rem01}
We point out that a related form of the Forelli--Rudin phenomenon has appeared in the theory of Bergman projections induced by radial weights, especially for radial weights with rapid growth\footnote{Here, a radial weight with rapid growth refers to a radial weight whose boundary growth is faster than that of the standard Bergman weights $(1-r^2)^\alpha$, $\alpha>-1$, which are commonly used in the classical
theory of Bergman spaces (see, e.g., \eqref{20260520eq01}).} near the unit circle $\mathbb T$. Here is a model case. Let
\begin{equation} \label{20260520eq01}
\omega(r):=\frac{1}{(1-r^2)\left(\log \frac{e}{1-r}\right)^2},
\qquad 0\le r<1,
\end{equation} 
and define the radial weight $\omega(z):=\omega(|z|)$ for $z\in\D$. Let the weighted Bergman projection $P_\omega$ be given by
$$
P_{\omega}f(z):=\int_{\D} f(w)B_z^{\omega}(w)\omega(w)\,dA(w),
$$
and its associated positive operator be
$$
P_{\omega}^+f(z):=\int_{\D} f(w)|B_z^{\omega}(w)|\omega(w)\,dA(w),
$$
where $B_z^{\omega}$ denotes the reproducing kernel of the weighted Bergman space $A_\omega^2(\D)$. It is known that, for every $1<p<\infty$, $P_{\omega}$ is bounded on $L_\omega^p(\D)$, while
$P_{\omega}^+$ fails to be bounded on $L_\omega^p(\D)$. We refer the reader to the excellent work \cite{PR2021} of Pel\'aez and R\"atty\"a, where the above model case is treated as part of a more general theory. For further background and recent developments on Bergman projections induced by radial weights, see also \cite{Dostanic2004,PR2014,PR2016,PRW2019,MOPR2023,PdRR2024} and the references therein.
\end{rem}

The example above suggests one possible approach to Question
\ref{20260520ques01}: one may study the cancellation of complex kernels arising from Bergman projections induced by rapidly increasing radial weights, such as the kernel \(B_z^\omega\) in the preceding example.  However, this is
\emph{not} the direction pursued in the present paper.  Question \ref{20260520ques01} is concerned with the cancellation of the Bergman-type kernel as a \emph{complex-analytic object} in its own right.  In the radially weighted setting, the kernel is itself induced by the underlying weight, and the observed
cancellation reflects the combined effect of the analytic structure of the kernel and the special boundary behavior of the weight.

Another reason for not taking this route is that the associated weighted Bergman spaces $A_\omega^p(\D)$ are structurally quite different from the classical weighted Bergman spaces $A_\alpha^p(\D)$, $\alpha>-1$.  For such weights, the stronger boundary growth imposes a much more restrictive integrability condition near $\mathbb T$.  In particular, the usual
Littlewood--Paley formula fails in this setting (see, e.g., 
\cite[Theorems 5 and 6]{PR2021}).

For these reasons, the present paper takes a different point of view.  Instead of working in the above framework of Bergman projections induced by radial weights, we take a more intrinsic point of view.  Namely, we focus on the complex-analytic content of Question \ref{20260520ques01} and study the
cancellation of the Bergman-type kernel itself, without introducing an auxiliary radial weight that changes the kernel.
Our main result is,

\begin{thm}[Informal] \label{20260514thm01}
Let
\begin{equation} \label{20260515eq01}
\Omega_{\mathcal H}
:=
\left\{
(p,q): 1\le p,q\le \infty,\ p>q
\right\}
\end{equation} 
denote the hypersingular regime. Then the following hold.

\begin{enumerate}
    \item At the level of weak-type estimates, the Forelli--Rudin phenomenon occurs.
    More precisely, the collection of all Forelli--Rudin pairs is
    $$
    \mathcal{FR}_w:=\left\{
    (p,q)\in \Omega_{\mathcal H}: 1<p\le 2
    \right\}.
    $$
    In other words, if $(p,q)\in \mathcal{FR}_w$, then there exist parameters
    $a,b,c\in\mathbb R$ such that
    $$
    T_{a,b,c}: L^p(\D)\to L^{q,\infty}(\D)
    \quad \textnormal{holds},
    \qquad
    \textnormal{whereas}
    \quad
    S_{a,b,c}: L^p(\D)\to L^{q,\infty}(\D)
    \quad \textnormal{fails}.
    $$

    \item At the level of restricted weak-type estimates, the Forelli--Rudin
    phenomenon occurs again. In this case, the collection of all Forelli--Rudin
    pairs is
    $$
    \mathcal{FR}_{rw}
    :=
    \left\{
    (p,q)\in \Omega_{\mathcal H}: p\neq \infty
    \right\}.
    $$
    That is, if $(p,q)\in \mathcal{FR}_{rw}$, then there exist parameters
    $a,b,c\in\mathbb R$ such that
    $$
    T_{a,b,c}: L^{p,1}(\D)\to L^{q,\infty}(\D)
    \quad \textnormal{holds},
    \qquad
    \textnormal{whereas}
    \quad
    S_{a,b,c}: L^{p,1}(\D)\to L^{q,\infty}(\D)
    \quad \textnormal{fails}.
    $$
\end{enumerate}
We refer the reader to Figure~\ref{Fig1} below for the corresponding regions of
Forelli--Rudin pairs at the levels of weak-type and restricted weak-type estimates, respectively.
\end{thm}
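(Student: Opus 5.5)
The informal theorem combines two separate classification results. For each we must show two things: every pair in the claimed set is realised by some $(a,b,c)$, and no pair outside the set can be a Forelli--Rudin pair. I would first reduce everything to explicit conditions on $(a,b,c)$. Writing $\lambda = c-2-b$ for the excess singularity, the strong theory of Zhao--Zhou together with the Forelli--Rudin method of \cite{HZ2025} suggests the following picture. In $\Omega_{\mathcal H}$, boundedness of $S_{a,b,c}$ into $L^{q,\infty}$ (resp. from $L^{p,1}$) should be governed by three lines:
\begin{enumerate}
\item the critical line $\frac1q-\frac1p=c-2-a-b$;
\item the integrability cut-off $-qa<1$ for the output weight;
\item the duality cut-off $p'(b)>-1$ for the input weight, i.e. $b>-1/p'$ up to the endpoint.
\end{enumerate}
For $S_{a,b,c}$ I would prove sharp positive results by dyadic decomposition into Carleson tops and tents. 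Necessity would come from testing on indicators of top-halves of Carleson boxes, together with $f=(1-|w|^2)^{-s}$ power functions. The only place $T$ and $S$ can differ is on the cut-off line $b=-1/p'$ (resp. its restricted analogue), where the duality cut-off is critical.

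The heart of the positive part for $T$ is cancellation. The model case is $\int_{\mathbb D}(1-|w|^2)^b(1-z\overline w)^{-c}\,dA(w)$, which by Taylor expansion equals $\sum_n \frac{\Gamma(n+c)}{n!\Gamma(c)}\,|z|^{0}\cdot(\text{radial moments})\,z^n\overline{w}^n$-orthogonality. Only the $n=0$ term survives, so it is bounded, whereas the positive version grows like a power of $\log$ or $(1-|z|)^{-\epsilon}$. To turn this into an operator bound I would:
\begin{enumerate}
\item split $f$ into its radial average and the remainder;
\item on the critical cut-off line, control the part of $f$ near the boundary via a weak-type Hardy inequality in the radial variable;
\item treat the angular-oscillating part through the Taylor coefficients $\hat f_n(r)$, using that $T_{a,b,c}f$ is $(1-|z|^2)^a$ times a holomorphic function whose coefficients are radial moments of $\hat f_n$.
\end{enumerate}
This should yield $T:L^p\to L^{q,\infty}$ exactly when $1<p\le2$. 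The restriction $p\le 2$ reflects that we need Hausdorff--Young type control of the coefficients, valid only for $p\le2$. The same scheme with $L^{p,1}$ inputs, where only characteristic functions of sets are needed, should yield $T:L^{p,1}\to L^{q,\infty}$ for all $1\le p<\infty$.

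For the failure of $S$ at the same parameters, I would test the positive kernel on a function concentrated along a radial strip with weight $(1-|w|)^{-1/p'}$ times a logarithmic correction. Positivity makes the divergent log integral appear directly.

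For the converse directions I would argue as follows:
\begin{enumerate}
\item When $p>2$, I would use a probabilistic construction: random signs on a lacunary family of Carleson-box bumps, via Khintchine/Kahane. This shows the $T$ estimate fails whenever the $S$ one does, since on average the complex kernel behaves like its modulus.
\item When $p=1$ in the weak setting, and $p=\infty$ in both settings, I would show $T$ and $S$ coincide in boundedness. For $p=\infty$ the cut-off $b>-1$ is forced by integrability of the kernel at a single point, so no cancellation is available. For $p=1$ weak type, boundedness of either operator reduces to uniform control of the kernel in $L^{q,\infty}$ in $z$ for each $w$, and $|1-z\overline w|^{-c}$ has the same distribution function as its complex analogue.
\end{enumerate}

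The main obstacle I expect is the sharp $T$ bound on the cut-off line for $p$ near $2$. There the radial Hardy estimate only holds in weak form, and it must be combined with the holomorphic-coefficient argument without losing the endpoint. I would try first to prove the restricted weak-type version via interpolation between $p=1$ and $p=2$ coefficient estimates, then upgrade it.
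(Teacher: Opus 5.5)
Your account of where $T_{a,b,c}$ and $S_{a,b,c}$ can differ is wrong at the weak-type level, and your positive argument depends on it. You place the discrepancy on your duality line $b=-1/p'$, i.e.\ the vertical cut-off $1/p=b+1$, apparently where it meets the critical line. On that whole line both operators fail to be of weak type. Within $\Omega_{\mathcal H}$ one has $b<0$; take $f(w)=(1-|w|^2)^{-(b+1)}(1+\log\frac{1}{1-|w|^2})^{-(b+1)(1+\delta)}\in L^{1/(b+1)}(\mathbb D)$ with $(b+1)(1+\delta)<1$. For $|z|\le r$ small, $\arg(1-z\overline w)$ is uniformly small in $w$, so $|T_{a,b,c}f(z)|\gtrsim\int_{\mathbb D}(1-|w|^2)^bf(w)\,dA(w)=\infty$, and no cancellation can help.

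The weak-type pair is instead $\calP_{\textnormal{C-H}}=(2-c+b,-a)$, where the critical line meets the \emph{horizontal} cut-off. It lies strictly left of the vertical line (this is the condition $c>1$), and it is a Forelli--Rudin pair exactly when $\frac12\le 2-c+b<-a\le1$. Only the restricted weak-type pair lies on the vertical line, at the triple point $(b+1,-a)$ with $c=1$.

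The missing idea for $T$ is that $T_{0,b,c}$ maps $L^p$ into the Hardy space $H^p$, where $p=1/(2-c+b)$. Pairing with $g\in H^{p'}$ gives $\langle T_{0,b,c}f,g\rangle_{\mathbb T}=\int_{\mathbb D}(1-|w|^2)^bf\,\overline{R^{-1,c-1}g}\,dA$. The Littlewood--Paley-type embedding $\|R^{-1,c-1}g\|_{A^{p'}_{b/(c-1-b)}}\lesssim\|g\|_{H^{p'}}$ holds precisely when $p'\ge2$, and this is where $p\le2$ comes from. Hausdorff--Young only bounds the coefficients of the input. It cannot place $T_{0,b,c}f$ in $H^p$ for $p<2$, because $H^p$ is not described by coefficient sizes and $T_{0,b,c}$ does not map $L^p$ into $H^s$ for any $s>p$.

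You then need the weak Hardy-space lemma $\|(1-|z|^2)^{-1/q}h\|_{L^{q,\infty}}\lesssim\|h\|_{H^p}$ for $q<p$, applied with $q=-1/a$. At $c=1$ the pairing reduces to $\int f\,\overline{(1-|w|^2)^bg}$. Lorentz--H\"older, plus this lemma used twice with an exponent $-1/a<r<1/(b+1)$, then gives the restricted bound. A radial Hardy inequality and a radial/oscillating splitting of $f$ are not what is needed.

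Your necessity arguments also fail as stated. Random signs do not make the complex kernel behave like its modulus; they produce the square function $(\sum_I|R_I(z)|^2)^{1/2}$. So on $A_N$ the test functions have size $2^{-Na}N^{\eta+1/2}$ rather than $2^{-Na}N^{\eta+1}$, with $\eta\approx-1/p$, and this diverges only for $p>2$. You also need a fourth-moment (Paley--Zygmund) bound and an averaging over sign patterns. That averaging fixes a single pattern that works on a set of measure $\gtrsim2^{-N}$ in $A_N$.

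Your $p=\infty$ claim is false: cancellation is visible there, since $P1=1$ while $P^+1\simeq1+\log\frac1{1-|z|}$. Failure of $T$ at $(\infty,-1/a)$ when $c=b+2$ needs exactly the random construction with $\eta=0$. For restricted weak type you additionally write the $\pm1$ test function as a difference of two indicators.

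Finally, ``$T$ fails whenever $S$ fails for $p>2$'' is not something one construction proves. The exact sets $\mathcal{FR}_w$ and $\mathcal{FR}_{rw}$ come from the reductions $a\ge-1$, $b>-1$, $0<c-a-b-2\le1$. They also need a point-by-point classification of the boundary of the Zhao--Zhou region:
\begin{itemize}
\item interior critical-line points, via $S_{a,b,c}=W_\eta S_{c-2-b,b,c}$;
\item the points $\calP_{\textnormal{C-V}}$ and $\calP_{\textnormal{V-H}}$, via sparse domination and Carleson embeddings;
\item points outside the closure, via interpolation.
\end{itemize}
This shows $T$ and $S$ agree everywhere except at the two exceptional points. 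Realizing a given $(p,q)$ then just means choosing $c>1$ with $(2-c+b,-a)=(1/p,1/q)$, respectively $c=1$ with $(b+1,-a)=(1/p,1/q)$.
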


\begin{figure}[H]
\centering
\begin{tikzpicture}[scale=3.3]

\begin{scope}
\draw[->] (-0.08,0) -- (1.15,0) node[below] {\small{$1/p$}};
\draw[->] (0,-0.08) -- (0,1.15) node[left] {\small{$1/q$}};
\draw[thick] (0,0) rectangle (1,1);

\draw (0.5,0.02) -- (0.5,0) node[below] {$1/2$};
\draw (1,0.02) -- (1,0) node[below] {$1$};
\draw (0.02,1) -- (0,1) node[left] {$1$};

\draw[dashed, thick] (0,0) -- (1,1);

\fill[red!12] (0.5,0.5) -- (0.5,1) -- (1,1) -- cycle;
\draw[red!70!black, very thick] (0.5,0.5) -- (0.5,1);
\draw[red!70!black, very thick] (0.5,1) -- (1,1);
\node at (0.67,0.88) {\small{$\mathcal{FR}_w$}};
\node[align=center] at (0.5,-0.3)
{\scriptsize Forelli--Rudin pairs\\[-1pt]
 \scriptsize for weak-type estimates};
\end{scope}

\begin{scope}[shift={(1.78,0)}]
\draw[->] (-0.08,0) -- (1.15,0) node[below] {\small{$1/p$}};
\draw[->] (0,-0.08) -- (0,1.15) node[left] {\small{$1/q$}};
\draw[thick] (0,0) rectangle (1,1);
\draw (1,0.02) -- (1,0) node[below] {$1$};
\draw (0.02,1) -- (0,1) node[left] {$1$};
\draw[dashed, thick] (0,0) -- (1,1);
\fill[violet!12] (0,1) -- (1,1) -- (0,0)-- cycle;
\draw[violet!70!black, very thick] (0,1) -- (1,1);
\node at (0.35,0.75) {\small{$\mathcal{FR}_{rw}$}};
\node[align=center] at (0.5,-0.3)
{\scriptsize Forelli--Rudin pairs\\[-1pt]
 \scriptsize for restricted weak-type estimates};
\end{scope}
\end{tikzpicture}
\caption{\small The Forelli--Rudin Phenomenon.}
\label{Fig1}
\end{figure}

The preceding theorem describes all possible Forelli--Rudin pairs as the parameters vary. 
We next give the corresponding parameterwise characterization.
\begin{thm}\label{thm:Forelli--Rudin}
Let $a,b,c\in\R$.
\begin{itemize}
	\item[(i)] The weak-type Forelli--Rudin phenomenon occurs in $\Omega_{\mathcal H}$ if and only if
		\begin{equation}\label{eq:weak}
			c>1,
			\qquad
			\frac{1}{2}\le 2-c+b<-a\le1.
		\end{equation}
		If condition \eqref{eq:weak} holds, the weak-type Forelli--Rudin pair is unique and is given by
		\[
			\left(\frac{1}{p},\frac{1}{q}\right)=(2-c+b,-a).
		\]
	\item[(ii)] The restricted weak-type Forelli--Rudin phenomenon occurs in $\Omega_{\mathcal H}$ if and only if
		\begin{equation}\label{eq:restricted-weak}
			c=1,
			\qquad
			0<b+1<-a\le1.
		\end{equation}
		If condition \eqref{eq:restricted-weak} holds, the restricted weak-type Forelli--Rudin pair is unique and is given by
		\[
			\left(\frac{1}{p},\frac{1}{q}\right)=(b+1,-a).
		\]
\end{itemize}
\end{thm}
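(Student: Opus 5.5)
Theorem \ref{thm:Forelli--Rudin} is obtained by comparing two parameterwise characterizations inside $\Omega_{\mathcal H}$: one for $S_{a,b,c}$, one for $T_{a,b,c}$, each at the weak-type level ($L^p\to L^{q,\infty}$) and the restricted weak-type level ($L^{p,1}\to L^{q,\infty}$). A Forelli--Rudin pair is then exactly a point of the symmetric difference of the two admissible regions.

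\textbf{The positive operator $S_{a,b,c}$.} We use the Forelli--Rudin method of \cite{HZ2025}: a dyadic (Carleson box/tent) decomposition of $\D$ and the standard estimate
\[
\int_{\D}\frac{(1-|w|^2)^{s}}{|1-z\overline w|^{2+s+t}}\,dA(w)\simeq (1-|z|^2)^{-t}, \qquad t>0,
\]
with a logarithm when $t=0$. In the $(1/p,1/q)$ square this should show that weak-type boundedness of $S$ holds on a closed region. That region is cut out by the critical line $1/q-1/p=c-2-a-b$ (coming from the homogeneity of the kernel) and by two cut-off lines. The first is $1/q<-a$ or $1/q\le -a$, reflecting integrability of $(1-|z|^2)^{aq}$ in weak $L^q$. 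The second is $1/p$ versus $b+1$, reflecting the size of $(1-|w|^2)^b$ tested against $L^p$ functions. On the cut-off lines the integral $\int_{\D}(1-|w|^2)^{b}|1-z\overline w|^{-c}\,dA(w)$ produces a logarithmic factor. Because $S$ is positive, this factor is fatal, and testing $S$ on the indicator of a dyadic box (for restricted weak type) or on an extremal radial function (for weak type) gives the failures.

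\textbf{The complex operator $T_{a,b,c}$.} Here the point is cancellation. Expanding $(1-z\overline w)^{-c}=\sum_n \frac{\Gamma(n+c)}{n!\,\Gamma(c)}z^n\overline w^{\,n}$, the radial part of $f$ interacts only with the $n=0$ term. This kills exactly the logarithmic growth that $S$ sees on the cut-off lines. We therefore split
\[
f = f_{\mathrm{rad}} + (f-f_{\mathrm{rad}}).
\]
\begin{itemize}
\item The radial part contributes $(1-|z|^2)^a\int (1-|w|^2)^b f_{\mathrm{rad}}\,dA$. This lies in $L^{q,\infty}$ precisely when $-a\ge 1/q$, with equality allowed for weak type.
\item The non-radial part, after subtracting the constant term, is handled by a weak-type Hardy inequality for the coefficients. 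This is where the extra range $1<p\le 2$ should enter: the Hausdorff--Young / Hardy--Littlewood behaviour of the Taylor coefficients is available only for $p\le 2$.
\end{itemize}
Carrying this out, $T$ and $S$ should differ only at the corner $(1/p,1/q)=(2-c+b,-a)$ for weak type. For restricted weak type the corner is $(b+1,-a)$, and there $c=1$ is forced: only then does the cut-off in $1/p$ coincide with the point where the kernel exponent, rather than $L^p$ integrability, creates the logarithm.

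\textbf{Necessity and the main obstacle.} For necessity, off these corners either both operators are bounded, since $S$ dominates $T$, or both fail. Showing that both fail requires counterexamples for $T$ that do not rely on positivity. For these I would use a probabilistic construction: randomized sums $\sum_j \varepsilon_j\,\mathbf 1_{Q_j}$ over dyadic boxes with Rademacher signs, then Khintchine's inequality to recover a lower bound for $|Tf|$ on a large set. I expect the main obstacle to be the sufficiency of $T$ at the corner point, in particular the endpoint $p=2$ and the constraint $1/2\le 2-c+b$, where the weak-type Hardy estimate must be sharp. I would first try Lorentz-space interpolation of the coefficient map $f\mapsto (\widehat{f}(n))_n$ between $L^1$ and $L^2$.
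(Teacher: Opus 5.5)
\textbf{Gap 1: boundedness of $T$ at the two corners is not supplied.} This step carries the theorem, and your proposal leaves it open. Splitting off $f_{\mathrm{rad}}$ only disposes of the radial test functions that defeat $S$. All of the difficulty sits in the non-radial part: the paper's counterexample for $0\le 2-c+b<\frac12$ is a non-radial Rademacher sum over truncated Carleson boxes $Q_{I,\varepsilon}$ on which $\arg(1-z\overline w)$ is almost constant. A ``weak-type Hardy inequality for the coefficients,'' or interpolating $f\mapsto(\widehat f(n))_n$ between $L^1$ and $L^2$, is not an estimate for that part.

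The missing idea is to factor through a Hardy space. Write $T_{a,b,c}f=(1-|z|^2)^aT_{0,b,c}f$ and prove $\|T_{0,b,c}f\|_{H^p}\lesssim\|f\|_{L^p(\D)}$ for $p=\frac{1}{2-c+b}\in(1,2]$ by duality. One has $\langle T_{0,b,c}f,g\rangle_{\T}=\int_{\D}(1-|w|^2)^bf\,\overline{R^{-1,c-1}g}\,dA$. The Littlewood--Paley type embedding $\|R^{-1,c-1}g\|_{A^{p'}_{bp'}}\lesssim\|g\|_{H^{p'}}$ holds exactly when $p'\ge 2$, and this, not Hausdorff--Young, is where $p=2$ enters. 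Then apply the weak Hardy lemma $\|(1-|z|^2)^{-1/q}h\|_{L^{q,\infty}(\D)}\lesssim\|h\|_{H^p}$ for $q<p$ (Chebyshev on each circle), with $q=-\frac1a$.

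At $c=1$ the same scheme works with three ingredients:
\begin{itemize}
\item the identity $\langle T_{0,b,1}f,g\rangle_{\T}=\int_{\D}f\,\overline{(1-|w|^2)^bg}\,dA$;
\item Lorentz--H\"older in $L^{\frac1{b+1},1}\times L^{-\frac1b,\infty}$;
\item the weak Hardy lemma applied twice, to $g\in H^{r'}$ and to $T_{0,b,1}f\in H^{r}$, with $-\frac1a<r<\frac1{b+1}$.
\end{itemize}

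\textbf{Gap 2: your picture of $S$ is wrong, so the ``only if'' direction and uniqueness fail as planned.} The weak-type region of $S$ is not closed, and the logarithm on the cut-off lines is not fatal in general. In fact:
\begin{itemize}
\item $S$ is of restricted weak type at $\calP_{\textnormal{C-H}}$ when $0<2-c+b<1$;
\item $S$ is of restricted weak type at $\calP_{\textnormal{C-V}}$, hence on the vertical line above it;
\item $S$ is of restricted weak type at $\calP_{\textnormal{V-H}}$ when $c<1$;
\item $S$ is of weak type on the horizontal line to the left of $\calP_{\textnormal{C-H}}$.
\end{itemize}
Since $T$ is bounded at all these points, each remains a candidate Forelli--Rudin pair until these positive bounds for $S$ are proved. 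Testing $S$ on box indicators cannot produce failures there. The paper needs the factorization $S_{a,b,c}=(1-|z|^2)^{-\eta}S_{c-2-b,b,c}$ with $\eta=c-a-b-2$ and Lorentz--H\"older, sparse domination over Carleson tents for two adjacent dyadic grids, and a layer-cake dyadic Carleson embedding. Your outline contains none of these.

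Conversely, on the vertical cut-off line the radial part is exactly what breaks both operators. For $-1<b<0$, a log-corrected radial $f\in L^{\frac{1}{b+1}}$ makes $Tf$ infinite, so the $n=0$ term does not remove that logarithm.

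Randomization is needed only at $\calP_{\textnormal{C-H}}$ with $0\le 2-c+b<\frac12$. There the weight $(1+\log\frac{1}{1-|w|^2})^{\eta}$ with $\eta>-\frac12$ makes the square function grow like $\sum_{k\le N}k^{2\eta}$. A fourth-moment plus Fubini argument then fixes one sign pattern on a set of measure $\gtrsim 2^{-N}$. The remaining failures of $T$ come from radial averaging, monomial tests $w^N$, interpolation against the strong-type theory, and the parameter reductions ($a\ge -1$, $b>-1$, and the case $c-a-b-2=1$).

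Also, $(1-|z|^2)^a\in L^{q,\infty}$ iff $\frac1q\ge -a$, the reverse of what you wrote.
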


Theorem~\ref{thm:Forelli--Rudin} follows from the preliminary reductions in Section~\ref{Sec03} and the complete classifications obtained in Sections~\ref{Sec10} and~\ref{Sec12}. We make several remarks on the Forelli--Rudin phenomenon for $T_{a, b, c}$ and $S_{a, b, c}$ in the hypersingular regime.

\begin{rem}
\begin{enumerate}
\item Observe that, in Theorem~\ref{20260514thm01}, a natural threshold $p=2$ appears in the definition of $\mathcal{FR}_w$ at the level of weak-type estimates. This threshold is an intrinsic complex-analytic feature of the general Forelli--Rudin operators. The analysis at such threshold is the crux of our endpoint and critical-line analysis, where methods from probability, complex function theory, and dyadic harmonic analysis meet in perfect harmony. More precisely, $p=2$ is
\begin{enumerate}
    \item the threshold arising from the exponent $2$ in Khintchine's inequality,
    where the fourth-moment probabilistic construction begins to take effect; see Theorem \ref{Forelli--Rudin threshold}, (ii);  
    \item the threshold at which the complex Hardy space method reaches its natural limit, through the embedding of a Hardy space into a Bergman space, in the spirit of \cite[Theorem~4.41]{Zhu2005}; see Theorem \ref{Forelli--Rudin threshold}, (iii). 
\end{enumerate}

Thus, the complex-analytic mechanism stops exactly where the probabilistic mechanism begins. This subtle behavior becomes invisible once the complex kernel is replaced by its absolute value. In particular, it cannot be detected by dyadic Carleson embedding arguments for the corresponding positive operator at
any Forelli--Rudin pair $(p,q)$ for weak-type estimates; see Theorem \ref{Forelli--Rudin threshold}, (i). 

Taken together, these phenomena show that, at the level of weak-type estimates,
the Forelli--Rudin phenomenon persists precisely in the range $1<p\le 2$,
whereas it disappears once $p>2$. Therefore, we refer to $p=2$ as the \emph{Forelli--Rudin threshold for weak-type estimates}.

\vspace{0.1cm}

\item The phenomena established in Theorems \ref{20260514thm01} and \ref{thm:Forelli--Rudin} are in sharp contrast with the example of rapidly increasing radial weights discussed in Remark~\ref{20260521rem01}, where the cancellation effect is visible along the entire diagonal line. For $T_{a,b,c}$ and $S_{a,b,c}$ with\footnote{This is the main case in the purely hypersingular regime; see
\eqref{maincase}.} 
$$
0<c-a-b-2<1,
$$
the corresponding Forelli--Rudin phenomenon can occur, if at all, \emph{only} at a single endpoint. To visualize Forelli--Rudin pairs, we recall the picture behind the
strong-type theory. For $T_{a,b,c}$ and $S_{a,b,c}$, this theory gives a natural critical line \eqref{20260522eq01} together with the vertical and horizontal
cut-off lines \eqref{20260515eq11} and \eqref{20260515eq12}, respectively; these three lines describe the boundary of the strong-type regime in \cite[Theorem~1.1]{ZZ2022}. We refer the reader to Section~\ref{20260522sec01} for a brief overview.

With this picture in mind, a Forelli--Rudin pair $(p,q)\in \mathcal{FR}_w$ at the level of weak-type estimates occurs precisely
at the intersection of the critical line and the horizontal cut-off line, provided
that this intersection point lies strictly on the left-hand side of the vertical cut-off line (see Figure \ref{Fig5}).

\begin{figure}[ht]
\centering
\begin{tikzpicture}[scale=4]
\def\yh{0.78}     
\def\xP{0.58}     
\def\xaux{0.72}   
\coordinate (P) at (\xP,\yh);

\coordinate (Q) at (\xaux,0.92);

\draw[->] (-0.08,0) -- (1.15,0) node[below] {\small{$1/p$}};
\draw[->] (0,-0.08) -- (0,1.15) node[left] {\small{$1/q$}};
\fill[red!12, fill opacity=0.7] (0.5,0.5) -- (0.5,1) -- (1,1) -- cycle;
\fill[gray!30, fill opacity=0.5] (0,\yh) -- (P) -- (Q) -- (\xaux,1) -- (0,1) -- cycle;
\draw[thick] (0,0) rectangle (1,1);
\draw (0.5,0.02) -- (0.5,0);
\draw (0.47, 0) node[below] {$1/2$};
\draw (1,0.02) -- (1,0) node[below] {$1$};
\draw (0.02,1) -- (0,1) node[left] {$1$};
\draw[dashed, thick] (0,0) -- (1,1);
\draw[red!70!black, very thick] (0.5,0.5) -- (0.5,1);
\draw[dashed, thick] (0.5,0) -- (0.5,0.5);
\draw[red!70!black, very thick] (0.5,1) -- (1,1);
\draw[green!55!black, very thick] (0,\yh) -- (1,\yh);
\draw[orange!85!black, very thick] (\xaux,0) -- (\xaux,1);
\draw[blue, very thick] (0, 0.2) -- (0.8, 1);
\filldraw[black] (P) circle (.4pt);
\draw[->] (1.29, 0.45) -- (0.6, 0.76);
\node[align=center] at (1.62, 0.42) {\scriptsize Forelli--Rudin pair \\ \scriptsize for weak-type estimates};
\node at (0.6, 0.95) {\tiny{$\mathcal{FR}_w$}};
\node[left] at (0,\yh) {\scriptsize horizontal cut-off line};
\node[left] at (0, 0.2) {\scriptsize critical line};
\node[align=center] at (\xaux,-0.1) {\scriptsize vertical \\
[-1pt] \scriptsize cut-off line};
\end{tikzpicture}
\caption{\small Forelli--Rudin pairs for weak-type estimates: the blue line denotes the critical line, the green line the horizontal cut-off line, and the orange line the vertical cut-off line. The red shaded region represents $\mathcal{FR}_w$, while the gray shaded region represents the strong-type region described in
\cite[Theorem~1.1]{ZZ2022}.}
\label{Fig5}
\end{figure}

\noindent At the level of restricted weak-type estimates, a Forelli--Rudin pair $(p,q)\in \mathcal{FR}_{rw}$ occurs when the critical line and the two cut-off
lines meet at a common point (see Figure \ref{Fig6}).

\begin{figure}[ht]
\centering
\begin{tikzpicture}[scale=4]

\def\xP{0.38}     
\def\yh{0.58}     
\coordinate (P) at (\xP,\yh);

\draw[->] (-0.08,0) -- (1.15,0) node[below] {\small{$1/p$}};
\draw[->] (0,-0.08) -- (0,1.15) node[left] {\small{$1/q$}};

\fill[violet!20, fill opacity=0.45] (0,0) -- (0,1) -- (1,1) -- cycle;
\fill[gray!30, fill opacity=0.5] (0,\yh) -- (P) -- (\xP, 1) -- (0,1) -- cycle;
\draw[thick] (0,0) rectangle (1,1);
\draw (0.5,0.02) -- (0.5,0);
\draw (0.5, 0) node[below] {$1/2$};
\draw[violet!70!black, very thick] (0,1) -- (1,1);
\draw (1,0.02) -- (1,0) node[below] {$1$};
\draw (0.02,1) -- (0,1) node[left] {$1$};
\draw[dashed, thick] (0,0) -- (1,1);
\draw[blue, very thick] (0,0.2) -- (0.8,1);    
\draw[green!55!black, very thick] (0,\yh) -- (1,\yh);        
\draw[orange!85!black, very thick] (\xP,0) -- (\xP,1);      
\filldraw[black] (P) circle (.4pt);
\draw[->] (1.2,0.43) -- (0.4, 0.55);
\node[align=center] at (1.61, 0.4)
{\scriptsize Forelli--Rudin pair \\
 \scriptsize for restricted weak-type estimates};
\node at (0.12, 0.46) {\tiny{$\mathcal{FR}_{rw}$}};
\node[left] at (0,\yh) {\scriptsize horizontal cut-off line};
\node[left] at (0,0.2) {\scriptsize critical line};
\node[align=center] at (\xP, 1.11)
{\scriptsize vertical \\
[-1pt] \scriptsize cut-off line};

\end{tikzpicture}
\caption{\small Forelli--Rudin pairs for restricted weak-type estimates: the blue line represents the critical line, the green line the horizontal cut-off line, and the orange line the vertical cut-off line. The violet shaded region corresponds to $\mathcal{FR}_{rw}$, while the gray shaded region indicates the strong-type region described in \cite[Theorem~1.1]{ZZ2022}.}
\label{Fig6}
\end{figure}

\vspace{-0.1cm}

\end{enumerate}
\end{rem}

The main results of the present paper are summarized below, providing a roadmap for the analysis that follows; see Figure \ref{Figroadmap}.

\begin{figure}[p]
\centering

\begin{adjustbox}{
    max totalsize={\textwidth}{0.91\textheight},
    center
}
\begin{tikzpicture}[
    x=1cm,
    y=1cm,
    >={Latex[length=2mm]},
    flow/.style={
        ->,
        line width=.7pt,
        draw=black!55,
        rounded corners=2pt
    },
    box/.style={
        draw,
        rounded corners=3pt,
        line width=.7pt,
        inner sep=4.5pt,
        align=left,
        font=\scriptsize,
        text=black!92
    },
    framework/.style={
        box,
        draw=black!65,
        fill=black!3,
        text width=12cm,
        align=center
    },
    mainhead/.style={
        box,
        draw=blue!60!black,
        fill=blue!7,
        text width=12cm,
        align=center
    },
    minorhead/.style={
        box,
        draw=orange!75!black,
        fill=orange!9,
        text width=12cm,
        align=center
    },
    maincase/.style={
        box,
        draw=blue!55!black,
        fill=blue!3,
        text width=7.15cm,
        minimum height=2.35cm
    },
    mainfull/.style={
        box,
        draw=blue!55!black,
        fill=blue!3,
        text width=15.5cm,
        minimum height=2.1cm
    },
    minorcase/.style={
        box,
        draw=orange!75!black,
        fill=orange!4,
        text width=7.15cm,
        minimum height=2.65cm
    },
    global/.style={
        box,
        draw=violet!70!black,
        fill=violet!6,
        text width=12cm,
        align=center
    }
]


\node[framework] (setup) at (0,0) {%
    {\normalsize\bfseries
    Forelli--Rudin operators in the hypersingular regime}\\[-1pt]
    \[
    T_{a,b,c}f(z)
    =
    (1-|z|^2)^a
    \int_{\D}
    \frac{(1-|w|^2)^b f(w)}
         {(1-z\overline w)^c}\,dA(w),\]
    \[
    S_{a,b,c}f(z)
    =
    (1-|z|^2)^a
    \int_{\D}
    \frac{(1-|w|^2)^b f(w)}
         {|1-z\overline w|^c}\,dA(w).\]
    \[n=1,\qquad
    \alpha=\beta=0,\qquad
    a\geq-1,\qquad
    b>-1,\qquad
    0<c-a-b-2\leq1,\]
    \[\Omega_{\mathcal H}
    =
    \{(p,q):1\leq p,q\leq\infty,\ p>q\}.\]
    \[\mathrm{(CL)}:\ 
    1/q=1/p+c-a-b-2,
    \qquad
    \mathrm{(VL)}:\ 
    1/p=b+1,
    \qquad
    \mathrm{(HL)}:\ 
    1/q=-a.\]
    {\itshape
    The case $c-a-b-2=1$ is settled in Reduction~II;
    the remaining range splits as follows.}
};


\node[mainhead] (main) at (0,-3.30) {%
    \vspace{-0.1cm}
    \[\mbox{{\bfseries Main Case:}}
    \qquad
    a,b>-1,\qquad 0<c-a-b-2<1\]
    \vspace{-0.3cm}
};

\draw[flow]
    (setup.south) -- (main.north);


\node[maincase] (MI) at (-4.05,-5.75) {%
    {\bfseries Main I}
    \qquad
    $c\geq\max\{a+2,b+2\}$\\[2pt]
    The operators $T_{a,b,c}$ and $S_{a,b,c}$ have the same
    weak- and restricted weak-type behavior. Hence, no
    Forelli--Rudin phenomenon occurs.\\[2pt]
    {\tiny\itshape
    See Theorem~\ref{mainsubcaseI}.}
};

\node[maincase] (MII) at (4.05,-5.75) {%
    {\bfseries Main II}
    \qquad
    $b+2\leq c<a+2$\\[2pt]
    The critical and vertical boundaries admit a complete
    weak- and restricted weak-type description, identical for
    $T_{a,b,c}$ and $S_{a,b,c}$. Thus, no Forelli--Rudin
    phenomenon occurs.\\[2pt]
    {\tiny\itshape
    See Theorem~\ref{mainsubcaseII}.}
};

\coordinate (rowone) at (0,-4.15);

\draw[flow]
    (main.south) -- (rowone);

\draw[flow]
    (rowone) -| (MI.north);

\draw[flow]
    (rowone) -| (MII.north);


\node[maincase] (MIII) at (-4.05,-8.80) {%
    {\bfseries Main III}
    \qquad
    $a+2\leq c<b+2$\\[2pt]
    At the critical--horizontal endpoint, both weak-type bounds
    fail if
    $0<2-c+b<\frac12$.
    If
    $\frac12\leq2-c+b<1$,
    then $T_{a,b,c}$ is of weak type, whereas $S_{a,b,c}$ is not.\\[1pt]
    {\color{red!70!black}\bfseries
    Weak-type F--R threshold: $\frac12$.}\\[2pt]
    {\tiny\itshape
    See Theorems~\ref{mainsubcaseIII}
    and~\ref{Forelli--Rudin threshold}.}
};

\node[maincase] (MIV) at (4.05,-8.80) {%
    {\bfseries Main IV}
    \qquad
    $1<c<\min\{a+2,b+2\}$\\[2pt]
    The same critical--horizontal dichotomy holds. A weak-type
    Forelli--Rudin phenomenon occurs precisely when
    $\frac12\leq2-c+b<1$.
    The remaining boundary behavior is shared by the two
    operators.\\[1pt]
    {\color{red!70!black}\bfseries
    Weak-type F--R threshold: $\frac12$.}\\[2pt]
    {\tiny\itshape
    See Theorems~\ref{mainsubcaseIV}
    and~\ref{Forelli--Rudin threshold}.}
};

\coordinate (rowtwo) at (0,-7.20);

\draw[flow]
    (rowone) -- (rowtwo);

\draw[flow]
    (rowtwo) -| (MIII.north);

\draw[flow]
    (rowtwo) -| (MIV.north);


\node[mainfull] (MV) at (0,-11.45) {%
    {\bfseries Main V}
    \qquad
    $c\leq1$\\[2pt]
    If $c<1$, the two operators have the same endpoint behavior.
    If $c=1$, then, at the critical--vertical--horizontal point,
    $T_{a,b,1}$ is of restricted weak type, whereas $S_{a,b,1}$
    is not.\\[1pt]
    {\color{red!70!black}\bfseries
    Restricted weak-type F--R phenomenon only at $c=1$.}\\[2pt]
    {\tiny\itshape
    See Theorems~\ref{mainsubcaseV}
    and~\ref{Forelli--Rudin Threshold restricted weak-type}.}
};

\draw[flow]
    (rowtwo) -- (MV.north);


\node[minorhead] (minor) at (0,-13.65) {%
    \vspace{-0.1cm}
    \[\mbox{{\bfseries Minor Case: }}
    \qquad
    a=-1,\qquad b>-1,\qquad 0\leq2-c+b<1\]
    \vspace{-0.3cm}
};

\draw[flow]
    (MV.south) -- (minor.north);


\node[minorcase] (mI) at (-4.05,-16.35) {%
    {\bfseries Minor I}
    \qquad
    $c>1$\\[2pt]
    At
    $(1/p,1/q)=(2-c+b,1)$,
    restricted weak-type fails for both operators when $2-c+b=0$, holds for both operators when $0<2-c+b<1$. 
    Weak type fails for
    both operators if
    $0\leq2-c+b<\frac12$.
    If
    $\frac12\leq2-c+b<1$,
    it holds for $T_{-1,b,c}$ but fails for $S_{-1,b,c}$.\\[1pt]
    {\color{red!70!black}\bfseries
    Weak-type F--R threshold: $\frac12$.}\\[2pt]
    {\tiny\itshape
    See Theorems~\ref{minorsubcaseI}
    and~\ref{Minor: weak Forelli--Rudin}.}
};

\node[minorcase] (mII) at (4.05,-16.35) {%
    {\bfseries Minor II}
    \qquad
    $c\leq1$\\[2pt]
    At
    $(1/p,1/q)=(b+1,1)$,
    weak type fails for both operators. If $c<1$, both are of
    restricted weak type. If $c=1$, only $T_{-1,b,1}$ is of
    restricted weak type.\\[1pt]
    {\color{red!70!black}\bfseries
    Restricted weak-type F--R phenomenon only at $c=1$.}\\[2pt]
    {\tiny\itshape
    See Theorems~\ref{minorsubcaseII}
    and~\ref{Minor: restricted weak Forelli--Rudin}.}
};

\coordinate (minorrow) at (0,-14.55);

\draw[flow]
    (minor.south) -- (minorrow);

\draw[flow]
    (minorrow) -| (mI.north);

\draw[flow]
    (minorrow) -| (mII.north);


\node[global] (conclusion) at (0,-19.20) {%
    {\bfseries
    Global Forelli--Rudin picture in $\Omega_{\mathcal H}$}
    \[
    \mathcal{FR}_{w}
    =
    \bigl\{
       (p,q)\in\Omega_{\mathcal H}:1<p\leq2
    \bigr\},
    \qquad
    \mathcal{FR}_{rw}
    =
    \bigl\{
       (p,q)\in\Omega_{\mathcal H}:p\neq\infty
    \bigr\}.\]
    {\tiny\itshape
    See Theorem~\ref{20260514thm01}.}
};

\draw[flow]
    (mI.south)
    -- ++(0,-.35)
    -| (conclusion.north);

\draw[flow]
    (mII.south)
    -- ++(0,-.35)
    -| (conclusion.north);

\end{tikzpicture}
\end{adjustbox}

\caption{Roadmap of the main and minor cases in the hypersingular regime.}
\label{Figroadmap}
\end{figure}

\subsection{Overview of the series} \label{20260522sec01}
One way to view this paper is as the first part of a broader program developing
the sharp critical-line and endpoint theory for Forelli--Rudin type operators.
The present paper treats the purely hypersingular case. 
The singular and sub-singular cases are treated in the sequel \cite{HWZZ2026a}, while the mixed case is investigated in \cite{HWZZ2026b}. Together, these works complete the critical-line and endpoint theory and resolve an open question raised in \cite{HZ2025}. 
In this section, we briefly explain how these three cases are formulated and highlight some of their main features.

We begin by recalling the strong-type theory for the Forelli--Rudin operators studied in \cite{ZZ2022}. 
For the purposes of this overview, we restrict our attention to the unit disc $\D$; the corresponding statements on the unit ball follow from standard modifications. 
See Section~\ref{Sec14} for some details.

Let $a,b,c\in\mathbb R$, and let $T_{a,b,c}$ and $S_{a,b,c}$ be defined as in
\eqref{20260514eq02} and \eqref{20260514eq03}, respectively. For
$\alpha>-1$ and $1\le p\le \infty$, let $L_\alpha^p(\D)$ denote the weighted
Lebesgue space on $\D$ associated with the radial weight
$(1-|z|^2)^\alpha$. The strong-type bounds
$$
T_{a,b,c},\; S_{a,b,c}: L_\alpha^p(\D)\to L_\beta^q(\D)
$$ 
are characterized by the following critical lines and cut-off lines.

\begin{enumerate}
    \item[$\bullet$] \emph{The critical line.}
    \begin{enumerate}
        \item[(a)] If $p\le q$, the critical line is
        $$
        \frac{1}{q}=\frac{2+\alpha}{2+\beta}\cdot \frac{1}{p}+\frac{c-a-b-2}{2+\beta}.
        $$
        \item[(b)] If $p>q$, the critical line is
        $$
        \frac{1}{q}=\frac{1+\alpha}{1+\beta}\cdot \frac{1}{p}+\frac{c-a-b-2}{1+\beta}.
        $$ 
    \end{enumerate}

    \item[$\bullet$] \emph{Two cut-off lines.}
    \begin{enumerate}
        \item[(a)] The vertical cut-off line is
        \begin{equation} \label{20260515eq11}
        \frac{1}{p}
        =
        \frac{b+1}{\alpha+1}.
        \end{equation} 
        \item[(b)] The horizontal cut-off line is
        \begin{equation} \label{20260515eq12}
        \frac{1}{q}
        =
        \frac{-a}{\beta+1}.
        \end{equation} 
    \end{enumerate}
\end{enumerate}

The underlying picture is that the strong-type boundedness region is described inside the $(1/p,1/q)$-square by intersecting the regions lying above the critical and horizontal cut-off lines and to the left of the vertical cut-off line. 
Whether points on these boundary lines are included depends subtly on whether $p\le q$ or $p>q$. 
Moreover, both $T_{a, b, c}$ and $S_{a, b, c}$ share the same strong-type bounds. 
We refer the interested reader to \cite{ZZ2022} for more details.

\begin{exa} \label{20260516exa01}
Recall the hypersingular Bergman projection $K_{2t}$ and its associated positive
operator $K_{2t}^+$, defined as in \eqref{20260514eq30}. In this case,
$a=b=0$, $c=2t$, and $\alpha=\beta=0$. Hence the critical line is
$$
\frac{1}{q}=\frac{1}{p}+2t-2,
$$
while the vertical and horizontal cut-off lines are given by
$$
\frac{1}{p}=1 \qquad \text{and} \qquad
\frac{1}{q}=0,
$$
respectively. Therefore, as a consequence of \cite[Theorem~1.1]{ZZ2022}, both $K_{2t}$ and
$K_{2t}^+$ map $L^p(\D)$ boundedly into $L^q(\D)$ whenever
$$
\frac{1}{q}>\frac{1}{p}+2t-2, \quad \textrm{with} \ 1 \le p, q \le +\infty, 
$$
and this range is sharp.
\end{exa}

Guided by the strong-type behavior, and taking into account the greater subtlety
of the critical-line and endpoint theory, we divide our study into a series of
three works:
\begin{enumerate}
    \item[(1)] In the first part of the series, namely the present paper, we
    consider the case $\alpha=\beta=0$. In this case, the critical line is given by 
    \begin{equation} \label{20260522eq01}
    \frac{1}{q}=\frac{1}{p}+c-a-b-2, \qquad 1 \le p, q \le +\infty, 
    \end{equation} 
    and lies entirely
    in the hypersingular regime $\Omega_{\mathcal H}$, as defined in
    \eqref{20260515eq01} (see Figure \ref{Fig3}).

\begin{figure}[ht]
\centering
\begin{tikzpicture}[scale=3.5]
    \def\xA{0.8}
    \def\yB{0.2}
    \fill[gray!10] (0,0) -- (0,1) -- (1,1) -- cycle;
    \draw[->, thick] (-0.08,0) -- (1.15,0) node[right] {$1/p$};
    \draw[->, thick] (0,-0.08) -- (0,1.15) node[above] {$1/q$};
    \draw[thick] (0,0) rectangle (1,1);
    \draw (1,0) -- (1,-0.025) node[below] {$1$};
    \draw (0,1) -- (-0.025,1) node[left] {$1$};
    \draw[dashed] (0,0) -- (1,1);
    \coordinate (B) at (0,\yB);
    \coordinate (A) at (\xA,1);
    \draw[blue, very thick] 
        (B) -- (A)
        node[midway, above, sloped, yshift=0pt]
        {\tiny{$\frac{1}{q}=\frac{1}{p}+c-a-b-2$}};
    \node[left=4pt] at (B) {\small{$\bigl(0,c-a-b-2\bigr)$}};
    \node[above=2pt] at (A) {\small{$\bigl(3+a+b-c,1\bigr)$}};
    \node at (0.15,0.85) {\small{$\Omega_{\mathcal H}$}};
\end{tikzpicture}
\caption{ \small Critical line for $T_{a,b,c}$ and $S_{a,b,c}$ in the purely hypersingular case: the blue line represents the critical line.}
\label{Fig3}
\end{figure}

    \item[(2)] In the second part of the series \cite{HWZZ2026a}, we consider
    the case $\alpha=\beta=0$ in which the critical line is given by 
    \begin{equation} \label{20260519eq31}
    \frac{1}{q}=\frac{1}{p}+\frac{c-a-b-2}{2}, \qquad 1 \le p, q \le +\infty,
    \end{equation} 
    and lies entirely in the
    sub--singular regime 
    $$    
    \Omega_{\mathcal S}:=\left\{
    (p,q): 1\le p,q\le \infty,\ p\le q
    \right\}
    $$
    (see Figure \ref{Fig4}).

\begin{figure}[ht]
\centering
\begin{tikzpicture}[scale=3.5]
\def\yL{-0.32}         
\def\xC{0.32}          
\def\yR{0.68}         
\fill[gray!10] (0,0) -- (1,0) -- (1,1) -- cycle;
\draw[->, thick] (-0.10,0) -- (1.18,0) node[right] {$1/p$};
\draw[->, thick] (0,-0.45) -- (0,1.12) node[above] {$1/q$};
\draw[thick] (0,0) rectangle (1,1);
\draw (1,0) -- (1,-0.02) node[below] {$1$};
\draw (0,1) -- (-0.02,1) node[left] {$1$};
\draw[thick] (0,0) -- (1,1);
\draw[blue, dashed, thick] (0,\yL) -- (\xC,0);
\draw[blue, thick] (\xC,0) -- (1,\yR) node[midway, above, sloped, yshift=2pt]
        {\tiny{$\frac{1}{q}=\frac{1}{p}+\frac{c-a-b-2}{2}$}};
\node[left=4pt] at (0,\yL)
{\small{$\left(0, \; \dfrac{c-a-b-2}{2}\right)$}};
\node[right=4pt] at (1,\yR)
{\small{$\left(1, \; 2-\dfrac{c-a-b}{2}\right)$}};
\node at (0.85,0.15) {\small{$\Omega_{\mathcal S}$}};
\end{tikzpicture}
\caption{\small  Critical line for $T_{a,b,c}$ and $S_{a,b,c}$ in the purely sub-singular regime: the blue line represents the critical line.}
\label{Fig4}
\end{figure}

    \item[(3)] In the final part of this series \cite{HWZZ2026b}, we consider the mixed case with general weights $\alpha,\beta>-1$, where the critical line may pass through both $\Omega_{\calH}$ and $\Omega_{\calS}$; see, e.g., \eqref{20260519eq02} below.
\end{enumerate}

\vspace{0.1cm}

Here are some highlights of the forthcoming works \cite{HWZZ2026a} and
\cite{HWZZ2026b}.

\begin{enumerate}
    \item[(a)] In the sub--singular case, the critical line \eqref{20260519eq31} suggests that the operators $T_{a,b,c}$ and
$S_{a,b,c}$ are closer in nature to the Bergman projection and its
fractional counterparts. Nevertheless, the interaction between this
critical line and the two cut-off lines \eqref{20260515eq11} and
\eqref{20260515eq12} remains subtle. This interaction is one of the main
issues addressed in the sub--singular theory developed in
\cite{HWZZ2026a}, where the central question is whether the Forelli--Rudin
phenomenon can still be detected in the sub--singular regime.

This perspective is different in nature
from many recent dyadic approaches to Bergman-type operators, which make use of size estimates for the Bergman kernel and sparse domination; see, e.g., \cite{HWW2021, WW2021, GHK2022, RTW2017}. The point here is more delicate: we ask whether the cancellation inherent in the complex analytic kernel itself can still change the endpoint mapping behavior, even in the presence of Calder\'on--Zygmund theory.  

    \item[(b)] In the mixed case, especially when $\alpha\neq\beta$, the
    critical line may exhibit a ``refraction'' phenomenon. For example, consider the Forelli--Rudin operators
    \begin{equation} \label{20260519eq02}
    T_{0,0,5/2}f(z)=
    \int_{\D}\frac{f(w)}{(1-z\overline{w})^{5/2}}\,dA(w), \quad \textrm{and} \quad 
    S_{0,0,5/2}f(z)=
    \int_{\D}\frac{f(w)}{|1-z\overline{w}|^{5/2}}\,dA(w)
    \end{equation} 
    acting from $L^p(\D)$ to $L^q_1(\D)$. The critical line in this case is the following polyline:
    $$
    \frac{1}{q}=
    \begin{cases}
   \frac{1}{2p}+\frac14, & p>q; \\
   \\
    \frac{2}{3p}+\frac16, & p\le q,
    \end{cases}
    $$
    with the refraction point located at $(1/2,1/2)$ (see Figure \ref{Fig2}).

\begin{figure}[ht]
\centering
\begin{tikzpicture}[scale=4]
\coordinate (O) at (0,0);
\coordinate (A) at (1,0);
\coordinate (B) at (1,1);
\coordinate (C) at (0,1);
\coordinate (L) at (0,0.25);
\coordinate (P) at (0.5,0.5);
\coordinate (R) at (1,0.8333);
\draw[->, thick] (-0.06,0) -- (1.13,0) node[right] {$1/p$};
\draw[->, thick] (0,-0.06) -- (0,1.13) node[above] {$1/q$};
\draw[thick] (0,0) rectangle (1,1);
\draw[dashed, thick] (0,0) -- (1,1);
\draw[blue, very thick] (L) -- (P);
\draw[blue, very thick] (P) -- (R);
\fill[red] (P) circle (0.02);
\draw (0.5,0) -- (0.5,-0.02) node[below=2pt] {$1/2$};
\draw (1,0) -- (1,-0.02) node[below=2pt] {$1$};
\draw (0,1) -- (-0.02,1) node[left=2pt] {$1$};
\node[below right=2pt] at (0.4, 0.46) {\footnotesize{$\left(1/2, 1/2\right)$}};
\node[left] at (0, 0.25) {\footnotesize{$(0, 1/4)$}};
\node[right] at (1, 0.8333) {\footnotesize{$(1, 5/6)$}};
\node[blue!85!black, rotate=26] at (0.25,0.45)
    {\scriptsize $\frac1q=\frac{1}{2p}+\frac14$};
\node[blue!85!black, rotate=33] at (0.78,0.58)
    {\scriptsize $\frac1q=\frac{2}{3p}+\frac16$};
\end{tikzpicture}
\caption{ \small The ``refracted'' critical line for 
$T_{0,0,5/2}$ and $S_{0,0,5/2}$ as mappings 
from $L^p(\D)$ to $L_1^q(\D)$: the blue segment denotes the refracted portion, and the red dot marks the refraction point $(1/2,1/2)$.}
\label{Fig2}
\end{figure}

In the example above, the critical line is refracted from the hypersingular regime into the sub-singular regime; in general, the refraction may also occur in the opposite direction. Away from the refraction point, the expected picture should follow by combining the hypersingular theory developed here with the sub-singular theory in \cite{HWZZ2026a}. The main subtlety in the mixed case is the endpoint behavior at the refraction point, where the two adjacent branches are governed by different mechanisms: one is closer to the theory of Bergman projections and their fractional counterparts, while the other is genuinely more singular.

This becomes particularly delicate when the refraction point meets one or both of the horizontal and vertical cut-off lines, so that the change of regime and the cut-off phenomenon occur at the same endpoint. One must then determine whether any Forelli--Rudin phenomenon remains visible there. These mixed cases are treated in \cite{HWZZ2026b}, which, together with the present paper and \cite{HWZZ2026a}, completes the critical-line and endpoint theory for Forelli--Rudin type operators.
\end{enumerate}

\subsection{Highlights of the present paper} \label{subsechighlights}

The contribution of the current paper can be summarized as follows.
\begin{enumerate}
    \item[(a)] From the viewpoint of harmonic analysis, we initiate a series of
    works aimed at giving a complete answer to the open problem raised in
    \cite{HZ2025} concerning the endpoint and critical-line behavior of general
    Forelli--Rudin operators. The present paper is the first part of this series
    and treats the hypersingular regime, where the relevant operators lie beyond
    the scope of Calder\'on--Zygmund theory.

    The main difficulty is that, for the general operators $T_{a,b,c}$ and
    $S_{a,b,c}$, the critical line may intersect one or both of the two
    cut-off lines \eqref{20260515eq11} and \eqref{20260515eq12}. This is in sharp contrast with the hypersingular Bergman
    projection and its positive counterpart studied in \cite{HZ2025}, where the
    critical line stays away from the cut-off lines (see Example \ref{20260516exa01}). Consequently, the endpoint
    behavior is not governed by the critical-line mechanism alone; one must also
    understand how this mechanism interacts with the cut-off phenomena.

    This interaction leads to a substantially more delicate endpoint and
    critical-line theory. While the Forelli--Rudin method developed in
    \cite{HZ2025} remains effective for estimates on the critical line, it is no
    longer sufficient by itself. We therefore develop a broader framework that
    combines this method with
    \begin{enumerate}
        \item[$\bullet$] constructions based on a fourth-moment probabilistic
        argument;

        \item[$\bullet$] dyadic Carleson embeddings, in the spirit of Nazarov,
        Treil, and Volberg \cite{NTV2003}, adapted to the geometry of Carleson
        boxes;

        \item[$\bullet$] a refined sparse domination argument, designed to handle
        the extra factor $(1-|z|^2)^a$ in the definitions of $T_{a,b,c}$ and
        $S_{a,b,c}$. 

    \end{enumerate}

 \item[(b)] From the viewpoint of complex analysis, the main finding of the
    present paper is the discovery of a genuinely new cancellation phenomenon for
    Forelli--Rudin operators. To the best of our knowledge, this is the first
    work showing that the cancellation of the complex analytic kernel can affect
    the mapping behavior of general Forelli--Rudin operators; we call this the
    \emph{Forelli--Rudin phenomenon}.

    The phenomenon is invisible in the strong-type theory, where $T_{a,b,c}$ and
    $S_{a,b,c}$ have the same boundedness range, but becomes visible at the
    weak-type and restricted weak-type levels. In particular, the weak-type
    theory reveals an intrinsic transition at $p=2$, the Forelli--Rudin
    threshold. 

    This is somewhat unexpected from the standpoint of complex analysis: level-set estimates usually do not preserve the analytic features of the kernel. Here, however, the cancellation of the analytic kernel remains detectable at the
    level of distribution estimates. The key mechanism behind this phenomenon is a novel weak-type Hardy estimate that bridges
    Lorentz-space estimates with complex function theory.
\end{enumerate}

The rest of the paper is organized as follows. 
Section~\ref{Sec02} fixes the notation and recalls the basic facts concerning Carleson tents, dyadic arcs, Lorentz spaces, and weak- and restricted weak-type estimates. 
Section~\ref{Sec03} establishes the preliminary reductions on the parameters. 
In Section~\ref{Sec04}, we divide the Main Case into five subcases and establish several further reductions used in the subsequent analysis. 
Section~\ref{Sec05} studies the weak-type bounds at the critical--horizontal intersection $\calP_{\textnormal{C-H}}$ and proves the weak-type Forelli--Rudin threshold. 
Sections~\ref{Sec06}, \ref{Sec07}, and~\ref{Sec08} establish the restricted weak-type bounds at the critical--horizontal, critical--vertical, and vertical--horizontal intersections, respectively. 
Section~\ref{Sec09} treats the common intersection of the critical, vertical, and horizontal lines and establishes the Forelli--Rudin phenomenon at the restricted weak-type level. 
In Section~\ref{Sec10}, we combine these results to obtain the complete weak- and restricted weak-type classification for Main Cases~I--V. 
Section~\ref{Sec11} develops the corresponding reductions and endpoint estimates for the Minor Case, and Section~\ref{Sec12} assembles these results to obtain the complete classification for Minor Cases~I and~II. 
Finally, Section~\ref{Sec14} explains the extension of our results to the unit ball in higher dimensions and records the necessary modifications to the dyadic and Hardy-space arguments.

Throughout this paper, for $a ,b \in  \mathbb{R}$, $a\lesssim b$ means there exists a positive number $C$, which is independent of $a$ and $b$, such that $a\leq C\,b$. Moreover, if both $a \lesssim b$ and $b\lesssim a$ hold, then we say $a \simeq b$. \\

\noindent{\textbf{Acknowledgments.}}
The first author was supported by NSF grant DMS-2555999 and by the Simons Foundation Travel Support for Mathematicians program under award MPS-TSM-00007213.
The second author was supported by the National Natural Science Foundation of China (No.12471116) and 2025CDJ-IAIS YB-004 (Chongqing University).

\section{Notations}\label{Sec02}

In this section we fix the basic notation used throughout the paper. 

\subsection{Carleson tents and dyadic arcs}

Let $I\subseteq\T$ be an arc, and let $|I|$ denote its normalized length.  The
\emph{Carleson tent associated with $I$} is
\begin{equation} \label{Carlesontent}
Q_I:=\left\{z\in\D:\ \frac{z}{|z|}\in I,\quad 1-|I|\le |z|<1
\right\}.
\end{equation} 
We shall also use the \emph{upper Carleson tent of $Q_I$}, denoted by
$Q_I^{\mathrm{up}}$, and defined by
\begin{equation} \label{UpperCarlesontent}
Q_I^{\mathrm{up}}:=\left\{z\in\D:\ \frac{z}{|z|}\in I,\quad 1-|I|\le |z|<1-\frac{|I|}{2}
\right\}.
\end{equation} 

A \emph{dyadic system}, or \emph{dyadic grid}, on $\T$ is a collection
$$
\mathcal D=\bigcup_{k\ge 0}\mathcal D_k
$$
such that each generation $\mathcal D_k$ consists of $2^k$ pairwise disjoint
arcs of length $2^{-k}$ whose union is $\T$, and each arc
$I\in\mathcal D_k$ is the union of two arcs in $\mathcal D_{k+1}$, called its
\emph{dyadic children}.  Equivalently, after identifying $\T$ with $[0,1)$, one may
take
$$
\mathcal D_k=\left\{\left[\frac{m}{2^k},\frac{m+1}{2^k}\right): m=0,1,\dots,2^k-1
\right\},
$$
and interpret these intervals as arcs on $\T$.

Finally, for $J\in \calD$, we denote by $\calD_k(J)$ the collection of all dyadic arcs \(I\subseteq J\) satisfying $|I|=2^{-k}|J|$. 

\subsection{Weak and Lorentz spaces}

For $1\le p<\infty$, the \emph{weak Lebesgue space} $L^{p,\infty}(\D)$ consists of all
measurable functions $f$ on $\D$ such that
\[
\|f\|_{L^{p,\infty}(\D)}
:=
\sup_{\lambda>0}
\lambda\,\big|\{z\in\D:\ |f(z)|>\lambda\}\big|^{1/p}
<\infty .
\]
More generally, for $1\le p<\infty$ and $0<r<\infty$, the \emph{Lorentz space}
$L^{p,r}(\D)$ is equipped with the quasi-norm
\[
\|f\|_{L^{p,r}(\D)}
:=
\left(
p\int_0^\infty
\lambda^r
\big|\{z\in\D:\ |f(z)|>\lambda\}\big|^{r/p}
\frac{d\lambda}{\lambda}
\right)^{1/r}.
\]
The endpoint cases are understood with the standard modifications.

\subsection{Weak and restricted weak-type estimates}

Let $1\le p,q\le\infty$. We say that $\mathcal T$ is of \emph{weak type $(p,q)$} if 
$$
\mathcal T:L^p(\D)\to L^{q,\infty}(\D)
$$ 
is bounded. Equivalently, when $1\le q<+\infty$, there exists a constant $C>0$ such that for every $f\in L^p(\D)$ and every $\lambda>0$,
$$
\big|\{z\in\D:\ |\mathcal T f(z)|>\lambda\}\big| \le
C\,\lambda^{-q}\|f\|_{L^p(\D)}^q.
$$
When $q=\infty$, this is understood as the estimate $\|\mathcal T f\|_{L^\infty(\D)}\le C\|f\|_{L^p(\D)}$. 

\vspace{0.1cm}

To this end, for $1\leq p<\infty$, we say that $\mathcal T$ is of \emph{restricted weak type $(p,q)$} if $$ \mathcal T:L^{p,1}(\D)\to L^{q,\infty}(\D) $$ is bounded. In particular, this implies the following estimate for characteristic functions: there exists a constant $C>0$ such that $$ \big|\{z\in\D:\ |\mathcal T\mathbf \one_E(z)|>\lambda\}\big| \le C\,\lambda^{-q}|E|^{q/p} $$ for every measurable set $E\subseteq\D$ with $|E|<\infty$ and every $\lambda>0$, with the usual modification when $q=\infty$.
When $p=\infty$, the restricted weak type $(\infty,q)$ is understood directly through characteristic functions:
$$
\big|\{z\in\D:\ |\mathcal T\one_E(z)|>\lambda\}\big| \le C\,\lambda^{-q},
$$
for every measurable set $E\subseteq\D$ and every $\lambda>0$, with the usual modification when $q=\infty$.

\bigskip 
\section{Preliminaries}\label{Sec03}

In this section, we set up the framework for our analysis and make several reductions. Recall that throughout this paper, we restrict ourselves to the case $\alpha=\beta=0$ and $n=1$ in the purely hypersingular regime \eqref{20260515eq01}.

\subsection{The strong-type region and its boundary}
In this section, we recall Zhao and Zhou's strong boundedness results \cite{ZZ2022} in the purely hypersingular regime $\Omega_{\mathcal H}$.

\begin{thm}
Let $a,b,c\in\R$ and $(p,q)\in\Omega_{\mathcal H}$.
Then the following statements are equivalent:
\begin{enumerate}
    \item[(i)] The operator $S_{a,b,c}$ is bounded from $L^p(\D)$ to $L^q(\D)$.
    \item[(ii)] The operator $T_{a,b,c}$ is bounded from $L^p(\D)$ to $L^q(\D)$.
    \item[(iii)] The pair $(1/p,1/q)\in\Omega(a,b,c)$, where
		\begin{equation}\label{eq:strong}
			\Omega(a,b,c)=\left\{\left(\frac{1}{p},\frac{1}{q}\right)\in[0,1]\times[0,1]: \frac{1}{q}>-a,\ \frac{1}{p}<b+1,\ \frac{1}{q}-\frac{1}{p}>c-2-a-b.\right\}
		\end{equation}
\end{enumerate}
\end{thm}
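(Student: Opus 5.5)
The plan is to prove (iii)$\Rightarrow$(i)$\Rightarrow$(ii)$\Rightarrow$(iii). The implication (i)$\Rightarrow$(ii) is immediate, since $|T_{a,b,c}f|\le S_{a,b,c}|f|$ pointwise. The work lies in sufficiency for the positive operator and necessity for the complex one. Since this is the specialization of \cite[Theorem~1.1]{ZZ2022} to $\alpha=\beta=0$, $n=1$, $p>q$, one could simply cite it. I would still sketch a self-contained argument, because both halves are reused later.

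For (iii)$\Rightarrow$(i), I would split $S_{a,b,c}$ dyadically. Write $|1-z\overline w|\simeq 2^{-k}$ on Whitney-type regions. Alternatively, use the standard Forelli--Rudin integral estimate
\[
\int_{\D}\frac{(1-|w|^2)^{s}}{|1-z\overline w|^{2+s+t}}\,dA(w)\simeq
\begin{cases}
1, & t<0,\\
\log\frac{1}{1-|z|^2}, & t=0,\\
(1-|z|^2)^{-t}, & t>0,
\end{cases}
\]
valid for $s>-1$. Since $p>q$ and $\D$ has finite measure, I would use H\"older in $w$ with exponents $p,p'$. The condition $1/p<b+1$ is exactly what makes $(1-|w|^2)^{b}\in L^{p'}$ locally integrable in the relevant weighted sense. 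This bounds $S_{a,b,c}f(z)$ by $\|f\|_p$ times $(1-|z|^2)^{a}$ times $\|(1-|w|^2)^b|1-z\overline w|^{-c}\|_{L^{p'}}$. By the estimate above, the latter is $\lesssim (1-|z|^2)^{-\gamma}$ with $\gamma=\max\{0,\,c-b-2+1/p\}$, up to a logarithm at equality. Then $(1-|z|^2)^{a-\gamma}\in L^q$ precisely when $q(\gamma-a)<1$. This yields the two conditions $1/q>-a$ and $1/q-1/p>c-2-a-b$, with the strict inequalities absorbing the logarithmic endpoint. H\"older is affordable here precisely because $p>q$, so no Schur-type test is needed.

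For (ii)$\Rightarrow$(iii), I would test $T_{a,b,c}$ on explicit functions, one for each boundary line.

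\emph{Vertical line.} If $1/p\ge b+1$, take $f(w)=(1-|w|^2)^{-b-1}\big(\log\frac{e}{1-|w|^2}\big)^{-1}$, suitably radial. It lies in $L^p$ but $T_{a,b,c}f(0)=\int(1-|w|^2)^b f\,dA=\infty$, since the kernel is $1$ at $z=0$.

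\emph{Horizontal line.} If $1/q\le -a$, take $f$ a bump supported near $0$. Then $T_{a,b,c}f(z)\simeq(1-|z|^2)^a$, using analyticity in $z$ and $(1-z\overline w)^{-c}\approx1$. This is not in $L^q$ when $-aq\ge1$.

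\emph{Critical line.} If $1/q-1/p\le c-2-a-b$, take $f_\zeta=\one_{Q^{\mathrm{up}}_I}\,\overline{(1-\zeta\overline w)^{c}}/|1-\zeta\overline w|^{c}$ for $I$ a short arc centered at $\zeta/|\zeta|$, so that the phase is killed on the tent. On $Q_I$ one gets $|T f|\gtrsim |I|^{a}|I|^{b+2-c}$, with $\|f\|_p=|I|^{2/p}$ and the $L^q(Q_I)$ norm $\gtrsim |I|^{a+b+2-c+2/q}$. A single tent gives only a constraint between $2/q$ and $2/p$, which is the wrong scaling for $p>q$. So I would instead sum $N$ disjoint tents at scale $|I|$, with $N\simeq|I|^{-1}$ along a layer, using $f=\sum_I f_I$. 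To avoid the cancellation issue between tents, I would estimate only the near-diagonal contribution and control the off-diagonal part by $S$ applied to the remaining pieces. Comparing $|I|^{a+b+2-c+1/q}$ against $|I|^{1/p}$ (one power for the layer in both norms) gives necessity of the strict critical inequality. Equality produces only a logarithmic failure, which I would capture by superposing layers $|I|=2^{-j}$ with coefficients chosen in $\ell^p$ but not $\ell^q$.

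The main obstacle is this critical-line necessity for the complex $T$: it requires controlling the interference among many tents. If the direct off-diagonal bound is too lossy, I would first randomize the signs of the tent pieces and apply Khintchine's inequality to obtain a square-function lower bound.
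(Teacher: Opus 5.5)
The paper does not prove this theorem; it quotes \cite[Theorem~1.1]{ZZ2022}. Your self-contained sketch has a genuine gap in the sufficiency direction (iii)$\Rightarrow$(i).

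\textbf{The exponent in the H\"older step is wrong.} With $s=bp'$ and $t=(c-b)p'-2$ in the Forelli--Rudin estimate,
\[
\Bigl\|\frac{(1-|w|^2)^b}{|1-z\overline w|^{c}}\Bigr\|_{L^{p'}(dA(w))}\simeq (1-|z|^2)^{-(c-b-2/p')}=(1-|z|^2)^{-(c-b-2+2/p)}.
\]
So $\gamma=\max\{0,\,c-b-2+2/p\}$, not $c-b-2+1/p$. With the correct $\gamma$, the condition $q(\gamma-a)<1$ becomes $1/q>-a$ and $1/q-2/p>c-a-b-2$. For $p<\infty$ this is strictly stronger than the critical condition. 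For $K_{2t}$, for example, it misses every $(1/p,1/q)\in\Omega(0,0,2t)$ with $1/q\le 2t-2+2/p$. On the critical line your pointwise bound is $(1-|z|^2)^{-1/q-1/p}\|f\|_{L^p}$, which is the same loss the paper records in the remark after Proposition~\ref{20260527prop02}.

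This is not a bookkeeping slip. For fixed $z$ the H\"older bound is attained by an $f$ concentrated on the shadow of $z$, and different $z$ need different $f$. Any argument that first bounds $\sup_{\|f\|_p\le1}|S_{a,b,c}f(z)|$ pointwise and then integrates in $z$ loses this. So the claim that $p>q$ makes H\"older in $w$ affordable is false.

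\textbf{What is missing: put the gain $1/q-1/p$ on the $z$-side.} Given $(1/p,1/q)\in\Omega(a,b,c)$, choose $\eta'$ with $\max\{c-a-b-2,\,-a-1/p\}<\eta'<1/q-1/p$. This is possible exactly because $1/q-1/p>c-a-b-2$ and $1/q>-a$. Then:
\begin{itemize}
\item Write $S_{a,b,c}f=(1-|z|^2)^{-\eta'}S_{a+\eta',b,c}f$.
\item Since $c<a+\eta'+b+2$ and $|1-z\overline w|\le2$, on nonnegative functions $S_{a+\eta',b,c}$ is dominated by $S_{a+\eta',b,a+\eta'+b+2}$.
\item That operator is bounded on $L^p(\D)$ because $-p(a+\eta')<1<p(b+1)$, by Schur's test or \cite[Theorem~4]{Zhao2015}.
\item H\"older in $z$ with $1/r=1/q-1/p$ finishes the proof, since $(1-|z|^2)^{-\eta'}\in L^r(\D)$ if and only if $\eta' r<1$.
\end{itemize}
This is the strong-type version of the Forelli--Rudin factorization used in Proposition~\ref{20260527prop02}.

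\textbf{Necessity is essentially fine, with two repairs.}
\begin{itemize}
\item Bounding the off-diagonal tents by $S$ fails with adjacent arcs: the tail $\sum_{k\ge1}(1+k)^{-c}$ is comparable to the diagonal term. Keep only every $M$-th arc with $M$ large, which needs $c>1$. This is automatic when only the critical inequality fails, since $1/q>-a$ and $1/p<b+1$ force $1/q-1/p>-a-b-1$.
\item In the equality case for $T$, you must control interference between layers. Relative to the diagonal term, the contribution of layer $j$ on layer $k$ is $O\bigl(2^{-|j-k|\kappa}\bigr)$ with $\kappa=\min\{1/q+a,\;b+1-1/p\}>0$ on the open critical segment. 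Sparse layers, or random signs, therefore suffice. The two endpoints of the segment are already excluded by your cut-off tests.
\end{itemize}
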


Here the critical line is given by
\begin{equation}\tag{CL}\label{criticalline}
\frac{1}{q}=\frac{1}{p}+c-a-b-2,
\end{equation}
with $0<c-a-b-2 \le 1$. On the other hand, by \eqref{20260515eq11} and \eqref{20260515eq12}, the two cut-off lines are given by 
\begin{equation} \tag{VL} \label{verticalcut}
\frac{1}{p}=b+1
\end{equation} 
and
\begin{equation} \tag{HL} \label{horizontalcut}
\frac{1}{q}=-a. 
\end{equation}

\subsection{Reduction I: Reducing the ranges of $a$ and $b$} \label{20260830subsec01}

We observe first that it suffices to consider the case when
\begin{equation} \label{20260518eq01}
a \ge -1 \qquad \textrm{and} \qquad b>-1.
\end{equation} 
Our goal is to show that, if \eqref{20260518eq01} is violated, then $T_{a,b,c}$ and $S_{a,b,c}$ fail to satisfy the restricted weak--type bounds at $(\infty,1)$.

We first argue that $b>-1$ by contradiction. Assume $b \le -1$. Then for any $0<R<1$, let 
$$
f_R(w):=\one_{|w| \le R}(w), \qquad w \in \D. 
$$
Then $\|f_R\|_{L^\infty(\D)} \le 1$.
For each $z\in\D$ and $0\le r<1$, the mean-value property gives
\[
    \frac{1}{2\pi}\int_0^{2\pi}
    \frac{d\theta}{(1-zr e^{-i\theta})^c}=1.
\]
Hence, 
\begin{align*}
    T_{a,b,c}f_R(z)
    &=(1-|z|^2)^a
      \int_{|w|<R}
      \frac{(1-|w|^2)^b}{(1-z\overline w)^c}\,dA(w)\\
    &=(1-|z|^2)^a
      \int_{|w|<R}(1-|w|^2)^b\,dA(w)\\
    &\simeq A_R(1-|z|^2)^a,
    \qquad R\to1^-,
\end{align*}
where
$$
A_R:=
\begin{cases}
\log \frac{1}{1-R}, \qquad \hfill b=-1; \\
\\
(1-R)^{b+1}, \qquad \hfill b<-1. 
\end{cases}
$$
Thus,
$$
\frac{\left\|T_{a, b, c}f_R \right\|_{L^{1, \infty}(\D)}}{\|f_R\|_{L^\infty(\D)}} \gtrsim A_R \to \infty, \qquad \textrm{as} \quad R \to 1^{-}.
$$
This shows that the restricted weak-type bounds for $T_{a, b, c}$ fail at the endpoint $(\infty, 1)$ and the same conclusion holds for $S_{a, b, c}$ due to the pointwise bound $|T_{a, b, c}f| \le S_{a, b, c}(|f|)$.

\vspace{0.1cm}

Next, we show that $a \ge -1$. Assume $a<-1$. We prove that in this case the restricted weak-type estimate at $(\infty, 1)$ fails again for $T_{a, b, c}$, and therefore so does $S_{a, b, c}$. Indeed, since $b>-1$, by the same radial averaging argument as above, we have
$$
T_{a, b, c}1(z)=(1-|z|^2)^a \int_{\D} \frac{(1-|w|^2)^b}{(1-z\overline{w})^c} dA(w)=(1-|z|^2)^a \int_{\D} (1-|w|^2)^bdA(w) \simeq (1-|z|^2)^a,
$$
which gives the desired claim since $(1-|z|^2)^a \notin L^{1, \infty}(\D)$ if $a<-1$.

\vspace{0.1cm}

The proof of \eqref{20260518eq01} is complete. 

\subsection{Reduction II: The case when $c-a-b-2=1$}

We claim that, in this case, the situation of interest is when $a=-1$; see Theorem \ref{minorsubcaseI} (a). Indeed, suppose that $a>-1$. Then the critical line in this case is given by 
$$
\frac{1}{q}=\frac{1}{p}+1. 
$$
By \cite[Theorem 1.1]{ZZ2022}, strong-type bounds fail at any admissible pair $(p, q)$ where $1 \le p, q \le +\infty$. For weak-type and restricted weak-type estimates, we have the following:

\begin{prop}\label{reductionII}
Let $a\geq-1$, $b>-1$ and $c=a+b+3$. Then 
\begin{enumerate}
    \item For any $1 \le p, q \le +\infty$ with $(p, q) \neq (\infty, 1)$, neither $T_{a,b,c}$ nor $S_{a,b,c}$ is of restricted weak type $(p,q)$.
    \item If $a, b>-1$, we have $T_{a, b, c}, \; S_{a, b, c}$ map $L^\infty(\D)$ boundedly into $L^{1, \infty}(\D)$. 
\end{enumerate}
    In particular, the statement $(1)$ shows the weak-type bounds in $(2)$ are sharp.
\end{prop}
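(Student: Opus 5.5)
Two separate tasks. For (2), prove $L^\infty\to L^{1,\infty}$ for the positive operator $S_{a,b,c}$, which then gives $T_{a,b,c}$ by $|T_{a,b,c}f|\le S_{a,b,c}|f|$. For (1), test on explicit functions to show restricted weak type fails at every $(p,q)\neq(\infty,1)$.

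\emph{Part (2).} Take $f\in L^\infty$ with $\|f\|_\infty\le1$. Then $S_{a,b,c}|f|(z)\le (1-|z|^2)^a\int_{\D}(1-|w|^2)^b|1-z\overline w|^{-c}\,dA(w)$. Here $c=a+b+3$, so $c-b-2=a+1>0$ because $a>-1$. The standard Forelli--Rudin integral estimate then gives $\int_{\D}(1-|w|^2)^b|1-z\overline w|^{-c}dA(w)\simeq(1-|z|^2)^{-(a+1)}$. Hence
\[
S_{a,b,c}|f|(z)\lesssim (1-|z|^2)^{-1}.
\]
The function $(1-|z|)^{-1}$ lies in $L^{1,\infty}(\D)$, since $|\{1-|z|<1/\lambda\}|\lesssim 1/\lambda$. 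This gives the bound.

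\emph{Part (1), the two families of test functions.} By monotonicity of Lorentz quasi-norms on the finite-measure space $\D$, it is enough to consider $q=1$ or small $p$ on the extreme boundary. Still, I plan to treat a general $(p,q)$ directly with two families.

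First, for $E=\D$ and $b>-1$, the mean-value computation of Reduction I gives $T_{a,b,c}1(z)\simeq(1-|z|^2)^a$. This lies in $L^{q,\infty}$ iff $-aq\le1$, i.e.\ iff $1/q\ge -a$. That rules out every $q$ with $1/q<-a$, which covers the region below the horizontal line for $a<0$.

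Second, for the remaining pairs, test with characteristic functions of top halves of Carleson tents, $E=Q_I^{\mathrm{up}}$ with $|I|=h$. For $z\in Q_J$ with $J\supseteq I$ concentric and $|J|=s\ge h$, the factor $1-z\overline w$ has essentially constant argument for $w\in E$, so I expect $|T\one_E(z)|\gtrsim (1-|z|)^a h^{b+2}s^{-c}$. This is exactly where cancellation could hurt, so I will restrict to a sector of $z$ where $\arg(1-z\overline w)$ stays in an angle less than $\pi/2$ and use the real part.

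\emph{Part (1), extracting the blow-up.} Take $z$ with $1-|z|\simeq s$ in that sector and apply the test to the level $\lambda\simeq s^{a-c}h^{b+2}$. The level set has measure $\gtrsim s^2$. Restricted weak type would then require, for all $h\le s\le1$,
\[
s^{2/q}\,s^{a-c}h^{b+2}\lesssim h^{2/p}.
\]
Recall $c-a=b+3$. At $s=h$ this becomes $h^{2/q-1}\lesssim h^{2/p}$, i.e.\ $1/q-1/p\ge 1/2$. Instead I will let $s$ range down to $h$ over annular shells and sum the level sets dyadically. This should convert the exponent to the critical line $1/q-1/p\ge 1$, which forces $(1/p,1/q)=(0,1)$. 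I will check whether the fixed-$s$ bound or the dyadic summation (via a logarithmic factor) is the sharper constraint.

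The main obstacle is this last step: getting the critical-line obstruction $1/q\ge 1/p+1$ from the tent example, including the logarithmic loss that must also exclude the boundary points $1/q=1$, $p<\infty$. If the single-tent example does not suffice, I will try sets $E$ made of many disjoint upper tents along a dyadic chain. This mimics the $\log$ divergence of $P^+1$ but with respect to the kernel of order $c$, and should make the distribution function grow by a factor $\log(1/h)$.
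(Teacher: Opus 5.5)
Your part (2) is correct and is the paper's argument. Part (1), however, has a genuine gap: the two test families cannot exclude all pairs $(p,q)\neq(\infty,1)$, and the proposed repair cannot close the difference.

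\textbf{What your tests actually rule out.} The test $E=\D$ excludes only $1/q<-a$. A single upper tent $E=Q_I^{\mathrm{up}}$ with $|I|=h$ excludes only $1/q-1/p<1/2$. That is the scaling $\frac{c-a-b-2}{2}$ of the sub-singular critical line, not the hypersingular exponent $c-a-b-2=1$. For every admissible $a,b$, the whole segment $\{(1/p,1):0<1/p\le 1/2\}$ survives both tests. For $a>-1$, so do the points $(0,1/q)$ with $\max\{1/2,-a\}\le 1/q<1$.

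This is not a bookkeeping issue that dyadic summation can fix. Let $w_0$ be the centre of the tent and decompose $\D$ into the pieces $\{|1-z\overline{w_0}|\simeq s,\ 1-|z|\simeq t\}$. One checks that, whenever $1/q\ge -a$,
\[
\bigl\|S_{a,b,c}\one_{Q_I^{\mathrm{up}}}\bigr\|_{L^{q,\infty}(\D)}\lesssim h^{\frac2q-1},\qquad |Q_I^{\mathrm{up}}|^{\frac1p}\simeq h^{\frac2p}.
\]
So single tents give nothing beyond $1/q-1/p\ge 1/2$. A nested chain of tents does not help either: at a point of depth $2^{-N}$ the $k$-th tent contributes $\simeq 2^{-Na}2^{k(a+1)}$ for $k\le N$. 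This gains at most a logarithm, which can move the line $1/q-1/p=1/2$ but cannot reach $1/q-1/p\ge 1$. Tests localized near one boundary point do not see the hypersingular exponent.

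\textbf{The missing idea.} You need mass spread along a whole annulus $A_N=\{1/N<1-|w|^2<2/N\}$. Then $|A_N|\simeq N^{-1}$ while the operator is $\simeq N$ on $A_N$, which forces $1/q-1/p\ge 1$.

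For $S_{a,b,c}$ the indicator already works. Using $c=a+b+3>1$, for $z\in A_N$,
\[
S_{a,b,c}\one_{A_N}(z)\simeq N^{-a-b-1}\int_0^\pi (N^{-1}+\theta)^{-c}\,d\theta\simeq N^{c-a-b-2}=N .
\]
For $T_{a,b,c}$ the indicator fails completely. By the mean-value property, $T_{a,b,c}\one_{A_N}(z)=(1-|z|^2)^a\int_{A_N}(1-|w|^2)^b\,dA(w)$, so the kernel averages out. This is exactly the cancellation you anticipated, and restricting $z$ to a sector does not help.

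The paper therefore inserts an oscillation, $f_N=N^{1/p}w^N\one_{A_N}$. Orthogonality isolates the single Taylor mode $m=N$ of $(1-z\overline w)^{-c}$. Stirling's formula then gives $|T_{a,b,c}f_N|\simeq N^{1/p+1}$ on $A_N$, while $\|f_N\|_{L^p}\simeq 1$. Hence weak type $(p,1)$ fails for every $p<\infty$. The paper handles $1<q\le\infty$ through Zhao--Zhou's strong-type failure (via $L^{q,\infty}\subset L^{q-\varepsilon}$ when $q<\infty$), but $w^N\one_{A_N}$ with $p=\infty$ does the same job.

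\textbf{The transfer step you also need.} Since $f_N$ is not an indicator, you must pass from failure of weak type to failure of restricted weak type:
\begin{itemize}
\item for $p<\infty$, use $L^{p+\varepsilon}\hookrightarrow L^{p,1}$;
\item for $p=\infty$ and $q>1$, use the layer-cake formula together with normability of $L^{q,\infty}$ to upgrade restricted weak type $(\infty,q)$ to weak type $(\infty,q)$.
\end{itemize}
Combined with your monotonicity remark, which correctly reduces matters to the lines $q=1$ and $p=\infty$, this closes part (1).
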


\begin{proof}
(1). We divide the proof into several steps.

\vspace{0.1cm}

\noindent \textit{Step I: We first show that $T_{a, b, c}, \; S_{a, b, c}: L^p(\D) \not\to L^{q, \infty}(\D)$ for any $1 \le p, q \le +\infty$ with $(p, q) \neq (\infty, 1)$.} Note first that it suffices to consider the case when $q<+\infty$, since the case $q=+\infty$ follows from \cite[Theorem~1.2\&1.3]{ZZ2022}. We consider two different cases.

If $1<q<+\infty$, we argue by contradiction. Assume $S_{a, b, c}$ maps $L^p(\D)$ boundedly into $L^{q, \infty}(\D)$ for $q>1$. Then take $\varepsilon>0$ sufficiently small so that $q-\varepsilon>1$. Then by \cite[Exercise 1.1.11]{Grafakos2014}, $L^{q, \infty}(\D) \subseteq L^{q-\varepsilon}(\D)$, which gives
\begin{equation} \label{20260523eq30}
\left\|S_{a, b, c} f\right\|_{L^{q-\varepsilon}(\D)} \lesssim \left\|S_{a, b, c}f \right\|_{L^{q, \infty}(\D)} \lesssim \left\|f \right\|_{L^p(\D)}.
\end{equation} 
However, this contradicts \cite[Theorem 1.1]{ZZ2022}. The same contradiction argument applies verbatim to $T_{a,b,c}$.

If $q=1$, it suffices to prove $T_{a, b, c}: L^{p}(\D) \not\to L^{1, \infty}(\D)$. First, by our assumption, note that $1 \le p<+\infty$ and $c=a+b+3>1$. For sufficiently large $N \ge 1$, denote $A_N:=\left\{w \in \D: 1/N<1-|w|^2<2/N\right\}$ and 
$$
f_N(w):=N^{\frac{1}{p}} w^N \one_{A_N}(w), \qquad w \in \D. 
$$
On one hand,
$$
\left\|f_N \right\|_{L^p(\D)}^p \simeq N \cdot \int_{A_N} dA(w) \simeq 1. 
$$
On the other hand, by a change of variables to polar coordinates, it is easy to see that
$$
\int_{A_N} (1-|w|^2)^b w^N\overline{w}^{\,m}\,dA(w)=0
\quad \text{unless } \; m=N.
$$
Hence, for any $z \in A_N$, 
\begin{align*}
\left| T_{a, b, c}f_N(z) \right|
&=(1-|z|^2)^a N^{\frac{1}{p}} \left| \int_\D (1-|w|^2)^b w^N \one_{A_N}(w)\cdot \left(\sum_{m=0}^\infty \frac{\Gamma(m+c)}{\Gamma(c)m!} z^m \overline{w}^m \right) dA(w) \right| \\
&=(1-|z|^2)^a N^{\frac{1}{p}} \cdot \frac{\Gamma(N+c)}{\Gamma(c)N!} |z|^N \int_{A_N} (1-|w|^2)^b|w|^{2N} dA(w)  \\
& \simeq N^{-a} N^{\frac{1}{p}} N^{c-1} N^{-b-1} \\
& = N^{\frac{1}{p}+1}, 
\end{align*}
where in the second last equation above, we used Stirling's formula. Therefore,
$$
\left\|T_{a, b, c}f_N \right\|_{L^{1, \infty}} \gtrsim N^{\frac{1}{p}+1} |A_N| \simeq N^{\frac{1}{p}}, 
$$
which implies
$$
\frac{\left\|T_{a, b, c}f_N \right\|_{L^{1, \infty}(\D)}}{\left\|f_N \right\|_{L^p(\D)}} \gtrsim N^{\frac{1}{p}}. 
$$
This clearly gives the desired result. 

\vspace{0.1in}

\noindent \textit{Step II: Next, we show that neither $T_{a,b,c}$ nor $S_{a,b,c}$ is of restricted weak type $(p,q)$ for any $1 \le p, q \le +\infty$ with $(p, q) \neq (\infty, 1)$.} We first consider the case $p=\infty$. Since $(p,q)\neq(\infty,1)$, this means that $q>1$. We claim that if $S_{a, b, c}$ satisfies a restricted weak-type estimate at $(\infty, q)$, then $S_{a, b, c}$ maps $L^\infty(\D)$ boundedly into $L^{q, \infty}(\D)$, thereby contradicting \textit{Step I}.

This is indeed a general principle for linear operators; for the reader's convenience, we include a proof here. Let us first prove the above claim under the assumption $1<q<+\infty$. Recall the following standard characterization: for every $1<q<+\infty$,
\begin{equation} \label{20260528eq01}
\|f\|_{L^{q,\infty}(\D)}
\simeq
\|f\|_{\widetilde L^{q,\infty}(\D)}
:=
\sup_{E\subseteq \D,\ |E|>0}
|E|^{\frac1q-1} \left| \int_E f(z) \,dA(z) \right|,
\end{equation}
where the quantity on the right-hand side of \eqref{20260528eq01} defines a genuine norm on \(L^{q,\infty}(\D)\); see, e.g., \cite[Exercise 17]{Tao2009notes}.

Let \(f\in L^\infty(\D)\). By normalization and decomposition into the positive and negative parts of the real and imaginary components, it suffices to consider the case \(0\le f\le 1\). By the layer-cake representation and the linearity of \(S_{a,b,c}\), we have
$$
S_{a,b,c}f(z)=\int_0^1 S_{a,b,c}\one_{\{f>t\}}(z)\,dt.
$$
Therefore,
\begin{align*}
\|S_{a,b,c}f\|_{L^{q,\infty}(\D)}
&\simeq
\|S_{a,b,c}f\|_{\widetilde L^{q,\infty}(\D)}  \\
&\le
\int_0^1
\|S_{a,b,c}\one_{\{f>t\}}\|_{\widetilde L^{q,\infty}(\D)}\,dt \\
&\simeq
\int_0^1
\|S_{a,b,c}\one_{\{f>t\}}\|_{L^{q,\infty}(\D)}\,dt \\
&\lesssim 1,
\end{align*}
where in the last step we used the assumption that \(S_{a,b,c}\) satisfies a restricted weak-type estimate at \((\infty,q)\). Hence
$$
S_{a,b,c}:L^\infty(\D)\to L^{q,\infty}(\D)
$$
is bounded. If $q=\infty$, the same argument applies with the $L^\infty(\D)$ norm in place of the $L^{q,\infty}(\D)$-norm. In this case, a restricted weak-type estimate at $(\infty,\infty)$ yields
$$
\|S_{a,b,c}f\|_{L^\infty(\D)} \lesssim
\|f\|_{L^\infty(\D)}.
$$
This proves the claim and therefore completes the proof of the case \(p=+\infty\).

\medskip 

Next, consider the case when $p<+\infty$. We argue by contradiction.
Suppose that \(S_{a,b,c}\) maps \(L^{p,1}(\D)\) boundedly into
\(L^{q,\infty}(\D)\) for some \(1\le p<\infty\) and \(1\le q\le \infty\).
Since \(|\D|<\infty\), one can show that
$$
L^{p+\varepsilon}(\D)\subseteq L^{p,1}(\D)
$$
for every $\varepsilon>0$.  It follows that
$$
S_{a,b,c}:L^{p+\varepsilon}(\D)\to L^{q,\infty}(\D)
$$
is bounded, which again contradicts \textit{Step I}. 

\vspace{0.1cm}

The same argument above also applies to \(T_{a,b,c}\), and the proof is complete.

\vspace{0.1cm}

\noindent(2). It suffices to prove the desired estimate for \(S_{a,b,c}\), since $|T_{a,b,c}f|\le S_{a,b,c}(|f|)$. Let \(\|f\|_{L^\infty(\D)}=1\). Then, a simple calculation yields 
$$
|S_{a,b,c}f(z)| \lesssim
(1-|z|^2)^a \int_{\D}
\frac{(1-|w|^2)^b}{|1-z\overline w|^{a+b+3}}\,dA(w) \lesssim
(1-|z|^2)^a (1-|z|^2)^{-a-1}=
(1-|z|^2)^{-1}.
$$
Since \((1-|z|^2)^{-1}\in L^{1,\infty}(\D)\), it follows that $
S_{a,b,c}:L^\infty(\D)\to L^{1,\infty}(\D)$ is bounded. 
\end{proof}

\medskip  

\noindent Consequently, Reductions I and II lead us to divide the analysis into the following two categories:
\begin{enumerate}
    \item[$\bullet$] \emph{Main Case}:
    \begin{equation} \tag{{\bf Main}}\label{maincase}
    a, b>-1, \quad \textrm{and} \quad 0<c-a-b-2<1. 
    \end{equation}
    This is the central part of our endpoint and critical-line analysis (see Figure \ref{Fig3}).

    \vspace{0.1cm}

    \item[$\bullet$] \emph{Minor Case}:
    \begin{equation} \tag{{\bf Minor}} \label{minorcase}
    	a=-1,\quad b>-1, \quad \textrm{and} \quad 0\leq2-c+b<1.
    \end{equation} 
    This corresponds to the case in which the horizontal cut-off line coincides with the boundary line \(1/q=1\).
\end{enumerate}

The distinction between the Main and Minor Cases has a natural interpretation. 
After Reductions~I and~II, the remaining parameter range is divided according to whether there exists a pair $(p,q)\in \Omega_{\mathcal H}$ at which both $T_{a,b,c}$ and $S_{a,b,c}$ are of strong type $(p,q)$. 
In the Main Case, $(p,q)=(\infty,1)$ satisfies all three
strong-type conditions in \eqref{eq:strong}.
In the Minor Case, $a=-1$, so the condition $1/q>-a$ would require $1/q>1$, which is impossible.

\bigskip 
\section{Analysis of the Main Case: Five sub-main cases and further reductions} \label{Sec04}

The heart of our endpoint and critical-line analysis is to understand the behavior at the points where the critical line meets the two cut-off lines. In this section, we further organize the \textit{Main Case} \eqref{maincase} into five sub-cases, depending on the position of
$$
\calP_{\textnormal{V-H}}:=\left(b+1,-a\right),
$$
the intersection point of the vertical and horizontal cut-off lines, with respect to the critical line. More precisely, we consider the following cases:
\begin{enumerate}
    \item[$\bullet$] \textit{Main Case I:} 
    $$
    c\ge \max\{a+2,b+2\};
    $$
    \item[$\bullet$] \textit{Main Case II:} 
    $$
    b+2\le c<a+2;
    $$
    \item[$\bullet$] \textit{Main Case III:} 
    $$
    a+2 \le c<b+2;
    $$
    \item[$\bullet$] \textit{Main Case IV:} 
    $$
    1<c<\min\{a+2, b+2\};
    $$
    \item[$\bullet$] \textit{Main Case V:} 
    $$
    c \le 1.
    $$
\end{enumerate}

\vspace{0.1cm}

We refer the reader to Figure~\ref{Fig7} below for a visualization of these cases.

\vspace{-0.1cm}

\begin{figure}[ht]
\centering
\begin{tikzpicture}[scale=4]
\def\yh{0.28}       
\def\xc{0.72}        
\def\xright{1.65}
\def\ybottom{-0.35}
\node[left] at (0, 0.28) {$(0, c-a-b-2)$};
\node[above] at (0.72, 1) {$(3+a+b-c, 1)$};
\fill[green!20, opacity=0.3] (0,\ybottom) rectangle (\xc,\yh);      
\fill[black] (0, 0.28) circle (.3pt);
\fill[black] (0.72, 1) circle (.3pt);
\fill[magenta!20, opacity=0.3]   (\xc,\ybottom) rectangle (\xright,\yh);     
\fill[violet!20, opacity=0.3] (\xc,\yh) rectangle (1,1);   
\fill[violet!20, opacity=0.3] (1,\yh) rectangle (\xright,1);
\fill[orange!20, opacity=0.3] (0,\yh) -- (\xc,\yh) -- (\xc,1) -- cycle; 
\fill[blue!20, opacity=0.3]   (0,\yh) -- (0,1) -- (\xc,1) -- cycle;     
\draw[->, thick] (-0.08,0) -- (\xright+0.08,0) node[right] {$1/p$};
\draw[->, thick] (0,\ybottom-0.03) -- (0,1.12) node[above] {$1/q$};
\draw[thick] (0,0) rectangle (1,1);
\draw[thick] (1,0) -- (1,-0.025) node[below] {$1$};
\draw[thick] (0,1) -- (-0.025,1) node[left] {$1$};
\draw[blue, very thick] 
(0,\yh) -- (\xc,1)
node[midway, above, sloped, yshift=1pt] {\small critical line};
\draw[violet, very thick] (\xc,\yh) -- (\xc,1);       
\draw[magenta, very thick]    (\xc,\ybottom) -- (\xc,\yh);   
\draw[magenta, very thick]    (\xc,\yh) -- (\xright,\yh);   
\draw[green!60!black, very thick] (0,\yh) -- (\xc,\yh);  
\node[font=\bfseries\large, magenta!85!black] at (0.85,0.13) {I};
\node[font=\bfseries\large, green!45!black] at (0.42,0.12) {II};
\node[font=\bfseries\large, violet!85!black] at (0.85,0.63) {III};
\node[font=\bfseries\large, orange!90!black] at (0.42,0.5) {IV};
\node[font=\bfseries\large, blue!80!black] at (0.15,0.85) {V};
\end{tikzpicture}
\caption{\small The five sub-cases of the \textit{Main Case}: the magenta, green, violet, orange, and blue regions correspond to \textit{Main Cases I, II, III, IV}, and \textit{V}, respectively.}
\label{Fig7}
\end{figure}

\vspace{-0.1cm}

\begin{rem}
The Forelli--Rudin phenomenon for weak-type estimates occurs in \textit{Main Cases III} and \textit{IV}, while the corresponding phenomenon for restricted weak-type estimates occurs in \textit{Main Case V}.
\end{rem}

\subsection{Reduction III: Failure of the weak-type estimates on the vertical cut-off line} \label{Subsec4.1}

We first show that in the \textit{Main Case} \eqref{maincase}, the weak-type
estimates for both $T_{a,b,c}$ and $S_{a,b,c}$ fail along the vertical cut-off line \eqref{verticalcut}. We have the following result.

\begin{prop} \label{20260710prop01}
Let $a,b,c\in \mathbb R$ satisfy the condition \eqref{maincase}. For every $q \in [1, \infty]$ such that the point $ \left(1+b,1/q \right)$ lies on the vertical cut-off line \eqref{verticalcut}, one has
\begin{equation} \label{20260523eq20}
T_{a,b,c},\; S_{a,b,c}:
L^{\frac{1}{b+1}}(\D)\not\to L^{q,\infty}(\D).
\end{equation}
\end{prop}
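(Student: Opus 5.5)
Since $|T_{a,b,c}f|\le S_{a,b,c}(|f|)$ pointwise, it suffices to prove the failure for $T_{a,b,c}$. Moreover, $|\D|<\infty$ gives $L^{q,\infty}(\D)\hookrightarrow L^{1,\infty}(\D)$ for every $1\le q\le\infty$ (with $L^\infty$ for $q=\infty$). So it is enough to show
$$T_{a,b,c}:L^{p_0}(\D)\not\to L^{1,\infty}(\D),\qquad p_0:=\frac{1}{b+1}.$$
The hypothesis that $(1+b,1/q)$ lies in the square forces $0<b+1\le 1$, so $p_0\in[1,\infty)$. I would split into the cases $b<0$ (so $p_0>1$) and $b=0$ (so $p_0=1$).

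\emph{Case $-1<b<0$.} Here the obstruction is the radial one already used in Reduction~I: the weight $(1-|w|^2)^b$ just fails to lie in $L^{p_0'}$. Fix $\gamma$ with $1/p_0<\gamma\le 1$, which is possible since $p_0>1$. For $0<R<1$ set
$$f_R(w):=(1-|w|^2)^{-(b+1)}\Bigl(\log\frac{e}{1-|w|^2}\Bigr)^{-\gamma}\one_{\{|w|<R\}}(w).$$
Then
$$\|f_R\|_{L^{p_0}}^{p_0}\lesssim\int_0^1(1-r)^{-1}\Bigl(\log\frac{e}{1-r}\Bigr)^{-\gamma p_0}\,dr<\infty,$$
uniformly in $R$. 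Since $f_R$ is radial, the mean-value identity $\frac1{2\pi}\int_0^{2\pi}(1-zre^{-i\theta})^{-c}\,d\theta=1$ gives
$$T_{a,b,c}f_R(z)=(1-|z|^2)^a A_R,\qquad A_R:=\int_{|w|<R}(1-|w|^2)^{-1}\Bigl(\log\frac{e}{1-|w|^2}\Bigr)^{-\gamma}dA(w).$$
Because $\gamma\le1$, we have $A_R\to\infty$ as $R\to1^-$. As $(1-|z|^2)^a$ has a fixed positive $L^{1,\infty}$ quasi-norm, the ratio $\|T_{a,b,c}f_R\|_{L^{1,\infty}}/\|f_R\|_{L^{p_0}}\gtrsim A_R\to\infty$.

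\emph{Case $b=0$, $p_0=1$.} The radial trick fails here, since $L^\infty$ pairs with $L^1$. Instead I would use an $L^1$-normalized bump near the boundary. Fix $\delta\in(0,1)$ small and a point $w_0=1-\delta$. Let $f_\delta:=\delta^{-2}\one_{B_\delta}$, where $B_\delta$ is a Euclidean disc of radius $c_0\delta$ about $w_0$; then $\|f_\delta\|_{L^1}\simeq1$. For $z$ in a comparable disc $B'_\delta$ around $w_0$ (measure $\simeq\delta^2$), and $c_0$ small, the kernel $(1-z\overline w)^{-c}$ has argument nearly constant and modulus $\simeq\delta^{-c}$ for $w\in B_\delta$. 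There is no cancellation, so $|T_{a,b,c}f_\delta(z)|\simeq\delta^{a-c}$. Hence
$$\|T_{a,b,c}f_\delta\|_{L^{1,\infty}}\gtrsim\delta^{a-c}\cdot\delta^2=\delta^{-(c-a-2)}.$$
By \eqref{maincase} with $b=0$, $c-a-2>0$, so this blows up as $\delta\to0$.

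The main (modest) obstacle is the local non-cancellation estimate in the case $b=0$: one must check that $\operatorname{Re}\bigl[(1-z\overline{w_0})^{c}(1-z\overline w)^{-c}\bigr]\gtrsim1$ uniformly for $z\in B'_\delta$ and $w\in B_\delta$. This follows from $|1-z\overline w-(1-z\overline{w_0})|\le c_0\delta\ll|1-z\overline{w_0}|\simeq\delta$. The case $b<0$ is essentially exact by the mean-value identity.
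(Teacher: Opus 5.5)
Your proof is correct. For $-1<b<0$ it is the paper's argument, using the same log-corrected critical power $(1-|w|^2)^{-(b+1)}$; your $\gamma$ plays the role of the paper's $(b+1)(1+\delta)$. The only difference is bookkeeping. You truncate to $\{|w|<R\}$ and use the mean-value identity to get the exact formula $T_{a,b,c}f_R=A_R(1-|z|^2)^a$ with $A_R\to\infty$. The paper keeps the untruncated $f$, controls $\arg(1-z\overline w)$ for $|z|\le r$, and concludes that the defining integral is $+\infty$ there. Your truncation has the small advantage of never applying $T_{a,b,c}$ to a function on which it is undefined.

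For $b=0$ your route is genuinely different. The paper argues by contradiction. It interpolates a hypothetical bound $L^1\to L^{q,\infty}$ (off-diagonal Marcinkiewicz) with a strong-type bound from \cite{ZZ2022}, producing a weak-type bound at some $(1/p_1,1/q_1)$ strictly below the critical line. After the embedding $L^{q_1,\infty}\subset L^{q_1-\varepsilon}$, this contradicts the Zhao--Zhou strong-type classification. You instead give an explicit scaling counterexample: an $L^1$-normalized bump at distance $\delta$ from $\mathbb T$, on whose support the kernel has essentially constant argument. This gives $\|T_{a,0,c}f_\delta\|_{L^{1,\infty}}\gtrsim\delta^{-(c-a-2)}$.

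Your version is self-contained and quantitative, and needs neither the interpolation theorem nor the strong-type classification. It also makes visible that at $b=0$ the failure is really a critical-line phenomenon, since the whole segment $1/p=1$ lies below the critical line. The paper's version is shorter once those black boxes are granted. Incidentally, the analogous bump $\delta^{-2(b+1)}\one_{B_\delta}$ also covers the quasi-Banach range $b>0$ mentioned in the paper's footnote, since there $a-b-c+2<0$. Finally, your preliminary reduction to $L^{1,\infty}$ via $|\D|<\infty$ disposes of all $q\in[1,\infty]$ at once.
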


\begin{proof}
Without loss of generality, we may restrict ourselves to the case $b\le 0$. Otherwise, \eqref{verticalcut} lies outside the hypersingular regime $\Omega_{\mathcal H}$, and has no intersection with the critical line \eqref{criticalline}.  

We consider two different cases. If $b=0$, then  \eqref{verticalcut} is given by $1/p=1$ and has no intersection with \eqref{criticalline}. We prove \eqref{20260523eq20} by contradiction. Assume there exists some $q \in [1, \infty]$, such that
\begin{equation} \label{20260523eq21}
    T_{a,0,c}:
L^1(\D)\to L^{q,\infty}(\D). 
\end{equation}
Since $0<c-a-b-2<1$ and $a>-1$, by \cite[Theorem 1.1]{ZZ2022}, we can find $1<p_0, q_0<+\infty$ with $q_0 \neq q$ such that $T_{a, b, c}$ maps $L^{p_0}(\D)$ boundedly into $L^{q_0}(\D)$. Interpolating this estimate with \eqref{20260523eq21} by the off-diagonal 
Marcinkiewicz interpolation theorem \cite[Theorem~1.4.19]{Grafakos2014}, there exists some $1<p_1, q_1<+\infty$, such that
\begin{enumerate}
    \item $1/q_1<1/p_1+c-a-b-2$;
    \item $T_{a, b, c}: L^{p_1}(\D) \to L^{q_1, \infty}(\D)$.
\end{enumerate}
Take $\varepsilon>0$ sufficiently small, such that $q_1-\varepsilon>1$ and $1/(q_1-\varepsilon)<1/p_1+c-a-b-2$. Then arguing as in \eqref{20260523eq30}, we conclude that $T_{a, b, c}$ maps $L^{p_1}(\D)$ boundedly into $L^{q_1-\varepsilon}(\D)$. This is a contradiction to \cite[Theorem 1.1]{ZZ2022}. The same argument\footnote{The argument for $b=0$ indeed also works for the case when $b>0$: indeed, the same argument yields that the quasi-Banach weak-type estimates $T_{a, b, c}, S_{a, b, c}: L^{1/(1+b)}(\D) \to L^{q, \infty}(\D)$ fail. } applies to $S_{a, b, c}$.

\vspace{0.1cm}

If $b<0$, it suffices to prove the desired statement \eqref{20260523eq20} for $T_{a, b, c}$. Since $b<0$, take $\del>0$ such that $(b+1)(1+\del)<1$, and put
$$
f(w):=(1-|w|^2)^{-(b+1)}\left(1+\log\frac{1}{1-|w|^2}\right)^{-(b+1)(1+\del)}.
$$
Then
\begin{align}\label{Reduction I: b<0,f}
	\|f\|_{L^{\frac{1}{b+1}}(\D)}^{\frac{1}{b+1}}
	&=\int_{\D}(1-|w|^2)^{-1}\left(1+\log\frac{1}{1-|w|^2}\right)^{-(1+\del)}dA(w) \nonumber\\
	&\simeq  \int_0^1 \frac{r}{(1-r^2)\left(1+\log\frac{1}{1-r^2}\right)^{1+\del}}dr=\frac{1}{2\del} \nonumber\\
    &<\infty.
\end{align}
On the other hand, let $r>0$ be sufficiently small (only depending on $c$) such that for any $z \in \D_r:=\left\{z \in \D: |z| \le r\right\}$, one has
$$
\left|\arg(1-z\overline{w}) \right| \le \frac{\pi}{10(|c|+1)}, \qquad \textrm{for any} \quad w \in \D. 
$$
Then, for any $z \in \D_r$, one has 
\begin{align*}
	|T_{a,b,c} f (z)|
	&\gtrsim (1-|z|^2)^a\int_{\D}\frac{(1-|w|^2)^{-1}\left(1+\log\frac{1}{1-|w|^2}\right)^{-(b+1)(1+\del)}}{|1 - z\overline{w}| ^{c}} dA(w)\\
	&\gtrsim_r \int_{\D}(1-|w|^2)^{-1}\left(1+\log\frac{1}{1-|w|^2}\right)^{-(b+1)(1+\del)}dA(w)\\
    &=+\infty,
\end{align*}
where the last estimate follows by passing to polar coordinates and using the fact that $(b+1)(1+\del)<1$. In particular, the above computation shows that  $T_{a,b,c} f \notin L^{q,\infty}({\D})$, which completes the proof. 
\end{proof}

\subsection{Reduction IV: Weak-type bounds when $p=\infty$} \label{20260526subsec01}

Our next reduction is to consider estimates along the vertical line $1/p=0$. 
Recall that, since $b>-1$, the vertical cut-off line \eqref{verticalcut} stays away from the line $1/p=0$. Thus there remain two cases:
\begin{enumerate}
    \item $c-b-2=0$, which corresponds to the case where the critical line \eqref{criticalline} and the horizontal cut-off line \eqref{horizontalcut} intersect on the line $1/p=0$;
    \item $c-b-2\neq 0$, which corresponds to the case where \eqref{criticalline} and \eqref{horizontalcut} do not meet on the line $1/p=0$.
\end{enumerate}
We refer the reader to Figure \ref{Fig8} for an illustration of these two cases.

\vspace{-0.2cm}

\begin{figure}[ht]
\centering
\begin{tikzpicture}[scale=2.75]

    \tikzset{
        axis/.style={->, thick},
        box/.style={thick},
        crit/.style={blue, very thick},
        hcut/.style={green!55!black, very thick},
    }

    \begin{scope}[shift={(0,0)}]
        \def\xA{0.72}
        \def\yB{0.28}
        \def\yh{0.58}

        \draw[axis] (-0.06,0) -- (1.18,0) node[right] {$1/p$};
        \draw[axis] (0,-0.06) -- (0,1.18) node[above] {$1/q$};
        \draw[box] (0,0) rectangle (1,1);
        \draw (1,0) -- (1,-0.022) node[below] {$1$};
        \draw (0,1) -- (-0.022,1) node[left] {$1$};

        \coordinate (B) at (0,\yB);
        \coordinate (A) at (\xA,1);
        \draw[crit] (B) -- (A);
        \draw[hcut] (0,\yh) -- (1,\yh);
        \fill (0, \yh) node[left] {\small $1/q_*$};

        \node[font=\scriptsize] at (0.74, 1.06) {\eqref{criticalline}};
        \node[font=\scriptsize] at (0.74,0.65){\eqref{horizontalcut}};
    \end{scope}

    \begin{scope}[shift={(1.95,0)}]
        \def\xA{0.72}
        \def\yB{0.28}
        \def\yh{0.15}

        \draw[axis] (-0.06,0) -- (1.18,0) node[right] {$1/p$};
        \draw[axis] (0,-0.06) -- (0,1.18) node[above] {$1/q$};
        \draw[box] (0,0) rectangle (1,1);
        \draw (1,0) -- (1,-0.022) node[below] {$1$};
        \draw (0,1) -- (-0.022,1) node[left] {$1$};

        \coordinate (B) at (0,\yB);
        \coordinate (A) at (\xA,1);
        \draw[crit] (B) -- (A);
        \draw[hcut] (0,\yh) -- (1,\yh);
        \fill (0, \yB) node[left] {\small $1/q_*$}; 

        \node[font=\scriptsize] at (0.74, 1.06) {\eqref{criticalline}};
        \node[font=\scriptsize] at (0.74,0.24) {\eqref{horizontalcut}};
    \end{scope}

    \begin{scope}[shift={(4,0)}]
        \def\xA{0.72}
        \def\yB{0.28}
        \def\yh{0.28}

        \draw[axis] (-0.06,0) -- (1.18,0) node[right] {$1/p$};
        \draw[axis] (0,-0.06) -- (0,1.18) node[above] {$1/q$};
        \draw[box] (0,0) rectangle (1,1);
        \draw (1,0) -- (1,-0.022) node[below] {$1$};
        \draw (0,1) -- (-0.022,1) node[left] {$1$};

        \coordinate (B) at (0,\yB);
        \coordinate (A) at (\xA,1);
        \draw[crit] (B) -- (A);
        \draw[hcut] (0,\yh) -- (1,\yh);
        \fill (0, \yh) node[left] {\small $1/q_*$};

        \node[font=\scriptsize] at (0.7, 1.06) {\eqref{criticalline}};
        \node[font=\scriptsize] at (0.7,0.36) {\eqref{horizontalcut}};

        \node[align=center] at (0.5,-0.32)
        {\scriptsize The case when $c-b-2=0$};
    \end{scope}

    \node[align=center] at (1.48,-0.32)
    {\scriptsize The case when $c-b-2\neq 0$};

\end{tikzpicture}
\caption{\small Relative positions of the critical line and the horizontal cut-off line along the vertical line $1/p=0$: the blue line represents the critical line, the green line represents the horizontal cut-off line, and $1/q_*$ denotes the endpoint of the strong-type region on $1/p=0$. }
\label{Fig8}
\end{figure}

Denote
\begin{equation} \label{20260526eq01}
\frac{1}{q_*}:=\max\{-a,\;c-a-b-2\}.
\end{equation} 
By \cite[Theorem~1.1]{ZZ2022}, the operators $T_{a,b,c}$ and $S_{a,b,c}$ map $L^\infty(\D)$ boundedly into $L^q(\D)$ for every $1\le q<q_*$. We are primarily interested in the endpoint case $q=q_*$. In this subsection, we first treat the case when $c-b-2 \neq 0$. We have the following result. 

\begin{prop} \label{20260527prop01}
Let $a, b, c \in \R$ satisfy the condition \eqref{maincase} with $c-b-2 \neq 0$ and define $q_*$ as in \eqref{20260526eq01}. Then
$$
T_{a, b, c}, \; S_{a, b, c}: L^\infty(\D) \to L^{q_*, \infty}(\D).
$$
\end{prop}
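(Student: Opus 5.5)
Since $|T_{a,b,c}f|\le S_{a,b,c}(|f|)$, it suffices to treat $S_{a,b,c}$. For $\|f\|_{L^\infty(\D)}\le 1$, the plan is to bound $S_{a,b,c}f$ pointwise by an explicit radial function and then check that this function lies in $L^{q_*,\infty}(\D)$. We have
$$
|S_{a,b,c}f(z)|\le (1-|z|^2)^a\int_{\D}\frac{(1-|w|^2)^b}{|1-z\overline w|^{c}}\,dA(w).
$$
Since $b>-1$, the standard Forelli--Rudin integral estimates (see, e.g., \cite{Zhu2005}) apply to the last integral. It is $\simeq 1$ if $c<b+2$, $\simeq\log\frac{e}{1-|z|^2}$ if $c=b+2$, and $\simeq(1-|z|^2)^{b+2-c}$ if $c>b+2$. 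The case $c=b+2$ is excluded by hypothesis, so the logarithm never appears. This exclusion is exactly why this case is separated out.

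\emph{Case $c<b+2$.} Here $c-a-b-2<-a$, so $1/q_*=-a$, and $-a\in(0,1)$ because $c-a-b-2>0$ and $a>-1$. The bound becomes $|S_{a,b,c}f(z)|\lesssim(1-|z|^2)^{a}$. The function $(1-|z|^2)^{-s}$ with $s=-a\in(0,1)$ lies in $L^{1/s,\infty}(\D)$. Indeed, $\{(1-|z|^2)^{-s}>\lambda\}$ is the annulus $\{1-|z|^2<\lambda^{-1/s}\}$, whose normalized area is $\lesssim\lambda^{-1/s}$. Hence $S_{a,b,c}f\in L^{q_*,\infty}(\D)$ with norm $\lesssim 1$.

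\emph{Case $c>b+2$.} Here $c-a-b-2>-a$, so $1/q_*=c-a-b-2\in(0,1)$. The bound becomes $|S_{a,b,c}f(z)|\lesssim(1-|z|^2)^{a+b+2-c}=(1-|z|^2)^{-1/q_*}$. The same level-set computation gives membership in $L^{q_*,\infty}(\D)$ with norm $\lesssim1$.

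One small point must be checked: when $-a\le 0$ in the first case, $1/q_*$ would be nonpositive. This cannot occur, since under \eqref{maincase} and $c<b+2$ we have $-a>c-a-b-2>0$. Both exponents therefore lie strictly in $(0,1)$, and $q_*\in(1,\infty)$ is a legitimate weak-Lebesgue exponent.

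The only real input is the Forelli--Rudin integral asymptotic, which is classical, so there is no serious obstacle. The main care needed is in tracking which term realizes the maximum in \eqref{20260526eq01}, and in noting that the hypothesis $c-b-2\neq0$ rules out the logarithmic loss. That loss would destroy the weak-type bound at $q_*$ and is treated separately.
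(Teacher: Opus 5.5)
Your proposal is correct and matches the paper's proof. Both reduce to $S_{a,b,c}$, bound $|S_{a,b,c}f(z)|\lesssim (1-|z|^2)^{-1/q_*}$ pointwise using the classical Forelli--Rudin integral asymptotics (Zhu's Theorem~1.12, which is exactly where $c-b-2\neq 0$ enters), and then compute the level sets of this radial function. The only difference is that you spell out the case split $c<b+2$ versus $c>b+2$ that determines which term realizes the maximum in $1/q_*$; the paper leaves this implicit.
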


\begin{proof}
Observe that it suffices to prove that $S_{a,b,c}$ is bounded from $L^\infty({\D})$ to $L^{q_*,\infty}({\D})$ when $c-2-b\neq0$.
For $f\in L^\infty({\D})$ and $\|f\|_{L^\infty({\D})}=1$, we have for each $z\in{\D}$,
\begin{align*}
	|S_{a,b,c} f (z)| 
	&= (1 - |z|^2) ^{a} \left|\int_{\D} \frac {(1-|w|^{2}) ^{b}}{|1 - z\overline{w}| ^{c}} f (w) dA (w)\right|\\
	&\leq (1 - |z|^2) ^{a} \int_{\D} \frac {(1-|w|^{2}) ^{b}}{\left|1 - z\overline{w}\right| ^{c}} dA (w) \\
    & \lesssim (1-|z|^2)^{-\frac{1}{q_*}},
\end{align*}
where in the last estimate above, we used  \cite[Theorem 1.12]{Zhu2005} with the assumption $c-b-2 \neq 0$. Hence,
$$
	|\{z\in{\D}: |S_{a,b,c} f (z)|>\lambda\}|
	\lesssim \left|\left\{z\in{\D}: (1-|z|^2)^{-\frac{1}{q_*}}>\lambda \right\}\right| \lesssim \min\{1, \lambda^{-q_*} \}. 
$$
Therefore, 
$$
\|S_{a,b,c} f\|_{L^{q_*,\infty}(\D)}
\leq \sup_{\lambda>0}\lambda |\{z\in\D: |S_{a,b,c} f (z)|>\lambda\}|^{\frac{1}{q_*}}<\infty.
$$
This implies $S_{a,b,c}$ is bounded from $L^\infty({\D})$ to $L^{q_*,\infty}({\D})$.
\end{proof}

\subsection{Reduction V: Weak-type bounds on critical line away from the cut-off intersections}

In the final part of this section, we establish weak-type bounds at points that lie on both the critical line and on the boundary of the strong-type region, while remaining away from the two cut-off lines. This corresponds to the situation when $\calP_{\textrm{V-H}}=(b+1, -a)$ lies in the region I, II, III, and IV (see Figure \ref{Fig7}). We refer the reader to Figure~\ref{Fig9} below for an example of this situation, represented there by the open line segment $\calP_{\textnormal{C-H}}\calP_{\textnormal{C-V}}$.

\vspace{-0.1cm}

\begin{figure}[ht]
\centering
\begin{tikzpicture}[scale=4]

    \tikzset{
        axis/.style={->, thick},
        box/.style={thick},
        crit/.style={blue, very thick},
        vcut/.style={orange!85!black, very thick},
        hcut/.style={green!55!black, very thick}
    }

    \def\xTop{0.72}   
    \def\yLeft{0.28}  
    \def\xv{0.56}     
    \def\yh{0.58}     

    \pgfmathsetmacro{\xHL}{\xTop*(\yh-\yLeft)/(1-\yLeft)}
    \pgfmathsetmacro{\yVL}{\yLeft+(1-\yLeft)*\xv/\xTop}

    \draw[axis] (-0.06,0) -- (1.18,0) node[right] {$1/p$};
    \draw[axis] (0,-0.06) -- (0,1.18) node[above] {$1/q$};
    \draw[box] (0,0) rectangle (1,1);

    \fill[gray!20, opacity=0.55]
        (0,1) -- (\xv,1) -- (\xv,\yVL) -- (\xHL,\yh) -- (0,\yh) -- cycle;

    \draw (1,0) -- (1,-0.022) node[below] {$1$};
    \draw (0,1) -- (-0.022,1) node[left] {$1$};
    \draw (0, \yh) -- (-0.022, \yh) node[left] {$-a$};
    \draw (\xv, 0) -- (\xv, -0.022) node[below]{$b+1$}; 

    \draw[hcut] (0,\yh) -- (1,\yh);
    \draw[vcut] (\xv,0) -- (\xv,1);

    \draw[crit] (0,\yLeft) -- (\xTop,1);

    \fill[black] (\xHL,\yh) circle (.3pt);
    \fill[black] (\xv,\yVL) circle (.3pt);

    \node[below] at (\xHL+0.1,\yh) {\small $\calP_{\textnormal{C-H}}$};
    \node[right] at (\xv+0.02,\yVL) {\small $\calP_{\textnormal{C-V}}$};

    \node[font=\scriptsize, above right] at (\xTop-0.05,1) {\eqref{criticalline}};
    \node[font=\scriptsize, right] at (0.55,0.2) {\eqref{verticalcut}};
    \node[font=\scriptsize] at (0.9,\yh+0.05) {\eqref{horizontalcut}};

    \fill[black] (\xv,\yh) circle (.3pt);
    \node[below right] at (\xv,\yh) {\small $\calP_{\textnormal{V-H}}$};

    \draw[blue!50, line width=5pt, opacity=0.18, line cap=round,
          shorten <=2.2pt, shorten >=2.2pt]
        (\xHL,\yh) -- (\xv,\yVL);

    \draw[blue!60, line width=3pt, opacity=0.28, line cap=round,
          shorten <=2.2pt, shorten >=2.2pt]
        (\xHL,\yh) -- (\xv,\yVL);

\end{tikzpicture}
\caption{\small Weak-type estimates on the critical line away from the two cut-off lines: the gray shaded region denotes the strong-type region guaranteed by \cite[Theorem~1.1]{ZZ2022}, while the highlighted blue open segment $\calP_{\textnormal{C-H}}\calP_{\textnormal{C-V}}$ indicates the portion of the critical line that lies on the boundary of this strong-type region and away from both cut-off lines.}
\label{Fig9}
\end{figure}

Note first that for any such $(p, q)$ lying on the critical line, we may assume $p<+\infty$. This is because the case when $p=+\infty$ is treated in Section \ref{20260526subsec01}. Our main result is the following.

\begin{prop} \label{20260527prop02}
Let $a, b, c \in \R$ satisfy the condition \eqref{maincase}. Then for any 
$$
(p, q) \in \left\{\substack{1 \le p<+\infty \\ \\ 1 \le q \le \infty}: \substack{\frac{1}{q}=\frac{1}{p}+c-a-b-2 \\ \\ \frac{1}{q}>-a, \; \;  \frac{1}{p}<b+1} \right\},
$$
one has 
$$
T_{a, b, c}, \; S_{a, b, c}: L^p(\D) \to L^{q, \infty}(\D) 
$$
\end{prop}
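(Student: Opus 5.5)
Since $|T_{a,b,c}f|\le S_{a,b,c}(|f|)$, it suffices to treat $S_{a,b,c}$ acting on nonnegative $f$. The approach is a direct level-set argument in the spirit of the Forelli--Rudin method of \cite{HZ2025}, with the weights $(1-|z|^2)^a$ and $(1-|w|^2)^b$ handled by H\"older's inequality. Write $\gamma:=c-a-b-2\in(0,1)$, so that $1/q=1/p+\gamma$. Fix $f\ge0$ with $\|f\|_{L^p(\D)}=1$ and $\lambda>0$ large.

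\textbf{Step 1: split the kernel at scale $\delta$.} For a parameter $\delta\in(0,1)$ to be chosen depending on $\lambda$, write $S_{a,b,c}f=S^{\mathrm{near}}f+S^{\mathrm{far}}f$, where $S^{\mathrm{near}}$ restricts integration to $\{|1-z\overline w|<\delta\}$. The near part is nonzero only if $1-|z|\lesssim\delta$. Moreover, for fixed $w$ with $1-|w|\lesssim\delta$, the relevant $z$ lie in a Carleson box of size $\delta$ about $w/|w|$. Integrate $S^{\mathrm{near}}f$ over $\D$ and use Fubini together with
$$\int_{|1-z\overline w|<\delta}\frac{(1-|z|^2)^a}{|1-z\overline w|^c}\,dA(z)\lesssim\delta^{a+2-c}\quad(a>-1,\ c>a+2)$$
(with the obvious modifications when $c\le a+2$). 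This gives $\|S^{\mathrm{near}}f\|_{L^1}\lesssim \delta^{\,a+2-c}\int_{1-|w|\lesssim\delta}(1-|w|^2)^b f(w)\,dA(w)$. By H\"older, the last integral is at most $\|f\|_p\,\delta^{(b+1)-1/p}$, and this uses $1/p<b+1$: the condition $p'b>-1$ makes $\int_{1-|w|<\delta}(1-|w|^2)^{bp'}\,dA$ comparable to $\delta^{bp'+1}$. Hence $\|S^{\mathrm{near}}f\|_{L^1}\lesssim\delta^{1-\gamma-1/p}$. Chebyshev's inequality then gives
$$|\{S^{\mathrm{near}}f>\lambda/2\}|\lesssim \lambda^{-1}\delta^{1-1/q}.$$
Alternatively, use $|\{1-|z|\lesssim\delta\}|\lesssim\delta$ directly, whichever is better.

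\textbf{Step 2: bound the far part pointwise.} H\"older gives
$$S^{\mathrm{far}}f(z)\le(1-|z|^2)^a\Big(\int_{|1-z\overline w|\ge\delta}\frac{(1-|w|^2)^{bp'}}{|1-z\overline w|^{cp'}}\,dA\Big)^{1/p'}.$$
Using \cite[Theorem 1.12]{Zhu2005}-type estimates truncated at $\delta$, together with $bp'>-1$, this is $\lesssim(1-|z|^2)^a\,\delta^{\,b+2-1/p-c}$ when $c>b+2-1/p$. The opposite case contributes only a bounded or logarithmic factor and is handled the same way. The condition $1/q>-a$ enters here: the set $\{(1-|z|^2)^a\delta^{-(a+\gamma+1/p)}>\lambda/2\}$ has measure $\lesssim(\lambda\,\delta^{a+1/q})^{1/a}$ when $a<0$, and it is empty for $1-|z|$ small relative to $\delta$ when $a\ge0$.

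\textbf{Step 3: choose $\delta$ and conclude.} Choose $\delta=\lambda^{-q}$, the natural scaling since $1/q$ is the total homogeneity. Step 1 then gives measure $\lesssim\lambda^{-q}$. For Step 2, the condition $-a<1/q$ ensures that the superlevel set of $(1-|z|)^a\delta^{-1/q-a}$ has measure $\lesssim\delta=\lambda^{-q}$. Together these yield $|\{S_{a,b,c}f>\lambda\}|\lesssim\lambda^{-q}$, which is the claimed weak-type bound.

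\textbf{Main obstacle.} The main obstacle is Step 1 when $p>1$. A single Carleson box is not enough, because the near region must also respect the radial position of $z$ relative to $w$. I would first try a dyadic decomposition over Carleson tents $Q_I$, $|I|=2^{-k}$, running the argument scale by scale and summing the resulting geometric series. If the crude $L^1$ bound loses an endpoint, the fallback is a dyadic Carleson embedding for the positive dyadic model of $S_{a,b,c}$, using that the weights $(1-|z|)^{a}$ and $(1-|w|)^{b}$ are strictly away from their critical exponents on the open segment. This strict separation is exactly what the hypotheses $1/q>-a$ and $1/p<b+1$ provide, so every geometric sum is non-critical.
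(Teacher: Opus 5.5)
There is a genuine gap, and it is in Step~2, not Step~1.

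\textbf{The exponent in Step~2 is wrong.} The region $\{w:|1-z\overline w|\simeq r\}$ is two-dimensional, so
\[
\int_{|1-z\overline w|\simeq r}(1-|w|^2)^{bp'}\,dA(w)\simeq r^{bp'+2},
\]
and the truncated integral is $\simeq\max\{\delta,1-|z|\}^{bp'+2-cp'}$. The correct bound is therefore
\[
S^{\mathrm{far}}f(z)\lesssim(1-|z|^2)^a\max\{\delta,1-|z|\}^{\,b+2-\frac2p-c}\,\|f\|_{L^p(\D)}=(1-|z|^2)^a\max\{\delta,1-|z|\}^{-(a+\frac1q+\frac1p)}\,\|f\|_{L^p(\D)},
\]
not $\delta^{-(a+\gamma+1/p)}=\delta^{-(a+1/q)}$. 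Moreover, $cp'>bp'+2$ always holds under the hypotheses, because $1/p>b+2-c$ and $p<\infty$. So the ``opposite case'' you set aside never arises.

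\textbf{With the correct exponent the argument fails, and no choice of $\delta$ rescues it.}
\begin{itemize}
\item On $\{1-|z|>\delta\}$ the truncation is vacuous, since $|1-z\overline w|\ge1-|z|$. There $S^{\mathrm{far}}f=S_{a,b,c}f$.
\item On that set your pointwise bound is only $(1-|z|^2)^{-1/q-1/p}\|f\|_{L^p(\D)}$. Its superlevel set at height $\lambda$ has measure $\simeq\lambda^{-pq/(p+q)}\gg\lambda^{-q}$.
\item The near part already forces $\delta\lesssim\lambda^{-q}$, whether you use the support bound or your $L^1$ bound. So you cannot enlarge $\delta$ to absorb this set.
\end{itemize}
This is exactly the obstruction noted in the paper's remark after this proposition. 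The H\"older bound is sharp for each fixed $z$, but the extremal $f$ depends on $z$, so pointwise size bounds cannot yield the distributional estimate.

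\textbf{Step~1 is not the real obstacle.} Since $|1-z\overline w|\ge1-|z|$, the near part is supported in $\{1-|z|<\delta\}$, which has measure $\simeq\delta=\lambda^{-q}$. That settles it. Two side remarks:
\begin{itemize}
\item Your inner bound $\delta^{a+2-c}$ for $c>a+2$ is false; the integral is $\simeq(1-|w|^2)^{a+2-c}$.
\item The $L^1$ route breaks down at $q=1$. The support bound makes both points irrelevant.
\end{itemize}

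\textbf{The missing idea: never take pointwise bounds, and move the excess singularity into a weight.} Write $S_{a,b,c}f=W_\eta\cdot S_{c-2-b,b,c}f$ with $\eta=c-a-b-2$ and $W_\eta(z)=(1-|z|^2)^{-\eta}\in L^{1/\eta,\infty}(\D)$.
\begin{itemize}
\item The operator $S_{c-2-b,b,c}$ has Bergman-type homogeneity. It is bounded on $L^p(\D)$ exactly when $b+2-c<1/p<b+1$.
\item Since $1/p=1/q-\eta$, the left inequality is precisely $1/q>-a$. So the two hypotheses of the proposition are exactly the strong $L^p$ conditions; the paper uses Zhao's $L^p$ theorem here.
\item Lorentz--H\"older with $1/q=1/p+\eta$ then gives $L^p(\D)\to L^{q,\infty}(\D)$ at once, including $q=1$.
\end{itemize}
Your fallback via a dyadic Carleson embedding is only gestured at and does not supply a replacement for Step~2.
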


\begin{proof}
Again, it suffices to show the desired claim for $S_{a, b, c}$. Let $f \in L^p(\D)$. Write 
$$
S_{a, b, c}f(z)=W_\eta(z)S_{a+\eta, b, c}f(z), 
$$
where $W_{\eta}(z):=(1-|z|^2)^{-\eta}, \; z \in \D$ with $\eta:=c-2-a-b$, and $S_{a+\eta, b, c}$ is the Forelli--Rudin type operator given by
$$
S_{a+\eta, b, c}f(z)=S_{c-2-b, b, c}f(z)=(1-|z|^2)^{c-2-b} \int_{\D} \frac{(1-|w|^2)^b f(w)}{|1-z \overline{w}|^c} dA(w). 
$$
We make the following claims.
\begin{enumerate}
    \item $S_{a+\eta, b, c}=S_{c-2-b, b, c}: L^p(\D) \to L^p(\D)$ is bounded. 
    \item $W_\eta \in L^{1/\eta, \infty}(\D)$. 
\end{enumerate}
Assuming (1) and (2), and using Lorentz--H\"older inequality with the fact that $1/q=1/p+\eta$, we have
\begin{align*}
    \left\|S_{a, b, c} f \right\|_{L^{q, \infty}(\D)} 
    &=\left\|W_{\eta} \cdot S_{a+\eta, b, c}f \right\|_{L^{q, \infty}(\D)} \\
    & \lesssim \left\|W_\eta \right\|_{L^{1/\eta, \infty}(\D)} \left\|S_{c-2-b, b, c}f \right\|_{L^p(\D)} \\
    & \lesssim \left\|W_\eta \right\|_{L^{1/\eta, \infty}(\D)} \left\|f \right\|_{L^p(\D)} \\
    &\lesssim \left\|f \right\|_{L^p(\D)},
\end{align*}
which gives the desired weak-type estimate. 

Therefore, it remains to verify claims (1) and (2). Claim (1) follows from \cite[Theorem 4]{Zhao2015} and the fact that $1/p=1/q+a+b+2-c>b+2-c$. Claim (2) follows from a direct computation. 
\end{proof}

\begin{rem}
\begin{enumerate}
    \item Propositions~\ref{20260527prop01} and~\ref{20260527prop02} extend the critical-line and endpoint results in \cite{HZ2025} for $K_{2t}$ and $K_{2t}^+$ defined as in \eqref{20260514eq30} to a broader class of Forelli--Rudin type operators.
    \item The approach used in the proof of Proposition~\ref{20260527prop02} is referred to as the \emph{Forelli--Rudin method} in \cite{HZ2025}. The main idea of this method is \emph{to reduce the desired weak-type estimate for a more singular operator to the product of a weight and a less singular operator}. 
    
    We point out that this method provides a useful tool for studying weak-type bounds, especially for hypersingular operators. For instance, if one tries to prove Proposition~\ref{20260527prop02} by the direct pointwise argument used in Proposition~\ref{20260527prop01} (via H\"older), then in the case $1<p<+\infty$ and $cp'>bp'+2$, where $p'$ is the conjugate exponent of $p$, one has
\begin{align*}
    |S_{a,b,c}f(z)|
    &\le (1-|z|^2)^a \left(\int_{\D} \frac{(1-|w|^2)^{bp'}}{|1-z\overline{w}|^{cp'}}\,dA(w)\right)^{1/p'} \|f\|_{L^p(\D)} \\
    &\lesssim (1-|z|^2)^{a+b-c+\frac{2}{p'}}\|f\|_{L^p(\D)} \\
    &=(1-|z|^2)^{-\frac1q-\frac1p}\|f\|_{L^p(\D)},
\end{align*}
where in the last equation above, we used the fact that $1/q=1/p+c-a-b-2$. This estimate is \emph{not} sufficient to conclude that $S_{a,b,c}f\in L^{q,\infty}(\D)$ by a direct level-set argument, since such an argument would require the stronger pointwise bound $|S_{a,b,c}f(z)| \lesssim (1-|z|^2)^{-1/q}$.

To this end, we also comment that the Forelli--Rudin method is not limited to the complex-analytic setting. For instance, we refer the reader to \cite[Section~5]{HZ2025} for an extension of this method to a dyadic harmonic analysis setting, where the first and fourth authors used it to study hypersingular sparse operators.
\end{enumerate}
\end{rem}

We now turn to the central part of our critical-line and endpoint analysis in the \textit{Main Case} \eqref{maincase}. The \emph{key} point is to understand the weak- and restricted weak-type behavior of $T_{a,b,c}$ and $S_{a,b,c}$ at the following three intersection points:
\begin{enumerate}
\item the critical--horizontal intersection
$$
\calP_{\textnormal{C-H}}:=(2-c+b,\; -a);
$$
\item the critical--vertical intersection
$$
\calP_{\textnormal{C-V}}:=(b+1, \; c-a-1);
$$
\item and finally, the vertical--horizontal intersection
$$
\calP_{\textnormal{V-H}}:=(b+1, \; -a).
$$
\end{enumerate}
We refer the reader to Figure~\ref{Fig9} for a representative configuration. Of course, for these intersection points to play a meaningful role in the endpoint analysis, they must lie on the boundary of the strong-type region. In the particular configuration shown in Figure~\ref{Fig9}, the two relevant endpoints are $\calP_{\textnormal{C-H}}$ and $\calP_{\textnormal{C-V}}$, while $\calP_{\textnormal{V-H}}$ plays a less essential role. On the other hand, when $\calP_{\textnormal{V-H}}$ lies on or above the critical line \eqref{criticalline}, the endpoint behavior at $\calP_{\textnormal{V-H}}$ becomes relevant and enters into the overall picture.

\medskip 
The endpoint behavior described above will appear repeatedly in the analysis of the \emph{Main Cases I--V}. Therefore, before turning to the individual cases shown in Figure~\ref{Fig7}, we first treat these endpoints separately; see Sections \ref{Sec05}--\ref{Sec09}. Then, the resulting estimates will serve as building blocks for the subsequent case-by-case analysis; see Section \ref{Sec10}.  

\bigskip 

\section{Analysis of the Main Case: Weak-type Bounds at the Critical--Horizontal Intersection $\calP_{\textnormal{C-H}}$ away from the vertical cut-off line and the Forelli--Rudin Phenomenon at the level of weak-type estimates} \label{Sec05}

The goal of this section is to study the weak-type bounds at the critical--horizontal intersection $$ \calP_{\textnormal{C-H}}=(2-c+b,-a), $$ where the critical line \eqref{criticalline} meets the horizontal cut-off line \eqref{horizontalcut}, and which lies strictly to the left of the vertical-horizontal intersection $\calP_{\textnormal{V-H}}$. Equivalently, we have the following condition
\begin{equation} \tag{${\bf P_1}$} \label{conditionP1}
\begin{cases}
a,b,c \in \R \quad \textrm{satisfy \eqref{maincase}}; \\
0 \le 2-c+b <1, \quad a<0; \\
c>1.
\end{cases}
\end{equation}
Indeed, the second condition, together with \eqref{maincase}, ensures that $\calP_{\textnormal{C-H}}$ lies in the region $0\le 1/p<1$ and $0<1/q<1$,
 while the third condition guarantees that $\calP_{\textnormal{C-H}}$ lies strictly to the left of $\calP_{\textnormal{V-H}}$. This is the typical situation\footnote{See Figure~\ref{Fig7} for the definitions of Regions I--IV.} when the vertical--horizontal intersection $\calP_{\textnormal{V-H}}$ lies in Regions III and IV, where $2-c+b>0$. It may also occur when $\calP_{\textnormal{V-H}}$ lies in Regions I and II, in the borderline case $2-c+b=0$ (see Figure~\ref{Fig10} below).

\begin{figure}[ht]
\centering


\begin{minipage}{0.48\textwidth}
\centering
\begin{tikzpicture}[scale=3.3]
\def\yh{0.28}       
\def\xc{0.72}        
\def\xright{1.35}
\def\ybottom{-0.05}          

\fill[violet!20, opacity=0.3] (\xc,\yh) rectangle (1,1);   
\fill[violet!20, opacity=0.3] (1,\yh) rectangle (\xright,1);

\draw[->, thick] (-0.08,0) -- (\xright+0.08,0) node[right] {$1/p$};
\draw[->, thick] (0,\ybottom-0.03) -- (0,1.12) node[above] {$1/q$};

\draw[thick] (0,0) rectangle (1,1);
\draw[thick] (1,0) -- (1,-0.025) node[below] {$1$};
\draw[thick] (0,1) -- (-0.025,1) node[left] {$1$};

\draw[blue, very thick] (0,\yh) -- (\xc,1);
\node[above] at (\xc/2-0.05, \yh/2+0.5) {\scriptsize \eqref{criticalline}};

\draw[violet, very thick] (\xc,\yh) -- (\xc,1);        
\draw[violet, dashed]    (\xc,\yh) -- (\xright,\yh);   
\draw[black, dashed] (0,\yh) -- (\xc,\yh);  

\node[font=\bfseries\large, violet!85!black] at (0.85,0.63) {III};

\draw[green!55!black, line width=3pt, opacity=0.18, line cap=round,
      shorten <=2.2pt, shorten >=2.2pt]
    (0,0.4) -- (1,0.4);

\fill[black] (0.85, 0.4) circle (.4pt);
\node[above] at (0.85, 0.4) {\scriptsize $\calP_{\textnormal{V-H}}$};

\fill[black] (0.12, .4) circle (.4pt);
\node[below] at (0.21, .4) {\scriptsize $\calP_{\textnormal{C-H}}$};

\node[above] at (0.5, 0.4) {\scriptsize \eqref{horizontalcut}};
\end{tikzpicture}
\end{minipage}
\hfill
\begin{minipage}{0.48\textwidth}
\centering
\begin{tikzpicture}[scale=3.3]
\def\yh{0.28}       
\def\xc{0.72}        
\def\xright{1.35}
\def\ybottom{-0.05}

\fill[orange!20, opacity=0.3] (0,\yh) -- (\xc,\yh) -- (\xc,1) -- cycle;    

\draw[->, thick] (-0.08,0) -- (\xright+0.08,0) node[right] {$1/p$};
\draw[->, thick] (0,\ybottom-0.03) -- (0,1.12) node[above] {$1/q$};

\draw[thick] (0,0) rectangle (1,1);
\draw[thick] (1,0) -- (1,-0.025) node[below] {$1$};
\draw[thick] (0,1) -- (-0.025,1) node[left] {$1$};

\draw[blue, very thick] (0,\yh) -- (\xc,1);
\node[above] at (\xc/2-0.05, \yh/2+0.5) {\scriptsize \eqref{criticalline}};

\draw[orange, dashed] (\xc,\yh) -- (\xc,1);    
\draw[orange, dashed] (0,\yh) -- (\xc,\yh);  
\draw[dashed] (\xc, \yh) -- (1, \yh);
\draw[dashed] (\xc, \yh) -- (\xc, 0);

\node[font=\bfseries\large, orange!90!black] at (0.56,0.67) {IV};

\draw[green!55!black, line width=3pt, opacity=0.18, line cap=round,
      shorten <=2.2pt, shorten >=2.2pt]
    (0,0.4) -- (1,0.4);

\fill[black] (0.5, 0.4) circle (.4pt);
\node[above] at (0.5, 0.42) {\scriptsize $\calP_{\textnormal{V-H}}$};

\fill[black] (0.12, .4) circle (.4pt);
\node[below] at (0.21, .4) {\scriptsize $\calP_{\textnormal{C-H}}$};

\node[above] at (0.9, 0.4) {\scriptsize \eqref{horizontalcut}};
\end{tikzpicture}
\end{minipage}

\vspace{0.3em}

{\small $\calP_{\textnormal{V-H}}$ belongs to Regions III or IV with $2-c+b>0$}

\vspace{.3em}


\begin{minipage}{0.48\textwidth}
\centering
\begin{tikzpicture}[scale=3.3]
\def\yh{0.28}       
\def\xc{0.72}        
\def\xright{1.35}
\def\ybottom{-0.15}    

\fill[magenta!20, opacity=0.3] (\xc,\ybottom) rectangle (\xright,\yh);        

\draw[->, thick] (-0.08,0) -- (\xright+0.08,0) node[right] {$1/p$};
\draw[->, thick] (0,\ybottom-0.03) -- (0,1.12) node[above] {$1/q$};

\draw[thick] (0,0) rectangle (1,1);
\draw[thick] (1,0) -- (1,-0.025) node[below] {$1$};
\draw[thick] (0,1) -- (-0.025,1) node[left] {$1$};

\draw[blue, very thick] (0,\yh) -- (\xc,1);
\node[above] at (\xc/2-0.05, \yh/2+0.5) {\scriptsize \eqref{criticalline}};

\draw[black, dashed] (\xc,\yh) -- (\xc,1);     
\draw[magenta, very thick] (\xc,\ybottom) -- (\xc,\yh);   
\draw[magenta, very thick] (\xc,\yh) -- (\xright,\yh);     

\node[font=\bfseries\large, magenta!85!black] at (0.85,0.13) {I};

\fill[black] (0.85,\yh) circle (.4pt);
\node[above] at (0.85,\yh+0.02) {\scriptsize $\calP_{\textnormal{V-H}}$};

\fill[black] (0, \yh) circle (.4pt);
\node[left] at (-0.01,\yh) {\scriptsize  $\calP_{\textnormal{C-H}}$};

\draw[green!55!black, line width=3pt, opacity=0.18, line cap=round,
      shorten <=2.2pt, shorten >=2.2pt]
    (0,\yh) -- (1,\yh);

\node[below] at (0.4, \yh-0.01) {\scriptsize  \eqref{horizontalcut}};
\end{tikzpicture}
\end{minipage}
\hfill
\begin{minipage}{0.48\textwidth}
\centering
\begin{tikzpicture}[scale=3.3]
\def\yh{0.28}       
\def\xc{0.72}        
\def\xright{1.35}
\def\ybottom{-0.15}

\fill[green!20, opacity=0.3] (0,\ybottom) rectangle (\xc,\yh);           

\draw[->, thick] (-0.08,0) -- (\xright+0.08,0) node[right] {$1/p$};
\draw[->, thick] (0,\ybottom-0.03) -- (0,1.12) node[above] {$1/q$};

\draw[thick] (0,0) rectangle (1,1);
\draw[thick] (1,0) -- (1,-0.025) node[below] {$1$};
\draw[thick] (0,1) -- (-0.025,1) node[left] {$1$};

\draw[blue, very thick] (0,\yh) -- (\xc,1);
\node[above] at (\xc/2-0.05, \yh/2+0.5) {\scriptsize \eqref{criticalline}};

\draw[green!60!black, dashed] (\xc,\ybottom) -- (\xc,\yh);   
\draw[black, dashed] (\xc,\yh) -- (\xc,1);     
\draw[green!60!black, very thick] (0,\yh) -- (\xc,\yh);  

\node[font=\bfseries\large, green!45!black] at (0.42,0.12) {II};

\fill[black] (0.5,\yh) circle (.4pt);
\node[above] at (0.5,\yh+0.02) {\scriptsize  $\calP_{\textnormal{V-H}}$};

\fill[black] (0, \yh) circle (.4pt);
\node[left] at (-0.01,\yh) {\scriptsize  $\calP_{\textnormal{C-H}}$};

\draw[green!55!black, line width=3pt, opacity=0.18, line cap=round,
      shorten <=2.2pt, shorten >=2.2pt]
    (0,\yh) -- (1,\yh);

\node[above] at (0.9, \yh) {\scriptsize  \eqref{horizontalcut}};
\end{tikzpicture}
\end{minipage}

\vspace{0.3em}

{\small $\calP_{\textnormal{V-H}}$ belongs to Regions I or II with $2-c+b=0$}

\caption{\small Four cases when $\calP_{\textnormal{C-H}}$ lies strictly on the left-hand side of $\calP_{\textnormal{V-H}}$.}
\label{Fig10}
\end{figure}

Our main result in this section is the following. 

\begin{thm}\label{Forelli--Rudin threshold}
Let $a, b, c \in \R$ satisfy \eqref{conditionP1}. Then the following statements hold.
\begin{itemize}
	\item[(i)] The Forelli--Rudin type operator $S_{a,b,c}$ is unbounded from $L^{\frac{1}{2-c+b}}(\D)$ to $L^{-\frac{1}{a},\infty}(\D)$.
	\item[(ii)] If $0 \le 2-c+b<1/2$, the Forelli--Rudin type operator $T_{a,b,c}$ is unbounded from $L^{\frac{1}{2-c+b}}(\D)$ to $L^{-\frac{1}{a},\infty}(\D)$.
	\item[(iii)] If $1/2 \le 2-c+b<1$, the Forelli--Rudin type operator $T_{a,b,c}$ is bounded from $L^{\frac{1}{2-c+b}}(\D)$ to $L^{-\frac{1}{a},\infty}(\D)$.
\end{itemize}
Here, in the borderline case $2-c+b=0$, the space $L^{\frac{1}{2-c+b}}(\D)$ is understood as $L^\infty(\D)$.
\end{thm}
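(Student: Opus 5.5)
The plan is to write $s:=2-c+b$ and $r:=-a$, so that $\calP_{\textnormal{C-H}}=(s,r)$, $p=1/s$, $q=1/r$, and $0\le s<r<1$ by \eqref{conditionP1}. Each of the three parts rests on one mechanism: a Hardy-type divergence for $S_{a,b,c}$, a random lacunary construction for $T_{a,b,c}$ when $s<1/2$, and a Hardy-space estimate for $T_{a,b,c}$ when $s\ge1/2$.

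\emph{(i) A radial counterexample.} Take $f\ge 0$ radial, with $f(w)=(1-|w|^2)^{-s}\bigl(1+\log\frac{1}{1-|w|^2}\bigr)^{-s(1+\del)}$ and $\del>0$ small (for $s=0$ take $f\equiv1$). As in \eqref{Reduction I: b<0,f}, $f\in L^{1/s}(\D)$. Restrict the integral defining $S_{a,b,c}f(z)$ to the region $1-|w|\ge 1-|z|$, integrate out the angle, and use $|1-z\overline w|\simeq(1-|w|)+|\arg(z\overline w)|$. Since $b+1-c=s-1$, this gives
$$
S_{a,b,c}f(z)\gtrsim (1-|z|)^{-r}\int_{1-|z|}^1 t^{s-1}f(t)\,dt\simeq(1-|z|)^{-r}\Bigl(\log\tfrac{e}{1-|z|}\Bigr)^{1-s(1+\del)}.
$$
The exponent $1-s(1+\del)$ is positive because $s<1$, so the right-hand side is not in $L^{1/r,\infty}(\D)$.

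\emph{(ii) Random lacunary sums when $s<1/2$.} Use the building blocks $f_N=N^{s}w^N\one_{A_N}$ from the proof of Proposition~\ref{reductionII}. Orthogonality and Stirling's formula give, for $z$ near the unit circle,
$$
T_{a,b,c}f_N(z)\simeq(1-|z|^2)^a\,\kappa_N z^N,\qquad \kappa_N\simeq N^{s+c-b-2}=1.
$$
Set $F_K=\sum_{k\le K}\varepsilon_k f_{2^k}$ with independent Rademacher signs $\varepsilon_k$. The annuli $A_{2^k}$ are disjoint, so $\|F_K\|_{L^{1/s}}\simeq K^{s}$; for $s=0$ the norm is $\|F_K\|_{L^\infty}\lesssim1$. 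On the annulus $1-|z|\simeq2^{-j}$ with $j\le K$, the terms with $k\le j$ have $|z|^{2^k}\simeq1$, while the terms with $k>j$ are summably small in expectation. Khintchine's inequality then gives $\mathbb E|T_{a,b,c}F_K(z)|^2\simeq 2^{jr}\cdot j$. The fourth-moment bound $\mathbb E|\cdot|^4\lesssim(\mathbb E|\cdot|^2)^2$ together with Paley--Zygmund shows that, for each such $z$, $|T_{a,b,c}F_K(z)|\gtrsim 2^{jr}j^{1/2}$ with probability bounded below. By Fubini, some choice of signs achieves this on a fixed proportion of every annulus with $K/2\le j\le K$. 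This yields $\|T_{a,b,c}F_K\|_{L^{1/r,\infty}}\gtrsim K^{1/2}$, so the ratio $K^{1/2-s}$ blows up.

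\emph{(iii) Boundedness when $s\ge1/2$.} Write $T_{a,b,c}f=(1-|z|^2)^{-r}F$, where $F(z)=\int_\D(1-|w|^2)^b f(w)(1-z\overline w)^{-c}\,dA(w)$ is holomorphic.

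\emph{Step 1 (reduction to an $H^2$ bound).} Suppose $\sup_\rho M_2(\rho,F)\lesssim\|f\|_{L^{1/s}}$. On the annulus $1-|z|\simeq2^{-j}$, Chebyshev's inequality bounds the measure of $\{|T_{a,b,c}f|>\lambda\}$ by $\min\{2^{-j},\,2^{-j}\lambda^{-2}2^{2jr}\}$. Because $2r>2s\ge1$, the sum over $j$ is geometric on both sides of $2^j\simeq\lambda^{1/r}$ and is $\lesssim\lambda^{-1/r}$. This is the desired weak-type bound.

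\emph{Step 2 (duality).} The Taylor coefficients of $F$ are $\frac{\Gamma(n+c)}{\Gamma(c)n!}\int(1-|w|^2)^b\overline w^{\,n}f$. Dualizing against $g=\sum g_nz^n\in H^2$, the $H^2$ bound is equivalent to the embedding
$$
\int_\D(1-|w|)^{(s-1)p'}\,\bigl|(1-|w|)^{c-1}R^{c-1}g(w)\bigr|^{p'}\,dA(w)\lesssim\|g\|_{H^2}^{p'},\qquad p'=\tfrac{1}{1-s}\ge2,
$$
where $R^{c-1}$ is the fractional radial derivative with multiplier $\frac{\Gamma(n+c)}{\Gamma(c)n!}\simeq n^{c-1}$. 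The combined weight is $(1-|w|)^{-1}\cdot\bigl|(1-|w|)^{c-1}R^{c-1}g\bigr|^{p'}$ up to the power bookkeeping.

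\emph{Step 3 (the embedding, expected main obstacle).} At $p'=2$, i.e.\ $s=1/2$, this is the Littlewood--Paley identity for $H^2$ with a fractional derivative. At $p'=\infty$ it is the pointwise bound $(1-|w|)^{c-1/2}|R^{c-1}g(w)|\lesssim\|g\|_{H^2}$. I would interpolate between these two, or argue via the Hardy--Bergman embedding \cite[Theorem~4.41]{Zhu2005}, to cover $2\le p'<\infty$. The embedding breaks down exactly when $p'<2$, which matches the threshold $s=1/2$. I expect this step to be the main obstacle: handling non-integer $c-1$ through the coefficient multiplier, and proving the embedding cleanly at the endpoint $s=1/2$.
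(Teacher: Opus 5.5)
Parts (i) and (ii) are correct. Part (i) uses the paper's mechanism, the divergence of $\int_{1-|z|}^{1}t^{-1}\,dt$, but packages it as a single log-corrected radial function instead of the truncated sequence $f_N$.

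Part (ii) takes a genuinely different and cleaner route than the paper. Since $T_{a,b,c}(w^N\one_{A_N})=\kappa_N(1-|z|^2)^a z^N$ exactly, $T_{a,b,c}F_K$ is a Rademacher lacunary series. You therefore need neither the truncated boxes $Q_{I,\varepsilon}$ nor the argument-stability lemma. The paper's box construction buys something else: at $2-c+b=0$ its test functions have the form $\one_{F^+}-\one_{F^-}$, which it reuses to refute the restricted weak-type bound. Your monomial test functions do not provide that.

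Two small corrections in (ii). The second moment on the $j$-th annulus is $\simeq 2^{2jr}j$, not $2^{jr}j$. Also, Fubini gives a good set occupying a fixed proportion of one annulus (or of the union), not of every annulus simultaneously. That is enough: taking $j=K$ already gives $\|T_{a,b,c}F_K\|_{L^{1/r,\infty}}\gtrsim K^{1/2}$.

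The genuine gap is in (iii). The estimate $\|T_{0,b,c}f\|_{H^2}\lesssim\|f\|_{L^{1/s}}$, on which your Step 1 rests, is false for every $1/2<s<1$.

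\emph{Dual test.} Take the normalized Szeg\H{o} kernel $g(z)=\delta^{1/2}(1-\overline{w_0}z)^{-1}$ with $|w_0|=1-\delta$. Then $\|g\|_{H^2}\simeq1$ and $R^{c-1}g=\delta^{1/2}(1-\overline{w_0}z)^{-c}$ exactly. Hence $(1-|w|)^{c-1}|R^{c-1}g(w)|\simeq\delta^{-1/2}$ on the Whitney box of $w_0$. The left side of your Step 2 embedding is then $\gtrsim\delta^{-p'/2}\cdot\delta^{-1}\cdot\delta^{2}=\delta^{1-p'/2}$, which tends to $\infty$ whenever $p'>2$.

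\emph{Primal test.} For a Whitney box $Q$ of size $\delta$, $\|\one_Q\|_{L^{1/s}}\simeq\delta^{2s}$. The first $\simeq\delta^{-1}$ Taylor coefficients of $T_{0,b,c}\one_Q$ have modulus $\gtrsim m^{c-1}\delta^{b+2}$, so $\|T_{0,b,c}\one_Q\|_{H^2}\gtrsim\delta^{s+1/2}$. The ratio is $\delta^{1/2-s}\to\infty$.

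So your embedding holds only at $p'=2$, i.e.\ $s=1/2$. In the $H^2$ formulation it fails for $p'>2$, not for $p'<2$ as you expected. The interpolation in Step 3 cannot rescue it. The $p'\to\infty$ member of your family would be $\sup_w(1-|w|)^{c-1}|R^{c-1}g(w)|\lesssim\|g\|_{H^2}$, which is false. Interpolating the true bound $(1-|w|)^{c-1/2}|R^{c-1}g|\lesssim\|g\|_{H^2}$ with the $p'=2$ identity only controls $\int_\D|G|^{p'}(1-|w|)^{p'/2-2}\,dA$, where $G=(1-|w|)^{c-1}R^{c-1}g$. You need the weight $(1-|w|)^{-1}$, and $p'/2-2>-1$, so this is strictly weaker.

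The fix is to use $H^{1/s}$, not $H^2$, as the intermediate space.
\begin{itemize}
\item Dualize against $g\in H^{p'}$ with $p'=1/(1-s)\ge 2$.
\item Use the Littlewood--Paley inequality $\int_\D|R^{c-1}g|^{p'}(1-|w|^2)^{(c-1)p'-1}\,dA\lesssim\|g\|_{H^{p'}}^{p'}$, which holds precisely for $p'\ge2$ (\cite[Theorem~4.41]{Zhu2005}; note $(c-1)p'-1=bp'$). This is where the threshold $s=1/2$ genuinely enters.
\item Your annulus-by-annulus Chebyshev argument from Step 1 then works verbatim with $M_{1/s}$ in place of $M_2$. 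The geometric series now has exponent $r/s-1>0$, which holds since $r>s$.
\end{itemize}
This is exactly the paper's route: an $L^{1/s}\to H^{1/s}$ bound followed by the weak Hardy lemma.
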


Observe that Theorem~\ref{Forelli--Rudin threshold} shows that the \emph{Forelli--Rudin phenomenon occurs at the level of weak-type estimates at the critical--horizontal intersection $\calP_{\textnormal{C-H}}$} (namely, $\calP_{\textnormal{C-H}}$ is a Forelli--Rudin pair at the level of weak-type estimates when $1/2 \le 2-c+b<1$), with a natural threshold at $p=2$. 

\vspace{0.1cm}

We now turn to the proof of Theorem~\ref{Forelli--Rudin threshold}.

\subsection{Proof of Theorem \ref{Forelli--Rudin threshold}, (i): Failure of the weak-type estimates for $S_{a, b, c}$ at the intersection $\calP_{\textnormal{C-H}}$}

We first consider the case $0<2-c+b<1$. For sufficiently large integer \(N\ge 1\), denote 
$$
f_N(w):=\frac{\one_{\{1/2<|w|^2<1-2^{-N}\}}(w)}{(1-|w|^2)^{2-c+b}}, \qquad w \in \D.
$$
A straightforward calculation then yields
\begin{equation} \label{20260529eq00}
\|f_N\|_{L^{\frac{1}{2-c+b}}(\D)}^{\frac{1}{2-c+b}}=\int_{1/2<|w|^2<1-2^{-N}} \frac{1}{1-|w|^2} dA(w) \simeq N.
\end{equation} 
On the other hand, set
\[
A_N:=\{z\in\D:\ 1-2^{-N}<|z|^2<1-2^{-N-1}\}, 
\]
and for each fixed \(z\in A_N\), define
\[
E_z:=\left\{w\in\D:\ 2(1-|z|^2)<1-|w|^2<2^{-1},\ 
|\arg z-\arg w|<1-|w|^2\right\}.
\]
We refer the reader to Figure~\ref{Fig11} for an example. 

\begin{figure}[ht]
\begin{center}
\begin{tikzpicture}[scale=5.2]

    \pgfmathsetmacro{\tz}{0.15}              
    \pgfmathsetmacro{\rz}{sqrt(1-\tz)}

    \pgfmathsetmacro{\tAmin}{0.1}
    \pgfmathsetmacro{\tAmax}{0.2}
    \pgfmathsetmacro{\rAout}{sqrt(1-\tAmin)}
    \pgfmathsetmacro{\rAin}{sqrt(1-\tAmax)}

    \pgfmathsetmacro{\rEin}{sqrt(1/2)}
    \pgfmathsetmacro{\rEout}{sqrt(1-2*\tz)}

    \pgfmathsetmacro{\ain}{deg(1-\rEin*\rEin)}
    \pgfmathsetmacro{\aout}{deg(1-\rEout*\rEout)}

    \begin{scope}[rotate=-90]

        \clip (0.48,-0.62) rectangle (1.04,0.62);

        \fill[gray!4] (0,0) circle (1);
        \draw[black, thick] (0,0) circle (1);

        \path[fill=blue!10, even odd rule]
            (0,0) circle (\rAout)
            (0,0) circle (\rAin);

        \draw[blue!60!black, dashed] (0,0) circle (\rAin);
        \draw[blue!60!black, dashed] (0,0) circle (\rAout);
\path[fill=green!30, draw=none]
    ({\rEout*cos(\aout)},{\rEout*sin(\aout)})
    plot[domain=\rEout:\rEin, variable=\r, samples=120]
        ({\r*cos(deg(1-\r*\r))},{\r*sin(deg(1-\r*\r))})
    arc[start angle=\ain, end angle=-\ain, radius=\rEin]
    plot[domain=\rEin:\rEout, variable=\r, samples=120]
        ({\r*cos(deg(-(1-\r*\r)))},{\r*sin(deg(-(1-\r*\r)))})
    arc[start angle=-\aout, end angle=\aout, radius=\rEout]
    -- cycle;


\draw[green!45!black, dashed, line width=0.8pt]
    plot[domain=\rEout:\rEin, variable=\r, samples=120]
        ({\r*cos(deg(1-\r*\r))},{\r*sin(deg(1-\r*\r))});

\draw[green!45!black, dashed, line width=0.8pt]
    ({\rEin*cos(\ain)},{\rEin*sin(\ain)})
    arc[start angle=\ain, end angle=-\ain, radius=\rEin];

\draw[green!45!black, dashed, line width=0.8pt]
    plot[domain=\rEin:\rEout, variable=\r, samples=120]
        ({\r*cos(deg(-(1-\r*\r)))},{\r*sin(deg(-(1-\r*\r)))});

\draw[green!45!black, dashed, line width=0.8pt]
    ({\rEout*cos(-\aout)},{\rEout*sin(-\aout)})
    arc[start angle=-\aout, end angle=\aout, radius=\rEout];

        \draw[gray!55, dashed] (0.48,0) -- (1.02,0);

\fill[red] (\rz,0) circle (0.011);
        \coordinate (EzLabelPos)    at (0.75,0.02);
        \coordinate (ANLabelPos)    at (0.93,0.30);
        \coordinate (ZLabelPos)     at (0.955,0.035);

        \coordinate (InnerTarget)   at ({\rEin*cos(\ain)},{\rEin*sin(\ain)});
        \coordinate (OuterTarget)   at ({\rEout*cos(-\aout)},{\rEout*sin(-\aout)});

        \coordinate (InnerLabelPos) at (0.47,0.48);
        \coordinate (OuterLabelPos) at (0.47,-0.42);
        \coordinate (AngleLabelPos) at (0.76,-0.70);

    \end{scope}

  \node[green!45!black] at (0.05, -.77) {\small $E_z$}; \node[blue!70!black] at (.56,-0.62) {\small $A_N$}; 
  \node[red, anchor=west] at (0, -.92) {\small $z$};
    \node[above] at (0.5, -0.45)
        {\scriptsize $1-|w|^2=\frac12$};
  \draw[->, thin, gray!70]
        (0.5, -0.45)
        .. controls (0.5, -0.54) and (0.25, -0.8) ..
        (0.2, -0.677);

    \node[above] at (-0.6, -0.45)
        {\scriptsize $1-|w|^2=2(1-|z|^2)$};
  \draw[->, thin, gray!70]
    (-0.6, -0.45) .. controls (-0.5, -0.66) and (-0.2, -0.7) .. (-0.15, -0.82);

    \node[font=\scriptsize, green!50!black, align=center] at (0, -0.58)
        {$|\arg w-\arg z|<1-|w|^2$};

\end{tikzpicture}
\caption{An example of the set \(E_z\) associated with a fixed point \(z\in A_N\).}
\label{Fig11}
\end{center} 
\end{figure}

We make the following observations.
\begin{enumerate}
    \item For any $z \in A_N$,
    $$
    E_z \subseteq \{w \in \D: 1/2<|w|^2<1-2^{-N}\}.
    $$
    Indeed, for any $w \in E_z$, one has
    $$
    |w|^2<1-2(1-|z|^2)<1-2\cdot 2^{-N-1}=1-2^{-N}.
    $$

    \item For any $z \in A_N$ and $w \in E_z$, one has
    $$
    |1-z\overline{w}| \lesssim 1-|w|^2.
    $$
    This follows from the following direct computation: 
    \begin{align} \label{20260530eq01}
    |1-z\bar{w}|
    &\leq (1-|z||w|)+|z||w|\left|1-\frac{z}{|z|}\frac{\bar{w}}{|w|}\right| \nonumber \\
    &\leq(1-|z|)+|z|(1-|w|)+\left|1-\frac{z}{|z|}\frac{\bar{w}}{|w|}\right| \nonumber \\
    &\lesssim  (1-|z|^2)+(1-|w|^2)+|1-e^{i(\arg z-\arg w)}| \nonumber \\
    &\lesssim (1-|w|^2)+\sin\left|\frac{\arg z-\arg w}{2}\right| \nonumber \\
    &\lesssim 1-|w|^2.
    \end{align}
\end{enumerate}

These observations, together with the assumption $c>1$, imply that for any $z \in A_N$,
\begin{align} \label{20260529eq01}
    S_{a,b,c} f_N(z)
    &=(1 - |z|^2) ^{a} \int_{\D} \frac {(1-|w|^{2}) ^{b}}{|1 - z\bar{w}|^{c}} f_N(w) dA(w) \nonumber \\
    & \ge (1 - |z|^2) ^{a} \int_{E_z} \frac {(1-|w|^{2}) ^{b}}{|1 - z\bar{w}|^{c}} \cdot \frac{1}{(1-|w|^2)^{2-c+b}} dA(w) \nonumber \\
    &\gtrsim (1 - |z|^2) ^{a} \int_{E_z} \frac{1}{(1-|w|^2)^2} dA(w).
\end{align}
Moreover,
$$
    \int_{E_z} \frac{1}{(1-|w|^2)^2} dA(w)
    \simeq \int_{2^{-1/2}}^{\sqrt{1-2(1-|z|^2)}} \frac{r}{(1-r^2)^2}
    \left(\int_{\arg z-(1-r^2)}^{\arg z+(1-r^2)} d\theta  \right) dr \gtrsim \log\left(\frac{1}{1-|z|^2}\right).
$$
Combining this with \eqref{20260529eq01}, we obtain
$$
S_{a,b,c} f_N(z) \gtrsim  (1 - |z|^2) ^{a} \log\left(\frac{1}{1-|z|^2}\right) \simeq  N2^{-Na}, \qquad z \in A_N.
$$
Hence,
\begin{align*}
    \|S_{a,b,c} f_N\|_{L^{-\frac{1}{a},\infty}(\D)}
    & \gtrsim  N2^{-Na} \left| \left\{z\in\D: |S_{a,b,c} f_N(z)| \gtrsim N2^{-Na}\right\} \right|^{-a} \\
    &\gtrsim  N2^{-Na}|A_N|^{-a}  \simeq N2^{-Na} \cdot 2^{Na} =N.
\end{align*}
Therefore,
\[
    \frac{\|S_{a,b,c} f_N\|_{L^{-\frac{1}{a},\infty}(\D)}}{\|f_N\|_{L^{\frac{1}{2-c+b}}(\D)}}
    \gtrsim \frac{N}{N^{2-c+b}}=N^{c-b-1}  \to \infty
\]
as $N\to\infty$. Here, in the last estimate, we used the assumption $0<2-c+b<1$. This completes the proof of Theorem~\ref{Forelli--Rudin threshold}, (i), under the assumption $0<2-c+b<1$.

\medskip

Finally, we consider the case $2-c+b=0$. The same argument above applies, with the only modification that we take
\[
f_N(w):=\one_{\{1/2<|w|^2<1-2^{-N}\}}(w).
\]
Then $\|f_N\|_{L^\infty(\D)}=1$, while the preceding lower bound becomes 
\[
\|S_{a,b,c}f_N\|_{-\frac1a,\infty}\gtrsim N.
\]
Therefore, $S_{a,b,c}:L^\infty(\D)\to L^{-\frac1a,\infty}(\D)$ fails to be bounded.

\medskip 

The proof of Theorem \ref{Forelli--Rudin threshold}, (i) is complete. 

\subsection{Proof of Theorem \ref{Forelli--Rudin threshold}, (ii): Preliminaries} \label{20260831subsec01}

Next, we turn to the proof of Theorem~\ref{Forelli--Rudin threshold}, (ii). This part is more delicate than the proof of (i), because the complex kernel
\begin{equation} \label{20260531eq01}
\frac{1}{(1-z\overline{w})^c}
\end{equation} 
in $T_{a,b,c}$ exhibits cancellation that is absent from the positive kernel in $S_{a,b,c}$. In particular, the construction used in the proof of (i) is too crude to detect the cancellation structure of the complex kernel. We therefore prove Theorem~\ref{Forelli--Rudin threshold}, (ii), by a more refined construction, combining a fourth-moment probabilistic argument with dyadic analysis on Carleson tents, which is tailored to the hidden cancellation of the complex kernel. We begin with some preliminary results.

For any arc \(I\subseteq \T\), recall that the Carleson tent and the upper
Carleson tent associated with \(I\) are defined in \eqref{Carlesontent} and
\eqref{UpperCarlesontent}, respectively. We first introduce a truncated upper
Carleson tent. Let \(0<\varepsilon<1\) be sufficiently small, and set
\begin{equation} \label{20260601eq02}
Q_{I,\varepsilon}:=
\left\{
z\in\D:
\frac{z}{|z|}\in I_{\varepsilon},\ 
1-|I|<|z|<1-(1-\varepsilon)|I|
\right\},
\end{equation} 
where \(I_{\varepsilon}\subseteq I\) denotes the subarc concentric with \(I\)
and satisfying $|I_{\varepsilon}|=\varepsilon |I|$ (see Figure \ref{Fig12}). 

\begin{figure}[ht]
\centering
\begin{tikzpicture}[scale=1.8, every node/.style={font=\small}]

\begin{scope}[shift={(0,6)}]

\def\angL{260}
\def\angR{280}
\def\angLe{267.5}
\def\angRe{272.5}

\def\Rzero{6.00}
\def\Rmid{5.18}
\def\Reps{4.78}
\def\Rtop{4.42}


\fill[gray!35, opacity=0.28]
  (\angL:\Rtop)
  arc[start angle=\angL, end angle=\angR, radius=\Rtop]
  -- (\angR:\Rzero)
  arc[start angle=\angR, end angle=\angL, radius=\Rzero]
  -- cycle;

\fill[blue!55, opacity=0.22]
  (\angL:\Rtop)
  arc[start angle=\angL, end angle=\angR, radius=\Rtop]
  -- (\angR:\Rmid)
  arc[start angle=\angR, end angle=\angL, radius=\Rmid]
  -- cycle;

\fill[orange!75, opacity=0.38]
  (\angLe:\Rtop)
  arc[start angle=\angLe, end angle=\angRe, radius=\Rtop]
  -- (\angRe:\Reps)
  arc[start angle=\angRe, end angle=\angLe, radius=\Reps]
  -- cycle;

\draw[red!70!black, line width=4.5pt, opacity=0.16]
  (\angL:\Rzero) arc[start angle=\angL, end angle=\angR, radius=\Rzero];

\draw[thick]
  (\angL:\Rzero) arc[start angle=\angL, end angle=\angR, radius=\Rzero];

\draw[thick]
  (\angL:\Rmid) arc[start angle=\angL, end angle=\angR, radius=\Rmid];

\draw[thick]
  (\angLe:\Reps) arc[start angle=\angLe, end angle=\angRe, radius=\Reps];

\draw[thick]
  (\angL:\Rtop) arc[start angle=\angL, end angle=\angR, radius=\Rtop];

\draw[thick] (\angL:\Rzero) -- (\angL:\Rtop);
\draw[thick] (\angR:\Rzero) -- (\angR:\Rtop);

\draw[thick] (\angLe:\Reps) -- (\angLe:\Rtop);
\draw[thick] (\angRe:\Reps) -- (\angRe:\Rtop);

\node at (277:5.62) {\scriptsize $Q_I$};
\node at (263.5:4.85) {\scriptsize $Q_I^{\mathrm{up}}$};
\node at (270:4.58) {\scriptsize $Q_{I,\varepsilon}$};

\node[right] at (280:\Rtop) {\scriptsize $1-|I|$};
\node[right] at (280:\Rmid) {\scriptsize $1-|I|/2$};

\draw[thick] (\angLe:\Rzero) -- ++(\angLe:-0.05);
\draw[thick] (\angRe:\Rzero) -- ++(\angRe:-0.05);

\draw[
    decorate,
    decoration={brace, amplitude=4pt},
    thin
]
  (-0.25,-5.92) -- (0.25,-5.92);

\node at (0,-5.75) {\scriptsize $I_{\varepsilon}$};

\node at (0.72,-6.07) {\scriptsize $I$};
\node at (0.47,-4.58) {\tiny $\varepsilon |I|$};
\node at (0,-4.98) {\tiny $\varepsilon |I|$};

\draw[
    decorate,
    decoration={brace, amplitude=4pt},
    thin
]
  (0.24,-4.425) -- (0.24,-4.765);

  \draw[
    decorate,
    decoration={brace, amplitude=4pt},
    thin
]
  (0.21,-4.8) -- (-0.21,-4.8);

\end{scope}

\end{tikzpicture}
\caption{$I \subseteq \T$ and its associated Carleson tent $Q_I$, upper Carleson tent $Q_I^{\textrm{up}}$, and ``truncated" upper Carleson tent $Q_{I,\varepsilon}$.}
\label{Fig12}
\end{figure}

To this end, for \(N\geq 1\) sufficiently large, we denote by \(A_N\) the annulus
\begin{equation} \label{20260531eq02}
A_N:=\left\{z\in\D:\ 1-2^{-N}\leq |z|^2<1-2^{-N-1}\right\}
\end{equation} 
as usual. 

We have the following geometric lemma. 

\begin{lem}\label{20260530lem01}
Let $N\geq 1$ be sufficiently large. Let further, $0\leq k\leq N$ and $I\in\calD_k$.
Then, for every $\delta>0$, there exists $\varepsilon=\varepsilon(\delta)>0$, independent of the choices of $I$, $N$, and $k$, such that
$$
	\sup_{z\in A_N\cap Q_I, \; w,w'\in Q_{I,\varepsilon}}
	\left|
	\arg(1-z\overline{w})-\arg(1-z\overline{w'})
	\right|
	\leq \delta .
$$
\end{lem}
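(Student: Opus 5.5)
The plan is to exploit the scale invariance of the configuration by setting $h:=|I|=2^{-k}$. Points $w\in Q_{I,\varepsilon}$ have $1-|w|\in((1-\varepsilon)h,h)$ and angular position within $\varepsilon h/2$ of the center $\theta_I$ of $I$. Points $z\in A_N\cap Q_I$ satisfy $1-|z|\simeq 2^{-N}\le h$ (up to a constant, since $k\le N$), and their angular position lies in $I$. The key observation is that $|1-z\overline w|$ is bounded below by a constant multiple of $h$, uniformly in these choices. Indeed,
$$|1-z\overline w|\ge 1-|z||w|\ge 1-|w|\ge (1-\varepsilon)h\ge h/2.$$

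Next we compare $w$ and $w'$ in $Q_{I,\varepsilon}$. Their moduli differ by at most $\varepsilon h$, and their arguments differ by at most $\varepsilon h$ (measured in normalized length, so by at most $2\pi\varepsilon h$ in radians). Hence $|w-w'|\lesssim \varepsilon h$, with an absolute implicit constant. Consequently
$$\bigl|(1-z\overline w)-(1-z\overline{w'})\bigr|=|z||w-w'|\lesssim \varepsilon h.$$

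We then convert this into an argument bound. Write $u=1-z\overline w$ and $u'=1-z\overline{w'}$; both lie in the right half-plane, since $|z\overline w|<1$, so their principal arguments lie in $(-\pi/2,\pi/2)$. For nonzero complex numbers, $|\arg u-\arg u'|\le \frac{\pi}{2}\frac{|u-u'|}{\min(|u|,|u'|)}$, which is the elementary inequality between an angle and its chord. This gives
$$|\arg u-\arg u'|\lesssim \frac{\varepsilon h}{h/2}\lesssim \varepsilon.$$
Choosing $\varepsilon=c_0\delta$ for a small absolute constant $c_0$ finishes the proof, and this choice is independent of $I$, $N$ and $k$.

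The main point to check carefully is the uniform lower bound $|1-z\overline w|\gtrsim h$. It comes only from the radial depth of $w$: the truncation $1-|w|>(1-\varepsilon)|I|$ forces $w$ to sit at height about $|I|$ in the tent, while $z$ plays no role beyond $|z|<1$. The membership $z\in A_N\cap Q_I$ is not really needed except to make the statement meaningful. The remaining care is routine: converting normalized arc length to radians, and justifying the angle–chord inequality for principal branches in the half-plane.
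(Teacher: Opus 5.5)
Your proof is correct and takes essentially the same route as the paper. Both arguments combine the lower bound $|1-z\overline{w}|\ge 1-|w|\ge(1-\varepsilon)|I|$ with the upper bound $|w-w'|\lesssim\varepsilon|I|$, so that $1-z\overline{w}$ and $1-z\overline{w'}$ differ by a relative perturbation of size $O(\varepsilon)$. The paper phrases this as $\bigl|\frac{1-z\overline{w}}{1-z\overline{w'}}-1\bigr|\lesssim\frac{\varepsilon}{1-\varepsilon}$, while you use the angle--chord inequality, which is equivalent. Your remark that only $|z|<1$ is needed is also consistent with the paper's proof, where the upper bound in its item (iii) plays no role.
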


\begin{proof}
Using an argument similar to the one in \eqref{20260530eq01}, it is straightforward to verify that for any $z\in A_N\cap Q_I$ and $w,w'\in Q_{I,\varepsilon}$, the following estimates hold:
\begin{itemize}
	\item[(i)] $2^{-N-1}\leq 1-|z|^2\leq 2^{-N}$;
	\item[(ii)] $(1-\varepsilon)2^{-k}\leq 1-|w|^2\leq 2^{-k+1}$;
	\item[(iii)] $(1-\varepsilon)2^{-k}\leq |1-z\overline{w}|\leq C_1 2^{-k}$ for some constant $C_1>0$ independent of $k,N$, and $I$;
	\item[(iv)] $|w-w'|\leq C_2\varepsilon 2^{-k}$ for some constant $C_2>0$ independent of $k,N$, and $I$.
\end{itemize}
Hence,
$$
\left|\frac{1-z\overline{w}}{1-z\overline{w'}}-1\right|
=
\left|\frac{z\overline{(w'-w)}}{1-z\overline{w'}}\right|
\leq \frac{C_2\varepsilon}{1-\varepsilon}.
$$
Thus, by choosing $\varepsilon>0$ sufficiently small, we obtain the desired conclusion.
\end{proof}

Heuristically, Lemma~\ref{20260530lem01} says that, once $\varepsilon>0$ is chosen sufficiently small, the complex kernel \eqref{20260531eq01} is essentially constant as $w$ varies over $Q_{I,\varepsilon}$, for each fixed $z\in A_N\cap Q_I$. This suggests that the refined boxes $Q_{I,\varepsilon}$ should serve as the basic building blocks in the construction of our test function.

More precisely, let $\varepsilon>0$ be sufficiently small and $\eta>-1/2$, and assume that $a,b,c\in\R$ satisfy \eqref{conditionP1}. For any $I\in\calD$, define
\begin{equation}\label{20260531eq10}
	R_{I,\varepsilon,\eta}(z)
	:=
	(1-|z|^2)^a
	\int_{Q_{I,\varepsilon}}
	\frac{(1-|w|^2)^{c-2}}{(1-z\overline{w})^{c}}
	\left(1+\log \frac{1}{1-|w|^2}\right)^{\eta}
	dA(w), \qquad z \in \D. 
\end{equation}
Moreover, for sufficiently large $N\ge 1$, define the associated dyadic quadratic energy at level $N$ by
\begin{equation}\label{20260531eq11}
	H_{N,\varepsilon,\eta}(z)
	:=
	\sum_{1\leq k\leq N}
	\sum_{I\in\calD_k}
	|R_{I,\varepsilon,\eta}(z)|^2, \qquad z \in \D.
\end{equation}

\begin{lem} \label{20260531eq30}
Let $N \ge 1$ be sufficiently large, and let $A_N$ be defined as in \eqref{20260531eq02}. Then there exists $\varepsilon=\varepsilon(c)>0$, depending only on $c$, such that for any $I \in \calD_k$ with $1 \le k \le N$ and any $z \in A_N \cap Q_I$, one has
\begin{equation}\label{20260531eq03}
    |R_{I,\varepsilon,\eta}(z)|\gtrsim 2^{-Na} \varepsilon^2 k^\eta .
\end{equation}
Furthermore, for any $z \in A_N$,
\begin{equation}\label{20260531eq04}
    H_{N,\varepsilon,\eta}(z)\gtrsim 2^{-2Na} \varepsilon^4 N^{2\eta+1}.
\end{equation}
The implicit constants in \eqref{20260531eq03} and \eqref{20260531eq04} depend only on $a,c$, and $\eta$.
\end{lem}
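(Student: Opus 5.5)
The plan has two stages. First we obtain the pointwise bound \eqref{20260531eq03} from Lemma~\ref{20260530lem01}, which freezes the argument of the kernel. Then we sum over the $\simeq N$ dyadic levels to get \eqref{20260531eq04}.

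\emph{Step 1: a single tent.} Fix $1\le k\le N$, $I\in\calD_k$ and $z\in A_N\cap Q_I$. By estimate (iii) in the proof of Lemma~\ref{20260530lem01}, for every $w\in Q_{I,\varepsilon}$ the quantity $1-z\overline w$ lies in a fixed sector $|\arg(1-z\overline w)|<\pi/2$. It also satisfies $|1-z\overline w|\simeq 2^{-k}$.

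Apply Lemma~\ref{20260530lem01} with $\delta:=\pi/(10(|c|+1))$, and let $\varepsilon=\varepsilon(c)$ be the resulting parameter. Then for all $w,w'\in Q_{I,\varepsilon}$,
\[
\left|\arg\big((1-z\overline w)^{-c}\big)-\arg\big((1-z\overline{w'})^{-c}\big)\right|\le |c|\delta<\tfrac{\pi}{10}.
\]
Hence the real part of $e^{-i\theta_0}(1-z\overline w)^{-c}$ is at least $\cos(\pi/10)\,|1-z\overline w|^{-c}$, where $\theta_0$ is the argument at a reference point $w_0\in Q_{I,\varepsilon}$. Consequently
\[
|R_{I,\varepsilon,\eta}(z)|\gtrsim (1-|z|^2)^a\int_{Q_{I,\varepsilon}}\frac{(1-|w|^2)^{c-2}}{|1-z\overline w|^{c}}\Big(1+\log\frac{1}{1-|w|^2}\Big)^{\eta}\,dA(w).
\]
On $Q_{I,\varepsilon}$ we use the following facts.
\begin{itemize}
\item $1-|w|^2\simeq 2^{-k}$ and $|1-z\overline w|\simeq 2^{-k}$, with constants uniform in $\varepsilon$ small.
\item $1+\log\frac{1}{1-|w|^2}\simeq k$. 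Since $\eta$ may be negative, this must be a two-sided comparison; it holds for $k\ge1$.
\item $|Q_{I,\varepsilon}|\simeq \varepsilon^2 2^{-2k}$.
\item $(1-|z|^2)^a\simeq 2^{-Na}$, because $z\in A_N$.
\end{itemize}
Multiplying these together gives $2^{-Na}\,2^{-k(c-2)}\,2^{kc}\,\varepsilon^2 2^{-2k}k^\eta=2^{-Na}\varepsilon^2k^\eta$, which is \eqref{20260531eq03}.

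\emph{Step 2: the quadratic energy.} Fix $z\in A_N$. For each $1\le k\le N$, the arcs in $\calD_k$ cover $\T$, so some $I_k\in\calD_k$ contains $z/|z|$. Moreover $1-|z|\le 1-|z|^2\le 2^{-N}\le 2^{-k}=|I_k|$, so $z\in Q_{I_k}$. We drop every other term from $H_{N,\varepsilon,\eta}$ and apply \eqref{20260531eq03} to the terms $I_k$:
\[
H_{N,\varepsilon,\eta}(z)\ge\sum_{k=1}^N|R_{I_k,\varepsilon,\eta}(z)|^2\gtrsim 2^{-2Na}\varepsilon^4\sum_{k=1}^N k^{2\eta}.
\]
Since $2\eta>-1$, we have $\sum_{k\le N}k^{2\eta}\simeq N^{2\eta+1}$, with constants depending only on $\eta$. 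This yields \eqref{20260531eq04}.

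\emph{Main obstacle.} The delicate point is the uniformity of the constants in Step 1, in two respects.

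First, the comparison $1-|w|^2\simeq|1-z\overline w|\simeq 2^{-k}$ must hold uniformly in $k\le N$ and in $\varepsilon$. This includes the level $k=N$, where $z$ sits at depth comparable to the tent scale. It also requires the angular separation between $z$ and $w$ to be $\lesssim 2^{-k}$, which follows from $z/|z|\in I$ and $w/|w|\in I_\varepsilon\subseteq I$.

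Second, the argument must be frozen using only a $c$-dependent choice of $\varepsilon$. Here we rely on the fact that the $\varepsilon$ of Lemma~\ref{20260530lem01} is independent of $I$, $N$ and $k$.

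With these checked, the rest is bookkeeping. The $\varepsilon^2$ factor is kept explicit rather than absorbed, since it comes exactly from $|Q_{I,\varepsilon}|$.
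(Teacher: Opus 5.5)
Your proposal is correct and follows the paper's proof essentially step for step. You freeze the argument of $(1-z\overline{w})^{-c}$ on $Q_{I,\varepsilon}$ via Lemma~\ref{20260530lem01} (the paper uses $\delta=\pi/(4c)$ where you use $\pi/(10(|c|+1))$), bound $|R_{I,\varepsilon,\eta}(z)|$ below by the positive integral evaluated scale by scale to get $2^{-Na}\varepsilon^2k^\eta$, then keep only the tents $Q_{I_k(z)}$ containing $z$ and sum $k^{2\eta}$ using $2\eta>-1$. Your two-sided comparison $1+\log\frac{1}{1-|w|^2}\simeq k$ (needed since $\eta$ may be negative) and your phase-rotation argument spell out details the paper leaves implicit.
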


\begin{proof}
We first prove \eqref{20260531eq03}. By Lemma \ref{20260530lem01}, there exists some $\varepsilon>0$ such that\footnote{Recall that by our assumption \eqref{conditionP1}, $c>1$.} 
$$
\sup_{z \in A_N \cap Q_I, \; w,w'\in Q_{I,\varepsilon}}|\arg(1-z\overline{w})-\arg(1-z\overline{w'})|\leq \frac{\pi}{4c}
$$
which implies for any $z \in A_N \cap Q_I$, 
\begin{align*}
	|R_{I,\varepsilon,\eta}(z)|
	&=(1-|z|^2)^a\left|\int_{Q_{I,\varepsilon}}\frac{(1-|w|^2)^{c-2}}{(1-z\overline{w})^{c}} \left(1+\log \frac{1}{1-|w|^2}\right)^{\eta} dA(w)\right|\\
	&\gtrsim (1-|z|^2)^a\int_{Q_{I,\varepsilon}}\frac{(1-|w|^2)^{c-2}}{|1-z\overline{w}|^c} \left(1+\log \frac{1}{1-|w|^2}\right)^{\eta} dA(w)\\
	&\gtrsim 2^{-Na}|Q_{I,\varepsilon}| |I|^{-2}k^{\eta}  \simeq 2^{-Na} \varepsilon^2 |I|^2 \cdot |I|^{-2}k^{\eta}\\
	&\gtrsim 2^{-Na}\varepsilon^2 k^{\eta}, 
\end{align*}
which gives \eqref{20260531eq03}. 

\vspace{0.1cm}

Next, we prove \eqref{20260531eq04}. Note that for each $z\in A_N$, there exists a unique dyadic arc $I_k(z)\in \calD_k$ such that $z\in Q_{I_k(z)}$ for $0 \le k \le N$. Therefore, by \eqref{20260531eq03}, we have
\begin{align*}
	H_{N, \varepsilon, \eta}(z)
	&=\sum_{I\in \calD_k, \; 1\leq k \leq N} |R_{I,\varepsilon,\eta}(z)|^2\\
	& \geq\sum_{1\leq k \leq N} |R_{I_k(z),\varepsilon,\eta}(z)|^2\\
    & \gtrsim 2^{-2Na} \varepsilon^4 \sum_{1\leq k \leq N}k^{2\eta}\\
	& \simeq 2^{-2Na} \varepsilon^4 N^{2\eta+1},
\end{align*}
where the last inequality follows from the assumption $\eta>-1/2$. This completes the proof of \eqref{20260531eq04}.
\end{proof}

\begin{rem}
We point out that the assumption $\eta>-1/2$ in Lemma~\ref{20260531eq30} is essential for the construction of our test functions. Indeed, if $\eta<-1/2$, then the sum $\sum_{k=1}^N k^{2\eta}$ remains uniformly bounded in $N$, and hence the argument above yields at most the lower bound
$$
H_{N,\varepsilon,\eta}(z)\gtrsim_{\varepsilon} 2^{-2Na}, \qquad z\in A_N.
$$
This estimate is not strong enough to produce a family of test functions leading to the failure of the weak-type estimate
$$
T_{a,b,c}: L^{\frac{1}{2-c+b}}(\D)\to L^{-\frac{1}{a},\infty}(\D).
$$
In fact, in this range the corresponding weak-type estimate does hold; see Theorem~\ref{Forelli--Rudin threshold}, (iii).
\end{rem}

\subsection{Proof of Theorem \ref{Forelli--Rudin threshold}, (ii): Failure of the weak-type estimates for $T_{a, b, c}$ at the intersection $\calP_{\textnormal{C-H}}$ when $0 \le 2-c+b<1/2$} \label{20260603sec01}

We first consider the case when $2-c+b>0$. It suffices to find a sequence of functions $\{f_N\}_{N\in \N}$ such that 
\begin{equation} \label{20260601eq20}
\sup_{N}\|f_N\|_{L^{\frac{1}{2-c+b}}(\D)}<\infty \qquad \textrm{while} \qquad  
\lim_{N\to\infty}\|T_{a,b,c}f_N\|_{L^{-\frac{1}{a},\infty}(\D)}=\infty.
\end{equation} 
Since $0<2-c+b<1/2$, we can choose $\tau>0$ sufficiently small so that
\begin{equation} \label{20260601eq01}
\eta:=(c-2-b)(1+\tau)>-\frac{1}{2}.
\end{equation} 
For each $N \ge 1$, let
$$
	\calI_N:=\bigcup\limits_{1 \le k \leq N} \calD_k
$$
be the collection of all dyadic arcs of generations $1,\ldots,N$, and 
$$
\Delta_N:=\{-1,1\}^{\calI_N}
$$
be the collection of all binary sign sequences of length\footnote{Here and in what follows, $\#E$ denotes the cardinality of a finite set $E$.} $\# \calI_N$, equipped with the uniform probability measure.
For each $\delta\in\Delta_N$, write
$$
\delta=(\delta(I))_{I\in\calI_N},
$$
where the coordinates $\delta(I)\in\{-1,1\}$ are i.i.d. Rademacher random variables with respect to the uniform probability measure on $\Delta_N$.

\vspace{0.1cm}

We are now ready to define the test functions. In what follows, we may assume that $N\geq 1$ is sufficiently large. Fix $\varepsilon>0$ as in Lemma~\ref{20260531eq30}, and choose $\eta>-1/2$ as in \eqref{20260601eq01}. For each $
\delta \in\Delta_N$, define
\begin{equation} \label{20260603eq20}
f_{N,\delta,\varepsilon,\eta}(z)
:=\sum_{I\in\calI_N}
\delta(I)\,\one_{Q_{I,\varepsilon}}(z)
(1-|z|^2)^{c-2-b}
\left(1+\log \frac{1}{1-|z|^2}\right)^{\eta},
\end{equation} 
where $Q_{I,\varepsilon}$ is defined in \eqref{20260601eq02}. Our goal is to show that, for every sufficiently large $N\geq 1$, one can choose
$\del_N\in\Delta_N$ such that the resulting sequence
\begin{equation} \label{20260601goal01}
\{f_{N,\del_N,\varepsilon,\eta}\}
\end{equation} 
satisfies \eqref{20260601eq20}. Let us now turn to the details. 

\vspace{0.1cm}

On one hand, by the mutual disjointness of the collection $\{Q_{I, \varepsilon}\}_{I \in \calI_N}$, one has
\begin{align*}
	\|f_{N,\delta, \varepsilon, \eta}\|_{L^{\frac{1}{2-c+b}}(\D)}^{\frac{1}{2-c+b}} 
	& \leq\int_{\D} (1-|z|^2)^{-1} \left(1+\log \frac{1}{1-|z|^2}\right)^{-(1+\tau)} dA(z)\\
	& \lesssim \int_0^1 r (1-r^2)^{-1} \left(1+\log \frac{1}{1-r^2}\right)^{-(1+\tau)} dr\\
	& <\infty.
\end{align*}
Hence, 
\begin{equation} \label{20260601eq98}
\sup_{N \ge 1} \sup_{\delta \in \Delta_N}\|f_{N,\delta, \varepsilon, \eta}\|_{L^{\frac{1}{2-c+b}}(\D)}<\infty, 
\end{equation} 
which gives the first assertion in \eqref{20260601eq20}. 

On the other hand, a direct computation gives
\begin{align*}
(T_{a,b,c}f_{N,\delta, \varepsilon, \eta})(z)
&=(1 - |z|^2) ^{a}\int_{\D} \frac {(1-|w|^{2}) ^{b}}{(1 - z\overline{w}) ^{c}} f_{N,\delta, \varepsilon, \eta} (w) dA(w)\\
&=\sum_{1\leq k \leq N}\sum_{I\in \calD_k}\delta(I) (1-|z|^2)^a \int_{Q_{I,\varepsilon}}
	\frac{(1-|w|^2)^{c-2}}{(1-z\overline{w})^{c}}
	\left(1+\log \frac{1}{1-|w|^2}\right)^{\eta}
	dA(w) \\
&= \sum_{I \in \calI_N} \delta(I) R_{I, \varepsilon, \eta}(z),   
\end{align*}
where $R_{I,\varepsilon,\eta}$ is defined in \eqref{20260531eq10}. Since $\{\delta(I)\}_{I\in\calI_N}$ are independent Rademacher random variables, orthogonality gives
$$
\mathbb{E}\left[\left| \left(T_{a,b,c}f_{N,\delta,\varepsilon,\eta}\right)(z)\right|^2\right]
=\sum_{I\in\calI_N}\left|R_{I,\varepsilon,\eta}(z)\right|^2
=H_{N,\varepsilon,\eta}(z), \qquad z \in \D, 
$$
where $H_{N,\varepsilon,\eta}$ denotes the dyadic quadratic energy at level $N$ defined in \eqref{20260531eq11}. Moreover, by Khintchine's inequality, we have 
\begin{equation} \label{20260601eq54}
\mathbb{E}\left[ \left| \left(T_{a,b,c}f_{N,\delta, \varepsilon, \eta}\right)(z) \right|^4\right]\ \simeq  (H_{N, \varepsilon, \eta}(z))^2, \qquad z \in \D. 
\end{equation} 
From now on, for each sufficiently large $N\geq 1$, we restrict our attention to $z\in A_N$ and define
$$
G_{N, \varepsilon, \eta}(z):=\left\{
\delta\in\Delta_N:
\left|(T_{a,b,c}f_{N,\delta,\varepsilon,\eta})(z)\right|^2
\geq
\frac{1}{10}H_{N,\varepsilon,\eta}(z)
\right\}.
$$
Observe that
\begin{align*} 
H_{N, \varepsilon, \eta}(z)
&= \mathbb{E}\left[|(T_{a,b,c}f_{N,\delta, \varepsilon, \eta})(z)|^2\right] \nonumber \\
&=\frac{1}{\#(\Delta_N)}\sum_{\delta\in\Delta_N}\left|\sum_{I\in \calI_N}\delta(I) R_{I,\varepsilon,\eta}(z)\right|^2 \nonumber \\
&=\frac{1}{\#(\Delta_N)}\left(\sum_{\delta\in G_{N, \varepsilon, \eta}(z)}+\sum_{\delta\in\Delta_N\setminus G_{N, \varepsilon, \eta}(z)}\right)\left|\sum_{I\in\calI_N}\delta(I) R_{I,\varepsilon,\eta}(z)\right|^2 \nonumber \\
& \le\frac{1}{\#(\Delta_N)}\sum_{\delta\in G_{N, \varepsilon, \eta}(z)}\left|\sum_{I\in\calI_N}\delta(I) R_{I,\varepsilon,\eta}(z)\right|^2+\frac{1}{10\#(\Delta_N)}\sum_{\delta\in\Delta_N\setminus G_{N, \varepsilon, \eta}(z)}  H_{N, \varepsilon, \eta}(z) \nonumber\\
& \le\frac{1}{\#(\Delta_N)}\sum_{\delta\in G_{N, \varepsilon, \eta}(z)}\left|\sum_{I\in\calI_N}\delta(I) R_{I,\varepsilon,\eta}(z)\right|^2+ \frac{ H_{N, \varepsilon, \eta}(z)}{10},
\end{align*}
which implies
\begin{equation} \label{20260601eq21}
H_{N, \varepsilon, \eta}(z)
\lesssim \frac{1}{\#(\Delta_N)}\sum_{\delta\in G_{N, \varepsilon, \eta}(z)}\left|\sum_{I\in\calI_N}\delta(I) R_{I,\varepsilon,\eta}(z)\right|^2.
\end{equation} 
Here, we have implicitly used the fact that $H_{N,\varepsilon,\eta}(z)<+\infty$ for every $z\in A_N$, which follows from \eqref{20260531eq11}, where $H_{N,\varepsilon,\eta}(z)$ is defined as a finite sum. As a consequence of \eqref{20260601eq21}, we have the following: 

\medskip 

\noindent \textit{\underline{Claim I}: For each $z \in A_N$, $G_{N, \varepsilon, \eta}(z) \neq \emptyset$.}

\begin{proof} [Proof of Claim I]
Assume $G_{N, \varepsilon, \eta}(z)=\emptyset$ for some $z \in A_N$. Then by \eqref{20260601eq21}, $H_{N, \varepsilon, \eta}(z)=0$, but this is a contradiction to \eqref{20260531eq04}. 
\end{proof}

\vspace{0.1cm}

\noindent \textit{\underline{Claim II}: For any $z\in A_N$, one has
\begin{equation}\label{lower bounded of G_N}
    \#G_{N, \varepsilon, \eta}(z)\gtrsim \#\Delta_N.
\end{equation}
}

\begin{proof}[Proof of Claim II]
By \eqref{20260601eq21}, Cauchy--Schwarz, and \eqref{20260601eq54}, we have
\begin{align*}
H_{N,\varepsilon,\eta}(z)
&\lesssim
\left(\frac{\#G_{N, \varepsilon, \eta}(z)}{\#\Delta_N}\right)^{1/2}
\mathbb{E}\left[
\left|
\left(T_{a,b,c}f_{N,\delta,\varepsilon,\eta}\right)(z)
\right|^4
\right]^{1/2} \\
&\lesssim
\left(\frac{\#G_{N, \varepsilon, \eta}(z)}{\#\Delta_N}\right)^{1/2}
H_{N,\varepsilon,\eta}(z).
\end{align*}
Since $0<H_{N,\varepsilon,\eta}(z)<+\infty$ for $z\in A_N$, the desired estimate
\eqref{lower bounded of G_N} follows immediately.
\end{proof}
Note that, heuristically, \eqref{lower bounded of G_N} asserts that for each $z\in A_N$, there are ``lots of'' choices of $\delta\in\Delta_N$ such that $
\left|(T_{a,b,c}f_{N,\delta,\varepsilon,\eta})(z)\right|^2$ is comparable to the dyadic quadratic energy $H_{N,\varepsilon,\eta}(z)$ at $z$. However, \eqref{lower bounded of G_N} only guarantees the existence of such choices of $\delta$ after $z$ has been fixed; in other words, the selected $\delta$ may depend on $z$. In contrast, for our purpose we need to choose $\delta$ depending only on $N$; see \eqref{20260601goal01}. This naturally motivates the next step: we shall show that there exists a large portion of $A_N$ on which a common choice of $\delta\in\Delta_N$ works simultaneously. We achieve this by the following Fubini-type argument. 

\vspace{0.1cm}

For each $\delta\in\Delta_N$, define
\begin{equation}\label{E_N,delta}
	E_{N, \delta, \varepsilon, \eta}
	=\left\{z\in A_N: \delta\in G_{N, \varepsilon, \eta}(z) \right\}
	=\left\{z\in A_N: |(T_{a,b,c}f_{N,\delta, \varepsilon, \eta})(z)|^2\geq \frac{H_{N, \varepsilon, \eta}(z)}{10} \right\}.
\end{equation}
By \eqref{lower bounded of G_N}, we have
\begin{align*}
	\mathbb{E}[|E_{N, \delta, \varepsilon, \eta}|]
	&=\frac{1}{\#(\Delta_N)}\sum_{\delta\in\Delta_N}\int_{E_{N, \delta, \varepsilon, \eta}} dA(z)\\
	&=\frac{1}{\#(\Delta_N)}\sum_{\delta\in\Delta_N}\int_{A_N} \one_{E_{N, \delta, \varepsilon, \eta}}(z) dA(z)\\
    &=\frac{1}{\#(\Delta_N)}\int_{A_N} \left( \sum_{\delta\in\Delta_N} \one_{E_{N, \delta, \varepsilon, \eta}}(z) \right) dA(z)\\
	&=\frac{1}{\#(\Delta_N)}\int_{A_N} \#\{\delta\in\Delta_N: z\in E_{N, \delta, \varepsilon, \eta}\} dA(z)\\
	&=\frac{1}{\#(\Delta_N)}\int_{A_N} \#\{\delta\in\Delta_N: \delta\in G_{N, \varepsilon, \eta}(z)\} dA(z)\\
	&=\frac{1}{\#(\Delta_N)}\int_{A_N} \# (G_{N, \varepsilon, \eta}(z)) dA(z) \\
    &\simeq 2^{-N}.
\end{align*}
Thus, for every sufficiently large $N\geq 1$, there exists $\del_N\in\Delta_N$ such that
$$
\left|E_{N,\del_N,\varepsilon,\eta}\right|\gtrsim 2^{-N}.
$$
Moreover, for every $z\in E_{N,\del_N,\varepsilon,\eta}$, it follows from
\eqref{E_N,delta} and \eqref{20260531eq04} that
$$
\left|(T_{a,b,c}f_{N,\del_N,\varepsilon,\eta})(z)\right|
\gtrsim
H_{N,\varepsilon,\eta}(z)^{1/2}
\gtrsim
2^{-Na}N^{\eta+\frac{1}{2}}.
$$
Combining the above estimates, we obtain
\begin{align} \label{20260601eq13}
\left\|T_{a,b,c}f_{N,\del_N,\varepsilon,\eta}\right\|_{L^{-\frac{1}{a},\infty}(\D)}
&\gtrsim 2^{-Na}N^{\eta+\frac{1}{2}} \left|\left\{z\in\D:\left|\left(T_{a,b,c}f_{N,\del_N,\varepsilon,\eta}\right)(z)\right|
\gtrsim 2^{-Na}N^{\eta+\frac{1}{2}}\right\}\right|^{-a} \nonumber \\
&\gtrsim 2^{-Na}N^{\eta+\frac{1}{2}}
\left|E_{N,\del_N,\varepsilon,\eta}\right|^{-a} \nonumber \\
&\gtrsim N^{\eta+\frac{1}{2}}.
\end{align}
Since $\eta>-1/2$ by \eqref{20260601eq01}, this implies the second assertion in \eqref{20260601eq20}, namely,
$$
\lim_{N\to\infty}
\left\|T_{a,b,c}f_{N,\del_N,\varepsilon,\eta}\right\|_{L^{-\frac{1}{a},\infty}(\D)}
=+\infty.
$$
This proves the desired unboundedness of
$T_{a,b,c}:L^{\frac{1}{2-c+b}}(\D)\to L^{-\frac{1}{a},\infty}(\D)$, under the assumption $0<2-c+b<1/2$.

\vspace{0.1cm}

To this end, we treat the case $2-c+b=0$ as a simple modification of the preceding argument. More precisely, one only needs to replace the space $L^{\frac{1}{2-c+b}}(\D)$ by $L^\infty(\D)$ and take $\eta=0$ in the above construction. The remaining details are left to the interested reader.

\vspace{0.1cm}

The proof of Theorem \ref{Forelli--Rudin threshold}, (ii) is complete. 

\medskip 

\subsection{Proof of Theorem \ref{Forelli--Rudin threshold}, (iii): The weak-type estimates for $T_{a, b, c}$ at the intersection $\calP_{\textnormal{C-H}}$ when $1/2 \le 2-c+b <1$}
\label{20260831subsec10}

Recall that, in the proof of Theorem~\ref{Forelli--Rudin threshold}, (ii), our probabilistic construction fails to work at the threshold $2-c+b=1/2$; see \eqref{20260601eq13}. It turns out that this is not a technical obstacle, but rather reflects an intrinsic complex-analytic feature of $T_{a,b,c}$. Indeed, when $2-c+b\geq 1/2$, tools from complex function theory begin to play a decisive role, and they eventually lead to the weak-type estimates for $T_{a,b,c}$ in this regime. For this purpose, we first recall a few standard definitions and facts from complex function theory.

For $0<p<\infty$, the analytic Hardy space $H^p(\D)$ consists of all holomorphic functions $h$ on $\D$ such that
$$
\|h\|_{H^p(\D)}:=\sup_{0<r<1}
\left(\int_0^{2\pi}|h(re^{i\theta})|^p\,\frac{d\theta}{2\pi}\right)^{1/p}
<+\infty.
$$
When $1<p<\infty$, we shall use the standard duality
\begin{equation} \label{Hardyduality}
\|h\|_{H^p(\D)}\simeq \sup_{\|g\|_{H^{p'}(\D)}=1} \left|\langle h,g\rangle_{\T}\right|,
\qquad \textnormal{where} \quad \frac1p+\frac1{p'}=1.
\end{equation} 

Moreover, for $\alpha>-1$ and $0<p<\infty$, the weighted Bergman space $A^p_\alpha(\D)$ consists of all holomorphic functions $h$ on $\D$ such that
$$
\|h\|_{A^p_\alpha(\D)}
:=
\left(
\int_{\D}|h(z)|^p(1-|z|^2)^\alpha\,dA(z)
\right)^{1/p}
<+\infty.
$$

\vspace{0.1cm}

Returning to the proof of Theorem~\ref{Forelli--Rudin threshold}, (iii), we note that the assumption $c-2-a-b>0$, together with $2-c+b\geq 1/2$, gives
$$
\frac{1}{2}\leq 2-c+b<-a<1.
$$
Hence
$$
1<\frac{1}{2-c+b}\leq 2,
\qquad \text{and} \qquad
2\leq \frac{1}{c-1-b}<+\infty.
$$
To this end, write
$$
T_{a,b,c}f(z)=(1-|z|^2)^aT_{0,b,c}f(z), \qquad z \in \D. 
$$
We divide the rest of the proof into two steps.

\vspace{0.1cm}

\noindent\textbf{$\diamond$    \textsl{Step 1.}}
We first show that 
\begin{equation} \label{20260602eq01}
\|T_{0,b,c}f\|_{H^{\frac{1}{2-c+b}}(\D)}
\lesssim \|f\|_{L^{\frac{1}{2-c+b}}(\D)}.
\end{equation} 
Recall that
$$
T_{0,b,c}f(z)=\int_{\D}\frac{(1-|w|^2)^b}{(1-z\overline{w})^c}f(w)\,dA(w).
$$
Since
$$
\frac{1}{(1-z\overline{w})^c}
=
\sum_{m=0}^{\infty}\frac{\Gamma(m+c)}{\Gamma(c)m!}z^m\overline{w}^{\,m},
$$
we have
$$
T_{0,b,c}f(z)
=
\sum_{m=0}^{\infty}
\frac{\Gamma(m+c)}{\Gamma(c)m!}
\left(\int_{\D}(1-|w|^2)^bf(w)\overline{w}^{\,m}\,dA(w)\right)z^m.
$$
Set
$$
A_m:=
\frac{\Gamma(m+c)}{\Gamma(c)m!}
\int_{\D}(1-|w|^2)^bf(w)\overline{w}^{\,m}\,dA(w).
$$
Take $g\in H^{\frac{1}{c-1-b}}(\D)$ with
$$
g(z)=\sum_{m=0}^{\infty}B_mz^m.
$$
Then
\begin{equation} \label{20260602eq02}
\langle T_{0,b,c}f,g\rangle_{\T}=
\sum_{m=0}^{\infty}A_m\overline{B_m} =
\int_{\D}(1-|w|^2)^bf(w)
\overline{\left(\sum_{m=0}^{\infty}
\frac{\Gamma(m+c)}{\Gamma(c)m!}B_mw^m\right)}\,dA(w).
\end{equation}

Recall the fractional radial derivative $R^{\alpha,t}$ on the space $\mathbb H(\D)$; see \cite[p.~18]{Zhu2005}. More precisely, if
$$
h(z)=\sum_{m=0}^{\infty}a_mz^m
$$
is the Taylor expansion of $h$, then for real parameters $\alpha$ and $t$ such that neither $1+\alpha$ nor $1+\alpha+t$ is a negative integer, $R^{\alpha,t}$ is defined by
$$
R^{\alpha,t}h(z)
=
\sum_{m=0}^{\infty}
\frac{\Gamma(2+\alpha)\Gamma(2+m+\alpha+t)}
{\Gamma(2+\alpha+t)\Gamma(2+m+\alpha)}
a_mz^m.
$$
In particular, taking $\alpha=-1$ and $t=c-1$, and using the assumption $c>1$, we obtain
$$
R^{-1,c-1}h(z)
=\sum_{m=0}^{\infty}
\frac{\Gamma(m+c)}{\Gamma(c)m!}a_mz^m.
$$
Therefore, \eqref{20260602eq02} can be written as
$$
\langle T_{0,b,c}f,g\rangle_{\T}
=
\int_{\D}(1-|w|^2)^bf(w)\overline{R^{-1,c-1}g(w)}\,dA(w).
$$
By H\"older's inequality with exponents $\frac{1}{2-c+b}$ and $\frac{1}{c-1-b}$, we obtain
\begin{align*}
\left|\langle T_{0,b,c}f,g\rangle_{\T}\right|
&\leq
\|f\|_{L^{\frac{1}{2-c+b}}(\D)}
\left(\int_{\D}|R^{-1,c-1}g(w)|^{\frac{1}{c-1-b}}
(1-|w|^2)^{\frac{b}{c-1-b}}\,dA(w)\right)^{c-1-b}.
\end{align*}
Since $c>1$, we have
$$
\frac{b}{c-1-b}>-1.
$$
This, together with the fact that $1/(c-1-b)\ge  2$, allows us to apply \cite[Theorem~4.41]{Zhu2005}, yielding
$$
\|R^{-1,c-1}g\|_{A^{\frac{1}{c-1-b}}_{\frac{b}{c-1-b}}(\D)}
\lesssim
\|g\|_{H^{\frac{1}{c-1-b}}(\D)}.
$$
Consequently,
$$
\left|\langle T_{0,b,c}f,g\rangle_{\T}\right|
\lesssim \|f\|_{L^{\frac{1}{2-c+b}}(\D)}
\|g\|_{H^{\frac{1}{c-1-b}}(\D)}.
$$
Taking the supremum over all $g\in H^{\frac{1}{c-1-b}}(\D)$ with
$\|g\|_{H^{\frac{1}{c-1-b}}}=1$, we obtain
$$
\|T_{0,b,c}f\|_{H^{\frac{1}{2-c+b}}(\D)}
\lesssim \|f\|_{L^{\frac{1}{2-c+b}}(\D)}.
$$
This finishes the proof of \textbf{\textsl{Step 1}}. 

\medskip 

\noindent\textbf{$\diamond$    \textsl{Step 2.}}
Next, we prove
$$
\left\|(1-|\cdot|^2)^aT_{0,b,c}f\right\|_{L^{-\frac{1}{a},\infty}(\D)}
\lesssim
\left\|T_{0,b,c}f\right\|_{H^{\frac{1}{2-c+b}}(\D)}.
$$
Since $c-2-a-b>0$, we have
$$
-a>2-c+b,
$$
and therefore
$$
-\frac1a<\frac{1}{2-c+b}.
$$
The \emph{key} point is the following weak-type estimate for Hardy functions.

\begin{lem} \label{weakHardy}
Let $0<q<p<\infty$. For each $h\in H^p(\D)$, define
$$
M_qh(z):=(1-|z|^2)^{-\frac1q}h(z).
$$
Then one has
$$
\|M_qh\|_{L^{q,\infty}(\D)}
\lesssim \|h\|_{H^p(\D)}.
$$
\end{lem}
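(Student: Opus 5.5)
The plan is to control $M_qh$ by the nontangential maximal function of $h$, whose $L^p(\T)$ norm is bounded by $\|h\|_{H^p(\D)}$ (standard Hardy space theory, valid for all $0<p<\infty$), and then estimate the level sets of $M_qh$ by splitting according to the size of $1-|z|^2$. Concretely, let $h^*(\zeta):=\sup_{z\in\Gamma(\zeta)}|h(z)|$, where $\Gamma(\zeta)$ is a Stolz angle with vertex $\zeta\in\T$. Then $\|h^*\|_{L^p(\T)}\lesssim\|h\|_{H^p(\D)}$, and for every $z\in\D$ we have $|h(z)|\le h^*(\zeta)$ for all $\zeta$ in an arc $J_z\subseteq\T$ centered at $z/|z|$ with $|J_z|\simeq 1-|z|$.

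Fix $\lambda>0$ and, normalizing, assume $\|h\|_{H^p(\D)}=1$. Writing $t=1-|z|^2$, the condition $|M_qh(z)|>\lambda$ reads $|h(z)|>\lambda t^{1/q}$. I would decompose $\D$ into the dyadic annuli $A_N$ from \eqref{20260531eq02}, on which $t\simeq 2^{-N}$. If $z\in A_N$ lies in the level set, then $h^*>c\lambda 2^{-N/q}$ on all of $J_z$. So the angular projection of the level set inside $A_N$ is contained in a set $U_N\subseteq\T$ that is slightly enlarged from $\{h^*>c\lambda2^{-N/q}\}$ but, by the arc structure, has measure at most a constant multiple of it. Hence
$$
|\{z\in A_N:|M_qh(z)|>\lambda\}|\lesssim 2^{-N}\,\bigl|\{\zeta\in\T: h^*(\zeta)>c\lambda 2^{-N/q}\}\bigr|.
$$

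Next I would sum over $N$ using two bounds for each term: the trivial bound $|\cdot|\le 2^{-N}$, and Chebyshev, which gives $2^{-N}(\lambda2^{-N/q})^{-p}=\lambda^{-p}2^{-N(1-p/q)}$. Since $p>q$, the exponent $1-p/q$ is negative, so the Chebyshev bound grows in $N$, while the trivial bound decays. Choosing the crossover $N_0$ with $2^{-N_0/q}\simeq\lambda^{-1}$, the trivial bound summed over $N\ge N_0$ gives $\lesssim 2^{-N_0}\simeq\lambda^{-q}$. The Chebyshev bound summed over $N<N_0$ is geometric and dominated by its largest term, giving $\lambda^{-p}2^{-N_0(1-p/q)}\simeq\lambda^{-p}\lambda^{q(p/q-1)}=\lambda^{-q}$. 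Altogether $|\{|M_qh|>\lambda\}|\lesssim\lambda^{-q}$, which is exactly $\|M_qh\|_{L^{q,\infty}}\lesssim1$.

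For small $\lambda$, where $N_0\le 0$, I would instead use the trivial bound $|\D|\lesssim\lambda^{-q}$. The step I expect to need the most care is the geometric comparison between the level set in $A_N$ and the measure of the superlevel set of $h^*$. Rather than passing to a Whitney covering, I would argue via Fubini: for each $\zeta$ in the superlevel set $\{h^*>c\lambda2^{-N/q}\}$, the points $z\in A_N$ with $\zeta\in J_z$ form a region of area $\simeq 2^{-2N}$, and each such $z$ meets a set of $\zeta$ of length $\simeq 2^{-N}$. Integrating $\one_{\zeta\in J_z}$ over pairs $(z,\zeta)$ then yields the $2^{-N}$ factor directly.
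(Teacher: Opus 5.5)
Your argument is correct, but it takes a genuinely different route from the paper in its key input. Both proofs split according to the size of $1-|z|^2$: a trivial area bound near the boundary, Chebyshev away from it. The paper's split at $1-r^2=A^q$, with $A=\|h\|_{H^p(\D)}/\lambda$, is exactly your crossover $2^{-N_0}\simeq\lambda^{-q}$, and your dyadic sum over the $A_N$ is a discretized version of its radial integral.

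The difference is where Chebyshev is applied. You pass through the nontangential maximal function and the theorem $\|h^*\|_{L^p(\T)}\lesssim\|h\|_{H^p(\D)}$. The paper never introduces $h^*$. For each fixed $r$ it applies Chebyshev directly on the circle $|z|=r$, using only $\int_0^{2\pi}|h(re^{i\theta})|^p\,\frac{d\theta}{2\pi}\le\|h\|_{H^p(\D)}^p$, which is the definition of $H^p$. This gives the angular bound $\min\{1,A^p(1-r^2)^{-p/q}\}$, which it then integrates in $r$.

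So the maximal theorem is superfluous here. All you extract from $h^*$ is a measure bound on the angular slice of the level set, and the integral means supply that slice bound directly. The paper's route is therefore more elementary. It is also more general: it uses no holomorphy and needs only $\sup_r\int_0^{2\pi}|h(re^{i\theta})|^p\,\frac{d\theta}{2\pi}<\infty$. For general functions this is weaker than $h^*\in L^p(\T)$, since the radius ending at $e^{i\theta}$ lies in the Stolz angle $\Gamma(e^{i\theta})$.

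A small simplification within your own argument: the enlarged set $U_N$ and the closing Fubini step are unnecessary. Since $J_z$ is centered at $z/|z|$, the angular projection of $\{z\in A_N:|M_qh(z)|>\lambda\}$ already lies in $\{h^*>c\lambda2^{-N/q}\}$. The bound $2^{-N}\,|\{h^*>c\lambda 2^{-N/q}\}|$ then follows at once from the product structure of the polar sector.
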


\begin{proof}
For $\lambda>0$, set
$$
E_\lambda:=\{z\in\D: |M_qh(z)|>\lambda\}
=
\left\{z\in\D: |h(z)|>\lambda(1-|z|^2)^{\frac1q}\right\}.
$$
If $h\equiv 0$, there is nothing to prove. Otherwise, put
$$
A:=\frac{\|h\|_{H^p(\D)}}{\lambda}.
$$
For $0<r<1$, Chebyshev's inequality gives
\begin{align*}
\left|\left\{e^{i\theta}\in\T: |h(re^{i\theta})|>\lambda(1-r^2)^{\frac1q}\right\}\right|
& \leq \min\left\{1, \; \frac{\int_0^{2\pi} |h(re^{i\theta})|^p \frac{d\theta}{2\pi}}{\lambda^p (1-r^2)^{\frac{p}{q}}} \right\} \\
& \le \min \left\{1, \; \frac{\|h\|_{H^p}^p}{\lambda^p (1-r^2)^{\frac{p}{q}}} \right\} \\
& =\min\left\{1, \; A^p(1-r^2)^{-\frac{p}{q}}\right\}.
\end{align*}
Hence, by polar coordinates,
$$
|E_\lambda|
\lesssim 
\int_0^1 \min\left\{1,A^p(1-r^2)^{-\frac{p}{q}}\right\}r\,dr.
$$
If $A\geq 1$, then
$$
\lambda^q|E_\lambda|\lesssim \lambda^q\leq \|h\|_{H^p(\D)}^q.
$$
If $0<A<1$, we split the integral into two parts and obtain
$$
|E_\lambda|\lesssim 
A^p\int_0^{\sqrt{1-A^q}}(1-r^2)^{-\frac{p}{q}}r\,dr+\int_{\sqrt{1-A^q}}^1 r\,dr \le \frac{p}{2(p-q)}A^q.
$$
Therefore, in both cases,
$$
\lambda^q|E_\lambda|\lesssim \|h\|_{H^p(\D)}^q,
$$
and the desired claim follows by taking the supremum over $\lambda>0$.
\end{proof}

To this end, applying the lemma with
$$
p=\frac{1}{2-c+b},
\qquad
q=-\frac1a,
$$
we get
$$
\|(1-| \cdot |^2)^aT_{0,b,c}f\|_{L^{-\frac{1}{a},\infty}(\D)}
\lesssim \|T_{0,b,c}f\|_{H^{\frac{1}{2-c+b}}(\D)}.
$$
This completes the proof of \textbf{    \textsl{Step 2}}. 

\medskip 

Combining \textbf{ \textsl{Steps 1}} and \textbf{\textsl{2}}, we obtain Theorem~\ref{Forelli--Rudin threshold}, (iii).

\bigskip 

\section{Analysis of the Main Case: Restricted Weak-type Bounds at the Critical--Horizontal Intersection $\calP_{\textnormal{C-H}}$ away from the vertical cut-off line} \label{Sec06}

In Section~\ref{Sec05}, we studied the weak--type behavior of $T_{a,b,c}$ and $S_{a,b,c}$ at the critical--horizontal intersection $\calP_{\textnormal{C-H}}$, which lies strictly to the left of the vertical line \eqref{verticalcut}. A natural next step is to study the restricted weak--type estimates at the same endpoint. In this setting, the Forelli--Rudin phenomenon disappears: the operators $T_{a,b,c}$ and $S_{a,b,c}$ have the same restricted weak--type behavior at $\calP_{\textnormal{C-H}}$. Nevertheless, the borderline case $2-c+b=0$ remains singular, as it separates the bounded and unbounded restricted weak--type regimes. The main result of this section is the following.

\begin{thm} \label{restricted-weak Forelli--Rudin at CH}
Let $a,b,c\in\R$ satisfy \eqref{conditionP1}. Then the following statements hold.

\begin{enumerate}
    \item[(i)] If $2-c+b=0$, then the restricted weak--type estimates fail for both Forelli--Rudin operators $T_{a,b,c}$ and $S_{a,b,c}$ at the intersection $\calP_{\textnormal{C-H}}$.

    \item[(ii)] If $0<2-c+b<1$, then the Forelli--Rudin operators $T_{a,b,c}$ and $S_{a,b,c}$ are bounded from
    $L^{\frac{1}{2-c+b},1}(\D)$ to $L^{-\frac{1}{a},\infty}(\D)$.
\end{enumerate}
\end{thm}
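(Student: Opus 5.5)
For (i), the plan is to reduce the restricted weak-type statement to the weak-type statement already settled in Theorem~\ref{Forelli--Rudin threshold}, (ii). For (ii), the plan is a uniform Lorentz--H\"older bound on the positive kernel. In both parts it suffices to handle $T_{a,b,c}$ for failures and $S_{a,b,c}$ for bounds, because of the pointwise bound $|T_{a,b,c}f|\le S_{a,b,c}(|f|)$.

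\emph{Part (i).} If $2-c+b=0$, then $\calP_{\textnormal{C-H}}=(0,-a)$, so $p=\infty$ and $q=-1/a$. By \eqref{conditionP1} and \eqref{maincase} we have $-1<a<0$, hence $1<q<\infty$. I would reuse verbatim the layer-cake argument from \emph{Step II} of the proof of Proposition~\ref{reductionII}. It applies to any linear operator, and it uses the normable characterization \eqref{20260528eq01} of $L^{q,\infty}$, which is valid since $q>1$. That argument shows that a restricted weak-type $(\infty,q)$ estimate for $T_{a,b,c}$ upgrades to boundedness $L^\infty(\D)\to L^{q,\infty}(\D)$. This contradicts Theorem~\ref{Forelli--Rudin threshold}, (ii), in the borderline case $2-c+b=0$. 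The same contradiction gives the failure for $S_{a,b,c}$, either through the domination above or by running the argument with Theorem~\ref{Forelli--Rudin threshold}, (i).

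\emph{Part (ii).} Set $p=1/(2-c+b)\in(1,\infty)$, so that $p'=1/(c-1-b)$, and put $q=-1/a$. For fixed $z\in\D$ define the kernel
$$
k_z(w):=\frac{(1-|w|^2)^b}{|1-z\overline w|^{c}}.
$$
The key claim is the uniform estimate
$$
\sup_{z\in\D}\|k_z\|_{L^{p',\infty}(\D)}<\infty.
$$
Granting it, the Lorentz--H\"older inequality gives, for every $f\in L^{p,1}(\D)$,
$$
|S_{a,b,c}f(z)|\le (1-|z|^2)^a\,\|f\|_{L^{p,1}(\D)}\,\|k_z\|_{L^{p',\infty}(\D)}\lesssim (1-|z|^2)^{a}\,\|f\|_{L^{p,1}(\D)}.
$$
Since $(1-|z|^2)^a\in L^{-1/a,\infty}(\D)$, this yields $S_{a,b,c}:L^{p,1}(\D)\to L^{-\frac1a,\infty}(\D)$, and hence the same bound for $T_{a,b,c}$.

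\emph{Proving the kernel estimate.} I would decompose $\D$ into dyadic regions $\{w:|1-z\overline w|\simeq 2^{-j}\}$, and split each region further by the size of $1-|w|^2\simeq 2^{-i}$ with $i\ge j$. On the piece with $i=j$ (the part where $1-|w|^2\simeq|1-z\overline w|$), the area is $\simeq 2^{-2j}$ and $k_z\simeq 2^{j(c-b)}=2^{2j/p'}$, because $c-b=2-1/p$. This alone gives $|\{k_z>\lambda\}|\lesssim\lambda^{-p'}$. The remaining pieces, where $1-|w|^2\ll|1-z\overline w|$, matter only when $b<0$. On the piece indexed by $(i,j)$, the area is $\simeq 2^{-i-j}$ and $k_z\simeq 2^{-ib+jc}$. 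Summing these piece areas over all $(i,j)$ with $2^{-ib+jc}>\lambda$ should again be $\lesssim\lambda^{-p'}$. The reason is that this condition is a half-plane constraint in $(i,j)$, while $b>1/p-1>-1$ (which follows from $c>1$) makes the series over $i$ decay geometrically.

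\emph{Expected obstacle.} The uniform kernel estimate is the step I expect to be the main obstacle. The difficulty is the bookkeeping in the $b<0$ regime, including the behaviour as $z$ approaches the boundary, and checking uniformity in $z$. If the double sum turns out to produce a logarithmic loss, I would instead prove the estimate directly on characteristic functions $f=\one_E$. I would bound $\int_E k_z$ by rearranging $E$ onto the superlevel sets of $k_z$, which again gives $\lesssim|E|^{1/p}$. I would then pass to general $f\in L^{p,1}(\D)$ by the standard decomposition of $L^{p,1}$ functions into sums of indicators, which is available here since $S_{a,b,c}$ is positive and $L^{q,\infty}$ is normable for $q>1$.
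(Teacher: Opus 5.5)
Your part (i) is correct and is essentially the paper's argument. The paper applies the normability of $L^{-1/a,\infty}(\D)$ (recall $-1/a>1$) directly to the explicit test function $f_{N,\delta_N,\varepsilon,0}=\one_{F^+_{N,\varepsilon}}-\one_{F^-_{N,\varepsilon}}$ from the proof of Theorem~\ref{Forelli--Rudin threshold}, (ii). Your layer-cake upgrade from Step~II of Proposition~\ref{reductionII} is the general form of that same idea.

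Part (ii), however, has a genuine gap: the uniform kernel estimate $\sup_{z\in\D}\|k_z\|_{L^{p',\infty}(\D)}<\infty$ is false. The slip is in the diagonal piece. With $1/p=2-c+b$ one has $c-b=2-\frac1p=1+\frac1{p'}$, not $\frac{2}{p'}=2-\frac2p$. Concretely, fix $z$ with $\delta:=1-|z|$ small and let $R:=\{w:\ \delta<1-|w|<2\delta,\ |\arg w-\arg z|<\delta\}$. Then $|R|\simeq\delta^2$, and on $R$ we have $|1-z\overline w|\simeq\delta$ and $(1-|w|^2)^b\simeq\delta^b$. Hence $k_z\simeq\delta^{b-c}$ on $R$, and
\[
\|k_z\|_{L^{p',\infty}(\D)}\gtrsim \delta^{b-c}\,\delta^{2/p'}=\delta^{-(2-c+b)}\longrightarrow\infty .
\]
Your fallback fails for the same reason. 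Taking $E=R$ gives $S_{a,b,c}\one_E(z)\gtrsim\delta^{a+(2-c+b)}$, whereas $(1-|z|^2)^a|E|^{2-c+b}\simeq\delta^{a+2(2-c+b)}$. So no argument that bounds $S_{a,b,c}f(z)$ pointwise by $(1-|z|^2)^a$ times a norm of $f$ can succeed. This is the same obstruction the paper points out for the direct H\"older approach in the remark following Proposition~\ref{20260527prop02}. The estimate is true only because the points $z$ where $S_{a,b,c}\one_F$ is abnormally large form a set of small measure, and this has to be captured by a genuinely distributional argument.

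The paper does this in two steps.
\begin{enumerate}
\item It uses the Forelli--Rudin factorization $S_{a,b,c}=W_\eta\,S_{c-2-b,b,c}$, with $\eta=c-2-a-b>0$ and $W_\eta=(1-|z|^2)^{-\eta}\in L^{1/\eta,\infty}(\D)$. Together with Lorentz--H\"older, this reduces the claim to the diagonal restricted estimate $\|S_{c-2-b,b,c}\one_F\|_{L^{p,\infty}}\lesssim|F|^{1/p}$.
\item This diagonal estimate follows from the two-set bound $\int_E S_{c-2-b,b,c}\one_F\,dA\lesssim|E|^{c-1-b}|F|^{2-c+b}$ (Lemma~\ref{simple function}). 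That bound is proved by sparse domination of the kernel over Carleson tents of two adjacent dyadic systems, combined with:
\begin{itemize}
\item the summation over upper tents, $\sum_k 2^{k(2-c+b)}\min\{|Q_J\cap E|,2^{-k}|Q_J|\}\lesssim|Q_J\cap E|^{c-1-b}|Q_J|^{2-c+b}$;
\item the coefficient bound for $|Q_J\cap F|_b$;
\item a dyadic Carleson embedding, carried out via a layer-cake in $(s,t)$ and maximal cubes.
\end{itemize}
\end{enumerate}
The two-set form keeps track of the relative position of $E$ and $F$, which is exactly the information your pointwise Lorentz--H\"older step discards.
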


\subsection{Proof of Theorem \ref{restricted-weak Forelli--Rudin at CH}, (i): Failure of the restricted weak-type estimates for $T_{a, b, c}$ and $S_{a, b, c}$ at the intersection $\calP_{\textnormal{C-H}}$ when $2-c+b=0$} \label{20260603sec02}

It is clear that it suffices to prove Theorem~\ref{restricted-weak Forelli--Rudin at CH}, (i), for $T_{a,b,c}$. We argue by contradiction. Assume that the desired conclusion fails for $T_{a,b,c}$, namely,
\begin{equation}\label{20260603eq02}
\sup_{E \subseteq \D}
\left\|T_{a,b,c}\one_E\right\|_{L^{-\frac1a,\infty}(\D)}
<+\infty.
\end{equation}
The argument here is indeed a consequence of the probabilistic construction used in the proof of Theorem~\ref{Forelli--Rudin threshold}, (ii).
Recall that, in Section~\ref{20260603sec01}, we constructed a sequence of test functions
$$
f_{N,\delta_N,\varepsilon,0}
:=
\sum_{I\in \calI_N}\delta_N(I)\one_{Q_{I,\varepsilon}},
\qquad \delta_N(I)\in\{-1,1\},
$$
(see \eqref{20260603eq20}). Since $c-2-b=0$, $\eta=(c-2-b)(1+\tau)=0$. Therefore, by the disjointness of the boxes $Q_{I,\varepsilon}$ in the construction, we may write
\begin{equation} \label{20260603eq30}
f_{N,\delta_N,\varepsilon,0}=
\one_{F_{N,\varepsilon}^+}-\one_{F_{N,\varepsilon}^-},
\end{equation} 
where
$$
F_{N,\varepsilon}^+:=\bigcup_{I\in \calI_N, \; \delta_N(I)=1} Q_{I,\varepsilon},
\qquad
F_{N,\varepsilon}^-:=\bigcup_{I\in \calI_N, \; \delta_N(I)=-1} Q_{I,\varepsilon}.
$$
Hence, using \eqref{20260528eq01} and the fact that $0<-a<1$, one has
$$
\begin{aligned}
\left\|T_{a,b,c} f_{N,\delta_N,\varepsilon,0} \right\|_{L^{-\frac{1}{a}, \infty}(\D)}
&\lesssim 
\left\|T_{a,b,c}\one_{F_{N,\varepsilon}^+}\right\|_{L^{-\frac{1}{a},\infty}(\D)}
+
\left\|T_{a,b,c}\one_{F_{N,\varepsilon}^-}\right\|_{L^{-\frac{1}{a},\infty}(\D)} \lesssim 1,
\end{aligned}
$$
where the last estimate follows from the contradiction assumption \eqref{20260603eq02}. This contradicts \eqref{20260601eq13} with $\eta=0$. The proof of Theorem~\ref{restricted-weak Forelli--Rudin at CH}, (i), is complete.

\begin{rem} 
We point out that the argument above applies only to the borderline case $2-c+b=0$. When $0<2-c+b<1/2$, each $Q_{I,\varepsilon}$ in the construction is associated with the weight 
$$ (1-|z|^2)^{c-2-b} \left(1+\log \frac{1}{1-|z|^2}\right)^{\eta}, $$ 
where $\eta>-1/2$ is defined as in \eqref{20260601eq01}. 

Consequently, the test functions $f_{N,\delta_N,\varepsilon,\eta}$ can no longer be decomposed as a difference of two characteristic functions, as in \eqref{20260603eq30}. This is not merely a technical obstacle: when $0<2-c+b<1$, we shall prove that the desired restricted weak--type estimate holds for both $T_{a,b,c}$ and $S_{a,b,c}$. \end{rem}

\subsection{Proof of Theorem \ref{restricted-weak Forelli--Rudin at CH}, (ii): Restricted weak-type estimates for $T_{a, b, c}$ and $S_{a, b, c}$ at the intersection $\calP_{\textnormal{C-H}}$ when $0<2-c+b<1$}

This section is devoted to the proof of Theorem~\ref{restricted-weak Forelli--Rudin at CH}, (ii). Clearly, we only need to prove the desired restricted weak--type estimate for $S_{a,b,c}$. The proof relies on two key ingredients:
\begin{enumerate}
    \item a refined sparse domination principle, tailored to handle the factor $(1-|z|^2)^a$ in the definition of $S_{a,b,c}$;
    \item a dyadic Carleson embedding argument over Carleson tents, inspired by the seminal work of Nazarov, Treil, and Volberg~\cite{NTV2003} on the $Tb$ theorem for non--homogeneous spaces.
\end{enumerate}

\vspace{0.1cm}

Now we turn to the details. 

\medskip 

\noindent\textbf{$\diamond$ \textsl{Step 1.}} We first reduce the estimate to the ``diagonal'' case by an argument similar to that in Proposition~\ref{20260527prop02}. Set $\eta:=c-2-a-b>0$ and
$W_\eta(z):=(1-|z|^2)^{-\eta}$.
Recall that $W_\eta\in L^{\frac{1}{\eta},\infty}(\D)$.
Since $a+\eta=c-2-b$, we may write
$$
	S_{a,b,c}f(z)
	=
	W_\eta(z)S_{a+\eta,b,c}f(z)
	=
	W_\eta(z)S_{c-2-b,b,c}f(z),
$$
which, by Lorentz--H\"older's inequality, gives
$$
	\|S_{a,b,c}f\|_{L^{-\frac{1}{a},\infty}(\D)}
	\lesssim
	\|W_\eta\|_{L^{\frac{1}{\eta},\infty}(\D)}
	\|S_{c-2-b,b,c}f\|_{L^{\frac{1}{2-c+b},\infty}(\D)}.
$$
Therefore, it suffices to show that
$$
\|S_{c-2-b,b,c}f\|_{L^{\frac{1}{2-c+b},\infty}(\D)} \lesssim \left\|f \right\|_{L^{\frac{1}{2-c+b}, 1}(\D)},
$$ 
which is equivalent to proving that 
\begin{equation} \label{simplified case of restricted weak type A}
\left\|S_{c-2-b, b, c} \one_F \right\|_{L^{\frac{1}{2-c+b}, \infty}(\D)} \lesssim |F|^{2-c+b}
\end{equation}
for every measurable set $F \subseteq \D$. 

\medskip 

\noindent\textbf{$\diamond$    \textsl{Step 2.}} Here, we prove the following slightly stronger result.

\begin{lem}\label{simple function}
Let $E,F\subseteq \D$ be measurable sets. Then
\begin{equation} \label{20260605eq01}
	\int_E |S_{c-2-b,b,c}\one_F(z)|\,dA(z)
	\lesssim |E|^{c-1-b}|F|^{2-c+b}.
\end{equation} 
\end{lem}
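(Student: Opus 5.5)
The plan is to dualize, discretize the kernel over dyadic Carleson tents, and reduce \eqref{20260605eq01} to a Carleson-type summation over tents in the spirit of \cite{NTV2003}. Throughout, set $s:=2-c+b\in(0,1)$. Then $c-2-b=-s$, $c-1-b=1-s$, and the left-hand side of \eqref{20260605eq01} equals
\[
\int_{\D}\int_{\D}\one_E(z)\one_F(w)\,\frac{(1-|z|^2)^{-s}(1-|w|^2)^{b}}{|1-z\overline w|^{c}}\,dA(w)\,dA(z).
\]

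\textbf{Step A: dyadic domination of the kernel.} Using finitely many shifted dyadic grids $\calD^{t}$ on $\T$ (one-third trick), I would show the following. For any $z,w\in\D$ there is a $t$ and an $I\in\calD^t$ with $z,w\in Q_I$ and $|I|\simeq |1-z\overline w|$; this uses the computation \eqref{20260530eq01}. Since $c>0$, and the tents $Q_I$ containing a given pair are nested with $|I|$ decreasing geometrically, this yields
\[
|1-z\overline w|^{-c}\lesssim \sum_t\sum_{I\in\calD^t:\ z,w\in Q_I}|I|^{-c}.
\]
Consequently
\[
\int_E S_{-s,b,c}\one_F\lesssim \sum_t\sum_{I\in\calD^t}|I|^{-c}\Big(\int_{E\cap Q_I}(1-|z|^2)^{-s}dA\Big)\Big(\int_{F\cap Q_I}(1-|w|^2)^{b}dA\Big).
\]

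\textbf{Step B: local rearrangement bounds.} On $Q_I$, since $0<s<1$ and $b>-1$, a bathtub/rearrangement argument gives two estimates, with the worst case being a set concentrated in a thin layer near $\T$. First,
\[
\int_{E\cap Q_I}(1-|z|^2)^{-s}\lesssim |I|^{s}|E\cap Q_I|^{1-s}.
\]
Second,
\[
\int_{F\cap Q_I}(1-|w|^2)^b\lesssim |I|^{-b}|F\cap Q_I|^{1+b}\quad\text{if } b<0,
\qquad
\int_{F\cap Q_I}(1-|w|^2)^b\lesssim |I|^{b}|F\cap Q_I|\quad\text{if } b\ge 0.
\]
Now write $e_I:=|E\cap Q_I|/|Q_I|\le 1$ and $f_I:=|F\cap Q_I|/|Q_I|\le1$, with $|Q_I|\simeq|I|^2$. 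The powers of $|I|$ then cancel exactly, since $-c+s-b+2(1-s)+2(1+b)=2$. In both cases for $b$ we are left with bounding
\[
\sum_I |Q_I|\,e_I^{1-s}f_I^{\beta},
\qquad \beta\in\{1+b,\,1\},\qquad \beta-s\ge c-1>0.
\]

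\textbf{Step C: Carleson summation (main obstacle).} A naive Hölder inequality in $I$ fails, because $\sum_I|E\cap Q_I|$ diverges logarithmically. This is exactly why the surplus exponent $\beta-s\ge c-1>0$ must be exploited.

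The key packing fact I would use is the following. For $\lambda>0$, take the maximal tents among those with $e_I>\lambda$; they are pairwise disjoint. Along each chain below a maximal tent, $|Q_I|$ decays by a factor of $4$ per generation. Hence
\[
\sum_{I:\ e_I>\lambda}|Q_I|\lesssim \frac{|E|}{\lambda},
\qquad\text{and likewise}\qquad
\sum_{I:\ f_I>\mu}|Q_I|\lesssim \frac{|F|}{\mu}.
\]
I would then split the tents dyadically by $e_I\simeq2^{-j}$ and $f_I\simeq2^{-k}$ ($j,k\ge0$), and bound the double sum by
\[
\sum_{j,k\ge0}2^{-j(1-s)-k\beta}\min\{2^{j}|E|,\,2^{k}|F|\}.
\]
For fixed $j$, the sum over $k$ is geometric on both sides of the crossover $2^k\simeq2^j|E|/|F|$, because $\beta>1$ is not needed; only $\beta>s$ and $1-\beta<0$ or small matter. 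Interchanging and summing over $j$ should give $|E|^{1-s}|F|^{s}$. The delicate part is to check that both remaining geometric series converge with exactly these exponents. This is where $\beta-s>0$ and $1-s>0$ enter. If the crude min-bound loses a factor, I would instead first replace $f_I^{\beta}\le f_I^{s}$ only on tents where $f_I\gtrsim e_I$, and keep the surplus $f_I^{\beta-s}$ elsewhere to absorb the logarithm.

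Summing over the finitely many grids $t$ then yields \eqref{20260605eq01}. Taking $E$ to be a level set of $S_{c-2-b,b,c}\one_F$ gives \eqref{simplified case of restricted weak type A}.
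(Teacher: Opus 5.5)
Your proposal is correct and follows essentially the same route as the paper. Both arguments dominate the kernel over Carleson tents from two adjacent dyadic grids, use local size bounds to reduce to $\sum_J |Q_J|\,e_J^{c-1-b}f_J^{\beta}$ with $\beta=\min\{1,b+1\}$, apply the packing bound $\sum_{e_J>s,\,f_J>t}|Q_J|\lesssim\min\{|E|/s,|F|/t\}$ via maximal tents, and finish with a two-parameter summation. The only differences are technical: the paper handles the $z$-weight by splitting into upper tents $Q_I^{\mathrm{up}}$ and sums by a continuous layer-cake integral, where you use a local Lorentz--H\"older bound and $(j,k)$ pigeonholing.

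Your $(j,k)$ sum does close using only $0<s<1$ and $\beta>s$; note that $\beta-s=\min\{c-1,\,c-1-b\}$, not $\ge c-1$. When $\beta=1$, the $k$-sum below the crossover is flat rather than geometric and produces a factor $1+\log_+(|E|/|F|)$. This factor is harmless against $(|E|/|F|)^{1-s}$. Alternatively, you can avoid it as the paper does, by first replacing $f_J^{\beta}$ with $f_J^{\sigma}$ for some $s<\sigma<\min\{1,b+1\}$.
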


We temporarily assume Lemma~\ref{simple function}. For $\lambda>0$, set
$$
	E_\lambda
	:=
	\{z\in\D:\ |S_{c-2-b,b,c}\one_F(z)|>\lambda\}.
$$
Then, by Lemma~\ref{simple function},
$$
	\lambda |E_\lambda|
	\leq
	\int_{E_\lambda} |S_{c-2-b,b,c}\one_F(z)|\,dA(z)
	\lesssim
	|E_\lambda|^{c-1-b}|F|^{2-c+b}.
$$
Hence
$$
	\lambda |E_\lambda|^{2-c+b}
	\lesssim
	|F|^{2-c+b},
$$
which gives \eqref{simplified case of restricted weak type A}. Therefore, it remains to prove Lemma~\ref{simple function}.

\medskip 

\noindent\textbf{$\diamond$    \textsl{Step 3.}} We begin by estimating the left-hand side of \eqref{20260605eq01} through a refined sparse domination argument (over Carleson tents). More precisely, let $\calD^{(1)}$ and $\calD^{(2)}$ be a pair of adjacent dyadic systems\footnote{For instance, one may take $\calD^{(1)}$ to be the standard dyadic system on $\T$, and $\calD^{(2)}$ to be the $1/3$-shifted dyadic system. This reduction goes back to the work of Garnett, Jones, and Mei on BMO and its dyadic analog (see \cite{GJ1982, TM2003}). } on $\T$. Then, by a standard argument (see, e.g., \cite{RTW2017}), for any $z,w\in\D$, there exist $i\in\{1,2\}$ and $J\in\calD^{(i)}$ such that
\begin{enumerate}
    \item $z,w\in Q_J$;
    \item $|1-z\overline{w}|^2 \simeq |Q_J|$;
    \item there is a unique dyadic descendant $I\subseteq J$ such that $z\in Q_I^{\textrm{up}}$. Without loss of generality, we may assume that $I\in\calD_k(J)$ for some $k\geq 0$, that is, $|I|=2^{-k}|J|$.
\end{enumerate}

Then for any $z, w \in \D$, one has
\begin{align*}
\frac{1}{(1-|z|^2)^{2-c+b}} \cdot \frac{1}{|1-z\overline{w}|^c} 
&\lesssim  \sum_{i \in \{1, 2\}} \sum_{J \in \calD^{(i)}} \sum_{k=0}^\infty \sum_{I \in \calD_k(J)} \frac{\one_{Q_I^{\textrm{up}}}(z) \one_{Q_J}(w)}{(1-|z|^2)^{2-c+b} |1-z\overline{w}|^c } \\
&\simeq  \sum_{i \in \{1, 2\}} \sum_{J \in \calD^{(i)}} \frac{\one_{Q_J}(w)}{|J|^c} \left[ \sum_{k=0}^\infty \sum_{I \in \calD_k(J)}  \frac{\one_{Q_I^{\textnormal{up}}}(z) }{|I|^{2-c+b}}\right]  \\
& \simeq \sum_{i \in \{1, 2\}} \sum_{J \in \calD^{(i)}} \frac{\one_{Q_J}(w)}{|J|^{2+b}} \left[ \sum_{k=0}^\infty 2^{k(2-c+b)} \left(\sum_{I \in \calD_k(J)} \one_{Q_I^{\textnormal{up}}}(z)  \right) \right]  \\
& \simeq \sum_{i \in \{1, 2\}} \sum_{J \in \calD^{(i)}} \frac{\one_{Q_J}(w)}{|Q_J|^{1+\frac{b}{2}}} \left[ \sum_{k=0}^\infty 2^{k(2-c+b)} \left(\sum_{I \in \calD_k(J)} \one_{Q_I^{\textnormal{up}}}(z)  \right) \right] 
\end{align*}
Hence, 
\begin{align*}
\int_{E} |S_{c-2-b,b,c} \one_{F}(z)| dA(z) 
&=\int_E \left[\frac{1}{(1-|z|^2)^{2-c+b}} \int_F \frac{(1-|w|^2)^b}{|1-z\overline{w}|^c} dA(w) \right] dA(z) \\
&\lesssim \sum_{i \in \{1, 2\}} \sum_{J\in\calD^{(i)}} \frac{|Q_J \cap F|_b}{|Q_J|^{1+\frac{b}{2}}} \left[ \sum_{k=0}^{\infty} 2^{k(2-c+b)} \left(\sum_{I\in\calD_k(J)} |Q_I^{\textnormal{up}} \cap E| \right) \right].
\end{align*}
Here, $|Q_J \cap F|_b:=(b+1)\int_{Q_J \cap F} (1-|z|^2)^b dA(z)$.
Therefore, to prove Lemma~\ref{simple function}, it suffices to show that for any dyadic system $\calD$ on $\T$, one has 
\begin{equation}\label{Goal_1}
	\sum_{J\in\calD} \frac{|Q_J \cap F|_b}{|Q_J|^{1+\frac{b}{2}}} \left[ \sum_{k=0}^{\infty} 2^{k(2-c+b)} \left(\sum_{I\in\calD_k(J)} |Q_I^{\textnormal{up}} \cap E| \right) \right]\lesssim  |E|^{c-1-b} |F|^{2-c+b},
\end{equation}
where the implicit constant in the above estimate is independent of the choice of $E,F$ and $\calD$.

\medskip 

\noindent\textbf{$\diamond$ \textsl{Step 4.}} 
To estimate \eqref{Goal_1}, we notice that, by the pairwise disjointness of the sets $\{Q_I^{\textnormal{up}}\}_{I\in\calD_k(J)}$ and the fact that $|I|=2^{-k}|J|$, one has
$$
	\sum_{I\in\calD_k(J)} |Q_I^{\textnormal{up}}\cap E|
	\leq
	\min\{|Q_J\cap E|,\,2^{-k}|Q_J|\}.
$$
Therefore, \eqref{Goal_1} will follow once we prove
\begin{equation} \tag{\ref{Goal_1}$'$} \label{Goal_1a}
	\sum_{J\in\calD}
	\frac{|Q_J \cap F|_b}{|Q_J|^{1+\frac{b}{2}}}
	\left[
		\sum_{k=0}^{\infty}
		2^{k(2-c+b)}
		\min\{|Q_J\cap E|,\,2^{-k}|Q_J|\}
	\right]
	\lesssim
	|E|^{c-1-b}|F|^{2-c+b}.
\end{equation}

To prove \eqref{Goal_1a}, we first estimate the inner sum appearing there. We have the following

\begin{lem} \label{20260613lem01}
For any $J \in \calD$ and $E \subseteq \D$ measurable, one has
\begin{equation} \label{20260605eq11}
	\sum_{k=0}^{\infty} 2^{k(2-c+b)}
	\min\{|Q_J\cap E|,2^{-k}|Q_J|\}
	\lesssim
	|Q_J\cap E|^{c-1-b}|Q_J|^{2-c+b}.
\end{equation}
\end{lem}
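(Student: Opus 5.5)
This is an elementary estimate for a sum of minima, and we prove it by splitting the sum at a crossover index. Set $s:=2-c+b\in(0,1)$, so that $c-1-b=1-s$. Write $x:=|Q_J\cap E|$ and $y:=|Q_J|$, and note $0\le x\le y$. If $x=0$ there is nothing to prove, so assume $x>0$. The goal is
\[
\sum_{k=0}^{\infty}2^{ks}\min\{x,2^{-k}y\}\lesssim x^{1-s}y^{s}.
\]

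Let $k_0\ge 0$ be the smallest integer with $2^{-k_0}y\le x$. Since $x\le y$, such $k_0$ exists, and by minimality $2^{-k_0}y\simeq x$ (more precisely $x/2<2^{-k_0}y\le x$ when $k_0\ge1$, while $k_0=0$ forces $x=y$). Split the sum at $k_0$ into the indices $k<k_0$, where the minimum equals $x$, and the indices $k\ge k_0$, where it equals $2^{-k}y$.

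For $k<k_0$, the terms are $2^{ks}x$. Because $s>0$, this is a geometric series dominated by its last term:
\[
\sum_{k<k_0}2^{ks}x\lesssim_s 2^{k_0 s}x .
\]
For $k\ge k_0$, the terms are $2^{-k(1-s)}y$. Because $1-s>0$, this series converges and is dominated by its first term:
\[
\sum_{k\ge k_0}2^{-k(1-s)}y\lesssim_s 2^{-k_0(1-s)}y=2^{k_0 s}\,(2^{-k_0}y)\simeq 2^{k_0s}x .
\]

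It remains to evaluate $2^{k_0}$. From $2^{-k_0}y\simeq x$ we get $2^{k_0}\simeq y/x$. Hence both pieces are
\[
\lesssim x\,(y/x)^{s}=x^{1-s}y^{s}=|Q_J\cap E|^{c-1-b}|Q_J|^{2-c+b},
\]
which is \eqref{20260605eq11}.

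The only points needing care are the strictness $0<s<1$ and the bookkeeping at $k_0=0$. The condition $0<s<1$ is guaranteed by \eqref{conditionP1} together with the assumption $0<2-c+b<1$ of Theorem~\ref{restricted-weak Forelli--Rudin at CH}, (ii). It ensures that the first series grows geometrically and the second decays geometrically, and the implicit constant depends only on $s$. When $k_0=0$ the first piece is empty and the second piece gives $\lesssim y=x^{1-s}y^{s}$ directly, since then $x=y$.
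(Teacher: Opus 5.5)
Your proof is correct and follows the paper's argument: the paper likewise fixes $k_0$ with $2^{-k_0-1}|Q_J|\le |Q_J\cap E|\le 2^{-k_0}|Q_J|$. It bounds the indices up to $k_0$ by a geometric series dominated by its last term (using $2-c+b>0$) and the tail by one dominated by its first term (using $c-1-b>0$), then converts $2^{k_0}\simeq |Q_J|/|Q_J\cap E|$. One minor correction: your $k_0$ exists because $x>0$ (so $2^{-k}y\to 0$), not because $x\le y$; the inequality $x\le y$ is what forces $x=y$ when $k_0=0$.
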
 

\begin{proof}
If $|Q_J\cap E|=0$, there is nothing to prove. We may therefore assume that
$|Q_J\cap E|>0$. Choose $k_0\in\N$ such that $
2^{-k_0-1}|Q_J| \leq |Q_J\cap E| \leq 2^{-k_0}|Q_J|$, which implies 
$$
\frac{|Q_J|}{|Q_J\cap E|} \simeq 2^{k_0}.
$$
Write
\begin{equation}\label{20260605eq12}
	\textnormal{LHS of \eqref{20260605eq11}}=\calA_1+\calA_2,
\end{equation}
where
\begin{align}\label{0-k_0}
\calA_1
&:=\sum_{k=0}^{k_0} 2^{k(2-c+b)} \min\{|Q_J\cap E|,2^{-k}|Q_J|\} \nonumber\\
&\leq |Q_J\cap E| \sum_{k=0}^{k_0}2^{k(2-c+b)} \lesssim |Q_J\cap E|\,2^{k_0(2-c+b)} \nonumber\\
&\lesssim |Q_J\cap E| \left(\frac{|Q_J|}{|Q_J\cap E|}\right)^{2-c+b}=
|Q_J\cap E|^{c-1-b}|Q_J|^{2-c+b}.
\end{align}
Here, we used $2-c+b>0$. On the other hand, since $c-1-b>0$, one has
\begin{align}\label{k_0-infty}
\calA_2
&:=\sum_{k=k_0+1}^{\infty} 2^{k(2-c+b)}
\min\{|Q_J\cap E|,2^{-k}|Q_J|\} \nonumber\\
&\leq |Q_J| \sum_{k=k_0+1}^{\infty}2^{-k(c-1-b)}
\lesssim |Q_J|\,2^{-k_0(c-1-b)} \nonumber\\
&\lesssim |Q_J| \left(\frac{|Q_J|}{|Q_J\cap E|}\right)^{-(c-1-b)} =|Q_J\cap E|^{c-1-b}|Q_J|^{2-c+b}.
\end{align}
Finally, the desired estimate \eqref{20260605eq11} follows by combining \eqref{0-k_0}, \eqref{k_0-infty}, and \eqref{20260605eq12}.
\end{proof}

Next, we estimate the coefficient appearing in the outer sum in \eqref{Goal_1a}.

\begin{lem}\label{Q_J cap F}
For any $J \in \calD$, and $F \subseteq \D$ measurable, 
\[
	\frac{|Q_J \cap F|_b}{|Q_J|^{1+\frac{b}{2}}} \lesssim \left( \frac{|Q_J \cap F|}{|Q_J|} \right)^{\min\{1,b+1\}}.
\]
\end{lem}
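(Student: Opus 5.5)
We need to bound $|Q_J\cap F|_b=(b+1)\int_{Q_J\cap F}(1-|z|^2)^b\,dA(z)$ by $|Q_J|^{1+b/2}(|Q_J\cap F|/|Q_J|)^{\min\{1,b+1\}}$. Recall $|Q_J|\simeq|J|^2$ and $1-|z|^2\simeq 1-|z|\le |J|$ on $Q_J$. The natural split is by the sign of $b$, since the weight $(1-|z|^2)^b$ is bounded above by $|J|^b$ when $b\ge 0$, but blows up at the boundary arc when $-1<b<0$.

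\emph{Case $b\ge 0$.} On $Q_J$ we have $(1-|z|^2)^b\lesssim |J|^b\simeq |Q_J|^{b/2}$. Therefore
$$|Q_J\cap F|_b\lesssim |Q_J|^{b/2}|Q_J\cap F|,$$
and dividing by $|Q_J|^{1+b/2}$ gives the ratio $|Q_J\cap F|/|Q_J|$ to the power $1=\min\{1,b+1\}$.

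\emph{Case $-1<b<0$.} Here the weight is largest near $\T$, so we use a rearrangement (bathtub) argument. Among subsets $G\subseteq Q_J$ with $|G|=|Q_J\cap F|=:t$, the integral $\int_G(1-|z|^2)^b\,dA$ is maximized by the layer adjacent to the boundary,
$$G_*=\{z\in Q_J:\ 1-|z|<s\},$$
with $s$ chosen so that $|G_*|=t$. Since $|G_*|\simeq s|J|$, this gives $s\simeq t/|J|$.

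We then compute directly in polar coordinates:
$$\int_{G_*}(1-|z|^2)^b\,dA\simeq |J|\int_0^s u^b\,du\simeq |J|\,s^{b+1}\simeq |J|^{-b}t^{b+1}.$$
Dividing by $|Q_J|^{1+b/2}\simeq |J|^{2+b}$ yields
$$\frac{|J|^{-b}t^{b+1}}{|J|^{2+b}}=\left(\frac{t}{|J|^2}\right)^{b+1}\simeq\left(\frac{|Q_J\cap F|}{|Q_J|}\right)^{b+1},$$
which is the claim with exponent $b+1=\min\{1,b+1\}$.

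An alternative to the rearrangement is Hölder with an exponent $r$ satisfying $br'>-1$; however, that produces the power $1/r<b+1$, which is not sharp, so the rearrangement is preferred.

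The only step needing care is justifying the bathtub principle. It holds because $(1-|z|^2)^b$ is a decreasing function of the distance to $\T$, so the superlevel sets of the weight are exactly these boundary layers. We also need the edge case $s\le |J|$ handled uniformly; this is automatic since $t\le|Q_J|\simeq|J|^2$. All implicit constants depend only on $b$.
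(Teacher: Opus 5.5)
Your proof is correct. The case $b\ge 0$ is exactly the paper's argument.

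For $-1<b<0$ the route differs. The paper does not use the bathtub principle. Instead it computes $\|(1-|z|^2)^b\one_{Q_J}\|_{L^{-1/b,\infty}(\D)}\simeq |J|^{-b}$ and applies the Lorentz--H\"older inequality against $\|\one_{Q_J\cap F}\|_{L^{\frac{1}{b+1},1}(\D)}\simeq |Q_J\cap F|^{b+1}$.

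The two arguments are the same estimate in different packaging. Lorentz--H\"older is itself a consequence of the Hardy--Littlewood rearrangement inequality. The decreasing rearrangement of $(1-|z|^2)^b\one_{Q_J}$ is $\simeq |J|^{-b}u^{b}$, so $\int_0^{t}|J|^{-b}u^{b}\,du\simeq |J|^{-b}t^{b+1}$ is precisely your boundary-layer computation.

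Your route is more elementary and self-contained. It also shows explicitly that the boundary layer is the extremal set, which is why the exponent $b+1$ is attained. The paper's route is shorter and matches its systematic use of power weights in weak Lebesgue spaces (e.g.\ $W_\eta\in L^{1/\eta,\infty}$) combined with Lorentz--H\"older.

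Your remark that plain H\"older only yields exponents $1/r<b+1$ is right. Note, though, that the lemma is later used only with some $\sigma$ satisfying $2-c+b<\sigma<\min\{1,b+1\}$, so that weaker bound would already suffice there.
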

\begin{proof}
If $|Q_J\cap F|=0$, there is nothing to prove. Hence assume $|Q_J\cap F|>0$. We consider two different cases.

\vspace{0.1cm}

\noindent $\bullet$ If $b \geq 0$, then $\min\{1, b+1\}=1$, and for each $z \in Q_J$, we have 
\[
	(1-|z|^2)^b \lesssim (1-|z|)^b \lesssim |J|^b.
\]
Hence,
\begin{align*}
	|Q_J \cap F|_b
	&=(b+1)\int_{Q_J \cap F} (1-|z|^2)^b dA(z)\\
	&\lesssim |J|^b |Q_J \cap F|\\
	&\simeq |Q_J|^{\frac{b}{2}} |Q_J \cap F|,
\end{align*}
which gives 
\begin{equation}\label{b>=0}
	\frac{|Q_J \cap F|_b}{|Q_J|^{1+\frac{b}{2}}} \lesssim \frac{|Q_J \cap F|}{|Q_J|}.
\end{equation}

\vspace{0.1cm} 

\noindent $\bullet$ If $-1<b<0$, then $\min\{1, b+1\}=b+1$. Set $W_{-b}(z):=(1-|z|^2)^b$. A direct computation yields 
$$
\|W_{-b}\one_{Q_J}\|_{L^{-\frac{1}{b},\infty}(\D)}\simeq |J|^{-b}.
$$
Therefore, by Lorentz-H\"older's inequality, 
\begin{align*}
	|Q_J \cap F|_b
	& \lesssim  \|W_{-b}\one_{Q_J}\|_{L^{-\frac{1}{b},\infty}(\D)} \|\one_{Q_J \cap F}\|_{L^{\frac{1}{b+1},1}(\D)}\\
	& \simeq |J|^{-b} |Q_J \cap F|^{b+1}\\
	& \simeq |Q_J|^{-\frac{b}{2}} |Q_J \cap F|^{b+1}.
\end{align*}
Hence, 
\begin{equation}\label{-1<b<0}
	\frac{|Q_J \cap F|_b}{|Q_J|^{1+\frac{b}{2}}} \lesssim \left( \frac{|Q_J \cap F|}{|Q_J|} \right)^{b+1}.
\end{equation}
Combining \eqref{b>=0} and \eqref{-1<b<0} yields the desired estimate. 
\end{proof}

Since $0<2-c+b<\min\{1,b+1\}$, we may choose $\sigma>0$ such that
$2-c+b<\sigma<\min\{1,b+1\}$. Using this choice of $\sigma$, together with
$|Q_J\cap F|\le |Q_J|$ and Lemma~\ref{Q_J cap F}, we obtain
\[
	\frac{|Q_J \cap F|_b}{|Q_J|^{1+\frac{b}{2}}} 
	\lesssim \left( \frac{|Q_J \cap F|}{|Q_J|} \right)^{\min\{1,b+1\}} 
	\le \left( \frac{|Q_J \cap F|}{|Q_J|} \right)^\sigma.
\]
Therefore, combining the above estimate with \eqref{20260605eq11}, we have
\begin{align*}
	\mbox{LHS of \eqref{Goal_1a} }
	& \lesssim \sum_{J\in\calD} \left( \frac{|Q_J \cap F|}{|Q_J|} \right)^\sigma \cdot |Q_J\cap E|^{c-1-b}|Q_J|^{2-c+b} \\
	& =\sum_{J\in\calD} \left( \frac{|Q_J \cap E|}{|Q_J|} \right)^{c-1-b} \left( \frac{|Q_J \cap F|}{|Q_J|} \right)^\sigma |Q_J|,
\end{align*}
Therefore, it remains to prove the following variant of dyadic Carleson embedding estimate:
\begin{equation}\tag{\ref{Goal_1a}$'$}\label{Goal_2}
	\sum_{J\in\calD} \left( \frac{|Q_J \cap E|}{|Q_J|} \right)^{c-1-b} \left( \frac{|Q_J \cap F|}{|Q_J|} \right)^\sigma |Q_J| \lesssim |E|^{c-1-b} |F|^{2-c+b},
\end{equation}
where implicit constant above is independent of $E,F$ and $\calD$.

\vspace{0.1cm}

\noindent\textbf{$\diamond$ \textsl{Step 5.}} In the last step, we prove \eqref{Goal_2}. Denote 
\[
	e_J:=\frac{|Q_J \cap E|}{|Q_J|}, \qquad \textrm{and} \qquad f_J:=\frac{|Q_J \cap F|}{|Q_J|}.
\]
Then 
\[
	e_J^{c-1-b}=(c-1-b)\int_0^{e_J} s^{c-2-b} ds=(c-1-b)\int_0^1 s^{c-2-b} \one_{\{s<e_J\}} ds,
\]
and 
\[
	f_J^{\sigma}=\sigma\int_0^{f_J} t^{\sigma-1} dt=\sigma\int_0^1 t^{\sigma-1} \one_{\{t<f_J\}} dt.
\]
Recall that $0<c-1-b, \sigma<1$. Therefore, 
\begin{align} \label{20290609eq01}
	\mbox{LHS of \eqref{Goal_2} }
	&=\sum_{J\in\calD} e_J^{c-1-b} f_J^{\sigma} |Q_J| \nonumber \\
	& \simeq \sum_{J\in\calD}  \left(\int_0^1 s^{c-2-b} \one_{\{s<e_J\}} ds \right) \cdot \left( \int_0^1 t^{\sigma-1} \one_{\{t<f_J\}} dt \right) |Q_J| \nonumber  \\
	&= \int_0^1 \int_0^1 s^{c-2-b} t^{\sigma-1} \left(\sum_{J\in\calD} |Q_J| \one_{\{s<e_J\}}\one_{\{t<f_J\}} \right) dtds \nonumber  \\
    & =\int_0^1 \int_0^1 s^{c-2-b} t^{\sigma-1} \left(\sum_{J\in\calD} |Q_J| \one_{\{s<e_J, \; t<f_J\}} \right) dtds \nonumber  \\
	&= \int_0^1 \int_0^1 s^{c-2-b} t^{\sigma-1} \left(\sum_{J\in\mathcal{G}_{s,t}} |Q_J| \right) dtds.
\end{align}
where in the last equation above, we write, for any $0<s, t<1$, 
$$
\mathcal{G}_{s,t}:=\{J\in\calD: e_J>s, \; f_J>t\}.
$$
The following lemma estimates the inner sum in \eqref{20290609eq01}.
 
\begin{lem}
Assume that $E,F$ are measurable subsets of $\D$, and let $e_J$ and $f_J$ be defined as above. Then, for every $0<s,t<1$,
\[
	\sum_{J\in\mathcal{G}_{s,t}} |Q_J|
	\lesssim
	\min\left\{\frac{|E|}{s},\frac{|F|}{t}\right\},
\]
where the implicit constant is independent of $E,F,s,t$, and the dyadic system $\mathcal D$.
\end{lem}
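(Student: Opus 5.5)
By symmetry it suffices to prove $\sum_{J\in\mathcal G_{s,t}}|Q_J|\lesssim |E|/s$; the bound $|F|/t$ follows by the identical argument with $(E,s)$ replaced by $(F,t)$. We only use that every $J\in\mathcal G_{s,t}$ satisfies $e_J>s$. So it is enough to bound the total area of all dyadic tents $Q_J$ with $|Q_J\cap E|>s|Q_J|$.

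The plan is a stopping-time (maximal arc) decomposition combined with the geometric decay of tent areas along dyadic chains. First we note that $|Q_J|\simeq |J|^2$, since the tent has angular width $|J|$ and radial depth $|J|$. Let $\mathcal M_s$ be the collection of maximal arcs $J\in\calD$ with $e_J>s$. Maximality is legitimate because $|J|\le 1$ and the dyadic grid has a top generation. The arcs in $\mathcal M_s$ are pairwise disjoint, so their tents $Q_J$, $J\in\mathcal M_s$, are pairwise disjoint as well. Indeed, $Q_J\subseteq Q_{J'}$ whenever $J\subseteq J'$, and disjoint arcs give disjoint tents.

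Every $J\in\mathcal G_{s,t}$ is contained in some $J^*\in\mathcal M_s$. Grouping the sum accordingly gives
$$
\sum_{J\in\mathcal G_{s,t}}|Q_J|\le \sum_{J^*\in\mathcal M_s}\ \sum_{k\ge0}\ \sum_{J\in\calD_k(J^*)}|Q_J|
\simeq \sum_{J^*\in\mathcal M_s}\sum_{k\ge 0}2^k\cdot 2^{-2k}|J^*|^2\simeq \sum_{J^*\in\mathcal M_s}|Q_{J^*}|.
$$
Here we used that $\calD_k(J^*)$ contains $2^k$ arcs, each of length $2^{-k}|J^*|$. This is the key point: in contrast with Euclidean Carleson boxes, the total area of all descendant tents is comparable to the area of the parent tent, with no loss.

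Finally, since $e_{J^*}>s$, we have $|Q_{J^*}|<s^{-1}|Q_{J^*}\cap E|$. By the disjointness of the maximal tents,
$$
\sum_{J^*\in\mathcal M_s}|Q_{J^*}|\le s^{-1}\sum_{J^*\in\mathcal M_s}|Q_{J^*}\cap E|\le |E|/s.
$$
This completes the bound. The implicit constants depend only on the comparability $|Q_J|\simeq|J|^2$ and the geometric series $\sum_{k\ge0}2^{-k}$, so they are independent of $E$, $F$, $s$, $t$, and $\calD$.

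The only step needing care is the containment and disjointness of tents for nested and disjoint dyadic arcs. This follows directly from the definition \eqref{Carlesontent}: the tent is the radial region over $I$ with depth $|I|$, which is monotone in $I$. I do not expect any real obstacle here.
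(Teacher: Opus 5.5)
Your proof is correct and is essentially the paper's argument: pass to maximal arcs using the geometric bound $\sum_{J\in\calD(I)}|Q_J|\lesssim|Q_I|$, then combine $|Q_J|<s^{-1}|Q_J\cap E|$ with the disjointness of the maximal tents. The only cosmetic difference is that the paper takes the maximal elements of $\mathcal G_{s,t}$ itself, so a single stopping family gives both the $|E|/s$ and $|F|/t$ bounds, whereas you run the argument twice with the one-condition families $\{e_J>s\}$ and $\{f_J>t\}$.
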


\begin{proof}
Let $\mathcal{G}_{s,t}^{\max}$ be a maximal collection of $\mathcal{G}_{s,t}$.
Note that for any  $I \in\calD$, we have
\[
	\sum_{J\in\calD(I)} |Q_J| \lesssim  |Q_I|.
\]
Hence, 
\[
	\sum_{J\in\mathcal{G}_{s,t}} |Q_J| 
	\lesssim \sum_{J\in\mathcal{G}_{s,t}^{\max}} |Q_J|.
\]
By the definitions of $e_J,f_J$ and $\mathcal{G}_{s,t}$, we have for each $J\in\mathcal{G}_{s,t}^{\max}$, 
\[
	|Q_J|<\frac{|Q_J \cap E|}{s}, \quad \textrm{and} \quad  |Q_J|<\frac{|Q_J \cap F|}{t}.
\]
Note that the intervals in $\mathcal{G}_{s,t}^{\max}$ are pairwise disjoint, and hence
the corresponding Carleson tents $\left\{Q_J \right\}_{J \in \mathcal G_{s, t}^{\max}}$ are also pairwise disjoint. Therefore, 
\[
	\sum_{J\in\mathcal{G}_{s,t}^{\max}} |Q_J| \lesssim  \sum_{J\in\mathcal{G}_{s,t}^{\max}} \frac{|Q_J \cap E|}{s} \leq \frac{|E|}{s},
\]
and 
\[
	\sum_{J\in\mathcal{G}_{s,t}^{\max}} |Q_J| \lesssim \sum_{J\in\mathcal{G}_{s,t}^{\max}} \frac{|Q_J \cap F|}{t} \leq \frac{|F|}{t}.
\]
Hence,
\[
	\sum_{J\in\mathcal{G}_{s,t}} |Q_J| 
	\lesssim \sum_{J\in\mathcal{G}_{s,t}^{\max}} |Q_J|
	\lesssim \min\left\{ \frac{|E|}{s},\frac{|F|}{t} \right\}.
\]
\end{proof}

Therefore, we have 
$$
	\mbox{LHS of \eqref{Goal_2} } \lesssim \int_0^1 \int_0^1 s^{c-2-b} t^{\sigma-1} \min\left\{ \frac{|E|}{s},\frac{|F|}{t} \right\} ds dt.
$$ 
Finally, it suffices to prove the following 
\begin{lem}
Assume $E,F$ are measurable subsets of $\D$ and $0<2-c+b<\sigma<\min\{1, b+1\}$.
Then 
\begin{equation}\label{Goal_3}
	\int_0^1 \int_0^1 s^{c-2-b} t^{\sigma-1} \min\left\{ \frac{|E|}{s},\frac{|F|}{t} \right\} dt ds \lesssim |E|^{c-1-b}|F|^{2-c+b},
\end{equation}
where the implicit constant above is independent of the choice of $E$ and $F$. 
\end{lem}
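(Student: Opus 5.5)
We need to bound $\int_0^1\int_0^1 s^{c-2-b}t^{\sigma-1}\min\{|E|/s,|F|/t\}\,dt\,ds$. Write $\alpha:=c-1-b\in(0,1)$, so $s^{c-2-b}=s^{\alpha-1}$ and $2-c+b=1-\alpha$. The hypothesis $2-c+b<\sigma$ reads $\sigma>1-\alpha$, i.e. $\sigma+\alpha>1$.

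The plan is to integrate in $t$ first, for fixed $s$, splitting at the crossover point. Set $t_0:=t_0(s)=s|F|/|E|$, so that $\min\{|E|/s,|F|/t\}=|E|/s$ for $t<t_0$ and $=|F|/t$ for $t\ge t_0$. (If $|E|=0$ or $|F|=0$ the integrand vanishes, so assume both are positive.)

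\textbf{Case $t_0<1$.} The inner integral is
\[
\frac{|E|}{s}\int_0^{t_0}t^{\sigma-1}\,dt+|F|\int_{t_0}^1 t^{\sigma-2}\,dt .
\]
The first term is $\simeq \frac{|E|}{s}t_0^{\sigma}$. Since $\sigma<1$, the second is $\lesssim |F|t_0^{\sigma-1}$, which equals the same quantity $\frac{|E|}{s}t_0^\sigma$ after substituting $t_0$. So the inner integral is
\[
\lesssim |E|^{1-\sigma}|F|^{\sigma}s^{\sigma-1}.
\]

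\textbf{Case $t_0\ge1$.} Here the integrand equals $|E|/s$ throughout, and the inner integral is $\lesssim |E|/s$. Because $t_0\ge 1$, we have $(t_0)^{\sigma}\ge1$, so $|E|/s\le |E|^{1-\sigma}|F|^\sigma s^{\sigma-1}$ again. Thus in both cases the inner integral is bounded by
\[
\min\left\{\frac{|E|}{s},\ |E|^{1-\sigma}|F|^{\sigma}s^{\sigma-1}\right\}.
\]

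\textbf{Integration in $s$.} Multiply by $s^{\alpha-1}$ and integrate over $s\in(0,1)$. It is cleanest to split at $s_0:=|F|/|E|$, which is exactly where $t_0=1$. On the region $s<\min\{s_0,1\}$, use the bound $|E|^{1-\sigma}|F|^\sigma s^{\sigma-1}$. The exponent is $s^{\alpha+\sigma-2}$, and $\alpha+\sigma-2>-1$ because $\sigma+\alpha>1$. The integral therefore converges at $0$ and gives
\[
\lesssim |E|^{1-\sigma}|F|^\sigma s_0^{\alpha+\sigma-1}=|E|^{\alpha}|F|^{1-\alpha}.
\]
This is the one place the lower bound $\sigma>2-c+b$ is used.

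On the region $s_0\le s<1$ (nonempty only when $s_0<1$), use the bound $|E|/s$. The exponent is $s^{\alpha-2}$, and $\alpha<1$, so the integral is
\[
\lesssim |E|s_0^{\alpha-1}=|E|^{\alpha}|F|^{1-\alpha}.
\]
Summing the two pieces gives $|E|^{c-1-b}|F|^{2-c+b}$, which is \eqref{Goal_3}. The implicit constants depend only on $\alpha$ and $\sigma$, so they are independent of $E$ and $F$.

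The only delicate point is tracking which exponent inequality is needed on each piece: $\sigma<1$ for the tail in $t$, $\sigma+\alpha>1$ for integrability of $s^{\alpha+\sigma-2}$ near $s=0$, and $\alpha<1$ for the tail in $s$.
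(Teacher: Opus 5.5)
Your strategy works, but the split point in the $s$-integration is inverted. This is a slip with an immediate fix, not a missing idea. Since $t_0(s)=s|F|/|E|$, the equality $t_0=1$ holds at $s=|E|/|F|$, not at $s=|F|/|E|$.

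With your definition $s_0=|F|/|E|$, both displayed identities are false:
\[
|E|^{1-\sigma}|F|^{\sigma}s_0^{\alpha+\sigma-1}=|E|^{2-\alpha-2\sigma}|F|^{\alpha+2\sigma-1},
\qquad
|E|\,s_0^{\alpha-1}=|E|^{2-\alpha}|F|^{\alpha-1}.
\]
The argument then does not close. For example, if $|E|<|F|$, your first region is all of $(0,1)$. It produces $|E|^{1-\sigma}|F|^{\sigma}$, which exceeds $|E|^{\alpha}|F|^{1-\alpha}$ by the unbounded factor $(|F|/|E|)^{\alpha+\sigma-1}$.

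The fix is to take $s_0:=|E|/|F|$. Then $s<\min\{s_0,1\}$ is exactly the range where $t_0<1$. Since $\alpha+\sigma-1>0$, we have $\min\{s_0,1\}^{\alpha+\sigma-1}\le s_0^{\alpha+\sigma-1}$, so the first piece is
\[
\lesssim |E|^{1-\sigma}|F|^{\sigma}\left(\frac{|E|}{|F|}\right)^{\alpha+\sigma-1}=|E|^{\alpha}|F|^{1-\alpha}.
\]
The second piece is nonempty only when $|E|<|F|$, and is $\lesssim |E|\,s_0^{\alpha-1}=|E|^{\alpha}|F|^{1-\alpha}$. So your final identities are exactly the correct ones for $s_0=|E|/|F|$. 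Only the definition was flipped, and with that correction the proof is complete.

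A quick check that $|E|/|F|$ is the right scale: substitute $s=(|E|/|F|)u$. Then $\int_0^\infty s^{\alpha-1}\min\{|E|/s,\,|E|^{1-\sigma}|F|^{\sigma}s^{\sigma-1}\}\,ds$ becomes $|E|^{\alpha}|F|^{1-\alpha}\int_0^\infty u^{\alpha-1}\min\{u^{-1},u^{\sigma-1}\}\,du$. This is finite exactly because $\alpha+\sigma>1$ and $\alpha<1$.

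Compared with the paper, the route differs only in bookkeeping. The paper splits into the cases $|E|\ge|F|$ and $|E|<|F|$ and switches the order of integration between them: $t$ first in the former, $s$ first in the latter. That way the crossover point ($s|F|/|E|$, resp.\ $t|E|/|F|$) always lies in $[0,1]$, and no analogue of your case $t_0\ge1$ arises. In the case $|E|\ge|F|$ it integrates over all $s\in(0,1)$ to get $|E|^{1-\sigma}|F|^{\sigma}$, then uses $(|F|/|E|)^{c-2-b+\sigma}\le1$.

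You instead keep one order of integration and absorb $t_0\ge1$ into the pointwise bound $\min\{|E|/s,\,|E|^{1-\sigma}|F|^{\sigma}s^{\sigma-1}\}$ for the inner integral. You then split once in $s$. This gives one uniform computation instead of two mirror-image ones. Both routes use exactly the same exponent conditions: $0<\sigma<1$, $\alpha+\sigma>1$, and $\alpha<1$, with your $\alpha=c-1-b$.
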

\begin{proof}
Without loss of generality, we may assume $|E|>0$ and $|F|>0$. We consider two different cases.

\vspace{0.1cm}

\noindent $\bullet$ If $|E| \geq |F|$, then
\begin{align*}
	\mbox{LHS of \eqref{Goal_3} }
	=&|E|\int_0^1 s^{c-3-b} \left( \int_0^{\frac{|F|}{|E|}s} t^{\sigma-1} dt  \right) ds + |F|\int_0^1 s^{c-2-b} \left(\int_{\frac{|F|}{|E|}s}^1 t^{\sigma-2} dt \right) ds\\
	:=&I_1+I_2.
\end{align*}

\vspace{0.1cm}

\noindent \underline{\textit{Estimate of $I_1$.}} Since $c-2-b+\sigma>0$, we have
$$
I_1 \simeq|E|^{1-\sigma}|F|^{\sigma}\int_0^1 s^{c-3-b+\sigma} ds \simeq |E|^{1-\sigma}|F|^{\sigma}.
$$

\vspace{0.1cm}

\noindent \underline{\textit{Estimate of $I_2$.}} Since $\sigma<1$, we have
\begin{align*}
	I_2
	& \simeq |F| \int_0^1 s^{c-2-b}\left[ \left( \frac{|F|}{|E|}s \right)^{\sigma-1}-1 \right] ds\\
	& \lesssim |E|^{1-\sigma}|F|^{\sigma}\int_0^1 s^{c-3-b+\sigma} ds\\
	& \simeq |E|^{1-\sigma}|F|^{\sigma}.
\end{align*}
Thus, 
\[
	\mbox{LHS of \eqref{Goal_3} } 
	\lesssim |E|^{1-\sigma}|F|^{\sigma}
	 = |E|^{c-1-b}|F|^{2-c+b}\left( \frac{|F|}{|E|} \right)^{c-2-b+\sigma}.
\]
Since $|E| \geq |F|$ and $c-2-b+\sigma>0$, we deduce that 
\[
	\mbox{LHS of \eqref{Goal_3} } \lesssim |E|^{c-1-b}|F|^{2-c+b}.
\]

\medskip 

\noindent $\bullet$ If $|E|<|F|$, then 
\begin{align*}
	\mbox{LHS of \eqref{Goal_3} }
	&=|F|\int_0^1 t^{\sigma-2} \left(\int_0^{\frac{|E|}{|F|}t} s^{c-2-b} ds \right) dt + |E|\int_0^1 t^{\sigma-1} \left(\int_{\frac{|E|}{|F|}t}^1  s^{c-3-b} ds \right) dt\\
	&:=I_3+I_4.
\end{align*}
\vspace{0.1cm}

\noindent \underline{\textit{Estimate of $I_3$.}} 
Since $c-1-b>0$ and $c-2-b+\sigma>0$, we have
\begin{align*}
	I_3
	& \simeq |E|^{c-1-b}|F|^{2-c+b} \int_0^1 t^{c-3-b+\sigma} dt\\
	& \simeq |E|^{c-1-b}|F|^{2-c+b}.
\end{align*}

\vspace{0.1cm}

\noindent \underline{\textit{Estimate of $I_4$.}} Since $c-2-b<0$ and $c-2-b+\sigma>0$, we have
\begin{align*}
	I_4
	& \simeq |E|\int_0^1 t^{\sigma-1}\left[ \left( \frac{|E|}{|F|}t \right)^{c-2-b}-1 \right] dt\\
	& \lesssim|E|^{c-1-b}|F|^{2-c+b}\int_0^1 t^{c-2-b+\sigma-1} dt\\
	& \simeq |E|^{c-1-b}|F|^{2-c+b}.
\end{align*}

Therefore, in both cases above, one has
\[
	\mbox{LHS of \eqref{Goal_3} } 
	\lesssim |E|^{c-1-b}|F|^{2-c+b}.
\]

\medskip 

The proof of Theorem \ref{restricted-weak Forelli--Rudin at CH}, (ii) is complete.
\end{proof}

\bigskip 

\section{Analysis of the Main Case: Restricted Weak-type Bounds at the Critical--Vertical Intersection $\calP_{\textnormal{C-V}}$ away from the horizontal cut-off line} \label{Sec07}

Our next goal is to analyze the behavior at the critical--vertical intersection
$$
\calP_{\textnormal{C-V}}=(b+1,\;c-a-1),
$$
where the critical line \eqref{criticalline} meets the vertical cut-off line
\eqref{verticalcut}. We focus on the case where this point lies strictly above
the vertical--horizontal intersection $\calP_{\textnormal{V-H}}$. Equivalently, we impose the
following condition:
\begin{equation}  \tag{${\bf P_2}$} \label{conditionP2}
\begin{cases}
a,b,c\in\R \quad \textrm{satisfy \eqref{maincase}}; \\
c-a-1 \le 1; \\
c>1.
\end{cases}
\end{equation}
Indeed, the second condition, together with \eqref{maincase}, guarantees $\calP_{\textnormal{C-V}}$ lies in the region $0 <1/p<1$ and $0<1/q \le 1$; while the third condition ensures that $\calP_{\textnormal{C-V}}$ lies strictly above the horizontal cut-off line \eqref{horizontalcut}. This is the typical configuration in which the vertical--horizontal intersection $\calP_{\textnormal{V--H}}$ lies in Regions II and IV, where $c-a-1<1$. The same situation can also arise in Regions I and III when $c-a-1=1$ (see Figure \ref{Fig13}).

\begin{figure}[ht]
\centering

\begin{minipage}[t]{0.48\textwidth}
\centering
\begin{tikzpicture}[scale=3.3]
\def\yh{0.28}       
\def\xc{0.72}        
\def\xright{1.35}
\def\ybottom{-0.05}

\fill[orange!20, opacity=0.3] (0,\yh) -- (\xc,\yh) -- (\xc,1) -- cycle;    

\draw[->, thick] (-0.08,0) -- (\xright+0.08,0) node[right] {$1/p$};
\draw[->, thick] (0,\ybottom-0.03) -- (0,1.12) node[above] {$1/q$};

\draw[thick] (0,0) rectangle (1,1);
\draw[thick] (1,0) -- (1,-0.025) node[below] {$1$};
\draw[thick] (0,1) -- (-0.025,1) node[left] {$1$};

\draw[blue, very thick] (0,\yh) -- (\xc,1);
\node[above] at (\xc/2-0.25, \yh/2+0.3) {\scriptsize \eqref{criticalline}};

\draw[orange, dashed] (\xc,\yh) -- (\xc,1);    
\draw[orange, dashed] (0,\yh) -- (\xc,\yh);  
\draw[dashed] (\xc, \yh) -- (1, \yh);
\draw[dashed] (\xc, \yh) -- (\xc, 0);

\node[font=\bfseries\large, orange!90!black] at (0.6,0.67) {IV};

\draw[orange!90!black, line width=3pt, opacity=0.18, line cap=round,
      shorten <=2.2pt, shorten >=2.2pt]
    (0.45, 0) -- (0.45, 1);

\fill[black] (0.45, 0.73) circle (.4pt);
\node[left] at (0.45, 0.74) {\scriptsize $\calP_{\textnormal{C-V}}$};

\fill[black] (0.45, 0.4) circle (.4pt);
\node[right] at (0.45, 0.4) {\scriptsize $\calP_{\textnormal{V-H}}$};

\node[right] at (0.44, 0.1) {\scriptsize \eqref{verticalcut}};
\end{tikzpicture}
\end{minipage}
\hfill
\begin{minipage}[t]{0.48\textwidth}
\centering
\begin{tikzpicture}[scale=3.3]
\def\yh{0.28}       
\def\xc{0.72}        
\def\xright{1.35}
\def\ybottom{-0.15}

\fill[green!20, opacity=0.3] (0,\ybottom) rectangle (\xc,\yh);           

\draw[->, thick] (-0.08,0) -- (\xright+0.08,0) node[right] {$1/p$};
\draw[->, thick] (0,\ybottom-0.03) -- (0,1.12) node[above] {$1/q$};

\draw[thick] (0,0) rectangle (1,1);
\draw[thick] (1,0) -- (1,-0.025) node[below] {$1$};
\draw[thick] (0,1) -- (-0.025,1) node[left] {$1$};

\draw[blue, very thick] (0,\yh) -- (\xc,1);
\node[above] at (\xc/2-0.25, \yh/2+0.3) {\scriptsize \eqref{criticalline}};

\draw[green!60!black, dashed] (\xc,\ybottom) -- (\xc,\yh);   
\draw[black, dashed] (\xc,\yh) -- (\xc,1);     
\draw[green!60!black, very thick] (0,\yh) -- (\xc,\yh);  

\node[font=\bfseries\large, green!45!black] at (0.2,0.15) {II};

\fill[black] (0.45,\yh-0.1) circle (.4pt);
\node[right] at (0.45,\yh-0.08) {\scriptsize  $\calP_{\textnormal{V-H}}$};

\fill[black] (0.45, 0.73) circle (.4pt);
\node[left] at (0.45, 0.74) {\scriptsize $\calP_{\textnormal{C-V}}$};

\draw[orange!90!black, line width=3pt, opacity=0.18, line cap=round,
      shorten <=2.2pt, shorten >=2.2pt]
    (0.45, 0) -- (0.45, 1);

\node[right] at (0.45, 0.5) {\scriptsize  \eqref{verticalcut}};
\end{tikzpicture}
\end{minipage}

\vspace{0.4em}

{\small $\calP_{\textnormal{V-H}}$ belongs to IV and II with $c-a-1<1$.}

\vspace{1em}

\begin{minipage}[t]{0.48\textwidth}
\centering
\begin{tikzpicture}[scale=3.3]
\def\yh{0.28}       
\def\xc{0.72}        
\def\xright{1.35}
\def\ybottom{-0.15}    

\fill[magenta!20, opacity=0.3] (\xc,\ybottom) rectangle (\xright,\yh);        

\draw[->, thick] (-0.08,0) -- (\xright+0.08,0) node[right] {$1/p$};
\draw[->, thick] (0,\ybottom-0.03) -- (0,1.12) node[above] {$1/q$};

\draw[thick] (0,0) rectangle (1,1);
\draw[thick] (1,0) -- (1,-0.025) node[below] {$1$};
\draw[thick] (0,1) -- (-0.025,1) node[left] {$1$};

\draw[blue, very thick] (0,\yh) -- (\xc,1);
\node[above] at (\xc/2-0.05, \yh/2+0.5) {\scriptsize \eqref{criticalline}};

\draw[black, dashed] (\xc,\yh) -- (0, \yh);     
\draw[magenta, very thick] (\xc,\ybottom) -- (\xc,\yh);   
\draw[magenta, very thick] (\xc,\yh) -- (\xright,\yh);     

\node[font=\bfseries\large, magenta!85!black] at (0.85,0.13) {I};

\fill[black] (\xc, .15) circle (.4pt);
\node[left] at (\xc-0.02,.15) {\scriptsize $\calP_{\textnormal{V-H}}$};

\fill[black] (\xc, 1) circle (.4pt);
\node[above] at (\xc, 1.02) {\scriptsize $\calP_{\textnormal{C-V}}$};

\draw[orange!90!black, line width=3pt, opacity=0.18, line cap=round,
      shorten <=2.2pt, shorten >=2.2pt]
    (\xc, 0) -- (\xc, 1);
\node[right] at (\xc, 0.5) {\scriptsize  \eqref{verticalcut}};

\end{tikzpicture}
\end{minipage}
\hfill
\begin{minipage}[t]{0.48\textwidth}
\centering
\begin{tikzpicture}[scale=3.3]
\def\yh{0.28}       
\def\xc{0.72}        
\def\xright{1.35}
\def\ybottom{-0.05}          

\fill[violet!20, opacity=0.3] (\xc,\yh) rectangle (1,1);   
\fill[violet!20, opacity=0.3] (1,\yh) rectangle (\xright,1);

\draw[->, thick] (-0.08,0) -- (\xright+0.08,0) node[right] {$1/p$};
\draw[->, thick] (0,\ybottom-0.03) -- (0,1.12) node[above] {$1/q$};

\draw[thick] (0,0) rectangle (1,1);
\draw[thick] (1,0) -- (1,-0.025) node[below] {$1$};
\draw[thick] (0,1) -- (-0.025,1) node[left] {$1$};

\draw[blue, very thick] (0,\yh) -- (\xc,1);
\node[above] at (\xc/2-0.05, \yh/2+0.5) {\scriptsize \eqref{criticalline}};

\draw[violet, very thick] (\xc,\yh) -- (\xc,1);        
\draw[violet, dashed]    (\xc,\yh) -- (\xright,\yh);   
\draw[black, dashed] (0,\yh) -- (\xc,\yh);  

\node[font=\bfseries\large, violet!85!black] at (0.85,0.63) {III};

\draw[orange!90!black, line width=3pt, opacity=0.18, line cap=round,
      shorten <=2.2pt, shorten >=2.2pt]
    (\xc, 0) -- (\xc, 1);

\fill[black] (\xc, 0.4) circle (.4pt);
\node[left] at (\xc-0.02, 0.4) {\scriptsize $\calP_{\textnormal{V-H}}$};

\fill[black] (\xc, 1) circle (.4pt);
\node[above] at (\xc, 1) {\scriptsize $\calP_{\textnormal{C-V}}$};

\node[right] at (\xc, 0.1) {\scriptsize \eqref{verticalcut}};
\end{tikzpicture}
\end{minipage}

\vspace{0.4em}

{\small $\calP_{\textnormal{V-H}}$ belongs to I and III with $c-a-1=1$.}

\caption{\small Four cases when $\calP_{\textnormal{C-V}}$ lies strictly above $\calP_{\textnormal{V-H}}$.}
\label{Fig13}
\end{figure}

First, recall from Section~\ref{Subsec4.1} that the weak--type estimates for both
$T_{a,b,c}$ and $S_{a,b,c}$ fail along the vertical cut-off line
\eqref{verticalcut}. Therefore, under the assumption \eqref{conditionP2}, we shall
focus instead on the restricted weak--type behavior at $\mathcal P_{\textnormal{C-V}}$. Here is the main result in this section. 

\begin{thm}\label{restricted-weak Forelli--Rudin at CV}
Let $a, b, c \in \R$ satisfy \eqref{conditionP2}. Then the Forelli--Rudin operators $T_{a, b, c}$ and $S_{a, b, c}$ are bounded from $L^{\frac{1}{b+1}, 1}(\D)$ to $L^{\frac{1}{c-1-a}, \infty}(\D)$. 
\end{thm}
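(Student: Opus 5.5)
The plan is to follow the scheme of the proof of Theorem~\ref{restricted-weak Forelli--Rudin at CH}, (ii), with the roles of the two exponents adjusted. It suffices to treat $S_{a,b,c}$, since $|T_{a,b,c}f|\le S_{a,b,c}|f|$.

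\emph{Reduction to the diagonal.} Set $\eta:=c-2-a-b>0$ and $W_\eta(z):=(1-|z|^2)^{-\eta}\in L^{1/\eta,\infty}(\D)$. We write $S_{a,b,c}=W_\eta\, S_{c-2-b,b,c}$. Since
$$
c-1-a=(b+1)+\eta,
$$
the Lorentz--H\"older inequality reduces the claim to the diagonal restricted weak-type bound
$$
\|S_{c-2-b,b,c}\one_F\|_{L^{\frac1{b+1},\infty}(\D)}\lesssim |F|^{b+1}.
$$
The condition $c-a-1\le 1$ in \eqref{conditionP2} guarantees that the target exponent $1/(c-1-a)$ is at least $1$. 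We may also assume $-1<b\le 0$, since otherwise the point $\calP_{\textnormal{C-V}}$ lies outside $\Omega_{\mathcal H}$. Arguing exactly as in \emph{Step 2} of that proof, the diagonal bound follows from the bilinear estimate
$$
\int_E S_{c-2-b,b,c}\one_F\,dA\lesssim |E|^{-b}|F|^{b+1}
$$
for all measurable $E,F\subseteq\D$. When $b=0$ this is simply boundedness on $L^1$ tested on sets.

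\emph{Sparse domination.} We use the two adjacent dyadic systems, as in \emph{Step 3}, to dominate the kernel
$$
(1-|z|^2)^{c-2-b}\,|1-z\overline w|^{-c}
$$
by
$$
\sum_J \frac{\one_{Q_J}(w)}{|J|^{c}}\sum_{k\ge0}2^{-k(c-2-b)}|J|^{c-2-b}\sum_{I\in\calD_k(J)}\one_{Q_I^{\rm up}}(z).
$$
We then split into two cases according to the sign of $c-2-b$.
\begin{itemize}
\item If $c-2-b<0$, Lemma~\ref{20260613lem01} (with $2-c+b$ now positive) bounds the inner sum by $|Q_J\cap E|^{c-1-b}|Q_J|^{2-c+b}$.
\item If $c-2-b\ge0$, the $k$-sum is geometric and gives simply $|Q_J\cap E|$.
\end{itemize}
In both cases, combining this with Lemma~\ref{Q_J cap F} (exponent $b+1$, since $b\le 0$) reduces everything to the Carleson-type estimate
$$
\sum_{J\in\calD} e_J^{\theta} f_J^{\,b+1}|Q_J|\lesssim |E|^{-b}|F|^{b+1},
$$
where $\theta=\min\{1,\,c-1-b\}$ and $e_J,f_J$ are as in \emph{Step 5}.

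\emph{The main obstacle.} The difficulty is that the exponent on $f_J$ is now exactly the endpoint value $b+1$: unlike in Section~\ref{Sec06}, there is no room to choose an intermediate $\sigma$. I would nevertheless run the same layer-cake argument. Using the maximal-tent lemma to bound $\sum_{J\in\mathcal G_{s,t}}|Q_J|$ by $\min\{|E|/s,\,|F|/t\}$, it remains to bound
$$
\int_0^1\!\!\int_0^1 s^{\theta-1}t^{b}\min\Big\{\frac{|E|}{s},\frac{|F|}{t}\Big\}\,dt\,ds .
$$
For fixed $s$, split the $t$-integral at $t_0=s|F|/|E|$ (when $t_0<1$).
\begin{itemize}
\item On $(0,t_0)$, the integrand $t^{b}\cdot|E|/s$ is integrable because $b>-1$.
\item On $(t_0,1)$, the integrand is $|F|t^{b-1}$, whose integral is $\lesssim t_0^{\,b}/|b|$ for $b<0$, with a logarithm when $b=0$ that needs separate care or a direct $L^1$ argument.
\end{itemize}
Both pieces give $|E|^{-b}|F|^{b+1}s^{\theta+b-1}$. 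This is integrable in $s$ precisely when $\theta+b>0$, which holds because either $\theta+b=c-1>0$ (here the hypothesis $c>1$ of \eqref{conditionP2} enters decisively) or $\theta+b=1+b>0$. The complementary regime $t_0\ge 1$ is handled symmetrically, as in the two cases of \eqref{Goal_3}. Verifying that the endpoint exponent $b+1$ causes no divergence is the step I expect to need the most care.
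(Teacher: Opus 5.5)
Your proposal is correct and follows the same architecture as the paper: the Forelli--Rudin reduction to $S_{c-2-b,b,c}$, the bilinear bound $\int_E S_{c-2-b,b,c}\one_F\,dA\lesssim |E|^{-b}|F|^{b+1}$, two-grid domination over Carleson tents, and the layer-cake Carleson embedding.

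The only real difference is the orientation of the Whitney decomposition. You keep the pieces $Q_I^{\mathrm{up}}$ on the $z$-side, exactly as in Section~\ref{Sec06}, and leave $(1-|w|^2)^b$ as a weight on $F$. Your $k$-sum therefore carries $2^{k(2-c+b)}$, so you must split on the sign of $2-c+b$: Lemma~\ref{20260613lem01} in one case and disjointness in the other, with Lemma~\ref{Q_J cap F} supplying $f_J^{b+1}$. The paper instead interchanges the roles of $b$ and $c-2-b$ and decomposes in $w$. There the factor $2^{-kb}$ always has a positive exponent, so Lemma~\ref{20260618lem01} gives $f_J^{1+b}$ without a case split. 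The sign of $c-2-b$ is absorbed into the weight estimate of Lemma~\ref{Q_J cap E}. Both routes reach the same sum $\sum_J e_J^{\min\{1,c-1-b\}}f_J^{1+b}|Q_J|$, so neither is substantially more economical.

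Three small remarks:
\begin{enumerate}
\item Condition \eqref{conditionP2} forces $-1<b<c-a-2\le 0$. So $b<0$ always, and your concern about a logarithm at $b=0$ is vacuous.
\item When $2-c+b=0$ the $k$-sum is not geometric. The bound by $|Q_J\cap E|$ still holds, because $\{Q_I^{\mathrm{up}}\}_{I\in\calD(J)}$ partitions $Q_J$ and $2^{k(2-c+b)}\le 1$.
\item When $\theta=1$ (i.e.\ $2-c+b\le 0$), the complementary regime $|E|/|F|\le s<1$ is not literally symmetric to \eqref{Goal_3}. It produces $|E|\log(|F|/|E|)/(1+b)$ rather than a power. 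This is still $\lesssim |E|^{-b}|F|^{1+b}$, since $x^{1+b}\log(1/x)$ is bounded on $(0,1)$. Alternatively, as the paper does, replace $e_J^\theta$ by $e_J^\sigma$ for some $\sigma\in(-b,1)$ with $\sigma\le\theta$; this is legitimate since $e_J\le 1$.
\end{enumerate}
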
 

\subsection{Proof of Theorem \ref{restricted-weak Forelli--Rudin at CV}: Restricted weak-type estimates for $T_{a, b, c}$ and $S_{a, b, c}$ at the intersection $\calP_{\textnormal{C-V}}$}

The proof of Theorem~\ref{restricted-weak Forelli--Rudin at CV} is similar to that of Theorem~\ref{restricted-weak Forelli--Rudin at CH}, (ii). We therefore include only a \emph{sketch}, indicating the necessary modifications.

\medskip 

\noindent\textbf{$\diamond$ \textsl{Step 1.}} We first reduce the problem to the diagonal case via the Forelli--Rudin method. Set $\eta: =c-2-a-b>0$ and $W_\eta(z)=(1-|z|^2)^{-\eta}$.
Then, as usual, $W_\eta \in L^{\frac{1}{\eta}, \infty}(\D)$ and we write 
\[
	S_{a,b,c}f(z)=W_\eta(z) S_{a+\eta,b,c}f(z)=W_\eta(z)S_{c-2-b ,b, c}f(z). 
\]
Since $c-1-a=\eta+(b+1)$,  Lorentz-H\"older's inequality gives
$$
\| S_{a,b,c}f \|_{L^{\frac{1}{c-1-a},\infty}(\D)} \lesssim \|W_\eta\|_{L^{\frac{1}{\eta},\infty}(\D)} \| S_{c-2-b, b, c}f \|_{L^{\frac{1}{b+1},\infty}(\D)}.
$$
Then it suffices to show
$$
\| S_{c-2-b, b, c}f \|_{L^{\frac{1}{b+1},\infty}(\D)} \lesssim \left\|f \right\|_{L^{\frac{1}{b+1}, 1}(\D)}, 
$$
which is equivalent to showing
\begin{equation} \tag{\ref{simplified case of restricted weak type A}'} \label{20260613eq01}
\| S_{c-2-b, b, c} \one_F \|_{L^{\frac{1}{b+1},\infty}(\D)} \lesssim |F|^{b+1}
\end{equation} 
for every measurable set $F \subseteq \D$. 

\medskip

\noindent\textbf{$\diamond$ \textsl{Step 2.}} 
As in \textbf{\textsl{Step 2}} of the proof in Theorem~\ref{restricted-weak Forelli--Rudin at CH}, (ii), we reduce the proof of \eqref{20260613eq01} to the following stronger estimate. 

\begin{lem}\label{simple function A}
Let $E, F \subseteq \D$ be measurable sets. Then
\[
	\int_{E} |S_{c-2-b,b,c} \one_{F}(z)| dA(z)
	\leq C |E|^{-b} |F|^{b+1}.
\]
\end{lem}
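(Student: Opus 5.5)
Lemma~\ref{simple function A} will be proved by following \textbf{\textsl{Steps 3--5}} in the proof of Theorem~\ref{restricted-weak Forelli--Rudin at CH}, (ii), with new exponents. Under \eqref{conditionP2} we are on the vertical cut-off line, so we may assume $-1<b<0$; otherwise $\calP_{\textnormal{C-V}}$ is not in $\Omega_{\mathcal H}$. Then $0<b+1<1$ and $0<-b<1$. Also $c-2-b>a>-1$, and the condition $c>1$ of \eqref{conditionP2} gives $2-c+b<b+1$.

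\textbf{Sparse domination (Step 3).} We use the same adjacent-dyadic domination of the kernel over Carleson tents. Since $1-|z|^2\simeq 2^{-k}|J|$ on $Q_I^{\textnormal{up}}$ with $I\in\calD_k(J)$, the left-hand side is bounded by
$$
\sum_{i\in\{1,2\}}\sum_{J\in\calD^{(i)}}\frac{|Q_J\cap F|_b}{|Q_J|^{1+\frac b2}}\Big[\sum_{k\ge0}2^{k(2-c+b)}\min\{|Q_J\cap E|,\,2^{-k}|Q_J|\}\Big].
$$

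\textbf{Inner and outer factors (Step 4).} For the inner sum we use Lemma~\ref{20260613lem01}, which holds whenever $2-c+b>0$. In the borderline case $2-c+b\le 0$ we replace it by the bound $|Q_J\cap E|^{\theta}|Q_J|^{1-\theta}$ for any $\theta\in(c-1-b,1]$, which is trivial in that case. For the outer factor we use Lemma~\ref{Q_J cap F} with $b<0$, giving $(|Q_J\cap F|/|Q_J|)^{b+1}$. Since $e_J:=|Q_J\cap E|/|Q_J|\le1$, we may lower the $E$-exponent: from $e_J^{c-1-b}$ down to any $\rho\in(0,c-1-b]$, or from $e_J^{\theta}$ down to $\rho$ when $\theta\ge\rho$. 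We aim for $\rho$ slightly below $-b$ and keep the $F$-exponent $\sigma=b+1$. This requires $c-1-b\ge -b$, i.e.\ $c\ge1$, which holds. The task is then the Carleson-type estimate
$$
\sum_{J}e_J^{\rho}f_J^{b+1}|Q_J|\lesssim |E|^{-b}|F|^{b+1},\qquad f_J:=\frac{|Q_J\cap F|}{|Q_J|}.
$$

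\textbf{Layer cake (Step 5) and the main obstacle.} Writing $e_J^\rho$ and $f_J^{b+1}$ in layer-cake form, and using the maximal-interval bound $\sum_{J\in\mathcal G_{s,t}}|Q_J|\lesssim\min\{|E|/s,|F|/t\}$, reduces matters to
$$
\int_0^1\!\!\int_0^1 s^{\rho-1}t^{b}\min\Big\{\frac{|E|}{s},\frac{|F|}{t}\Big\}\,dt\,ds .
$$
Here lies the main difficulty. The scaling exponent $\rho+b+1$ must exceed $1$ for the $I_1$--$I_4$ computation to close, i.e.\ we need $\rho>-b$. But the target exponents $(-b,\,b+1)$ sum exactly to $1$, so we sit exactly at the endpoint where the integral is logarithmically divergent. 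The choice of $\rho$ and the $F$-exponent is therefore delicate.

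I would first redistribute the exponents: take $\rho=-b+\epsilon$ together with an $F$-exponent $b+1-\epsilon'$ for suitable small $\epsilon,\epsilon'$. Lowering the $F$-exponent costs nothing since $f_J\le1$. Raising $\rho$ above $-b$ is permissible only when $c-1-b>-b$, i.e.\ $c>1$, which is exactly the strict inequality in \eqref{conditionP2}. With $\rho>-b$ and $F$-exponent below $b+1$, I would rerun the case analysis $|E|\gtrless|F|$ from the proof of \eqref{Goal_3}. The two cases produce $|E|^{1-\sigma'}|F|^{\sigma'}$ and $|E|^{\rho}|F|^{1-\rho}$ respectively, with $\sigma'$ the $F$-exponent. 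These are then upgraded to $|E|^{-b}|F|^{b+1}$ using the ratio $|F|/|E|$ or $|E|/|F|$, which is at most $1$ in the respective case, together with the margins $\epsilon,\epsilon'$. Verifying that these margins can be chosen compatibly in both cases is the step I expect to require the most care.
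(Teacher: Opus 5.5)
Your reduction to a Carleson sum is sound, but the last step as you plan it fails. You propose to lower the $F$-exponent to $\sigma'=b+1-\epsilon'$ with $\epsilon'>0$. The resulting inequality $\sum_J e_J^{\rho}f_J^{\sigma'}|Q_J|\lesssim|E|^{-b}|F|^{b+1}$ is false. Take $E=\D$ and $F=Q_{J_0}$ with $|J_0|=\delta$. The single term $J=\T$ already contributes $\simeq\delta^{2\sigma'}$, while $|E|^{-b}|F|^{b+1}\simeq\delta^{2(b+1)}$. Correspondingly, in your case analysis the case $|E|\ge|F|$ gives $|E|^{1-\sigma'}|F|^{\sigma'}=|E|^{-b}|F|^{b+1}(|E|/|F|)^{\epsilon'}$. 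The factor $(|E|/|F|)^{\epsilon'}\ge1$ is unbounded, and no choice of $\epsilon$ absorbs it, so no compatible margins exist.

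The fix is to take $\epsilon'=0$: keep the $F$-exponent exactly $b+1$ and put the whole margin on $E$, with $\rho\in(-b,\min\{1,c-1-b\})$. This interval is nonempty because $c>1$ and $-b<1$. You need $\rho<1$ for the $I_4$-type integral and $b+1<1$ for the $I_2$-type one. Then $\rho+b+1>1$, the case $|E|\ge|F|$ gives exactly $|E|^{-b}|F|^{b+1}$, and the case $|E|<|F|$ gives $|E|^{-b}|F|^{b+1}(|E|/|F|)^{\rho+b}\le|E|^{-b}|F|^{b+1}$. This is exactly \eqref{Goal_2A} with $\sigma=\rho$, which is where the paper's proof also lands.

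So the ``logarithmic endpoint'' you worried about is not the obstruction; the target exponents also sum to $1$ in \eqref{Goal_3}. In general, for $0<\alpha,\beta<1<\alpha+\beta$, the Step 5 argument bounds $\sum_J e_J^{\alpha}f_J^{\beta}|Q_J|$ by $|E|^{\gamma}|F|^{1-\gamma}$ precisely when $1-\beta\le\gamma\le\alpha$. Your target $\gamma=-b$ sits at the left endpoint, which forces $\beta=b+1$ exactly. Drop your earlier remark about taking $\rho$ slightly below $-b$: then $\rho+b+1<1$ and the integral diverges.

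Two smaller corrections. First, in the borderline case $2-c+b\le0$, your range $\theta\in(c-1-b,1]$ is empty, since $c-1-b\ge1$ there. What holds is that the inner sum is $\lesssim|Q_J\cap E|$ if $2-c+b<0$. If $2-c+b=0$, it is $\lesssim|Q_J\cap E|\bigl(1+\log\frac{|Q_J|}{|Q_J\cap E|}\bigr)\lesssim_\theta|Q_J\cap E|^{\theta}|Q_J|^{1-\theta}$ for every $\theta<1$. Either way you still reach $e_J^{\rho}|Q_J|$. Second, $-1<b<0$ is forced by \eqref{conditionP2}, since $b<c-a-2\le0$; it is not an extra assumption.

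With these fixes your route is a valid variant of the paper's. You reuse the $\calP_{\textnormal{C-H}}$ domination verbatim: upper tents in $z$, the weight $(1-|w|^2)^b$ on the $F$ side, and Lemmas~\ref{20260613lem01} and~\ref{Q_J cap F}. The paper swaps the roles of $z$ and $w$: upper tents in $w$ with factor $2^{-kb}$, the weight $(1-|z|^2)^{c-2-b}$ on the $E$ side, and Lemmas~\ref{20260618lem01} and~\ref{Q_J cap E}. The paper's choice needs only $-1<b<0$ for the inner sum, so it avoids splitting by the sign of $2-c+b$. Yours shows that one Carleson sum serves both endpoints, with the exact exponent on $E$ at $\calP_{\textnormal{C-H}}$ and on $F$ at $\calP_{\textnormal{C-V}}$.
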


\medskip 

\noindent\textbf{$\diamond$ \textsl{Step 3.}}
Now, by interchanging the roles of $c-2-b$ and $b$ in \textbf{\textsl{Step 3}} of the proof of Theorem~\ref{restricted-weak Forelli--Rudin at CH}, (ii), we deduce that for any $z, w \in \D$, 
\[
	\frac{(1-|w|^2)^b}{|1-z\overline{w}|^{c}} \lesssim  \sum_{i \in \{1, 2\}}  \sum_{J\in\calD^{(i)}} \frac{\one_{Q_J}(z)}{|Q_J|^{1+\frac{c-2-b}{2}}} \left[ \sum_{k=0}^{\infty} 2^{-kb} \left(\sum_{I\in\calD_k(J)} \one_{Q_I^{\textrm{up}}}(w) \right) \right] .
\]
and hence
\begin{align*}
	\int_{E} |S_{c-2-b,b,c} \one_{F}(z)| dA(z) 
	&=\int_E \left[(1-|z|^2)^{c-2-b} \int_F \frac{(1-|w|^2)^b}{|1-z\overline{w}|^c} dA(w) \right] dA(z) \\
	& \lesssim  \sum_{i \in\{1, 2\}} \sum_{J\in\calD^{(i)}} \frac{|Q_J \cap E|_{c-2-b}}{|Q_J|^{1+\frac{c-2-b}{2}}} \left[ \sum_{k=0}^{\infty} 2^{-kb} \left(\sum_{I\in\calD_k(J)} |Q_I^{\textrm{up}} \cap F| \right) \right].
\end{align*}
Here $|Q_J \cap E|_{c-2-b}:=(c-1-b)\int_{Q_J \cap E} (1-|z|^2)^{c-2-b} dA(z)$, and note also that $c-2-b>a>-1$. Therefore, it suffices to show that for any dyadic system $\calD$ on $\T$
\begin{equation} \tag{\ref{Goal_1}a} \label{Goal_1A}
	\sum_{J\in\calD} \frac{|Q_J \cap E|_{c-2-b}}{|Q_J|^{1+\frac{c-2-b}{2}}} \left[ \sum_{k=0}^{\infty} 2^{-kb} \left(\sum_{I\in\calD_k(J)} |Q_I^{\textrm{up}} \cap F|  \right) \right] \lesssim  |E|^{-b} |F|^{b+1},
\end{equation}
where the implicit constant in the above estimate is independent of $E, F$ and $\calD$.

\medskip

\noindent\textbf{$\diamond$ \textsl{Step 4.}}
Note that $-1<b<c-a-2 \le 0$, and 
\[
	\sum_{I\in\calD_k(J)} |Q_I^{\textrm{up}} \cap F| \leq \min\{|Q_J \cap F|, 2^{-k}|Q_J|\}. 
\]
This reduces the proof of \eqref{Goal_1A} to showing that
\begin{equation} \tag{\ref{Goal_1A}'} \label{Goal_1AA}
	\sum_{J\in\calD}
	\frac{|Q_J\cap E|_{c-2-b}}{|Q_J|^{1+\frac{c-2-b}{2}}}
	\left[
		\sum_{k=0}^{\infty} 2^{-kb}
		\min\{|Q_J\cap F|,2^{-k}|Q_J|\}
	\right]
	\lesssim |E|^{-b}|F|^{b+1}.
\end{equation}

To prove \eqref{Goal_1AA}, we estimate the inner sum and the coefficient term in the preceding estimate. The proofs are similar to those of Lemma~\ref{20260613lem01} and Lemma~\ref{Q_J cap F}, respectively, and hence are omitted.

\begin{lem} \label{20260618lem01}
For any $J \in \calD$ and $F \subseteq \D$ measurable, one has
\[
	\sum_{k=0}^{\infty}  2^{-kb} \min\{|Q_J \cap F|, 2^{-k}|Q_J|\} \lesssim |Q_J \cap F|^{1+b} |Q_J|^{-b}.
\]

\end{lem}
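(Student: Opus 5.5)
We follow the proof of Lemma~\ref{20260613lem01} closely, with the exponent $2-c+b$ replaced by $-b$. Here $-b\in(0,1)$ by the condition $-1<b<c-a-2\le 0$ recorded in \textbf{\textsl{Step 4}}, and $1+b\in(0,1)$ plays the role of $c-1-b$. If $|Q_J\cap F|=0$ there is nothing to prove, so assume $|Q_J\cap F|>0$. Choose $k_0\in\N$ with $2^{-k_0-1}|Q_J|\le |Q_J\cap F|\le 2^{-k_0}|Q_J|$, so that $|Q_J|/|Q_J\cap F|\simeq 2^{k_0}$. Then split the left-hand side into the range $0\le k\le k_0$ and the range $k>k_0$.

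For the range $0\le k\le k_0$, bound the minimum by $|Q_J\cap F|$. Since $-b>0$, the resulting geometric sum $\sum_{k\le k_0}2^{-kb}$ is dominated by its last term. This gives
$$
\lesssim |Q_J\cap F|\,2^{-k_0 b}\simeq |Q_J\cap F|\left(\frac{|Q_J|}{|Q_J\cap F|}\right)^{-b}=|Q_J\cap F|^{1+b}|Q_J|^{-b}.
$$

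For the range $k>k_0$, bound the minimum by $2^{-k}|Q_J|$. The terms are then $2^{-k(1+b)}|Q_J|$, and since $1+b>0$ the series converges geometrically. It is dominated by its first term, giving
$$
\lesssim |Q_J|\,2^{-k_0(1+b)}\simeq |Q_J|\left(\frac{|Q_J\cap F|}{|Q_J|}\right)^{1+b}=|Q_J\cap F|^{1+b}|Q_J|^{-b}.
$$
Adding the two bounds yields the claim. The implicit constants depend only on $b$, through the sums $\sum_{j\ge0}2^{jb}$ and $\sum_{j\ge0}2^{-j(1+b)}$, and not on $J$, $F$, or $\calD$.

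There is no genuine obstacle here. The only point to check is that both geometric series behave in the right direction, which requires exactly $b<0$ and $b>-1$; both hold under \eqref{conditionP2}, since $c-a-2\le 0$ there. In the borderline case $b=0$, which would arise if $c-a-2=0$, the first sum would give $k_0\simeq\log(|Q_J|/|Q_J\cap F|)$ and the bound would fail. However, this case is excluded by \eqref{maincase}, since $b<c-a-2$ and $c-a-2\le 0$ force $b<0$ strictly.
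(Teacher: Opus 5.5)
Your proof is correct and is the argument the paper intends: it omits the proof of this lemma as "similar to Lemma~\ref{20260613lem01}", and your choice of $k_0$ with $|Q_J|/|Q_J\cap F|\simeq 2^{k_0}$ followed by the split at $k_0$ is exactly that argument. The exponents $2-c+b$ and $c-1-b$ are replaced by $-b$ and $1+b$, both in $(0,1)$ because $-1<b<c-a-2\le 0$. One cosmetic slip: the case $b=0$ never arises, not even when $c-a-2=0$, because the inequality $b<c-a-2$ is strict.
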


\begin{lem}\label{Q_J cap E}
For any $J \in \calD$ and $E \subseteq \D$ measurable, 
\[
	\frac{|Q_J \cap E|_{c-2-b}}{|Q_J|^{1+\frac{c-2-b}{2}}} \lesssim \left( \frac{|Q_J \cap E|}{|Q_J|} \right)^{\min\{1,c-1-b\}}.
\]
\end{lem}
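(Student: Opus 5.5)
The plan is to follow the proof of Lemma~\ref{Q_J cap F} line by line, with the weight exponent $b$ replaced by $c-2-b$. Recall from \eqref{conditionP2} and \eqref{maincase} that $c-2-b>a>-1$, so $c-1-b>0$ and the weighted measure $|Q_J\cap E|_{c-2-b}$ is locally finite. We may assume $|Q_J\cap E|>0$. The only geometric input is that $1-|z|^2\lesssim |J|\simeq |Q_J|^{1/2}$ for $z\in Q_J$, since $Q_J$ has radial depth $|J|$. We then split into two cases according to the sign of $c-2-b$, which mirrors the dichotomy $\min\{1,c-1-b\}$.

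\emph{Case $c-2-b\ge 0$.} Here $\min\{1,c-1-b\}=1$. Bound the weight pointwise on $Q_J$:
\[
(1-|z|^2)^{c-2-b}\lesssim |J|^{c-2-b}\simeq |Q_J|^{\frac{c-2-b}{2}}.
\]
This gives $|Q_J\cap E|_{c-2-b}\lesssim |Q_J|^{\frac{c-2-b}{2}}|Q_J\cap E|$. Dividing by $|Q_J|^{1+\frac{c-2-b}{2}}$ yields the ratio $|Q_J\cap E|/|Q_J|$.

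\emph{Case $-1<c-2-b<0$.} Here $\min\{1,c-1-b\}=c-1-b\in(0,1)$. Set $W(z):=(1-|z|^2)^{c-2-b}$. The first step is to verify the direct computation
\[
\|W\one_{Q_J}\|_{L^{\frac{1}{b+2-c},\infty}(\D)}\simeq |J|^{b+2-c}.
\]
To check it, note that the set where $W\one_{Q_J}>\lambda$ is contained in $Q_J\cap\{1-|z|^2<\lambda^{-1/(b+2-c)}\}$. Its measure is therefore $\lesssim |J|\min\{|J|,\lambda^{-1/(b+2-c)}\}$. Multiplying by $\lambda^{b+2-c}$ and taking the supremum gives $|J|^{b+2-c}$.

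Next apply the Lorentz--H\"older inequality, pairing this weak-type norm with $\|\one_{Q_J\cap E}\|_{L^{\frac{1}{c-1-b},1}}\simeq |Q_J\cap E|^{c-1-b}$. This gives
\[
|Q_J\cap E|_{c-2-b}\lesssim |Q_J|^{\frac{b+2-c}{2}}|Q_J\cap E|^{c-1-b}.
\]
Dividing by $|Q_J|^{1+\frac{c-2-b}{2}}$, the powers of $|Q_J|$ combine to $|Q_J|^{-(c-1-b)}$, so the ratio is $\bigl(|Q_J\cap E|/|Q_J|\bigr)^{c-1-b}$.

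Combining the two cases gives the lemma. There is no real obstacle. The only point needing care is the exponent bookkeeping in the weak-norm computation and in the Hölder step; everything else is routine and parallels Lemma~\ref{Q_J cap F}.
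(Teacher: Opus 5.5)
Your proposal is correct and follows the paper's route exactly: the paper omits this proof as a repetition of Lemma~\ref{Q_J cap F} with $b$ replaced by $c-2-b$. That is, it splits on the sign of $c-2-b$, uses the pointwise bound on the weight in one case, and uses the weak-norm estimate with Lorentz--H\"older in the other.

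One small slip in the weak-norm computation: you should multiply the $(b+2-c)$-th power of the level-set measure by $\lambda$, or equivalently multiply the measure itself by $\lambda^{1/(b+2-c)}$, rather than multiplying the measure by $\lambda^{b+2-c}$. With that correction the bound $|J|^{b+2-c}$ follows as you claim.
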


Since $c-1>0$ and $-1<b<0$, there exists $\sigma>0$ such that $-b<\sigma<\min\{1,c-1-b\}$.
It follows from $|Q_J \cap E| \le |Q_J|$ and Lemma \ref{Q_J cap E} that 
\begin{align*}
	\frac{|Q_J \cap E|_{c-2-b}}{|Q_J|^{1+\frac{c-2-b}{2}}} 
	& \lesssim \left( \frac{|Q_J \cap E|}{|Q_J|} \right)^{\min\{1,c-1-b\}} \\
	& \lesssim \left( \frac{|Q_J \cap E|}{|Q_J|} \right)^\sigma.
\end{align*}
Using the above estimate with Lemma \ref{20260618lem01}, we deduce that 
\begin{align*}
	\mbox{LHS of \eqref{Goal_1AA} }
	& \lesssim \sum_{J\in\calD} \frac{|Q_J \cap E|_{c-2-b} }{|Q_J|^{1+\frac{c-2-b}{2}}} \cdot \frac{|Q_J \cap F|^{1+b}}{|Q_J|^b}\\
	&\lesssim \sum_{J\in\calD} \left( \frac{|Q_J \cap E|}{|Q_J|} \right)^{\sigma} \left( \frac{|Q_J \cap F|}{|Q_J|} \right)^{1+b} |Q_J|.
\end{align*}
Therefore, it remains to prove
\begin{equation} \tag{\ref{Goal_1AA}$'$} \label{Goal_2A}
	\sum_{J\in\calD} \left( \frac{|Q_J \cap E|}{|Q_J|} \right)^{\sigma}
	\left( \frac{|Q_J \cap F|}{|Q_J|} \right)^{1+b} |Q_J|
	\lesssim |E|^{-b} |F|^{b+1},
\end{equation}
where the implicit constant is independent of $E$, $F$, and $\calD$. The proof of
\eqref{Goal_2A} is parallel to that of \eqref{Goal_2}, presented in
{\bf \textsl{Step 5}} of the proof of Theorem~\ref{restricted-weak Forelli--Rudin at CH}, (ii). We therefore omit the details and leave them to the interested reader.

\bigskip 

\section{Analysis of the Main Case: Restricted Weak-type Bounds at the Vertical--Horizontal Intersection $\calP_{\textnormal{V-H}}$ away from the critical line} \label{Sec08}

We next consider the vertical--horizontal intersection
$$
\calP_{\textnormal{V-H}}=(b+1,-a),
$$
where the vertical cut-off line \eqref{verticalcut} meets the horizontal cut-off line \eqref{horizontalcut}. This endpoint is relevant only in the case where $\calP_{\textnormal{V-H}}$ lies strictly above the critical line \eqref{criticalline}. Equivalently, this corresponds to the following parameter condition:
\begin{equation} \tag{${\bf P_3}$} \label{conditionP3}
\begin{cases}
a,b,c\in\R \quad \textnormal{satisfy \eqref{maincase}}; \\
c<1.
\end{cases}
\end{equation}
Here the condition $c<1$ is precisely what ensures that $\calP_{\textnormal{V-H}}$ lies strictly above the critical line \eqref{criticalline}. This is the typical situation when $\calP_{\textnormal{V-H}}$ lies in Region V (see Figure \ref{Fig14}). 

\begin{figure}[ht]
\centering
\begin{tikzpicture}[scale=3.6]
\def\yh{0.28}       
\def\xc{0.72}        
\def\xright{1.15}
\def\ybottom{-0.05}          

\fill[blue!20, opacity=0.3]   (0,\yh) -- (0,1) -- (\xc,1) -- cycle;  

\draw[->, thick] (-0.08,0) -- (\xright+0.04,0) node[right] {$1/p$};
\draw[->, thick] (0,\ybottom-0.03) -- (0,1.12) node[above] {$1/q$};

\draw[thick] (0,0) rectangle (1,1);
\draw[thick] (1,0) -- (1,-0.025) node[below] {$1$};
\draw[thick] (0,1) -- (-0.025,1) node[left] {$1$};

\draw[blue, very thick] (0,\yh) -- (\xc,1);
\node[above] at (\xc/2-0.27, \yh/2+0.3) {\scriptsize \eqref{criticalline}};

\node[font=\bfseries\large, blue!80!black] at (0.5,0.9) {V};

\draw[green!55!black, line width=3pt, opacity=0.18, line cap=round,
      shorten <=2.2pt, shorten >=2.2pt]
    (0,0.7) -- (1,0.7);

\draw[orange!90!black, line width=3pt, opacity=0.18, line cap=round,
      shorten <=2.2pt, shorten >=2.2pt]
    (0.3, 0) -- (0.3, 1);

\fill[black] (0.3,  0.7) circle (.4pt);
\node[above] at (0.18, 0.7) {\scriptsize $\calP_{\textnormal{V-H}}$};

\node[right] at (0.28, 0.2) {\scriptsize \eqref{verticalcut}};

\node[above] at (0.8, 0.7) {\scriptsize \eqref{horizontalcut}};
\end{tikzpicture}
\caption{\small The case when $\calP_{\textnormal{V-H}}$ lies strictly above \eqref{criticalline}.}
\label{Fig14}
\end{figure}
Since $\calP_{\textnormal{V-H}}$ lies on the vertical cut-off line \eqref{verticalcut}, the weak--type estimates for both $T_{a,b,c}$ and $S_{a,b,c}$ fail at $\calP_{\textnormal{V-H}}$, as shown in Section~\ref{Subsec4.1}. Thus, the remaining meaningful endpoint question is the restricted weak--type behavior at $\calP_{\textnormal{V-H}}$. We have the following result.

\begin{thm}\label{restricted-weak Forelli--Rudin at VH}
Let $a, b, c \in \R$ satisfy \eqref{conditionP3}. Then the Forelli--Rudin operators $T_{a, b, c}$ and $S_{a, b, c}$ are bounded from $L^{\frac{1}{b+1}, 1}(\D)$ to $L^{-\frac{1}{a}, \infty}(\D)$. 
\end{thm}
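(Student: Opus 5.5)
The plan is to follow the architecture of the proof of Theorem~\ref{restricted-weak Forelli--Rudin at CH}, (ii), but now with cut-offs in both variables and no critical-line weight to peel off. First record the exponents. Under \eqref{conditionP3}, the inequalities $c-a-b-2>0$ and $a>-1$ give $c>b+1$. Hence $-1<b<c-1<0$ and $a<c-b-2<-(b+1)<0$, so $1/p=b+1<-a=1/q<1$.

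It suffices to treat $S_{a,b,c}$. By the layer-cake and $\widetilde L^{q,\infty}$ argument of Proposition~\ref{reductionII}, we only need to test on characteristic functions. As in Step~2 of Section~\ref{Sec06}, it is enough to prove the bilinear estimate
\[
\int_E S_{a,b,c}\one_F\,dA\lesssim |E|^{1+a}|F|^{1+b}
\]
for all measurable $E,F\subseteq\D$. Indeed, taking $E=E_\lambda$ gives $\lambda|E_\lambda|^{-a}\lesssim|F|^{b+1}$, which is the desired restricted weak-type bound into $L^{-1/a,\infty}$.

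A naive pointwise bound does not suffice. In the case $0<c<1$, the case $c>b+1$ makes $(1-|w|)^{b-c}$ non-integrable. Moreover, testing $F$ equal to a Carleson box at $z/|z|$ shows that $\sup_z\int_F(1-|w|^2)^b|1-z\overline w|^{-c}\,dA(w)$ can exceed $|F|^{b+1}$ when $b+c>0$. So the $z$-weight $(1-|z|^2)^a$ has to be used jointly with the kernel.

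I would therefore do a two-sided sparse decomposition over Carleson tents. Using the adjacent dyadic systems $\calD^{(1)},\calD^{(2)}$, for each pair $z,w$ I would pick $J$ with $z,w\in Q_J$ and $|1-z\overline w|\simeq|J|$, and descendants $I\in\calD_k(J)$, $I'\in\calD_l(J)$ with $z\in Q_I^{\rm up}$ and $w\in Q_{I'}^{\rm up}$. This gives
\[
\int_E S_{a,b,c}\one_F \lesssim \sum_{i}\sum_{J\in\calD^{(i)}}|J|^{a+b-c}\Bigl[\sum_k 2^{-ka}\min\{|Q_J\cap E|,2^{-k}|Q_J|\}\Bigr]\Bigl[\sum_l 2^{-lb}\min\{|Q_J\cap F|,2^{-l}|Q_J|\}\Bigr].
\]
Since $a,b\in(-1,0)$, Lemma~\ref{20260618lem01} applies to each bracket. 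It bounds them by $e_J^{1+a}|Q_J|^{1+a}$ and $f_J^{1+b}|Q_J|^{1+b}$, where $e_J=|Q_J\cap E|/|Q_J|$ and $f_J=|Q_J\cap F|/|Q_J|$. Since $|Q_J|=|J|^2$, the $J$-term becomes $e_J^{1+a}f_J^{1+b}|Q_J|\cdot|Q_J|^{1-c/2}$.

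The main obstacle is the final Carleson-type summation
\[
\sum_J e_J^{1+a}f_J^{1+b}|Q_J|^{2-c/2}\lesssim|E|^{1+a}|F|^{1+b}.
\]
Step~5 of Section~\ref{Sec06} does not transfer directly. The layer-cake reduction produces $\int\!\!\int s^{a}t^{b}\min\{|E|/s,|F|/t\}\,ds\,dt$, and its convergence needs $a+b+1>0$, whereas here $a+b+1<0$. The extra factor $|Q_J|^{1-c/2}$ must compensate, and this is where $c<1$ should enter.

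The first thing I would try is to organize the sum by generation $|J|=2^{-m}$. At fixed $m$, the tents are disjoint, so Hölder over the at most $2^m$ arcs gives a bound of $2^{-m(2-c)}$ times $\min\{2^{2m}|E|,1\}^{1+a}\min\{2^{2m}|F|,1\}^{1+b}$, up to the count factor. I would then sum the resulting geometric series in $m$, splitting at the scales $2^{-2m}\simeq|E|$ and $2^{-2m}\simeq|F|$. The exponent bookkeeping should close precisely because $-a-b-1<1-c$, i.e.\ $c-a-b-2<1$. If this crude Hölder loses too much, the fallback is to keep the weight $|Q_J|^{1-c/2}$ inside the stopping-time argument, summing over maximal cubes of $\mathcal G_{s,t}$ with that weight. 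That yields a bound like $\min\{|E|/s,|F|/t\}^{2-c/2}$, which restores integrability in $(s,t)$.
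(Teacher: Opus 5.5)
\textbf{The exponent of $|Q_J|$ in your final target is wrong.} Your reduction to $\int_E S_{a,b,c}\one_F\lesssim |E|^{1+a}|F|^{1+b}$, the two-sided sparse decomposition with the factor $|J|^{a+b-c}$, and the use of the inner-sum lemma with exponents $a,b\in(-1,0)$ are exactly the paper's steps. The error comes right after. The lemma gives $|Q_J\cap E|^{1+a}|Q_J|^{-a}=e_J^{1+a}|Q_J|$, not $e_J^{1+a}|Q_J|^{1+a}$. Combined with $|J|^{a+b-c}=|Q_J|^{(a+b-c)/2}$, the $J$-term is
\[
e_J^{1+a}f_J^{1+b}\,|Q_J|^{\gamma},\qquad \gamma:=1+\frac{2-c+a+b}{2}\in\Bigl(\frac12,1\Bigr),
\]
not $e_J^{1+a}f_J^{1+b}|Q_J|^{2-c/2}$.

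Your target is smaller than the correct one by the factor $|Q_J|^{(a+b)/2}\gtrsim 1$, so proving it would not give the theorem. Your heuristic is also backwards. The factor beyond $|Q_J|$ is $|Q_J|^{(2-c+a+b)/2}$, whose exponent is negative, so it does not compensate for $a+b+1<0$; it makes the sum harder. For the same reason your fallback breaks. With $\gamma<1$, concavity gives $\sum|Q_J|^\gamma\ge(\sum|Q_J|)^\gamma$ over disjoint maximal tents, so the disjointness bound cannot simply be raised to the power $\gamma$.

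\textbf{Your primary plan does close once $\gamma$ is corrected.} For $J\in\calD_m$ one has $|Q_J|\simeq 2^{-2m}$. Disjointness of the tents and $e_J\le 1$ give $\sum_{J\in\calD_m}e_J\lesssim\min\{2^{2m}|E|,2^m\}$, and likewise for $f_J$. Since $(1+a)+(1+b)<1$, three-factor H\"older over the $2^m$ arcs gives
\[
\sum_{J\in\calD_m}e_J^{1+a}f_J^{1+b}\le\Bigl(\sum_J e_J\Bigr)^{1+a}\Bigl(\sum_J f_J\Bigr)^{1+b}2^{-m(1+a+b)}.
\]
Hence
\[
2^{-2m\gamma}\sum_{J\in\calD_m}e_J^{1+a}f_J^{1+b}\lesssim
\begin{cases}
2^{m(c-1)}\,|E|^{1+a}|F|^{1+b}, & 2^m\le\min\{|E|^{-1},|F|^{-1}\},\\
2^{m(c-2-b)}\,|E|^{1+a}, & |F|^{-1}\le 2^m\le |E|^{-1},\\
2^{m(c-2-a)}\,|F|^{1+b}, & |E|^{-1}\le 2^m\le |F|^{-1},\\
2^{m(c-3-a-b)}, & 2^m\ge\max\{|E|^{-1},|F|^{-1}\}.
\end{cases}
\]
All four exponents are negative: the first by $c<1$, the middle two by $c<2+\min\{a,b\}$, the last by $c-a-b-2<1$. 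Summing each geometric series from its left endpoint gives $\lesssim|E|^{1+a}|F|^{1+b}$. The endpoint matching, for instance $|F|^{2+b-c}\le|F|^{1+b}$ and $\min\{|E|,|F|\}^{3+a+b-c}\le\min\{|E|,|F|\}^{2+a+b}\le|E|^{1+a}|F|^{1+b}$, uses $c\le 1$ again. Note that the scale splits occur at $2^{-m}\simeq|E|,|F|$, not at $2^{-2m}$. At $c=1$ the coarse-scale series becomes $\sum_m 1$, a logarithmic loss consistent with the failure of $S_{a,b,1}$ at $\calP_{\textnormal{C-H-V}}$.

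\textbf{Comparison with the paper.} This finish differs from the paper's. The paper first decouples $E$ and $F$ by H\"older with exponents $\frac{2+a+b}{1+a}$ and $\frac{2+a+b}{1+b}$. That reduces matters to the one-set embedding $\sum_J e_J^{2+a+b}|Q_J|^{\gamma}\lesssim |E|^{2+a+b}$, which it proves in three steps. It uses a layer-cake in $e_J$, and passes to maximal tents of $\{e_J>s\}$; summing over descendants needs $2\gamma>1$, i.e.\ $c-a-b-2<1$. It then applies an auxiliary H\"older with $\rho\in(2+a+b,\,3-c+a+b)$, an interval that is nonempty exactly because $c<1$. Your generation-wise H\"older needs no stopping time and shows directly which scales use which hypothesis. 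The paper's route instead isolates a reusable Carleson-embedding lemma. In either case, you must correct the exponent before your final step is valid.
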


\subsection{Proof of Theorem \ref{restricted-weak Forelli--Rudin at VH}: Restricted weak-type estimates for $T_{a, b, c}$ and $S_{a, b, c}$ at the intersection $\calP_{\textnormal{V-H}}$}

The proof of Theorem~\ref{restricted-weak Forelli--Rudin at VH} is again parallel to that of Theorem~\ref{restricted-weak Forelli--Rudin at CH}, (ii). We therefore give only a \emph{sketch}, emphasizing the necessary modifications.

\medskip 

\noindent\textbf{$\diamond$ \textsl{Step 1.}} Again, it suffices to prove that, for any measurable $F \subseteq \D$, one has
$$
\|S_{a,b,c} \one_{F}\|_{L^{-\frac{1}{a},\infty}(\D)} \lesssim |F|^{1+b}.
$$
Following the argument in \textbf{\textsl{Step 2}} of the proof of Theorem~\ref{restricted-weak Forelli--Rudin at CH}, (ii), we reduce the above estimate to proving the following restricted estimate: for any measurable sets $E,F \subseteq \D$, one has
$$
\int_E \left|S_{a, b, c} \one_F(z) \right| dA(z) \lesssim |E|^{1+a}|F|^{1+b}. 
$$

\medskip 

\noindent\textbf{$\diamond$ \textsl{Step 2.}} In this case, we use a refined pointwise sparse domination argument for the whole kernel
$$
\frac{(1-|z|^2)^a(1-|w|^2)^b}{|1-z \overline{w}|^c}.
$$
This follows from a modification of the argument in \textbf{\textsl{Step 3}} of the proof of Theorem~\ref{restricted-weak Forelli--Rudin at CH}, (ii). More precisely, let $\calD^{(1)}$ and $\calD^{(2)}$ be a pair of adjacent dyadic systems on $\T$. Then, for any $z,w\in\D$, there exist $i\in \{1,2\}$ and $J\in\calD^{(i)}$ such that
\begin{enumerate}
\item $z,w\in Q_J$;
\item $|1-z\overline{w}|^2 \simeq |Q_J|$;
\item there is a unique dyadic descendant $I_1\subseteq J$ such that $z \in Q_{I_1}^{\textrm{up}}$. Say $I_1\in\calD_{k_1}(J)$ for some $k_1\geq 0$, that is, $|I_1|=2^{-k_1}|J|$;
\item there is a unique dyadic descendant $I_2 \subseteq J$ such that $w \in Q_{I_2}^{\textrm{up}}$. Say $I_2\in\calD_{k_2}(J)$ for some $k_2\geq 0$, that is, $|I_2|=2^{-k_2}|J|$.
\end{enumerate}
As a consequence, one has 
$$
\begin{aligned}
&\frac{(1-|z|^2)^a (1-|w|^2)^b}{|1-z\overline{w}|^{c}}\\
& \lesssim  \sum_{i \in \{1, 2\}} \sum_{J\in\calD^{(i)}}
\frac{1}{|Q_J|^{\frac{c-a-b}{2}}}
\left( \sum_{k_1=0}^{\infty} 2^{-k_1a}
\sum_{I_1\in\calD_{k_1}(J)}
\one_{Q_{I_1}^{\textnormal{up}}}(z)\right)
\left( \sum_{k_2=0}^{\infty} 2^{-k_2b}
\sum_{I_2\in\calD_{k_2}(J)}
\one_{Q_{I_2}^{\textnormal{up}}}(w)\right).
\end{aligned}
$$
Hence, 
\begin{align*} 
&\int_{E} |S_{a,b,c} \one_{F}(z)|\, dA(z)=\int_E\int_F \frac{(1-|z|^2)^a (1-|w|^2)^b}{|1-z\bar{w}|^{c}}\, dA(w)dA(z) \nonumber \\
&\lesssim \sum_{i \in \{1, 2\}} \sum_{J\in\calD^{(i)}}
	\frac{1}{|Q_J|^{\frac{c-a-b}{2}}}
	\left(\sum_{k_1=0}^{\infty} 2^{-k_1a}
	\sum_{I_1\in\calD_{k_1}(J)} |Q_{I_1}^{\textrm{up}} \cap E|\right) 
	\left(\sum_{k_2=0}^{\infty} 2^{-k_2b}
	\sum_{I_2\in\calD_{k_2}(J)} |Q_{I_2}^{\textrm{up}} \cap F|\right).
\end{align*}
Therefore, it suffices to show that for any dyadic system $\calD$ on $\T$, one has 
\begin{equation} \label{20260620eq01}
\sum_{J\in \calD}
	\frac{1}{|Q_J|^{\frac{c-a-b}{2}}}
	\left(\sum_{k_1=0}^{\infty} 2^{-k_1a}
	\sum_{I_1\in\calD_{k_1}(J)} |Q_{I_1}^{\textrm{up}} \cap E|\right) 
	\left(\sum_{k_2=0}^{\infty} 2^{-k_2b}
	\sum_{I_2\in\calD_{k_2}(J)} |Q_{I_2}^{\textrm{up}} \cap F|\right) \lesssim |E|^{1+a}|F|^{1+b}.
\end{equation} 

\medskip 

\noindent\textbf{$\diamond$ \textsl{Step 3.}} Observe that, under condition \eqref{conditionP3}, the roles of $a$ and $b$ are symmetric. Moreover, in this case, one has $-1<a<c-b-2<-1-b<0$, and hence the analogous estimate $-1<b<0$ also holds. Therefore, for the two inner sums in \eqref{20260620eq01}, it is enough to estimate the first one. To begin with, observe that
$$
	\sum_{I_1\in \calD_{k_1}(J)} \left| Q^{\textrm{up}}_{I_1} \cap E \right|
	\leq \min\{|Q_J \cap E|, \; 2^{-k_1}|Q_J|\}.
$$
Following the argument in Lemma~\ref{20260613lem01} and using the above estimate, we obtain
\begin{align} \label{20260620eq10}
  \sum_{k_1=0}^{\infty} 2^{-k_1a}
	\sum_{I_1\in\calD_{k_1}(J)} |Q_{I_1}^{\textrm{up}} \cap E|
& \lesssim \sum_{k_1=0}^{\infty} 2^{-k_1a} \min\{|Q_J \cap E|, \; 2^{-k_1}|Q_J|\} \nonumber \\
& \lesssim |Q_J \cap E|^{1+a}|Q_J|^{-a}.
\end{align}
Similarly, one has
\begin{equation} \label{20260620eq11}
\sum_{k_2=0}^{\infty} 2^{-k_2b}
	\sum_{I_2\in\calD_{k_2}(J)} |Q_{I_2}^{\textrm{up}} \cap F|
\lesssim |Q_J \cap F|^{1+b}|Q_J|^{-b}.
\end{equation}
Plugging \eqref{20260620eq10} and \eqref{20260620eq11} back into \eqref{20260620eq01}, we obtain
\begin{align} \label{20260621eq01}
\textrm{LHS of \eqref{20260620eq01}}
&\lesssim \sum_{J\in \calD} \frac{|Q_J \cap E|^{1+a} |Q_J \cap F|^{1+b}}{|Q_J|^{\frac{c+a+b}{2}}} \nonumber \\
&= \sum_{J\in \calD} \left( \frac{|Q_J \cap E|}{|Q_J|} \right)^{1+a}
\left( \frac{|Q_J \cap F|}{|Q_J|} \right)^{1+b}
|Q_J|^{1+\frac{2-c+a+b}{2}} \nonumber \\
&\leq
\left( \sum_{J\in \calD}\left( \frac{|Q_J \cap E|}{|Q_J|} \right)^{2+a+b}
|Q_J|^{1+\frac{2-c+a+b}{2}} \right)^{\frac{1+a}{2+a+b}} \nonumber \\
& \qquad \qquad \cdot 
\left(\sum_{J\in \calD} \left( \frac{|Q_J \cap F|}{|Q_J|} \right)^{2+a+b}
|Q_J|^{1+\frac{2-c+a+b}{2}} \right)^{\frac{1+b}{2+a+b}},
\end{align}
where, in the last inequality, we used H\"older's inequality and the fact that $-1<a,b<0$. Since the roles of $E$ and $F$ are symmetric in the above estimate, it remains to prove the following Carleson embedding estimate: for any dyadic system $\calD$ on $\T$,
\begin{equation} \label{20260620eq20}
 \sum_{J\in \calD} \left( \frac{|Q_J \cap E|}{|Q_J|} \right)^{2+a+b}
 |Q_J|^{1+\frac{2-c+a+b}{2}} \lesssim |E|^{2+a+b}.
\end{equation}
Indeed, temporarily assume \eqref{20260620eq20}. Then, by \eqref{20260621eq01}, we have
$$
\textrm{LHS of \eqref{20260620eq01}}
\lesssim
\left(|E|^{2+a+b}\right)^{\frac{1+a}{2+a+b}}
\left(|F|^{2+a+b}\right)^{\frac{1+b}{2+a+b}}
=|E|^{1+a}|F|^{1+b},
$$
which gives the desired estimate.

\medskip 

\noindent\textbf{$\diamond$ \textsl{Step 4.}} 
Finally, we prove \eqref{20260620eq20}. The proof is similar to that of \eqref{Goal_2}. First observe that, by $0<c-2-a-b<1$ and $c<1$,
$$
\frac{1}{2}<1+\frac{2-c+a+b}{2}<1
$$
and
$$
2+a+b<3-c+a+b=2\left(1+\frac{2-c+a+b}{2}\right)-1.
$$

\vspace{0.1cm}
Choose $\rho$ such that
$$
2+a+b<\rho<3-c+a+b.
$$
In particular, since $1+\frac{2-c+a+b}{2}<1$, we have $0<\rho<1$.

\vspace{0.1cm}

We first record the following observation.

\begin{obs} \label{20260621obs01}
For every $J\in\calD$,
$$
\sum_{I\in\calD(J)} |Q_I|^{1+\frac{2-c+a+b}{2}}
\lesssim |Q_J|^{1+\frac{2-c+a+b}{2}}.
$$
Here, $\calD(J)$ denotes the collection of dyadic descendants of $J$, and $\calD_k(J)$ denotes the collection of the $k$-th generation dyadic descendants of $J$.
\end{obs}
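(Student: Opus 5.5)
The plan is to reduce Observation~\ref{20260621obs01} to a geometric series over the generations of descendants of $J$. Set
$$
\gamma:=1+\frac{2-c+a+b}{2}.
$$
Under \eqref{conditionP3} we have already noted that $\frac12<\gamma<1$. We will organize the sum by generation, writing
$$
\sum_{I\in\calD(J)}|Q_I|^{\gamma}=\sum_{k=0}^{\infty}\sum_{I\in\calD_k(J)}|Q_I|^{\gamma},
$$
and then compute each generation explicitly.

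For $I\in\calD_k(J)$ we have $|I|=2^{-k}|J|$. By the definition \eqref{Carlesontent}, $|Q_I|\simeq|I|^2$, with constants uniform in $I$. Hence $|Q_I|\simeq 2^{-2k}|Q_J|$. The generation $\calD_k(J)$ contains exactly $2^k$ arcs, so the $k$-th generation contributes
$$
\sum_{I\in\calD_k(J)}|Q_I|^{\gamma}\simeq 2^{k}\,2^{-2k\gamma}|Q_J|^{\gamma}=2^{-k(2\gamma-1)}|Q_J|^{\gamma}.
$$

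It remains to sum over $k\ge0$. Since $\gamma>\frac12$, the exponent $2\gamma-1$ is positive, so this is a convergent geometric series. Its sum is bounded by a constant depending only on $a,b,c$, which yields the stated bound $\lesssim|Q_J|^{\gamma}$.

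The only point needing care is the strict inequality $\gamma>\frac12$. It is equivalent to $c-a-b-2<1$, which holds in \eqref{maincase} and was recorded just above. There is no genuine obstacle beyond this; the observation will then feed into the layer-cake and maximal-cube argument for \eqref{20260620eq20}, run as in \textbf{\textsl{Step 5}} for \eqref{Goal_2}.
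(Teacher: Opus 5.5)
Your proof is correct and is essentially the paper's argument. Both group the descendants by generation, use $|Q_I|\simeq |I|^2$ and $\#\calD_k(J)=2^k$, and reduce the sum to the geometric series $\sum_{k\ge 0}2^{-k(3-c+a+b)}$. Your exponent $2\gamma-1$ is exactly $3-c+a+b$, so your convergence condition $c-a-b-2<1$ is the same as the paper's $3-c+a+b>0$.
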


\begin{proof} 
Indeed, using $|Q_I|\simeq |I|^2$, we have
\begin{align*}
\sum_{I\in\calD(J)} |Q_I|^{1+\frac{2-c+a+b}{2}}
&\lesssim \sum_{k=0}^{\infty}\sum_{I\in\calD_k(J)}
|I|^{2\left(1+\frac{2-c+a+b}{2}\right)} \\
&= \sum_{k=0}^{\infty} 2^k \left(2^{-k}|J|\right)^{2\left(1+\frac{2-c+a+b}{2}\right)} \\
&= |J|^{2\left(1+\frac{2-c+a+b}{2}\right)}
\sum_{k=0}^{\infty}2^{-k(3-c+a+b)} \\
&\lesssim |Q_J|^{1+\frac{2-c+a+b}{2}},
\end{align*}
where the last estimate follows from $3-c+a+b>0$.
\end{proof}

For $J\in\calD$, denote
$$
e_J:=\frac{|Q_J\cap E|}{|Q_J|}.
$$
Then
$$
e_J^{2+a+b}
=(2+a+b)\int_0^{e_J} s^{1+a+b}\,ds
=(2+a+b)\int_0^1 s^{1+a+b}\one_{\{s<e_J\}}\,ds.
$$
For $0<s<1$, set
$$
\calG_s:=\{J\in\calD:\ s<e_J\}
=\left\{J\in\calD:\frac{|Q_J\cap E|}{|Q_J|}>s\right\}.
$$
Therefore,
\begin{align} \label{20260620eq21}
\textnormal{LHS of \eqref{20260620eq20}}
=&\sum_{J\in\calD} e_J^{2+a+b}|Q_J|^{1+\frac{2-c+a+b}{2}} \nonumber \\
\simeq& \sum_{J\in\calD}
\left(\int_0^1 s^{1+a+b}\one_{\{s<e_J\}}\,ds\right)
|Q_J|^{1+\frac{2-c+a+b}{2}} \nonumber \\
=& \int_0^1 s^{1+a+b}
\left(\sum_{J\in\calD}
|Q_J|^{1+\frac{2-c+a+b}{2}}\one_{\{s<e_J\}}\right)\,ds \nonumber \\
=& \int_0^1 s^{1+a+b}
\left(\sum_{J\in\calG_s}
|Q_J|^{1+\frac{2-c+a+b}{2}}\right)\,ds.
\end{align}
We next estimate the inner sum in \eqref{20260620eq21}. 
Let $\calG_s^{\max}$ be the collection of maximal elements of $\calG_s$. By Observation~\ref{20260621obs01},
\begin{equation} \label{20260621eq19}
\sum_{J \in\calG_s}|Q_J|^{1+\frac{2-c+a+b}{2}}
\lesssim
\sum_{J\in\calG_s^{\max}} |Q_J|^{1+\frac{2-c+a+b}{2}}.
\end{equation}
For each $J\in\calG_s^{\max}$, one has $|Q_J\cap E|>s|Q_J|$. Hence
\begin{align*}
|Q_J|^{1+\frac{2-c+a+b}{2}}
&=|Q_J|^\rho |Q_J|^{1+\frac{2-c+a+b}{2}-\rho} \\
&\leq s^{-\rho}|Q_J\cap E|^\rho
|Q_J|^{1+\frac{2-c+a+b}{2}-\rho} \\
&\lesssim s^{-\rho}|Q_J\cap E|^\rho
|J|^{2\left(1+\frac{2-c+a+b}{2}-\rho\right)}.
\end{align*}
Therefore,
\begin{equation} \label{20260621eq20}
\sum_{J\in\calG_s^{\max}} |Q_J|^{1+\frac{2-c+a+b}{2}}
\lesssim s^{-\rho}
\sum_{J\in\calG_s^{\max}}
|Q_J\cap E|^\rho
|J|^{2\left(1+\frac{2-c+a+b}{2}-\rho\right)}.
\end{equation}
By H\"older and using the fact that $0<\rho<1$,
\begin{align} \label{20260621eq21}
&\sum_{J\in\calG_s^{\max}}
|Q_J\cap E|^\rho
|J|^{2\left(1+\frac{2-c+a+b}{2}-\rho\right)} \nonumber \\
& \leq \left(\sum_{J\in\calG_s^{\max}} |Q_J\cap E|\right)^\rho
\left(
\sum_{J\in\calG_s^{\max}}
|J|^{\frac{2\left(1+\frac{2-c+a+b}{2}-\rho\right)}{1-\rho}}
\right)^{1-\rho} \nonumber \\
&\leq |E|^\rho
\left(
\sum_{J\in\calG_s^{\max}}
|J|^{\frac{2\left(1+\frac{2-c+a+b}{2}-\rho\right)}{1-\rho}}
\right)^{1-\rho}.
\end{align}
Since $\rho<3-c+a+b$, we have
$$
\frac{2\left(1+\frac{2-c+a+b}{2}-\rho\right)}{1-\rho}>1.
$$
Moreover, since the intervals in $\calG_s^{\max}$ are pairwise disjoint, we see that 
\begin{equation} \label{20260621eq22}
\sum_{J\in\calG_s^{\max}}
|J|^{\frac{2\left(1+\frac{2-c+a+b}{2}-\rho\right)}{1-\rho}}
\lesssim 1.
\end{equation} 
Combining \eqref{20260621eq19}, \eqref{20260621eq20}, \eqref{20260621eq21}, and \eqref{20260621eq22}, we deduce that 
$$
\sum_{I\in\calG_s}|Q_I|^{1+\frac{2-c+a+b}{2}}
\lesssim \left(\frac{|E|}{s}\right)^\rho.
$$
On the other hand, since $1+\frac{2-c+a+b}{2}>\frac12$, we also have
\begin{align*}
\sum_{J\in\calD}|Q_J|^{1+\frac{2-c+a+b}{2}}
&\lesssim \sum_{k=0}^{\infty}\sum_{J\in\calD_k}
|J|^{2\left(1+\frac{2-c+a+b}{2}\right)} \\
&= \sum_{k=0}^{\infty}2^k2^{-2k\left(1+\frac{2-c+a+b}{2}\right)}
\lesssim 1.
\end{align*}
Thus
$$
\sum_{J\in\calG_s}|Q_J|^{1+\frac{2-c+a+b}{2}}
\lesssim
\min\left\{1,\left(\frac{|E|}{s}\right)^\rho\right\}.
$$
Using the assumption that $\rho>2+a+b$, we obtain
\begin{align*}
\textnormal{RHS of \eqref{20260620eq21}}
&\lesssim \int_0^1 s^{1+a+b}
\min\left\{1,\left(\frac{|E|}{s}\right)^\rho\right\}\,ds \\
&\leq \int_0^{|E|} s^{1+a+b}\,ds
+ |E|^\rho \int_{|E|}^1 s^{1+a+b-\rho}\,ds \\
&\lesssim |E|^{2+a+b}.
\end{align*}
This proves \eqref{20260620eq20}, and the proof for Theorem \ref{restricted-weak Forelli--Rudin at VH} is complete. 

\bigskip 

\section{Analysis of the Main Case: Restricted Weak-type Bounds at the Critical--Horizontal--Vertical Intersection $\calP_{\textnormal{C-H-V}}$ and the Forelli--Rudin Phenomenon at the level of restricted weak-type estimates} \label{Sec09}

Finally, we treat the last endpoint configuration in our analysis under the Main case~\eqref{maincase}, namely the case where the critical line, the horizontal cut-off line, and the vertical cut-off line meet at a common point:
$$
\calP_{\textnormal{C-H-V}}=(b+1,-a).
$$
This is equivalent to the condition
$$
\begin{cases}
a,b,c \in \R \quad \textrm{satisfy \eqref{maincase}}; \\
c=1,
\end{cases}
$$
or, equivalently,
\begin{equation}  \tag{${\bf P}_4$}
\label{conditionP4}
\begin{cases}
a+b<-1, \quad a,b>-1; \\
c=1.
\end{cases}
\end{equation}
Here, the condition $c=1$ guarantees that the three lines intersect at the same point. This situation can happen when $\calP_{\textnormal{V-H}}$ lies in Region V (see Figure \ref{Fig15}). 

\begin{figure}[ht]
\centering
\begin{tikzpicture}[scale=3.6]
\def\yh{0.28}       
\def\xc{0.72}        
\def\xright{1.15}
\def\ybottom{-0.05}          

\fill[blue!20, opacity=0.3]   (0,\yh) -- (0,1) -- (\xc,1) -- cycle;  

\draw[->, thick] (-0.08,0) -- (\xright+0.04,0) node[right] {$1/p$};
\draw[->, thick] (0,\ybottom-0.03) -- (0,1.12) node[above] {$1/q$};

\draw[thick] (0,0) rectangle (1,1);
\draw[thick] (1,0) -- (1,-0.025) node[below] {$1$};
\draw[thick] (0,1) -- (-0.025,1) node[left] {$1$};

\draw[blue, very thick] (0,\yh) -- (\xc,1);
\node[above] at (\xc/2-0.27, \yh/2+0.3) {\scriptsize \eqref{criticalline}};

\node[font=\bfseries\large, blue!80!black] at (0.1,0.9) {V};

\draw[green!55!black, line width=3pt, opacity=0.18, line cap=round,
      shorten <=2.2pt, shorten >=2.2pt]
    (0,0.7) -- (1,0.7);

\draw[orange!90!black, line width=3pt, opacity=0.18, line cap=round,
      shorten <=2.2pt, shorten >=2.2pt]
    (0.42, 0) -- (0.42, 1);

\fill[black] (0.42,  0.7) circle (.4pt);
\node[above] at (0.25, 0.7) {\scriptsize $\calP_{\textnormal{C-H-V}}$};

\node[right] at (0.4, 0.2) {\scriptsize \eqref{verticalcut}};

\node[above] at (0.8, 0.7) {\scriptsize \eqref{horizontalcut}};
\end{tikzpicture}
\caption{\small The case when the three lines meet at a common point}
\label{Fig15}
\end{figure}

Again, by the reduction in Section~\ref{Subsec4.1}, it remains only to consider the restricted weak--type estimates at this point. The behavior of $T_{a,b,c}$ and $S_{a,b,c}$ in the present case is quite different from that in the previous situations, where the relevant intersection point was formed by only two of the three lines and stayed away from the third. More precisely, in the present situation the Forelli--Rudin phenomenon occurs at the level of restricted weak--type estimates. The precise result is as follows: 

\begin{thm} \label{Forelli--Rudin Threshold restricted weak-type}
Let $a, b, c \in \R$ satisfy \eqref{conditionP4}. Then the following statement holds.
\begin{enumerate}
    \item[(i)] The Forelli--Rudin type operator $S_{a, b, 1}$ is unbounded from $L^{\frac{1}{b+1}, 1}(\D)$ to $L^{-\frac{1}{a}, \infty}(\D)$.
    \item[(ii)] The Forelli--Rudin type operator $T_{a, b, 1}$ is bounded from $L^{\frac{1}{b+1}, 1}(\D)$ to $L^{-\frac{1}{a}, \infty}(\D)$.
\end{enumerate}
\end{thm}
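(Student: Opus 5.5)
The plan is to treat the two parts separately. In (i) we build explicit sets exposing a logarithmic loss in the angular integral of the positive kernel $|1-z\overline w|^{-1}$. In (ii) we run the Hardy-space mechanism of Theorem~\ref{Forelli--Rudin threshold}, (iii), exploiting that for $c=1$ the fractional derivative $R^{-1,c-1}=R^{-1,0}$ is the identity. Throughout, \eqref{conditionP4} gives $-1<a,b<0$ and $a+b<-1$. Hence $p:=\frac{1}{b+1}\in(1,\infty)$ and $q:=-\frac1a\in(1,\infty)$ satisfy $q<p$, and the conjugate exponent of $p$ is $p'=-\frac1b$.

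\emph{Part (i).} For $N$ large, put $\delta=2^{-N}$ and $F_N:=\{w\in\D:\ 1-|w|^2<\delta\}$, so $|F_N|\simeq\delta$. Fix $z\in A_N$ (as in \eqref{20260531eq02}), so $1-|z|^2\simeq\delta$. In polar coordinates, for $w\in F_N$ one has $|1-z\overline w|\simeq \delta+|\arg z-\arg w|$ and $(1-|w|^2)^b\gtrsim\delta^b$ (since $b<0$). Integrating the angle gives
$$
\int_{F_N}\frac{(1-|w|^2)^b}{|1-z\overline w|}\,dA(w)\gtrsim \delta\cdot\delta^{b}\int_{-\pi}^{\pi}\frac{d\theta}{\delta+|\theta|}\simeq N\,\delta^{1+b}.
$$
Therefore $S_{a,b,1}\one_{F_N}\gtrsim N\delta^{1+a+b}$ on $A_N$, and $|A_N|\simeq\delta$. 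This yields
$$
\|S_{a,b,1}\one_{F_N}\|_{L^{-\frac1a,\infty}(\D)}\gtrsim N\delta^{1+a+b}\,\delta^{-a}=N\,\delta^{1+b}\simeq N\,|F_N|^{b+1},
$$
which is unbounded as $N\to\infty$. This is exactly the logarithm that the angular cancellation $\frac1{2\pi}\int_0^{2\pi}(1-zre^{-i\theta})^{-1}\,d\theta=1$ kills for $T_{a,b,1}$.

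\emph{Part (ii).} Write $T_{a,b,1}f=(1-|z|^2)^aT_{0,b,1}f$. By Lemma~\ref{weakHardy} with exponents $q<p$, it suffices to prove
$$
\|T_{0,b,1}f\|_{H^{p}(\D)}\lesssim\|f\|_{L^{p,1}(\D)}.
$$
As in Step 1 of the proof of Theorem~\ref{Forelli--Rudin threshold}, (iii), the Taylor expansion and the duality \eqref{Hardyduality} give, for $g\in H^{p'}(\D)$,
$$
\langle T_{0,b,1}f,g\rangle_{\T}=\int_{\D}(1-|w|^2)^b f(w)\overline{g(w)}\,dA(w),
$$
because $\Gamma(m+1)/(\Gamma(1)m!)=1$. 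By the Lorentz duality between $L^{p,1}$ and $L^{p',\infty}$, it remains to prove the endpoint weak-type Hardy embedding
$$
\|(1-|w|^2)^{-1/p'}g\|_{L^{p',\infty}(\D)}\lesssim\|g\|_{H^{p'}(\D)},
$$
which is the case ``$q=p$'' of Lemma~\ref{weakHardy}.

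\emph{Main obstacle.} This endpoint embedding is where I expect the difficulty, since the Chebyshev-on-circles proof of Lemma~\ref{weakHardy} produces a divergent radial integral when $q=p$. I would replace it with the nontangential maximal function $g^*\in L^{p'}(\T)$, which satisfies $\|g^*\|_{L^{p'}}\lesssim\|g\|_{H^{p'}}$. Write $z=(1-t)e^{i\theta}$. If $|g(z)|>\lambda t^{1/p'}$, then $g^*>\lambda t^{1/p'}$ on an arc of length $\simeq t$ around $e^{i\theta}$ (suitable aperture). Hence, up to constants,
$$
|\{|g|>\lambda(1-|\cdot|^2)^{1/p'}\}|\lesssim \bigl|\{(\phi,t)\in\T\times(0,1):\ g^*(\phi)>c\lambda t^{1/p'}\}\bigr|.
$$
The right-hand side equals $\int_{\T}\min\{1,(g^*/c\lambda)^{p'}\}\,d\phi\lesssim\lambda^{-p'}\|g\|^{p'}_{H^{p'}}$, which is the claimed weak-type bound.

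Combining this embedding with the Lorentz duality and Lemma~\ref{weakHardy} gives $T_{a,b,1}:L^{\frac1{b+1},1}(\D)\to L^{-\frac1a,\infty}(\D)$.
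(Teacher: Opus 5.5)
Your proof is correct. Part (i) is essentially the paper's argument: the paper uses the dyadic annulus $\{\varepsilon<1-|z|^2<2\varepsilon\}$ and a one-sided angular sector instead of your full boundary layer. In both versions the $\log\frac1\varepsilon$ (your $N$) comes from the same angular integral of $|1-z\overline w|^{-1}$.

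Part (ii) takes a genuinely different route at one step. The paper never needs the endpoint case of Lemma~\ref{weakHardy}. Since $a+b<-1$ is strict, it picks an intermediate exponent $-\frac1a<r<\frac1{b+1}$ and proves $\|T_{0,b,1}f\|_{H^r}\lesssim\|f\|_{L^{\frac1{b+1},1}}$. By duality this only requires $\|(1-|w|^2)^b g\|_{L^{-\frac1b,\infty}}\lesssim\|g\|_{H^{r'}}$ with $r'>-\frac1b$, which is again the non-endpoint case $q<p$ of Lemma~\ref{weakHardy}. So the paper spends the slack $a+b<-1$ on both sides, and the elementary Chebyshev-on-circles proof suffices twice.

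You instead spend the slack only on the outer step with $r=\frac1{b+1}$. You pay for this on the inner step with the endpoint embedding $\|(1-|w|^2)^{-1/s}g\|_{L^{s,\infty}(\D)}\lesssim\|g\|_{H^s}$, $s=-\frac1b$. You prove it correctly via the $L^s(\T)$-boundedness of the maximal function on $H^s$. The arc is in fact unnecessary: if $|g((1-t)e^{i\theta})|>\lambda t^{1/s}$, then already the radial maximal function at $e^{i\theta}$ exceeds $\lambda t^{1/s}$, and Fubini in $(\theta,t)$ finishes.

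Your route uses a heavier tool, the maximal-function theorem for Hardy spaces. In exchange it yields the sharper intermediate bound $T_{0,b,1}:L^{\frac1{b+1},1}(\D)\to H^{\frac1{b+1}}(\D)$ and a log-free endpoint weak Hardy embedding. The paper's route is more elementary but leans on the strict inequality $a+b<-1$ twice.
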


Note that Theorem \ref{Forelli--Rudin Threshold restricted weak-type} asserts that the \emph{Forelli--Rudin phenomenon occurs at the level of restricted weak-type estimates at the critical--horizontal--vertical intersection $\calP_{\textnormal{C-H-V}}$}. In other words, $\calP_{\textnormal{C-H-V}}$ is a Forelli--Rudin pair at the level of restricted weak-type estimates.

\vspace{0.1cm}

We now turn to the proof of Theorem \ref{Forelli--Rudin Threshold restricted weak-type}.

\subsection{Proof of Theorem \ref{Forelli--Rudin Threshold restricted weak-type}, (i): Failure of the restricted weak-type estimate for $S_{a, b, 1}$ at the intersection $\calP_{\textnormal{C-H-V}}$ }

For sufficiently small $\varepsilon>0$, denote 
$$
	F_{\varepsilon}:=\{z\in\D: \varepsilon<1-|z|^2<2\varepsilon\}.
$$
Hence, $|F_{\varepsilon}| \simeq \varepsilon$.
For each $z\in F_{\varepsilon}$, we define
$$
	E_{\varepsilon}(z)=\left\{w \in F_\varepsilon:  0<\arg z - \arg w<\frac{1}{10}\right\}.
$$
Then for any $z\in F_{\varepsilon}$, we have
\begin{align*}
	S_{a,b,1} \one_{F_{\varepsilon}}(z)
	=&(1-|z|^2)^a \int_{\D} \frac{(1-|w|^2)^b}{|1-z\overline{w}|} \one_{F_{\varepsilon}}(w) dA(w)\\
	\gtrsim & \varepsilon^{a+b} \int_{E_{\varepsilon}(z)} \frac{1}{|1-z\overline{w}|} dA(w)\\
	\gtrsim & \varepsilon^{a+b} \int_{\sqrt{1-2\varepsilon}}^{\sqrt{1-\varepsilon}}\int_{0}^{\frac{1}{10}} \frac{\rho}{\left|1-|z|\rho e^{-i\theta}\right|} d\theta d\rho.
\end{align*}
Note that
\begin{align*}
	\left|1-|z|\rho e^{-i\theta}\right|
	=&\left|1-|z|\rho+|z|\rho-|z|\rho e^{-i\theta}\right|\\
	\leq&(1-|z|\rho)+|z|\rho\left|1-e^{-i\theta}\right|\\
	\leq&(1-|z|)+|z|(1-\rho)+\left|1-e^{-i\theta}\right|\\
	\leq&(1-|z|^2)+(1-\rho^2)+\theta\\
	\leq&4\varepsilon+\theta.
\end{align*}
Hence, 
\begin{align*}
	S_{a,b,1} \one_{F_{\varepsilon}}(z)
	\gtrsim & \varepsilon^{a+b} \int_{\sqrt{1-2\varepsilon}}^{\sqrt{1-\varepsilon}}\int_{0}^{\frac{1}{10}} \frac{\rho}{4\varepsilon+\theta} d\theta d\rho\\
    \simeq & \varepsilon^{a+b+1}\int_{0}^{\frac{1}{10}} \frac{1}{4\varepsilon+\theta} d\theta \\
    \gtrsim &\varepsilon^{a+b+1}\log\frac{1}{\varepsilon}.
\end{align*}
Therefore, 
$$
	F_{\varepsilon}\subset\left\{z\in\D: S_{a,b,1} \one_{F_{\varepsilon}}(z) \gtrsim \varepsilon^{a+b+1}\log\frac{1}{\varepsilon}\right\}.
$$
It follows from $|F_{\varepsilon}|\simeq \varepsilon$ that 
$$
	\|S_{a,b,1} \one_{F_{\varepsilon}}\|_{L^{-\frac{1}{a},\infty}(\D)}
	\gtrsim \left(\varepsilon^{a+b+1}\log\frac{1}{\varepsilon}\right) |F_{\varepsilon}|^{-a}
	\simeq \varepsilon^{b+1}\log\frac{1}{\varepsilon},
$$
which implies 
$$
	\frac{\|S_{a,b,1} \one_{F_{\varepsilon}}\|_{L^{-\frac{1}{a},\infty}(\D)}}{|F_\varepsilon|^{b+1}} 
	\gtrsim \log\frac{1}{\varepsilon} 
	\to \infty \qquad \textrm{as} \quad \varepsilon \to 0.
$$
This completes the proof of Theorem \ref{Forelli--Rudin Threshold restricted weak-type}, (i). 

\subsection{Proof of Theorem \ref{Forelli--Rudin Threshold restricted weak-type}, (ii): Restricted weak-type estimate for $T_{a, b, 1}$ at the intersection $\calP_{\textnormal{C-H-V}}$ } \label{20260831subsec54}

The key idea to prove this positive result is to make use of the weak-type estimates for Hardy functions; see Lemma \ref{weakHardy}. Since $-1<a, b<0$, and $a+b<-1$, we have
$$
	1<-\frac{1}{a}<\frac{1}{b+1}<\infty.
$$
Hence we can take 
$$
	-\frac{1}{a}<r<\frac{1}{b+1}.
$$
Moreover, write 
\begin{equation} \label{20260701eq04}
	T_{a,b,1} f(z)=(1-|z|^2)^a T_{0,b,1} f(z).
\end{equation} 
We first show that 
\begin{equation} \label{202606701eq03}
	\|T_{0,b,1} f\|_{H^{r}(\D)} \lesssim \|f\|_{L^{\frac{1}{b+1},1}(\D)}.
\end{equation} 
By Taylor expansion, 
\begin{align*}
T_{0,b,1} f (z) 
&= \int_{\D} \frac {(1-|w|^{2}) ^{b}}{1 - z\bar{w}} f(w) dA(w) \\
&= \sum_{m=0}^\infty \left(\int_{\D} (1-|w|^{2}) ^{b} f(w) \overline{w}^m dA(w) \right) z^m.
\end{align*}
For $g\in H^{r'}(\D)$, assume
$$
	g(z)=\sum_{m=0}^\infty a_m z^m.
$$
Therefore, 
\begin{align*}
	\langle T_{0,b,1} f, g \rangle_{\T}
	=&\sum_{m=0}^\infty \left(\int_{\D} (1-|w|^{2}) ^{b} f(w) \overline{w}^m dA(w) \right)  \overline{a_m}\\
	=&\int_{\D} (1-|w|^{2}) ^{b} f(w)\cdot \left( \;  \overline{\sum_{m=0}^\infty a_m w^m} \; \right) dA(w)\\
	=&\int_{\D} (1-|w|^{2}) ^{b} f(w) \overline{g(w)} dA(w)\\
	=&\int_{\D} f(w) \overline{(1-|w|^{2}) ^{b} g(w)} dA(w) \\
    =&\int_{\D} f(w) \overline{M_{-\frac{1}{b}} g(w)} dA(w) 
\end{align*}
where we recall that $M_{-\frac{1}{b}} g(w):=(1-|w|^2)^bg(w)$ and $\langle \cdot, \cdot \rangle_{\T}$ refers to the standard duality between Hardy spaces; see \eqref{Hardyduality}. 
Therefore, by Lorentz-H\"older's inequality
\begin{equation} \label{20260701eq01}
	|\langle T_{0,b,1} f, g \rangle_{\T}| 
	\lesssim  \|f\|_{L^{\frac{1}{b+1},1}(\D)} \|M_{-\frac{1}{b}}g\|_{L^{-\frac{1}{b},\infty}(\D)}.
\end{equation} 
Since $r<\frac{1}{b+1}$, we have
$$
	r'>-\frac{1}{b}.
$$
Hence, by Lemma \ref{weakHardy}, we have
\begin{equation} \label{20260701eq02}
	\|M_{-\frac{1}{b}}g\|_{L^{-\frac{1}{b},\infty}(\D)} \lesssim \|g\|_{H^{r'}(\D)}.
\end{equation} 
Combining \eqref{20260701eq01} and \eqref{20260701eq02}, we deduce that
$$
|\langle T_{0,b,1} f, g \rangle_{\T}| \lesssim \left\|f \right\|_{L^{\frac{1}{b+1}, 1}(\D)} \left\|g \right\|_{H^{r'}(\D)}, 
$$
which clearly gives the desired estimate \eqref{202606701eq03}.

To this end, since $-\frac{1}{a}<r$, another application of Lemma \ref{weakHardy}, together with \eqref{20260701eq04} and then \eqref{202606701eq03}, gives
\begin{align*}
	\|T_{a,b,1} f\|_{L^{-\frac{1}{a},\infty}(\D)}
	&=\|(1-|\cdot|^2)^a T_{0,b,1} f\|_{L^{-\frac{1}{a},\infty}(\D)} \\
    &=\left\|M_{-\frac{1}{a}} \left(T_{0, b, 1}f \right) \right\|_{L^{-\frac{1}{a},\infty}(\D)} \\
	&\lesssim \|T_{0,b,1} f\|_{H^{r}(\D)} \\
	&\lesssim \|f\|_{L^{\frac{1}{b+1},1}(\D)}.
\end{align*}
The proof of Theorem \ref{Forelli--Rudin Threshold restricted weak-type}, (ii) is complete. 

\bigskip 

\section{Analysis of the Main Case: Putting all pieces together} \label{Sec10}

In this section, we put together the building blocks developed in Sections \ref{Sec05}--\ref{Sec09} to obtain a full picture of the $L^p$ theory for the Forelli--Rudin type operators $T_{a,b,c}$ and $S_{a,b,c}$ in the \emph{Main Case} \eqref{maincase}. 
In this Case, conditions \eqref{eq:weak} and \eqref{eq:restricted-weak} are equivalent to 
\[
	\begin{cases}
		a,b,c \in \R \quad \textrm{satisfy \eqref{maincase}}, \\
		1/2 \le 2-c+b <1, \quad a<0, \\
		c>1,
	\end{cases}
	\qquad
	\begin{cases}
		a,b,c \in \R \quad \textrm{satisfy \eqref{maincase}}, \\
		c=1,
	\end{cases}
\]
respectively.
In the corresponding cases, the weak and restricted weak Forelli--Rudin phenomena occur at $\mathcal{P}_{\mathrm{C-H}}$ and $\mathcal{P}_{\mathrm{C-H-V}}$, respectively.

On the other hand, at each of the two cases, the operators have identical boundedness behavior away from the corresponding Forelli--Rudin pairs.
Recall that the strong-type region $\Omega(a,b,c)$ defined by \eqref{eq:strong} is nonempty and is uniquely determined by     
\[
	\eqref{criticalline}:\ 
	\frac{1}{q}=\frac{1}{p}+c-a-b-2,
	\qquad
	\eqref{verticalcut}:\ 
	\frac{1}{p}=b+1,
	\qquad
	\eqref{horizontalcut}:\ 
	\frac{1}{q}=-a.
\]
Denote
\[
	\partial\Omega_{bd}=\overline{\Omega(a,b,c)}\setminus \Omega(a,b,c).
\]
Section~\ref{Sec10.1} shows that the restricted weak-type region cannot extend beyond $\overline{\Omega(a,b,c)}$, and hence neither can the weak-type region.
Therefore, in what follows, we restrict our attention to the boundary $\partial\Omega_{\mathrm{bd}}$.
Recall from the beginning of Section~\ref{Sec04} that the boundedness behavior of the Forelli--Rudin operators at the intersection points $\calP_{\mathrm{C-H}}$, $\calP_{\mathrm{C-V}}$, and $\calP_{\mathrm{V-H}}$ plays a crucial role. 
However, such a point is relevant only when it lies on $\partial\Omega_{\mathrm{bd}}$. 
In this case, we call the corresponding intersection point active.
See Figure \ref{20260906Fig01} for the active intersection points in each case and the boundedness behavior of the Forelli--Rudin operators at corresponding points.

\begin{figure}[ht]
\centering
\begin{tikzpicture}[
    >=Latex,
    every path/.style={line cap=round, line join=round},
    branch/.style={very thick, -{Latex[length=2.2mm]}},
    case label/.style={font=\small, align=left},
    point label/.style={font=\scriptsize, align=center, text=gray!75!black}
]

\definecolor{caseI}{RGB}{190,45,135}
\definecolor{caseII}{RGB}{50,145,85}
\definecolor{caseIII}{RGB}{125,75,175}
\definecolor{caseIV}{RGB}{225,125,35}
\definecolor{caseV}{RGB}{45,105,190}

\draw[branch,caseI]
    (-0.52,1.88) .. controls (-1.45,2.42) and (-2.55,2.90) .. (-3.20,2.90);

\draw[branch,caseII]
    (0.52,1.88) .. controls (1.45,2.42) and (2.55,2.90) .. (3.55,2.90);

\draw[branch,caseIII]
    (0.90,1.52) .. controls (1.75,1.56) and (2.75,1.56) .. (3.6,1.56);

\draw[branch,caseIV]
    (0.52,1.12) .. controls (1.45,0.62) and (2.35,0.24) .. (3.20,0.24);

\draw[branch,caseV]
    (-0.52,1.12) .. controls (-1.45,0.62) and (-2.35,0.24) .. (-3.20,0.24);

\node[case label, anchor=east] at (-3.3,2.90)
    {\textcolor{caseI}{\large\bfseries I}\quad
    $c\ge \max\{a+2,b+2\}$\\[-1pt]
    \hspace*{1.25em}{\scriptsize $\calP_{\mathrm{C-V}}$ is active when $c=a+2$, } \\[-1pt] 
    \hspace*{1.25em}{\scriptsize $\calP_{\mathrm{C-H}}$ is active when $c=b+2$}};

\node[case label, anchor=west] at (3.63,2.90)
    {\textcolor{caseII}{\large\bfseries II}\quad
    $b+2\le c<a+2$\\[-1pt]
    \hspace*{1.25em}{\scriptsize $\calP_{\mathrm{C-V}}$ is active,}\\[-1pt]
    \hspace*{1.25em}{\scriptsize $\calP_{\mathrm{C-H}}$ is active when $c=b+2$ }};

\node[case label, anchor=west] at (3.6,1.56)
    {\textcolor{caseIII}{\large\bfseries III}\quad
    $a+2\le c<b+2$\\[-1pt]
    \hspace*{1.25em}{\scriptsize $\calP_{\mathrm{C-H}}$ is active,}\\[-1pt]
    \hspace*{1.25em}{\scriptsize $\calP_{\mathrm{C-V}}$ is active when $c=a+2$}};

\node[case label, anchor=west] at (3.28,0.24)
    {\textcolor{caseIV}{\large\bfseries IV}\quad
    $1<c<\min\{a+2,b+2\}$\\[-1pt]
    \hspace*{1.25em}{\scriptsize
    $\calP_{\mathrm{C-H}}$, $\calP_{\mathrm{C-V}}$ are active}};

\node[case label, anchor=east] at (-3.28,0.24)
    {\textcolor{caseV}{\large\bfseries V}\quad
    $c\le 1$\\[-1pt]
    \hspace*{1.25em}{\scriptsize
    $\calP_{\mathrm{V-H}}$ is active}};

\node[circle, draw=gray!55, fill=gray!7, line width=.8pt,
      minimum size=2.65cm, inner sep=2pt, align=center] at (0,1.50)
      {\textsc{Main Case}\\[-1pt]
       {\scriptsize split according to}\\[-2pt]
       {\scriptsize the position of $\calP_{\mathrm{V-H}}$}\\[-2pt]
       {\scriptsize relative to the critical line}\\[-2pt]
       {\scriptsize $0<c-a-b-2<1$}\\[-2pt]
       {\scriptsize $a,b>-1$}};

\tikzset{
    endpoint card/.style={
        draw=gray!45,
        rounded corners=2pt,
        fill=gray!4,
        text width=4.25cm,
        minimum height=4.35cm,
        inner sep=7pt,
        align=center
    }
}

\node[endpoint card] at (-4.85,-3.60) {
    \textcolor{caseIII}{\large\bfseries C--H}\\[-1pt]
    {\scriptsize $\calP_{\mathrm{C-H}}=(2-c+b,-a)$}\\[5pt]
    {\scriptsize\bfseries At weak-type level}\\[4pt]
    \textcolor{red!75!black}{\scriptsize $S$: unbounded}\\[-1pt]
    \textcolor{blue!75!black}{\scriptsize $T$: unbounded if $0\le 2-c+b<\frac12$}\\[-1pt]
    \textcolor{blue!75!black}{\scriptsize $T$: bounded if $\frac12\le 2-c+b<1$}\\[5pt]
    \textcolor{caseIII}{\scriptsize\bfseries Weak--type F--R phenomenon}\\[-1pt]
    {\tiny when $\frac12\le 2-c+b<1$}
};

\node[endpoint card] at (0,-3.60) {
    \textcolor{caseII}{\large\bfseries C--V}\\[-1pt]
    {\scriptsize $\calP_{\mathrm{C-V}}=(b+1,c-1-a)$}\\[5pt]
    {\scriptsize\bfseries At restricted weak-type level}\\[4pt]
    \textcolor{blue!75!black}{\scriptsize $T$ and $S$: bounded}\\[2pt]
    {\tiny $L^{\frac1{b+1},1}(\D)\longrightarrow
    L^{\frac1{c-1-a},\infty}(\D)$}\\[5pt]
    \textcolor{gray!70!black}{\scriptsize no Forelli--Rudin phenomenon\\[-1pt]
    at this endpoint}
};

\node[endpoint card] at (4.85,-3.60) {
    \textcolor{caseV}{\large\bfseries V--H}\\[-1pt]
    {\scriptsize $\calP_{\mathrm{V-H}}=(b+1,-a)$}\\[5pt]
    {\scriptsize\bfseries At restricted weak-type level}\\[4pt]
    {\scriptsize $c<1$: $T$ and $S$ are bounded}\\[-1pt]
    {\scriptsize $c=1$: $T$ is bounded, $S$ is unbounded}\\[5pt]
    \textcolor{caseV}{\scriptsize\bfseries Restricted weak-type}\\[-1pt]
    \textcolor{caseV}{\scriptsize\bfseries Forelli--Rudin phenomenon at $c=1$}
};

\end{tikzpicture}
\caption{Summary of the Main Cases}
\label{20260906Fig01}
\end{figure}

Before proceeding to the proofs, we summarize the conclusions.
Sections~\ref{Sec10.2}--\ref{Sec10.6} determine the weak-type and restricted weak-type regions, with the weak-type points for $T_{a,b,c}$ and $S_{a,b,c}$ coinciding, apart from the corresponding Forelli--Rudin pairs.
More precisely, the following statements hold on $\partial\Omega_{\mathrm{bd}}$ away from the corresponding Forelli--Rudin pairs:
\begin{itemize}
	\item Both $T_{a,b,c}$ and $S_{a,b,c}$ are of weak type at every point except the point $\calP_{\mathrm{C-H}}$ and the vertical cut-off line \eqref{verticalcut}, whenever they belong to the boundary.
	\item Both $T_{a,b,c}$ and $S_{a,b,c}$ are of restricted weak type at every point except for the point $\calP_{\textnormal{C-H}}$ when $2-c+b=0$.
\end{itemize}

\vspace{0.1cm}

\subsection{Sharpness of the Estimates in the Main Cases}\label{Sec10.1}
We first show that neither operator satisfies a restricted
weak-type estimate outside $\overline{\Omega(a,b,c)}$.

\begin{prop}
Let $a,b,c$ satisfy \eqref{maincase}.
If 
\[
	\left(\frac{1}{p}, \frac{1}{q}\right) \in ([0,1]\times[0,1]) \setminus \overline{\Omega(a,b,c)},
\]
then neither $T_{a,b,c}$ nor $S_{a,b,c}$ is of restricted weak type $(p,q)$.
In particular, neither operator is of weak type $(p,q)$.
\end{prop}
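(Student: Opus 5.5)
Since $|T_{a,b,c}f|\le S_{a,b,c}(|f|)$, failure for $T_{a,b,c}$ implies failure for $S_{a,b,c}$. So I will build test functions (characteristic functions, or Lorentz-normalized sums of them) for $T_{a,b,c}$ only. A point of $[0,1]^2\setminus\overline{\Omega(a,b,c)}$ violates at least one of three closed conditions strictly: (A) $1/q<-a$, (B) $1/p>b+1$, or (C) $1/q-1/p<c-a-b-2$. I will treat these three cases separately.

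\emph{Case (A).} Test $T_{a,b,c}\one_{D_R}$ with $D_R=\{|w|<R\}$, $R$ fixed, say $R=1/2$. By the mean-value computation from Reduction~I,
\[
T_{a,b,c}\one_{D_R}(z)=(1-|z|^2)^a\int_{D_R}(1-|w|^2)^b\,dA(w)\simeq(1-|z|^2)^a .
\]
This function lies in $L^{q,\infty}$ only if $q\le -1/a$, i.e.\ $1/q\ge -a$. Since $|D_R|$ is fixed, the restricted weak-type bound fails whenever $1/q<-a$.

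\emph{Case (B).} Use the annulus $F_N=\{1/N<1-|w|^2<2/N\}$ together with the monomial trick from Proposition~\ref{reductionII}. The difficulty is that restricted weak type only allows characteristic functions, so $w^N\one_{A_N}$ is not admissible. A cleaner choice is $\one_{F_N}$ itself. Its $T$-image is again radial-constant: $T\one_{F_N}(z)=(1-|z|^2)^a\int_{F_N}(1-|w|^2)^b\,dA\simeq (1-|z|^2)^aN^{-b-1}$. This gives
\[
\|T\one_{F_N}\|_{L^{q,\infty}}\gtrsim N^{-b-1}\quad\text{(using }1/q\ge -a\text{)},\qquad |F_N|^{1/p}\simeq N^{-1/p}.
\]
The ratio is $N^{1/p-b-1}\to\infty$ exactly when $1/p>b+1$. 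Case (A) already covers $1/q<-a$, so I may assume $1/q\ge-a$ here; then $(1-|z|^2)^a$ has positive weak norm, and (B) is done.

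\emph{Case (C).} This is the main obstacle. Here radial tests only give $\int_{F}(1-|w|^2)^b$, with no kernel singularity, because of cancellation. I would instead use localized tests: a Carleson tent piece $Q_{I,\varepsilon}$ of side $\delta=|I|$, with $F=Q_{I,\varepsilon}$ and $|F|\simeq\varepsilon^2\delta^2$. By Lemma~\ref{20260530lem01} the argument of the kernel is essentially constant for $z$ in the region $\{z\in Q_I: 1-|z|\simeq\delta\}$. On that region
\[
|T\one_F(z)|\gtrsim \delta^{a}\delta^{b}\delta^{2}\delta^{-c}=\delta^{a+b+2-c},
\]
on a set of measure $\simeq\delta^2$. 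Hence $\|T\one_F\|_{L^{q,\infty}}\gtrsim\delta^{a+b+2-c+2/q}$, while $|F|^{1/p}\simeq\delta^{2/p}$. The ratio is $\delta^{(a+b+2-c)+2(1/q-1/p)}$, which gives the wrong scaling factor of $2$ (this is the sub-singular line). To get the hypersingular line I would enlarge the target region to the full tent $Q_I$ down to depth $\delta$, where $T\one_F$ is still coherent after restricting the argument range, or else keep $z$ fixed near the boundary. If the localized computation cannot reach the line \eqref{criticalline}, the fallback is to invoke \cite[Theorem~1.1]{ZZ2022} together with interpolation. Suppose restricted weak type held at a point strictly below \eqref{criticalline}. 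Marcinkiewicz interpolation (in its restricted weak-type form, \cite[Theorem~1.4.19]{Grafakos2014}) with a strong-type point inside $\Omega(a,b,c)$, which exists in the Main Case, e.g.\ near $(0,1)$, would then give a strong $L^{p_1}\to L^{q_1}$ bound at some point with $1/q_1-1/p_1<c-a-b-2$. This contradicts \cite[Theorem~1.1]{ZZ2022}, exactly as in Proposition~\ref{20260710prop01}. I expect this interpolation route to be the one to use in all three cases, since the bad set is open and the segment joining the bad point to an interior strong point passes through other bad points. The explicit tests in (A) and (B) serve as a check.
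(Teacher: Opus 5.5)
Your final argument is correct, but it is organized differently from the paper's. The paper builds no test functions for this proposition. For $p\neq\infty$, $q\neq1$ it uses only the finite-measure embeddings $L^{p_1}\hookrightarrow L^{p,1}$ ($p_1>p$) and $L^{q,\infty}\hookrightarrow L^{q_1}$ ($q_1<q$). Together with the openness of $[0,1]^2\setminus\overline{\Omega(a,b,c)}$, these turn a restricted weak-type bound at a bad point into a strong bound at a nearby bad point, which contradicts \cite[Theorem~1.1]{ZZ2022}. Interpolation with an interior strong-type point is reserved for the edges $p=\infty$ or $q=1$, where those embeddings cannot move the exponents in the needed direction.

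Your explicit tests in (A) and (B) are correct. By the mean-value identity, $T_{a,b,c}\one_{D_R}$ and $T_{a,b,c}\one_{F_N}$ are exact constant multiples of $(1-|z|^2)^a$. They also give something the paper's proof does not: a self-contained, quantitative failure for the two cut-off violations that does not depend on Zhao--Zhou. Your tent computation in (C) does only reach the sub-singular line, as you noticed. So for (C) you, like the paper, must fall back on \cite{ZZ2022}.

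In that fallback, state one step precisely. When $p_\theta>q_\theta$, the off-diagonal Marcinkiewicz theorem does not give strong type $(p_\theta,q_\theta)$; it gives only $L^{p_\theta,r}\to L^{q_\theta,r}$. To reach a strong-type point outside $\Omega(a,b,c)$, you still need the embeddings $L^{p_\theta+\epsilon}\hookrightarrow L^{p_\theta,1}$ and $L^{q_\theta,\infty}\hookrightarrow L^{q_\theta-\epsilon}$, plus openness of the bad set. That is exactly the paper's Case~I. Consequently, for bad points with $p\neq\infty$ and $q\neq1$ your interpolation step is redundant; it is genuinely needed only on the two edges.
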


\begin{proof}
Assume 
\begin{equation}\label{Main: outside}
	\left(\frac{1}{p}, \frac{1}{q}\right) \in ([0,1]\times[0,1]) \setminus \overline{\Omega(a,b,c)}.
\end{equation}
We split the proof into three cases.

\vspace{0.1cm}

\noindent\textbf{Case I: $p\neq\infty,\ q\neq1$.}
We prove this case by contradiction.
Suppose $T_{a,b,c}$ or $S_{a,b,c}$ is bounded from $L^{p,1}(\D)$ to $L^{q,\infty}(\D)$.
By \eqref{Main: outside}, there exist $1<p_1,q_1<\infty$ such that $p<p_1<\infty,\ 1<q_1<q$ and 
\[
	\left(\frac{1}{p_1}, \frac{1}{q_1}\right) \in ([0,1]\times[0,1]) \setminus \overline{\Omega(a,b,c)}.
\]
Since
\[
	L^{p_1}(\D) \hookrightarrow L^{p,1}(\D),
	\quad
	L^{q,\infty}(\D) \hookrightarrow L^{q_1}(\D),
\]
by assumed boundedness, we have $T_{a,b,c}$ or $S_{a,b,c}$ is bounded from $L^{p_1}(\D)$ to $L^{q_1}(\D)$.
But $\left(\frac{1}{p_1}, \frac{1}{q_1}\right)$ lies outside of $\Omega(a,b,c)$, which leads to a contradiction.
Hence, for $p,q$ satisfying \eqref{Main: outside} with $p\neq\infty,\ q\neq1$, we have
\[
	T_{a,b,c},\ S_{a,b,c}: L^{p,1}(\D) \not\to L^{q,\infty}(\D).
\]

\vspace{0.1cm}

\noindent\textbf{Case II: $p=\infty$.}
We prove this case by contradiction.
Suppose $T_{a,b,c}$ or $S_{a,b,c}$ is of restricted weak type $(\infty,q)$.

In this case, the condition $\left(\frac{1}{p}, \frac{1}{q}\right) \in \Omega(a,b,c)$ can be simplified to
\[
	\frac{1}{q}>\max\{-a,c-a-b-2\},
\]
and \eqref{Main: outside} can be simplified to
\[
	\frac{1}{q}<\max\{-a,c-a-b-2\}.
\]
Since $\Omega(a,b,c)$ is not empty, choose a pair $\left(\frac{1}{p_0}, \frac{1}{q_0}\right)\in\operatorname{int}\Omega(a,b,c)$.
Then $1<p_0<\infty$, $1<q_0<q$ and 
\[
	T_{a,b,c},\ S_{a,b,c}: L^{p_0}(\D) \to L^{q_0}(\D)
\]
are bounded.
Interpolating this estimate with restricted weak type $(\infty,q)$ boundedness of $T_{a,b,c}$ or $S_{a,b,c}$ by the off-diagonal Marcinkiewicz interpolation theorem \cite[Theorem~1.4.19]{Grafakos2014}, there exists some $1<p_1, q_1<\infty$ such that
\begin{enumerate}
	\item $\left(\frac{1}{p_1}, \frac{1}{q_1}\right) \in ([0,1]\times[0,1]) \setminus \overline{\Omega(a,b,c)}$;
	\item $T_{a, b, c}$ or $S_{a,b,c}$ is bounded from $L^{p_1,1}(\D)$ to $L^{q_1, \infty}(\D)$.
\end{enumerate}
This contradicts Case I.

\vspace{0.1cm}

\noindent\textbf{Case III: $p\neq\infty, q=1$.}
The same argument as in Case II applies.
In this case, the condition $\left(\frac{1}{p}, \frac{1}{q}\right) \in \Omega(a,b,c)$ can be simplified to
\[
	\frac{1}{p}<\min\{b+1,3+a+b-c\},
\]
and \eqref{Main: outside} can be simplified to
\[
	\frac{1}{p}>\min\{b+1,3+a+b-c\}.
\]
Since $\Omega(a,b,c)$ is not empty, choose a pair $\left(\frac{1}{p_0}, \frac{1}{q_0}\right)\in\operatorname{int}\Omega(a,b,c)$.
Then $p<p_0<\infty$, $1<q_0<\infty$ and
\[
	T_{a,b,c},\ S_{a,b,c}: L^{p_0}(\D) \to L^{q_0}(\D)
\]
are bounded.
Interpolating this estimate with restricted weak type $(p,1)$ boundedness of $T_{a,b,c}$ or $S_{a,b,c}$ by the off-diagonal Marcinkiewicz interpolation theorem \cite[Theorem~1.4.19]{Grafakos2014}, there exists some $1<p_1, q_1<\infty$ such that
\begin{enumerate}
	\item $\left(\frac{1}{p_1}, \frac{1}{q_1}\right) \in ([0,1]\times[0,1]) \setminus \overline{\Omega(a,b,c)}$;
	\item $T_{a, b, c}$ or $S_{a,b,c}$ is bounded from  $L^{p_1,1}(\D)$ to $L^{q_1, \infty}(\D)$.
\end{enumerate}
This contradicts Case I.
\end{proof}

Consequently, the only remaining problem is to determine the weak- and restricted weak-type behavior on $\partial\Omega_{bd}$.
We prove it by considering Main Cases~I--V; see Figure~\ref{Fig7}.

\vspace{0.1cm}

\subsection{Treatment of Main Case I}\label{Sec10.2}

Recall that \emph{Main Case I} refers to the case where the vertical--horizontal intersection $\calP_{\textnormal{V--H}}$ lies in the region
$$
\left\{ (p,q): \frac{1}{p} \ge 3+a+b-c, \; \frac{1}{q} \le c-a-b-2 \right\};
$$
see Figure \ref{Fig7}. By \cite[Theorem~1.1]{ZZ2022}, the strong-type bounds in this case are as follows: the operators $T_{a,b,c}$ and $S_{a,b,c}$ map $L^p(\D)$ boundedly into $L^q(\D)$ if and only if
$$
(p,q) \in \left\{ 1 \le p,q \le \infty: \; \frac{1}{q}>\frac{1}{p}+c-a-b-2 \right\}.
$$

\vspace{0.1cm}

Our main result is the following. 

\begin{thm} \label{mainsubcaseI}
Let $a, b, c \in \R$ satisfy \eqref{maincase} with $c \ge \max\{a+2, \; b+2\}$. Then the following statements hold. 
\begin{enumerate}
    \item [(a)] {\bf (Weak--type bounds)} If $\calP_{\textnormal{V-H}}$ stays away from both lines
    \begin{equation} \tag{${\bf L_1}$} \label{20260710line01}
    \frac{1}{p}=3+a+b-c, \quad 
    \end{equation} 
    and 
     \begin{equation} \tag{${\bf L_2}$} \label{20260710line02}
    \frac{1}{q}=c-a-b-2, 
    \end{equation} 
    then $T_{a, b, c}$ and $S_{a, b, c}$ map $L^p(\D)$ boundedly into $L^{q, \infty}(\D)$ if
        $$
        (p, q) \in \left\{ 1 \le p, q \le \infty: \; \frac{1}{q}=\frac{1}{p}+c-a-b-2 \right\}.
        $$

    \item [(b)] If $\calP_{\textnormal{V-H}}$ lies on the line \eqref{20260710line02}
    but stays away from the line 
    \eqref{20260710line01}, 
    then 
    \begin{enumerate}
        \item [(1)] {\bf (Weak--type bounds)} $T_{a, b, c}$ and $S_{a, b, c}$ map $L^p(\D)$ boundedly into $L^{q, \infty}(\D)$ if
        $$
        (p, q) \in \left\{ 1 \le p, q \le \infty: \; \frac{1}{q}=\frac{1}{p}+c-a-b-2,  \;  p \neq \infty \right\}, 
        $$
        while at the point $(p, q)=\left(\infty, \frac{1}{c-a-b-2} \right)$, $T_{a, b, c}$ and $S_{a, b, c}$ map $L^\infty(\D)$ unboundedly into $L^{\frac{1}{c-a-b-2}, \infty}(\D)$. 

        \vspace{0.1cm}
        
        \item [(2)] {\bf (Restricted weak-type bounds)} Further, the restricted weak--type estimates at the point $\left(p, q\right)=\left(\infty, \frac{1}{c-a-b-2} \right)$ also fail for both $T_{a, b, c}$ and $S_{a, b, c}$. 
    \end{enumerate}
    \item [(c)] If $\calP_{\textnormal{V-H}}$ lies on the line \eqref{20260710line01}
    but stays away from the line \eqref{20260710line02}, 
    then \begin{enumerate}
        \item [(1)] {\bf (Weak--type bounds)} $T_{a, b, c}$ and $S_{a, b, c}$ map $L^p(\D)$ boundedly into $L^{q, \infty}(\D)$ if
        $$
        (p, q) \in \left\{ 1 \le p, q \le \infty: \; \frac{1}{q}=\frac{1}{p}+c-a-b-2,  \; q \neq 1\right\}, 
        $$
        while at the point $(p, q)=\left(\frac{1}{3+a+b-c}, 1\right)$, $T_{a, b, c}$ and $S_{a, b, c}$ map $L^{\frac{1}{3+a+b-c}}(\D)$ unboundedly into $L^{1, \infty}(\D)$. 

        \vspace{0.1cm}
        
        \item [(2)] {\bf (Restricted weak-type bounds)} $T_{a, b, c}$ and $S_{a, b, c}$ map $L^{\frac{1}{3+a+b-c}, 1}(\D)$ boundedly into $L^{1, \infty}(\D)$. 
    \end{enumerate}

\vspace{0.1cm}

\item [(d)] If $\calP_{\textnormal{V-H}}$ lies on the intersection of \eqref{20260710line01} and \eqref{20260710line02}, then
\begin{enumerate}
        \item [(1)] {\bf (Weak--type bounds)} $T_{a, b, c}$ and $S_{a, b, c}$ map $L^p(\D)$ boundedly into $L^{q, \infty}(\D)$ if
        $$
        (p, q) \in \left\{ 1 \le p, q \le \infty: \; \frac{1}{q}=\frac{1}{p}+c-a-b-2,  \; p \neq \infty, \; q \neq 1\right\}, 
        $$
        while at the points $(p, q)= \left(\infty, \frac{1}{c-a-b-2} \right)$ and $\left(\frac{1}{3+a+b-c}, 1 \right)$, $T_{a, b, c}$ and $S_{a, b, c}$ fail to be of weak-type there. 

        \vspace{0.1cm}
        
        \item [(2)] {\bf (Restricted weak-type bounds)} $T_{a, b, c}$ and $S_{a, b, c}$ map $L^{\frac{1}{3+a+b-c}, 1}(\D)$ boundedly into $L^{1, \infty}(\D)$, while they fail to have a restricted weak-type estimate at the point $(p, q)=\left(\infty, \frac{1}{c-a-b-2} \right)$. 
    \end{enumerate}

\end{enumerate}

\end{thm}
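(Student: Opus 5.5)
Throughout, I use that in Main Case I ($c\ge\max\{a+2,b+2\}$) the strong-type region is bounded only by \eqref{criticalline}. Hence $\partial\Omega_{bd}$ is the closed segment of the critical line from $(0,c-a-b-2)$ to $(3+a+b-c,1)$. By Section~\ref{Sec10.1}, only this segment needs to be examined.

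\textbf{Interior points.} For points with $p<\infty$ and $q>1$, I check that they satisfy $1/q>-a$ and $1/p<b+1$. This holds because $c\ge b+2$ gives $-a\le c-a-b-2\le 1/q$, with strict inequality unless $p=\infty$ and $c=b+2$. Similarly, $c\ge a+2$ gives $b+1\ge 3+a+b-c\ge 1/p$, with strictness unless $q=1$ and $c=a+2$. So Proposition~\ref{20260527prop02} yields the weak-type bound at every such point.

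The two endpoints are handled as follows.
\begin{enumerate}
\item[$\bullet$] \emph{The endpoint $(\infty,\frac{1}{c-a-b-2})$.} Off \eqref{20260710line02} we have $c\ne b+2$, so Proposition~\ref{20260527prop01} applies, since $q_*$ equals this exponent. On \eqref{20260710line02}, i.e.\ $c=b+2$, this endpoint is $\calP_{\textnormal{C-H}}$ with $2-c+b=0$. There Theorem~\ref{restricted-weak Forelli--Rudin at CH}(i) gives failure of restricted weak type for both operators, hence also of weak type. I still need to verify that \eqref{conditionP1} holds: $a<0$ follows from $c-a-b-2>0$, and I must check $c>1$, i.e.\ $b>-1$, which is true. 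This gives (b) and the $p=\infty$ parts of (d).
\item[$\bullet$] \emph{The endpoint $(\frac{1}{3+a+b-c},1)$.} Off \eqref{20260710line01} we have $c>a+2$, and this point satisfies $1/q=1>-a$ and $1/p<b+1$. Proposition~\ref{20260527prop02} therefore applies with $q=1$, whose statement allows $q\in[1,\infty]$. On \eqref{20260710line01}, i.e.\ $c=a+2$, the endpoint lies on \eqref{verticalcut}, and Proposition~\ref{20260710prop01} gives failure of weak type.
\end{enumerate}

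\textbf{Restricted weak type at $\calP_{\textnormal{C-V}}$.} It remains to prove restricted weak type at $\calP_{\textnormal{C-V}}=(b+1,1)$ when $c=a+2$. This is exactly Theorem~\ref{restricted-weak Forelli--Rudin at CV}, provided \eqref{conditionP2} holds. I need $c-a-1=1\le1$, which is immediate. I also need $c>1$: this follows since $c\ge b+2>1$. Together these give (c)(2) and (d)(2).

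\textbf{Main obstacle.} The main obstacle is bookkeeping rather than analysis. I must make sure each invoked result's hypotheses, especially \eqref{conditionP1} and \eqref{conditionP2}, are met in each subcase. I must also confirm that Proposition~\ref{20260527prop02} genuinely covers the endpoint $q=1$, since its proof uses $W_\eta\in L^{1/\eta,\infty}$ with Lorentz--H\"older and an $L^p$ bound requiring $1/p>b+2-c$. The latter holds because $c\ge b+2$ makes the right side nonpositive, and equality would force $p=\infty$, which is excluded there.
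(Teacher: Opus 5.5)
Your proposal is correct and follows the paper's proof. It uses Propositions~\ref{20260527prop02} and~\ref{20260527prop01} along the critical segment, Proposition~\ref{20260710prop01} for the failure at $(b+1,1)$ when $c=a+2$, and Theorem~\ref{restricted-weak Forelli--Rudin at CH}(i) at $(\infty,-1/a)$ when $c=b+2$. It then uses Theorem~\ref{restricted-weak Forelli--Rudin at CV} for the restricted weak-type bound at $\calP_{\textnormal{C-V}}$, and your explicit checks of \eqref{conditionP1} and \eqref{conditionP2} fill in what the paper leaves implicit.

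The one minor difference is that you get the weak-type failure at $p=\infty$ on \eqref{20260710line02} from the failure of restricted weak type, whereas the paper cites Theorem~\ref{Forelli--Rudin threshold}(i)--(ii) directly. Both are valid, since weak type at $(\infty,q)$ implies restricted weak type there.
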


We refer the reader to Figure~\ref{Fig16} below for an illustration of Main Case~I.

\begin{figure}[ht]
\centering

\begin{minipage}[t]{0.48\textwidth}
\centering
\begin{tikzpicture}[scale=3.3]
  \def\xA{0.72}
    \def\yB{0.28}

    \def\xV{0.90}
    \def\yH{0.28}

    \def\xright{1.35}
    \def\ybottom{-0.15}

    \coordinate (B) at (0,\yB);
    \coordinate (A) at (\xA,1);
    \coordinate (PVH) at (\xV,\yH);
    \fill[magenta!20, opacity=0.30]
        (\xA,\ybottom) rectangle (\xright,\yB);

    \draw[->, thick]
        (-0.08,0) -- (\xright+0.08,0) node[right] {$1/p$};
    \draw[->, thick]
        (0,\ybottom-0.03) -- (0,1.12) node[above] {$1/q$};

    \draw[thick] (0,0) rectangle (1,1);
    \draw[thick] (1,0) -- (1,-0.025) node[below] {$1$};
    \draw[thick] (0,1) -- (-0.025,1) node[left] {$1$};

    \draw[magenta, very thick]
        (\xA,\ybottom) -- (\xA,\yB);
    \draw[magenta, very thick]
        (\xA,\yB) -- (\xright,\yB);
    \draw[black, dashed]
        (\xA,\yB) -- (\xA,1);
        \draw[black, dashed]
        (0,\yB) -- (\xA,\yB);

    \draw[green!55!black, line width=3pt, opacity=0.18,
          line cap=round, shorten <=2.2pt, shorten >=2.2pt]
        (0, 0.2) -- (1, 0.2);

    \draw[orange!85!black, line width=3pt, opacity=0.22,
          line cap=round, shorten <=2.2pt, shorten >=2.2pt]
        (\xV,\ybottom) -- (\xV,1);

    \draw[blue, line width=3pt, opacity=0.32,
          line cap=round, shorten <=2.2pt, shorten >=2.2pt]
        (B) -- (A);

    \fill[blue] (B) circle (1.05pt);
    \fill[blue] (A) circle (1.05pt);

    \fill[black] (0.9, 0.2) circle (0.55pt);
    \node[above right] at (0.75, 0.06)
        {\tiny $\calP_{\textnormal{V-H}}$};

    \node[font=\bfseries\large, magenta!85!black]
        at (1.1, 0.10) {I};

    \node[above] at (0.40,0.28)
        {\tiny \eqref{20260710line02}};
    \node[left] at (0.73,0.60)
        {\tiny \eqref{20260710line01}};

    \node[left=3pt] at (B)
        {\tiny  $\bigl(0,c-a-b-2\bigr)$};
    \node[above=2pt] at (A)
        {\tiny $\bigl(3+a+b-c,1\bigr)$};
\end{tikzpicture}

\vspace{0.05cm}
{\small (a)}
\end{minipage}
\hfill
\begin{minipage}[t]{0.48\textwidth}
\centering
\begin{tikzpicture}[scale=3.3]
    \def\xA{0.72}
    \def\yB{0.28}

    \def\xV{0.90}
    \def\yH{0.28}

    \def\xright{1.35}
    \def\ybottom{-0.15}

    \coordinate (B) at (0,\yB);
    \coordinate (A) at (\xA,1);
    \coordinate (PVH) at (\xV,\yH);

    \fill[magenta!20, opacity=0.30]
        (\xA,\ybottom) rectangle (\xright,\yB);

    \draw[->, thick]
        (-0.08,0) -- (\xright+0.08,0) node[right] {$1/p$};
    \draw[->, thick]
        (0,\ybottom-0.03) -- (0,1.12) node[above] {$1/q$};

    \draw[thick] (0,0) rectangle (1,1);
    \draw[thick] (1,0) -- (1,-0.025) node[below] {$1$};
    \draw[thick] (0,1) -- (-0.025,1) node[left] {$1$};

    \draw[magenta, very thick]
        (\xA,\ybottom) -- (\xA,\yB);
    \draw[magenta, very thick]
        (\xA,\yB) -- (\xright,\yB);
    \draw[black, dashed]
        (\xA,\yB) -- (\xA,1);
         \draw[black, dashed]
        (0,\yB) -- (\xA,\yB);

    \draw[green!55!black, line width=3pt, opacity=0.30,
          line cap=round, shorten <=2.2pt, shorten >=2.2pt]
        (0,\yH) -- (1,\yH);

    \draw[orange!85!black, line width=3pt, opacity=0.22,
          line cap=round, shorten <=2.2pt, shorten >=2.2pt]
        (\xV,\ybottom) -- (\xV,1);

    \draw[blue, line width=3pt, opacity=0.32,
          line cap=round, shorten <=5.2pt, shorten >=2.2pt]
        (B) -- (A);

    \fill[blue] (A) circle (1.05pt);

    \fill[violet!80!blue] (B) circle (1pt);

    \fill[black] (PVH) circle (0.55pt);
    \node[above right] at (0.75, 0.3)
        {\tiny $\calP_{\textnormal{V-H}}$};

    \node[font=\bfseries\large, magenta!85!black]
        at (1.1,0.10) {I};

    \node[above] at (0.40,0.28)
        {\tiny \eqref{20260710line02}};
    \node[left] at (0.73,0.60)
        {\tiny \eqref{20260710line01}};

    \node[left=3pt] at (B)
        {\tiny $\bigl(0,c-a-b-2\bigr)$};
    \node[above=2pt] at (A)
        {\tiny $\bigl(3+a+b-c,1\bigr)$};
\end{tikzpicture}

\vspace{0.05cm}
{\small (b)}
\end{minipage}

\vspace{0.35cm}

\begin{minipage}[t]{0.48\textwidth}
\centering
\begin{tikzpicture}[scale=3.3]
  \def\xA{0.72}
    \def\yB{0.28}

    \def\xV{0.90}
    \def\yH{0.28}

    \def\xright{1.35}
    \def\ybottom{-0.15}

    \coordinate (B) at (0,\yB);
    \coordinate (A) at (\xA,1);
    \coordinate (PVH) at (\xV,\yH);

    \fill[magenta!20, opacity=0.30]
        (\xA,\ybottom) rectangle (\xright,\yB);

    \draw[->, thick]
        (-0.08,0) -- (\xright+0.08,0) node[right] {$1/p$};
    \draw[->, thick]
        (0,\ybottom-0.03) -- (0,1.12) node[above] {$1/q$};

    \draw[thick] (0,0) rectangle (1,1);
    \draw[thick] (1,0) -- (1,-0.025) node[below] {$1$};
    \draw[thick] (0,1) -- (-0.025,1) node[left] {$1$};

    \draw[magenta, very thick]
        (\xA,\ybottom) -- (\xA,\yB);
    \draw[magenta, very thick]
        (\xA,\yB) -- (\xright,\yB);
    \draw[black, dashed]
        (\xA,\yB) -- (\xA,1);
                \draw[black, dashed]
        (0,\yB) -- (\xA,\yB);

    \draw[green!55!black, line width=3pt, opacity=0.18,
          line cap=round, shorten <=2.2pt, shorten >=2.2pt]
        (0,0.2) -- (1,0.2);

    \draw[orange!85!black, line width=3pt, opacity=0.30,
          line cap=round, shorten <=2.2pt, shorten >=2.2pt]
        (\xA,\ybottom) -- (\xA,1);

    \draw[blue, line width=3pt, opacity=0.32,
          line cap=round, shorten <=2.2pt, shorten >=5.2pt]
        (B) -- (A);

    \fill[blue] (B) circle (1.05pt);

    \fill[red!80!black] (A) circle (1pt);

    \fill[black] (\xA, 0.2) circle (0.55pt);
    \node[above right] at (\xA, 0.06)
        {\tiny $\calP_{\textnormal{V-H}}$};

    \node[font=\bfseries\large, magenta!85!black]
        at (1.1,0.10) {I};

   \node[above] at (0.40,0.28)
        {\tiny \eqref{20260710line02}};
    \node[left] at (0.73,0.60)
        {\tiny \eqref{20260710line01}};

    \node[left=3pt] at (B)
        {\tiny $\bigl(0,c-a-b-2\bigr)$};
    \node[above=2pt] at (A)
        {\tiny $\bigl(3+a+b-c,1\bigr)$};
\end{tikzpicture}

\vspace{0.05cm}
{\small (c)}
\end{minipage}
\hfill
\begin{minipage}[t]{0.48\textwidth}
\centering
\begin{tikzpicture}[scale=3.3]
    \def\xA{0.72}
    \def\yB{0.28}

    \def\xV{0.72}
    \def\yH{0.28}

    \def\xright{1.35}
    \def\ybottom{-0.15}

    \coordinate (B) at (0,\yB);
    \coordinate (A) at (\xA,1);
    \coordinate (PVH) at (\xV,\yH);

    \fill[magenta!20, opacity=0.30]
        (\xA,\ybottom) rectangle (\xright,\yB);

    \draw[->, thick]
        (-0.08,0) -- (\xright+0.08,0) node[right] {$1/p$};
    \draw[->, thick]
        (0,\ybottom-0.03) -- (0,1.12) node[above] {$1/q$};

    \draw[thick] (0,0) rectangle (1,1);
    \draw[thick] (1,0) -- (1,-0.025) node[below] {$1$};
    \draw[thick] (0,1) -- (-0.025,1) node[left] {$1$};

    \draw[magenta, very thick]
        (\xA,\ybottom) -- (\xA,\yB);
    \draw[magenta, very thick]
        (\xA,\yB) -- (\xright,\yB);
    \draw[black, dashed]
        (\xA,\yB) -- (\xA,1);
                \draw[black, dashed]
        (0,\yB) -- (\xA,\yB);

    \draw[green!55!black, line width=3pt, opacity=0.30,
          line cap=round, shorten <=2.2pt, shorten >=2.2pt]
        (0,\yH) -- (1,\yH);

    \draw[orange!85!black, line width=3pt, opacity=0.30,
          line cap=round, shorten <=2.2pt, shorten >=2.2pt]
        (\xV,\ybottom) -- (\xV,1);

    \draw[blue, line width=3pt, opacity=0.32,
          line cap=round, shorten <=5.2pt, shorten >=5.2pt]
        (B) -- (A);

    \fill[violet!80!blue] (B) circle (1pt);

    \fill[red!80!black] (A) circle (1pt);

    \fill[black] (PVH) circle (0.55pt);
    \node[above right] at (PVH)
        {\tiny $\calP_{\textnormal{V-H}}$};

    \node[font=\bfseries\large, magenta!85!black]
        at (1.1,0.10) {I};

   \node[above] at (0.40,0.28)
        {\tiny \eqref{20260710line02}};
    \node[left] at (0.73,0.60)
        {\tiny \eqref{20260710line01}};

    \node[left=3pt] at (B)
        {\tiny $\bigl(0,c-a-b-2\bigr)$};
    \node[above=2pt] at (A)
        {\tiny $\bigl(3+a+b-c,1\bigr)$};
\end{tikzpicture}

\vspace{0.05cm}
{\small (d)}
\end{minipage}

\caption{\small
Weak-type and restricted weak-type behavior in Main Case~I. The blue segments and points represent weak-type bounds, the red points represent restricted weak-type bounds only, and the violet points represent failure even of restricted weak-type bounds. As usual, the green and orange lines represent the horizontal cut-off line~\eqref{horizontalcut} and the vertical cut-off line~\eqref{verticalcut}, respectively.}
\label{Fig16}
\end{figure}

\begin{proof}
Each part follows by combining the endpoint estimates established earlier with the corresponding threshold and restricted weak-type results.

 $\bullet$ For part~(a), the desired weak-type bounds follow directly from Propositions~\ref{20260527prop01} and \ref{20260527prop02}.

$\bullet$  For part~(b), the weak-type statement in part~(b.1) is a consequence of Proposition~\ref{20260527prop02}, together with parts~(i) and~(ii) of Theorem~\ref{Forelli--Rudin threshold}. The failure of the restricted weak-type estimate asserted in part~(b.2) follows from part~(i) of Theorem~\ref{restricted-weak Forelli--Rudin at CH}.

 $\bullet$ For part~(c), combining Propositions~\ref{20260710prop01}, \ref{20260527prop01} and \ref{20260527prop02} yields part~(c.1), while part~(c.2) follows from Theorem~\ref{restricted-weak Forelli--Rudin at CV}.

$\bullet$ Finally, part~(d.1) follows from Propositions~\ref{20260527prop02} and~\ref{20260710prop01}, together with parts~(i) and~(ii) of Theorem~\ref{Forelli--Rudin threshold}. Part~(d.2) is then obtained by combining Theorems~\ref{restricted-weak Forelli--Rudin at CV} and~\ref{restricted-weak Forelli--Rudin at CH}.
\end{proof}

\vspace{0.1cm}

\subsection{Treatment of Main Case II}

Recall that \emph{Main Case II} refers to the case where the vertical--horizontal intersection $\calP_{\textnormal{V--H}}$ lies in the region
$$
\left\{ (p,q): \frac{1}{p} < 3+a+b-c, \; \frac{1}{q} \le c-a-b-2 \right\};
$$
see Figure \ref{Fig7}. By \cite[Theorem~1.1]{ZZ2022}, the strong-type bounds in this case are characterized as follows: the operators $T_{a,b,c}$ and $S_{a,b,c}$ map $L^p(\D)$ boundedly into $L^q(\D)$ if and only if
$$
(p,q)\in \left\{1\le p,q\le\infty:\ \frac{1}{q}>\frac{1}{p}+c-a-b-2,\quad \frac{1}{p}<b+1\right\}.
$$

\vspace{0.1cm}

We now state our main result.

\begin{thm} \label{mainsubcaseII}
Let $a, b, c \in \R$ satisfy \eqref{maincase} with $b+2 \le c < a+2$. Then the following statements hold. 
\begin{enumerate}
    \item [(a)] If $\calP_{\textnormal{V-H}}$ stays away from the line \eqref{20260710line02},
    then 
    \begin{enumerate}
        \item [(1)] {\bf (Weak--type bounds)} 
    $T_{a, b, c}$ and $S_{a, b, c}$ map $L^p(\D)$ boundedly into $L^{q, \infty}(\D)$ if
        $$
        (p, q) \in \left\{ 1 \le p, q \le \infty: \; \frac{1}{q}=\frac{1}{p}+c-a-b-2,\ \frac{1}{p}<b+1\right\}.
        $$
    $T_{a, b, c}$ and $S_{a, b, c}$ map $L^p(\D)$ unboundedly into $L^{q, \infty}(\D)$ if 
    	\[
    		p=\frac{1}{b+1},
    		\quad
    		c-1-a \le \frac{1}{q} \le 1.
    	\]
        
        \item [(2)] {\bf (Restricted weak-type bounds)}
        $T_{a, b, c}$ and $S_{a, b, c}$ map $L^{p,1}(\D)$ boundedly into $L^{q, \infty}(\D)$ if
    	\[
    		p=\frac{1}{b+1},
    		\quad
    		c-1-a \le \frac{1}{q} \le 1.
    	\]
    \end{enumerate}

    \item [(b)] If $\calP_{\textnormal{V-H}}$ lies on the line \eqref{20260710line02}, 
    then 
    \begin{enumerate}
        \item [(1)] {\bf (Weak--type bounds)} $T_{a, b, c}$ and $S_{a, b, c}$ map $L^p(\D)$ boundedly into $L^{q, \infty}(\D)$ if
        $$
        (p, q) \in \left\{ 1 \le p, q \le \infty: \; \frac{1}{q}=\frac{1}{p}+c-a-b-2,\ \frac{1}{p}<b+1 ,  \;  p \neq \infty \right\}.
        $$
       $T_{a, b, c}$ and $S_{a, b, c}$ map $L^p(\D)$ unboundedly into $L^{q, \infty}(\D)$ if
       \[
       	p=\infty,
       	\quad
       	q=\frac{1}{c-a-b-2},
       \]
       or
    	\[
    		p=\frac{1}{b+1},
    		\quad
    		c-1-a \le \frac{1}{q} \le 1.
    	\]

        \vspace{0.1cm}
        
        \item [(2)] {\bf (Restricted weak-type bounds)}
        $T_{a, b, c}$ and $S_{a, b, c}$ map $L^{p,1}(\D)$ boundedly into $L^{q, \infty}(\D)$ if
    	\[
    		p=\frac{1}{b+1},
    		\quad
    		c-1-a \le \frac{1}{q} \le 1.
    	\]
    	Further, the restricted weak--type estimates at the point $\left(p, q\right)=\left(\infty, \frac{1}{c-a-b-2} \right)$ also fail for both $T_{a, b, c}$ and $S_{a, b, c}$. 
    \end{enumerate}

\end{enumerate}

\end{thm}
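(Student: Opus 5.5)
The proof assembles the endpoint results from Sections \ref{Sec04}--\ref{Sec08}, in the same way as the proof of Theorem~\ref{mainsubcaseI}. We first fix the geometry. In Main Case II we have $b+2\le c<a+2$. So $c-a-1<1$, which puts $\calP_{\textnormal{C-V}}=(b+1,c-1-a)$ inside the square, and $c>1$ holds because $c\ge b+2>1$. Thus condition \eqref{conditionP2} is satisfied. The horizontal cut-off line satisfies $-a\le c-a-b-2$, so the boundary $\partial\Omega_{\mathrm{bd}}$ consists of two pieces: the closed segment of \eqref{criticalline} from $(0,c-a-b-2)$ to $\calP_{\textnormal{C-V}}$, and the vertical segment $\{1/p=b+1,\ c-1-a\le 1/q\le 1\}$. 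The point $\calP_{\textnormal{V-H}}$ lies on \eqref{20260710line02} exactly when $c=b+2$. In that case $\calP_{\textnormal{C-H}}=(0,-a)=(0,c-a-b-2)$ and $2-c+b=0$, so condition \eqref{conditionP1} holds at the left endpoint.

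\textbf{Part (a).} Here $c>b+2$.
\begin{itemize}
\item On the open part of the critical segment with $p<\infty$ and $1/p<b+1$, the weak-type bounds follow from Proposition~\ref{20260527prop02}. The relevant conditions are $1/q>-a$, which holds because $1/q\ge c-a-b-2>-a$, and $1/p<b+1$.
\item At the left endpoint $p=\infty$, Proposition~\ref{20260527prop01} applies since $c-b-2\neq 0$. It gives $L^\infty\to L^{q_*,\infty}$ with $1/q_*=\max\{-a,c-a-b-2\}=c-a-b-2$.
\item On the vertical segment, failure of the weak type comes from Proposition~\ref{20260710prop01}.
\item For the restricted weak type on the vertical segment, the point $\calP_{\textnormal{C-V}}$ itself is handled by Theorem~\ref{restricted-weak Forelli--Rudin at CV}. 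The points strictly above it, $c-1-a<1/q\le 1$, we get by embedding. Because $|\D|<\infty$, we have $L^{\frac1{c-1-a},\infty}(\D)\hookrightarrow L^{q,\infty}(\D)$ whenever $q<\frac1{c-1-a}$. Hence $L^{\frac1{b+1},1}\to L^{\frac1{c-1-a},\infty}$ implies $L^{\frac1{b+1},1}\to L^{q,\infty}$.
\end{itemize}

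\textbf{Part (b).} Here $c=b+2$.
\begin{itemize}
\item The critical segment with $p\neq\infty$ is again covered by Proposition~\ref{20260527prop02}. The needed inequality $1/q=1/p+(-a)>-a$ holds strictly since $1/p>0$.
\item At $(\infty,\frac1{c-a-b-2})=\calP_{\textnormal{C-H}}$ we have $2-c+b=0$. So parts (i) and (ii) of Theorem~\ref{Forelli--Rudin threshold} give failure of the weak type for $S_{a,b,c}$ and $T_{a,b,c}$, respectively.
\item At the same point, part (i) of Theorem~\ref{restricted-weak Forelli--Rudin at CH} gives failure of the restricted weak type for both operators.
\item The vertical segment is treated exactly as in part (a).
\end{itemize}

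The only step needing care is the embedding argument for the vertical segment above $\calP_{\textnormal{C-V}}$. This is routine because the disc has finite measure. Once the embedding is in place, everything reduces to checking the parameter conditions \eqref{conditionP1}, \eqref{conditionP2} and the hypotheses of Propositions~\ref{20260527prop01}--\ref{20260527prop02}; no new estimate is required.
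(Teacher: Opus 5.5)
Your proposal is correct and is essentially the paper's proof. The paper likewise obtains (a.1) and (b.1) from Propositions~\ref{20260710prop01}, \ref{20260527prop01} and \ref{20260527prop02}, adding parts (i)--(ii) of Theorem~\ref{Forelli--Rudin threshold} when $c=b+2$. It gets the restricted weak-type bounds on the vertical segment from Theorem~\ref{restricted-weak Forelli--Rudin at CV} plus the embedding $L^{t,\infty}(\D)\hookrightarrow L^{s,\infty}(\D)$ for $s<t$, and the restricted weak-type failure at $(\infty,\frac{1}{c-a-b-2})$ from Theorem~\ref{restricted-weak Forelli--Rudin at CH}(i). Your explicit checks of \eqref{conditionP1}, \eqref{conditionP2} and $1/q>-a$ fill in details the paper leaves implicit.
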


We refer the reader to Figure~\ref{Fig17} below for an illustration of Main Case~II.

\begin{figure}[ht]
\centering

\begin{minipage}[t]{0.48\textwidth}
\centering
\begin{tikzpicture}[scale=3.3]
    \def\xA{0.72}
    \def\yB{0.28}

    \def\xV{0.5}
    \def\yH{0.2}

    \def\xright{1.35}
    \def\ybottom{-0.15}

    \coordinate (B) at (0,\yB);
    \coordinate (A) at (\xA,1);
    \coordinate (PVH) at (\xV,\yH);
    \fill[green!20, opacity=0.30]
        (0,\ybottom) rectangle (\xA,\yB);

    \draw[->, thick]
        (-0.08,0) -- (\xright+0.08,0) node[right] {$1/p$};
    \draw[->, thick]
        (0,\ybottom-0.03) -- (0,1.12) node[above] {$1/q$};

    \draw[thick] (0,0) rectangle (1,1);
    \draw[thick] (1,0) -- (1,-0.025) node[below] {$1$};
    \draw[thick] (0,1) -- (-0.025,1) node[left] {$1$};

    \draw[green, very thick]
        (0,\yB) -- (\xA,\yB);
    \draw[black, dashed]
        (\xA,\ybottom) -- (\xA,1);
   \draw[black, dashed]
        (\xA,\yB) -- (\xright,\yB);
   \draw[black, dashed]
        (\xV, \xV+\yB) -- (\xA,1);

    \draw[green!55!black, line width=3pt, opacity=0.18,
          line cap=round, shorten <=2.2pt, shorten >=2.2pt]
        (0, \yH) -- (1, \yH);

    \draw[orange!85!black, line width=3pt, opacity=0.22,
          line cap=round, shorten <=2.2pt, shorten >=2.2pt]
        (\xV,\ybottom) -- (\xV, \xV+\yB);

    \draw[blue, line width=3pt, opacity=0.32,
          line cap=round, shorten <=2.2pt, shorten >=2.2pt]
        (B) -- (\xV, \xV+\yB);
        
    \draw[red, line width=3pt, opacity=0.5,
          line cap=round, shorten <=2.2pt, shorten >=2.2pt]
        (\xV,1) -- (\xV, \xV+\yB);

    \fill[blue] (B) circle (1.05pt);
    \fill[red] (\xV, \xV+\yB) circle (1.05pt);

    \fill[black] (\xV, \yH) circle (0.55pt);
    \node[below] at (\xV, \yH)
        {\tiny $\calP_{\textnormal{V-H}}$};

    \node[font=\bfseries\large, green!45!black] at (0.3,0.1) {II};

    \node[above] at (0.35,0.28)
        {\tiny \eqref{20260710line02}};
    \node[right] at (0.73,0.60)
        {\tiny \eqref{20260710line01}};

    \node[left=3pt] at (B)
        {\tiny  $\bigl(0,c-a-b-2\bigr)$};
    \node[above=2pt] at (A)
        {\tiny $\bigl(3+a+b-c,1\bigr)$};
\end{tikzpicture}

\vspace{0.05cm}
{\small (a)}
\end{minipage}
\hfill
\begin{minipage}[t]{0.48\textwidth}
\centering
\begin{tikzpicture}[scale=3.3]
    \def\xA{0.72}
    \def\yB{0.28}

    \def\xV{0.5}
    \def\yH{0.28}

    \def\xright{1.35}
    \def\ybottom{-0.15}

    \coordinate (B) at (0,\yB);
    \coordinate (A) at (\xA,1);
    \coordinate (PVH) at (\xV,\yH);
    \fill[green!20, opacity=0.30]
        (0,\ybottom) rectangle (\xA,\yB);

    \draw[->, thick]
        (-0.08,0) -- (\xright+0.08,0) node[right] {$1/p$};
    \draw[->, thick]
        (0,\ybottom-0.03) -- (0,1.12) node[above] {$1/q$};

    \draw[thick] (0,0) rectangle (1,1);
    \draw[thick] (1,0) -- (1,-0.025) node[below] {$1$};
    \draw[thick] (0,1) -- (-0.025,1) node[left] {$1$};

    \draw[green, very thick]
        (0,\yB) -- (\xA,\yB);
    \draw[black, dashed]
        (\xA,\ybottom) -- (\xA,1);
    \draw[black, dashed]
        (\xA,\yB) -- (\xright,\yB);
   \draw[black, dashed]
        (\xV, \xV+\yB) -- (\xA,1);

    \draw[green!55!black, line width=3pt, opacity=0.18,
          line cap=round, shorten <=2.2pt, shorten >=2.2pt]
        (0, \yH) -- (1, \yH);

    \draw[orange!85!black, line width=3pt, opacity=0.22,
          line cap=round, shorten <=2.2pt, shorten >=2.2pt]
        (\xV,\ybottom) -- (\xV, \xV+\yB);

    \draw[blue, line width=3pt, opacity=0.32,
          line cap=round, shorten <=2.2pt, shorten >=2.2pt]
        (B) -- (\xV, \xV+\yB);
        
    \draw[red, line width=3pt, opacity=0.5,
          line cap=round, shorten <=2.2pt, shorten >=2.2pt]
        (\xV,1) -- (\xV, \xV+\yB);

    \fill[violet] (B) circle (1.05pt);
    \fill[red] (\xV, \xV+\yB) circle (1.05pt);

    \fill[black] (\xV, \yH) circle (0.55pt);
    \node[below] at (\xV, \yH)
        {\tiny $\calP_{\textnormal{V-H}}$};

    \node[font=\bfseries\large, green!45!black] at (0.3,0.1) {II};

    \node[above] at (0.35,0.28)
        {\tiny \eqref{20260710line02}};
    \node[right] at (0.73,0.60)
        {\tiny \eqref{20260710line01}};

    \node[left=3pt] at (B)
        {\tiny  $\bigl(0,c-a-b-2\bigr)$};
    \node[above=2pt] at (A)
        {\tiny $\bigl(3+a+b-c,1\bigr)$};
\end{tikzpicture}

\vspace{0.05cm}
{\small (b)}
\end{minipage}

\caption{\small
Weak-type and restricted weak-type behavior in Main Case~II. The blue segments and points represent weak-type bounds, the red segments and points represent restricted weak-type bounds only and the violet points represent failure even of restricted weak-type bounds. As usual, the green and orange lines represent the horizontal cut-off line~\eqref{horizontalcut} and the vertical cut-off line~\eqref{verticalcut}, respectively.}
\label{Fig17}
\end{figure}

\begin{proof}
Each assertion follows from the endpoint estimates obtained earlier, together with the corresponding threshold and restricted weak-type results.

 $\bullet$ For part~(a), the desired weak-type bounds in (a.1) follow directly from Propositions~\ref{20260710prop01}, \ref{20260527prop01} and \ref{20260527prop02}.
Since $L^{t,\infty}(\D) \hookrightarrow L^{s,\infty}(\D)$ for $0<s<t<\infty$, part~(a.2) follows from the restricted weak-type estimate at $\left(\frac{1}{b+1}, \frac{1}{c-a-1}\right)$, which is obtained by Theorem~\ref{restricted-weak Forelli--Rudin at CV}. 

$\bullet$  For part~(b), combining Proposition~\ref{20260710prop01} and Propositions~\ref{20260527prop02} with (i) and (ii) of Theorem~\ref{Forelli--Rudin threshold} yields part~(b.1).
The restricted weak-type estimate in part~(b.2) is the same as the proof of (a.2), while the failure of the restricted weak-type estimate asserted follows from part~(i) of Theorem~\ref{restricted-weak Forelli--Rudin at CH}.
\end{proof}

\vspace{0.1cm}

\subsection{Treatment of Main Case III}

Recall that \emph{Main Case III} refers to the case where the vertical--horizontal intersection $\calP_{\textnormal{V--H}}$ lies in the region
$$
\left\{ (p,q): \frac{1}{p} \ge 3+a+b-c, \; \frac{1}{q} > c-a-b-2 \right\};
$$
see Figure \ref{Fig7}. According to \cite[Theorem~1.1]{ZZ2022}, the strong-type mapping properties in this case are completely determined by the following conditions:
$$
T_{a,b,c},\, S_{a,b,c}: L^p(\D)\longrightarrow L^q(\D)
$$
are bounded if and only if
$$ 
(p,q) \in \left\{ 1 \le p,q \le \infty: \; \frac{1}{q}>\frac{1}{p}+c-a-b-2, \frac{1}{q}>-a\right\}. 
$$

\vspace{0.1cm}

The main result in this case is as follows.

\begin{thm} \label{mainsubcaseIII}
Let $a, b, c \in \R$ satisfy \eqref{maincase} with $a+2 \le c < b+2$. Then the following statements hold. 
\begin{enumerate}
    \item [(a)] If $\calP_{\textnormal{V-H}}$ stays away from the line \eqref{20260710line01} and $0<2-c+b<1/2$,
    then 
    \begin{enumerate}
        \item [(1)] {\bf (Weak--type bounds)} 
    $T_{a, b, c}$ and $S_{a, b, c}$ map $L^p(\D)$ boundedly into $L^{q, \infty}(\D)$ if
        \[
        (p, q) \in \left\{ 1 \le p, q \le \infty: \; \frac{1}{q}=\frac{1}{p}+c-a-b-2,\ \frac{1}{q}>-a\right\}.
        \]
        or
        \[
        	0 \le \frac{1}{p} < 2-c+b,
        	\quad 
        	q=-\frac{1}{a}.
        \]
        At the point $(p, q)=\left(\frac{1}{2-c+b}, -\frac{1}{a}\right)$, $T_{a, b, c}$ and $S_{a, b, c}$ map $L^{\frac{1}{2-c+b}}(\D)$ unboundedly into $L^{-\frac{1}{a}, \infty}(\D)$.
        
        \item [(2)] {\bf (Restricted weak-type bounds)}
        $T_{a, b, c}$ and $S_{a, b, c}$ map $L^{\frac{1}{2-c+b},1}(\D)$ boundedly into $L^{-\frac{1}{a}, \infty}(\D)$.
    \end{enumerate}
    
    \medskip

    \item [(b)] If $\calP_{\textnormal{V-H}}$ lies on the line \eqref{20260710line01} and $0<2-c+b<1/2$, 
    then 
    \begin{enumerate}
        \item [(1)] {\bf (Weak--type bounds)} $T_{a, b, c}$ and $S_{a, b, c}$ map $L^p(\D)$ boundedly into $L^{q, \infty}(\D)$ if
        $$
        (p, q) \in \left\{ 1 \le p, q \le \infty: \; \frac{1}{q}=\frac{1}{p}+c-a-b-2,\ -a<\frac{1}{q}<1 \right\}, 
        $$
        or
        \[
        	0 \le \frac{1}{p} < 2-c+b,
        	\quad 
        	q=-\frac{1}{a}.
        \]
        While $T_{a, b, c}$ and $S_{a, b, c}$ map $L^{p}(\D)$ unboundedly into $L^{q, \infty}(\D)$ if
        \[
        	p=\frac{1}{2-c+b},
        	\quad
        	q=-\frac{1}{a},
        \]
        or
        \[
        	p=\frac{1}{3+a+b-c},
        	\quad
        	q=1.
        \]

        \vspace{0.1cm}
        
        \item [(2)] {\bf (Restricted weak-type bounds)}
        $T_{a, b, c}$ and $S_{a, b, c}$ map $L^{p,1}(\D)$ boundedly into $L^{q, \infty}(\D)$ if
    	\[
    		p=\frac{1}{2-c+b},
    		\quad
    		q=-\frac{1}{a},
    	\]
    	or 
    	\[
    		p=\frac{1}{3+a+b-c},
    		\quad
    		q=1.
    	\] 
    \end{enumerate}
    
    \medskip
    
    \item [(c)] If $\calP_{\textnormal{V-H}}$ stays away from the line \eqref{20260710line01} and $1/2 \le 2-c+b<1$,
    then 
    \begin{enumerate}
        \item [(1)] {\bf (Weak--type bounds)} 
    $T_{a, b, c}$ and $S_{a, b, c}$ map $L^p(\D)$ boundedly into $L^{q, \infty}(\D)$ if
        \[
        (p, q) \in \left\{ 1 \le p, q \le \infty: \; \frac{1}{q}=\frac{1}{p}+c-a-b-2,\ \frac{1}{q}>-a\right\}.
        \]
        or
        \[
        	0 \le \frac{1}{p} < 2-c+b,
        	\quad 
        	q=-\frac{1}{a}.
        \]
        
        \item [(2)] {\bf (Forelli--Rudin phenomenon for weak--type estimates)}
        At the point $(p, q)=\left(\frac{1}{2-c+b}, -\frac{1}{a}\right)$, $T_{a, b, c}$ maps $L^{\frac{1}{2-c+b}}(\D)$ boundedly into $L^{-\frac{1}{a}, \infty}(\D)$ while $S_{a, b, c}$ fails.
        
        \item [(3)] {\bf (Restricted weak-type bounds)}
        $T_{a, b, c}$ and $S_{a, b, c}$ map $L^{\frac{1}{2-c+b},1}(\D)$ boundedly into $L^{-\frac{1}{a}, \infty}(\D)$.
    \end{enumerate}
    
    \medskip

    \item [(d)] If $\calP_{\textnormal{V-H}}$ lies on the line \eqref{20260710line01} and $1/2 \le 2-c+b<1$,
    then 
    \begin{enumerate}
        \item [(1)] {\bf (Weak--type bounds)} $T_{a, b, c}$ and $S_{a, b, c}$ map $L^p(\D)$ boundedly into $L^{q, \infty}(\D)$ if
        $$
        (p, q) \in \left\{ 1 \le p, q \le \infty: \; \frac{1}{q}=\frac{1}{p}+c-a-b-2,\ -a<\frac{1}{q}<1 \right\}, 
        $$
        or
        \[
        	0 \le \frac{1}{p} < 2-c+b,
        	\quad 
        	q=-\frac{1}{a},
        \]
        while at the point $(p, q)=\left(\frac{1}{3+a+b-c}, 1\right)$, $T_{a, b, c}$ and $S_{a, b, c}$ map $L^{\frac{1}{3+a+b-c}}(\D)$ unboundedly into $L^{1, \infty}(\D)$. 

        \item [(2)] {\bf (Forelli--Rudin phenomenon for weak-type estimates)}
        At the point $(p, q)=\left(\frac{1}{2-c+b}, -\frac{1}{a}\right)$, $T_{a, b, c}$ maps $L^{\frac{1}{2-c+b}}(\D)$ boundedly into $L^{-\frac{1}{a}, \infty}(\D)$ while $S_{a, b, c}$ fails.
        
        \item [(3)] {\bf (Restricted weak-type bounds)}
        $T_{a, b, c}$ and $S_{a, b, c}$ map $L^{p,1}(\D)$ boundedly into $L^{q, \infty}(\D)$ if
    	\[
    		p=\frac{1}{2-c+b},
    		\quad
    		q=-\frac{1}{a},
    	\]
    	or 
    	\[
    		p=\frac{1}{3+a+b-c},
    		\quad
    		q=1.
    	\] 
    \end{enumerate}

\end{enumerate}

\end{thm}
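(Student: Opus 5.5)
The proof is an assembly of the endpoint results of Sections~\ref{Sec04}--\ref{Sec07}, following the pattern used for Theorems~\ref{mainsubcaseI} and~\ref{mainsubcaseII}. First we record the geometry of Main Case III. The condition $a+2\le c$ gives $c-a-1\ge 1$, so $\calP_{\textnormal{C-V}}$ lies on or above the top edge $1/q=1$. The condition $c<b+2$ gives $2-c+b>0$, so $\calP_{\textnormal{C-H}}=(2-c+b,-a)$ lies strictly inside the square. Since $c\ge a+2>1$ and $a<0$ (because $-a>2-c+b>0$), condition \eqref{conditionP1} holds. The boundary $\partial\Omega_{\mathrm{bd}}$ therefore consists of three pieces:
\begin{itemize}
\item the segment of \eqref{criticalline} from $\calP_{\textnormal{C-H}}$ to $(3+a+b-c,1)$;
\item the segment of \eqref{horizontalcut} from $(0,-a)$ to $\calP_{\textnormal{C-H}}$;
\item the point $(3+a+b-c,1)$. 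When $c=a+2$, i.e.\ $\calP_{\textnormal{V-H}}$ lies on \eqref{20260710line01}, this point is $\calP_{\textnormal{C-V}}$ and lies on \eqref{verticalcut}.
\end{itemize}
By the sharpness proposition of Section~\ref{Sec10.1}, nothing outside $\overline{\Omega(a,b,c)}$ needs to be checked.

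Next we treat the weak-type bounds in (a.1), (b.1), (c.1) and (d.1).
\begin{itemize}
\item On the open critical segment with $1/q>-a$ and $1/p<b+1$, the bound follows from Proposition~\ref{20260527prop02}.
\item When $\calP_{\textnormal{V-H}}$ lies off \eqref{20260710line01}, i.e.\ $c>a+2$, the top endpoint $(3+a+b-c,1)$ satisfies $1/p<b+1$. It is therefore also covered by Proposition~\ref{20260527prop02}. This gives the closed range in (a.1) and (c.1).
\item When $c=a+2$, the top endpoint lies on \eqref{verticalcut}, and Proposition~\ref{20260710prop01} gives the failure at $(3+a+b-c,1)$ in (b.1) and (d.1).
\item On the horizontal segment $0\le 1/p<2-c+b$, $q=-1/a$, we use the embedding $L^{p}(\D)\hookrightarrow L^{\tilde p}(\D)$ for $p>\tilde p$ on the finite measure space $\D$. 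This reduces the bound to one at a point on or above the strong region. Concretely, for $1/p<2-c+b$ the point $(1/p,-a)$ lies on the boundary line \eqref{horizontalcut}. Since $L^{-1/a}\subset L^{-1/a,\infty}$, it is cleanest to interpolate between $\calP_{\textnormal{C-H}}$'s restricted weak bound and the point $(0,-a)$.
\end{itemize}
The point $(0,-a)$ is the $p=\infty$ endpoint of Reduction IV. Here $-a>c-a-b-2$, so $c-b-2\neq0$, and Proposition~\ref{20260527prop01} gives $L^\infty\to L^{q_*,\infty}$ with $1/q_*=-a$. We then apply the off-diagonal Marcinkiewicz interpolation theorem \cite[Theorem~1.4.19]{Grafakos2014} between $(0,-a)$ (weak type $(\infty,-1/a)$) and $\calP_{\textnormal{C-H}}$ (restricted weak type from Theorem~\ref{restricted-weak Forelli--Rudin at CH}(ii)). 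Both endpoints have the same target exponent $-1/a$, and this is the main obstacle: standard Marcinkiewicz requires $q_0\neq q_1$. If it fails here, the fallback is a direct argument. We write $S_{a,b,c}f=(1-|z|^2)^{a}S_{0,b,c}f$ and bound $|S_{0,b,c}f(z)|\lesssim (1-|z|^2)^{c-b-2+\frac{1}{p}}\|f\|_{L^p}$ by H\"older, exactly as in the remark after Proposition~\ref{20260527prop02}. Since $1/p<2-c+b$, the exponent $c-b-2+1/p$ is negative but exceeds $c-b-2$. A Forelli--Rudin-method splitting then yields the pointwise bound $|S_{a,b,c}f(z)|\lesssim (1-|z|^2)^{a}$, which lies in $L^{-1/a,\infty}$.

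Finally we handle $\calP_{\textnormal{C-H}}$. For $0<2-c+b<1/2$, the failure for both operators in (a.1) and (b.1) follows from Theorem~\ref{Forelli--Rudin threshold}(i) and (ii). For $1/2\le 2-c+b<1$, the Forelli--Rudin dichotomy in (c.2) and (d.2) follows from Theorem~\ref{Forelli--Rudin threshold}(i) and (iii). The restricted weak-type bounds at $\calP_{\textnormal{C-H}}$ in (a.2), (b.2), (c.3) and (d.3) follow from Theorem~\ref{restricted-weak Forelli--Rudin at CH}(ii), using $0<2-c+b<1$. When $c=a+2$, the restricted weak bound at $(3+a+b-c,1)=\calP_{\textnormal{C-V}}$ follows from Theorem~\ref{restricted-weak Forelli--Rudin at CV}, since \eqref{conditionP2} holds with $c-a-1=1$ and $c>1$.
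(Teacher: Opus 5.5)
Your assembly of the endpoint results matches the paper everywhere except on the horizontal segment. The parts that match are: Proposition~\ref{20260527prop02} on the critical segment, including the top endpoint when $c>a+2$; Proposition~\ref{20260710prop01} at $(3+a+b-c,1)$ when $c=a+2$; Theorem~\ref{Forelli--Rudin threshold} at $\calP_{\textnormal{C-H}}$; and Theorems~\ref{restricted-weak Forelli--Rudin at CH}(ii) and~\ref{restricted-weak Forelli--Rudin at CV} for the restricted weak-type bounds.

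The gap is the weak-type bound on $0\le \frac1p<2-c+b$, $q=-\frac1a$, which neither of your routes establishes.

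\begin{itemize}
\item Your main route applies \cite[Theorem~1.4.19]{Grafakos2014} with equal target exponents. As you note, that theorem does not cover this case.
\item Your fallback is false. The H\"older computation in the remark after Proposition~\ref{20260527prop02} gives the exponent $(2-c+b)-\frac2p$ for $S_{0,b,c}$, not $c-b-2+\frac1p$. By duality, $\sup_{\|f\|_{L^p}\le1}|S_{0,b,c}f(z)|\simeq(1-|z|^2)^{(2-c+b)-2/p}$, which is unbounded as $|z|\to1$ once $\frac1p>\frac{2-c+b}{2}$.
\item Hence no pointwise bound $|S_{a,b,c}f(z)|\lesssim(1-|z|^2)^a\|f\|_{L^p}$ can hold on the portion $\frac{2-c+b}{2}<\frac1p<2-c+b$ of the segment. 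The paper points out exactly this failure of pointwise arguments in that remark.
\end{itemize}

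The missing step is the embedding you began to write and then abandoned. Since $|\D|<\infty$, you have $L^p(\D)\hookrightarrow L^{\frac{1}{2-c+b},1}(\D)$ for every $p>\frac{1}{2-c+b}$, including $p=\infty$. Compose this with the restricted weak-type bound $L^{\frac{1}{2-c+b},1}(\D)\to L^{-\frac1a,\infty}(\D)$ from Theorem~\ref{restricted-weak Forelli--Rudin at CH}(ii). This gives the whole horizontal segment at once, and it is precisely the paper's argument.

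With it, you need neither the interpolation nor the separate appeal to Proposition~\ref{20260527prop01} at $(0,-a)$. Even the interpolation you wanted would have been trivial: a linear operator bounded from $L^{p_0,1}$ and from $L^\infty$ into the same space is bounded on their sum, and on $\D$ that sum is just $L^{p_0,1}$.
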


We refer the reader to Figure~\ref{Fig18} below for an illustration of Main Case~III.

\begin{figure}[ht]
\centering

\begin{minipage}[t]{0.48\textwidth}
\centering
\begin{tikzpicture}[scale=3.3]
    \def\xA{0.72}
    \def\yB{0.28}

    \def\xV{0.9}
    \def\yH{0.5}

    \def\xright{1.35}
    \def\ybottom{-0.15}

    \coordinate (B) at (0,\yB);
    \coordinate (A) at (\xA,1);
    \coordinate (PVH) at (\xV,\yH);
	\fill[violet!20, opacity=0.3] (\xA,\yB) rectangle (\xright,1);

    \draw[->, thick]
        (-0.08,0) -- (\xright+0.08,0) node[right] {$1/p$};
    \draw[->, thick]
        (0,\ybottom-0.03) -- (0,1.12) node[above] {$1/q$};

    \draw[thick] (0,0) rectangle (1,1);
    \draw[thick] (1,0) -- (1,-0.025) node[below] {$1$};
    \draw[dashed] (0.5,1) -- (0.5,-0.025) node[below] {$1/2$};
    \draw[thick] (0,1) -- (-0.025,1) node[left] {$1$};

    \draw[violet, very thick]
        (\xA,1) -- (\xA,\yB);
    \draw[black, dashed]
        (\xA,\ybottom) -- (\xA,1);
    \draw[black, dashed]
        (0,\yB) -- (\xright,\yB);
    \draw[black, dashed]
        (0,\yB) -- (\yH-\yB,\yH);

    \draw[green!55!black, line width=3pt, opacity=0.18,
          line cap=round, shorten <=2.2pt, shorten >=2.2pt]
        (\yH-\yB,\yH) -- (1, \yH);

    \draw[orange!85!black, line width=3pt, opacity=0.22,
          line cap=round, shorten <=2.2pt, shorten >=2.2pt]
        (\xV,\ybottom) -- (\xV, 1);

    \draw[blue, line width=3pt, opacity=0.32,
          line cap=round, shorten <=2.2pt, shorten >=2.2pt]
        (\yH-\yB,\yH) -- (A);
        
    \draw[blue, line width=3pt, opacity=0.32,
          line cap=round, shorten <=2.2pt, shorten >=2.2pt]
        (0, \yH) -- (\yH-\yB, \yH);

    \fill[blue] (A) circle (1.05pt);
    \fill[red] (\yH-\yB, \yH) circle (1.05pt);

    \fill[black] (\xV, \yH) circle (0.55pt);
    \node[below] at (\xV, \yH)
        {\tiny $\calP_{\textnormal{V-H}}$};

	\node[font=\bfseries\large, violet!85!black] at (0.85,0.63) {III};

    \node[above] at (0.35,0.28)
        {\tiny \eqref{20260710line02}};
    \node[left] at (0.73,0.60)
        {\tiny \eqref{20260710line01}};

    \node[left=3pt] at (B)
        {\tiny  $\bigl(0,c-a-b-2\bigr)$};
    \node[above=2pt] at (A)
        {\tiny $\bigl(3+a+b-c,1\bigr)$};
\end{tikzpicture}

\vspace{0.05cm}
{\small (a)}
\end{minipage}
\hfill
\begin{minipage}[t]{0.48\textwidth}
\centering
\begin{tikzpicture}[scale=3.3]
    \def\xA{0.72}
    \def\yB{0.28}

    \def\xV{0.72}
    \def\yH{0.5}

    \def\xright{1.35}
    \def\ybottom{-0.15}

    \coordinate (B) at (0,\yB);
    \coordinate (A) at (\xA,1);
    \coordinate (PVH) at (\xV,\yH);
	\fill[violet!20, opacity=0.3] (\xA,\yB) rectangle (\xright,1);

    \draw[->, thick]
        (-0.08,0) -- (\xright+0.08,0) node[right] {$1/p$};
    \draw[->, thick]
        (0,\ybottom-0.03) -- (0,1.12) node[above] {$1/q$};

    \draw[thick] (0,0) rectangle (1,1);
    \draw[thick] (1,0) -- (1,-0.025) node[below] {$1$};
    \draw[dashed] (0.5,1) -- (0.5,-0.025) node[below] {$1/2$};
    \draw[thick] (0,1) -- (-0.025,1) node[left] {$1$};

    \draw[violet, very thick]
        (\xA,1) -- (\xA,\yB);
    \draw[black, dashed]
        (\xA,\ybottom) -- (\xA,1);
        \draw[black, dashed]
        (0,\yB) -- (\xright,\yB);
    \draw[black, dashed]
        (0,\yB) -- (\yH-\yB,\yH);

    \draw[green!55!black, line width=3pt, opacity=0.18,
          line cap=round, shorten <=2.2pt, shorten >=2.2pt]
        (\yH-\yB,\yH) -- (1, \yH);

    \draw[orange!85!black, line width=3pt, opacity=0.22,
          line cap=round, shorten <=2.2pt, shorten >=2.2pt]
        (\xV,\ybottom) -- (\xV, 1);

    \draw[blue, line width=3pt, opacity=0.32,
          line cap=round, shorten <=2.2pt, shorten >=2.2pt]
        (\yH-\yB,\yH) -- (A);
        
    \draw[blue, line width=3pt, opacity=0.32,
          line cap=round, shorten <=2.2pt, shorten >=2.2pt]
        (0, \yH) -- (\yH-\yB, \yH);

    \fill[red] (A) circle (1.05pt);
    \fill[red] (\yH-\yB, \yH) circle (1.05pt);

    \fill[black] (\xV, \yH) circle (0.55pt);
    \node[below] at (\xV+0.13, \yH)
        {\tiny $\calP_{\textnormal{V-H}}$};

	\node[font=\bfseries\large, violet!85!black] at (0.85,0.63) {III};

    \node[above] at (0.35,0.28)
        {\tiny \eqref{20260710line02}};
    \node[left] at (0.73,0.60)
        {\tiny \eqref{20260710line01}};

    \node[left=3pt] at (B)
        {\tiny  $\bigl(0,c-a-b-2\bigr)$};
    \node[above=2pt] at (A)
        {\tiny $\bigl(3+a+b-c,1\bigr)$};
\end{tikzpicture}

\vspace{0.05cm}
{\small (b)}
\end{minipage}
\hfill
\begin{minipage}[t]{0.48\textwidth}
\centering
\begin{tikzpicture}[scale=3.3]
    \def\xA{0.72}
    \def\yB{0.28}

    \def\xV{0.9}
    \def\yH{0.88}

    \def\xright{1.35}
    \def\ybottom{-0.15}

    \coordinate (B) at (0,\yB);
    \coordinate (A) at (\xA,1);
    \coordinate (PVH) at (\xV,\yH);
	\fill[violet!20, opacity=0.3] (\xA,\yB) rectangle (\xright,1);

    \draw[->, thick]
        (-0.08,0) -- (\xright+0.08,0) node[right] {$1/p$};
    \draw[->, thick]
        (0,\ybottom-0.03) -- (0,1.12) node[above] {$1/q$};

    \draw[thick] (0,0) rectangle (1,1);
    \draw[thick] (1,0) -- (1,-0.025) node[below] {$1$};
    \draw[dashed] (0.5,1) -- (0.5,-0.025) node[below] {$1/2$};
    \draw[thick] (0,1) -- (-0.025,1) node[left] {$1$};

    \draw[violet, very thick]
        (\xA,1) -- (\xA,\yB);
    \draw[black, dashed]
        (\xA,\ybottom) -- (\xA,1);
        \draw[black, dashed]
        (0,\yB) -- (\xright,\yB);
    \draw[black, dashed]
        (0,\yB) -- (\yH-\yB,\yH);

    \draw[green!55!black, line width=3pt, opacity=0.18,
          line cap=round, shorten <=2.2pt, shorten >=2.2pt]
        (\yH-\yB,\yH) -- (1, \yH);

    \draw[orange!85!black, line width=3pt, opacity=0.22,
          line cap=round, shorten <=2.2pt, shorten >=2.2pt]
        (\xV,\ybottom) -- (\xV, 1);

    \draw[blue, line width=3pt, opacity=0.32,
          line cap=round, shorten <=2.2pt, shorten >=2.2pt]
        (\yH-\yB,\yH) -- (A);
        
    \draw[blue, line width=3pt, opacity=0.32,
          line cap=round, shorten <=2.2pt, shorten >=2.2pt]
        (0, \yH) -- (\yH-\yB, \yH);

    \fill[blue] (A) circle (1.05pt);
    \fill[gray] (\yH-\yB, \yH) circle (1.05pt);

    \fill[black] (\xV, \yH) circle (0.55pt);
    \node[below] at (\xV, \yH)
        {\tiny $\calP_{\textnormal{V-H}}$};

	\node[font=\bfseries\large, violet!85!black] at (0.85,0.63) {III};

    \node[above] at (0.35,0.28)
        {\tiny \eqref{20260710line02}};
    \node[left] at (0.73,0.60)
        {\tiny \eqref{20260710line01}};

    \node[left=3pt] at (B)
        {\tiny  $\bigl(0,c-a-b-2\bigr)$};
    \node[above=2pt] at (A)
        {\tiny $\bigl(3+a+b-c,1\bigr)$};
\end{tikzpicture}

\vspace{0.05cm}
{\small (c)}
\end{minipage}
\hfill
\begin{minipage}[t]{0.48\textwidth}
\centering
\begin{tikzpicture}[scale=3.3]
    \def\xA{0.72}
    \def\yB{0.28}

    \def\xV{0.72}
    \def\yH{0.88}

    \def\xright{1.35}
    \def\ybottom{-0.15}

    \coordinate (B) at (0,\yB);
    \coordinate (A) at (\xA,1);
    \coordinate (PVH) at (\xV,\yH);
	\fill[violet!20, opacity=0.3] (\xA,\yB) rectangle (\xright,1);

    \draw[->, thick]
        (-0.08,0) -- (\xright+0.08,0) node[right] {$1/p$};
    \draw[->, thick]
        (0,\ybottom-0.03) -- (0,1.12) node[above] {$1/q$};

    \draw[thick] (0,0) rectangle (1,1);
    \draw[thick] (1,0) -- (1,-0.025) node[below] {$1$};
    \draw[dashed] (0.5,1) -- (0.5,-0.025) node[below] {$1/2$};
    \draw[thick] (0,1) -- (-0.025,1) node[left] {$1$};

    \draw[violet, very thick]
        (\xA,1) -- (\xA,\yB);
    \draw[black, dashed]
        (\xA,\ybottom) -- (\xA,1);
        \draw[black, dashed]
        (0,\yB) -- (\xright,\yB);
    \draw[black, dashed]
        (0,\yB) -- (\yH-\yB,\yH);

    \draw[green!55!black, line width=3pt, opacity=0.18,
          line cap=round, shorten <=2.2pt, shorten >=2.2pt]
        (\yH-\yB,\yH) -- (1, \yH);

    \draw[orange!85!black, line width=3pt, opacity=0.22,
          line cap=round, shorten <=2.2pt, shorten >=2.2pt]
        (\xV,\ybottom) -- (\xV, 1);

    \draw[blue, line width=3pt, opacity=0.32,
          line cap=round, shorten <=2.2pt, shorten >=2.2pt]
        (\yH-\yB,\yH) -- (A);
        
    \draw[blue, line width=3pt, opacity=0.32,
          line cap=round, shorten <=2.2pt, shorten >=2.2pt]
        (0, \yH) -- (\yH-\yB, \yH);

    \fill[red] (A) circle (1.05pt);
    \fill[gray] (\yH-\yB, \yH) circle (1.05pt);

    \fill[black] (\xV, \yH) circle (0.55pt);
    \node[below] at (\xV+0.13, \yH)
        {\tiny $\calP_{\textnormal{V-H}}$};

	\node[font=\bfseries\large, violet!85!black] at (0.85,0.63) {III};

    \node[above] at (0.35,0.28)
        {\tiny \eqref{20260710line02}};
    \node[left] at (0.73,0.60)
        {\tiny \eqref{20260710line01}};

    \node[left=3pt] at (B)
        {\tiny  $\bigl(0,c-a-b-2\bigr)$};
    \node[above=2pt] at (A)
        {\tiny $\bigl(3+a+b-c,1\bigr)$};
\end{tikzpicture}

\vspace{0.05cm}
{\small (d)}
\end{minipage}

\caption{\small
Weak-type and restricted weak-type behavior in Main Case~III. The blue segments and points represent weak-type bounds, the red points represent restricted weak-type bounds only. The gray points represent the Forelli--Rudin phenomenon for weak-type estimates. As usual, the green and orange lines represent the horizontal cut-off line~\eqref{horizontalcut} and the vertical cut-off line~\eqref{verticalcut}, respectively.}
\label{Fig18}
\end{figure}

\begin{proof}
The conclusion in each case follows by combining the preceding endpoint estimates with the relevant threshold and restricted weak-type bounds.

$\bullet$ We first establish assertions (a.1), (b.1), (c.1), and (d.1). The desired weak-type bounds along the critical line follow directly from Proposition~\ref{20260527prop02}. 

Since $L^{t}(\D) \hookrightarrow L^{s,1}(\D)$ whenever $0<s<t\leq\infty$, the weak-type bounds along the horizontal cut-off line~\eqref{horizontalcut} follow from the restricted weak-type estimate at
$\left(\frac{1}{2-c+b}, \frac{1}{-a}\right)$, 
which is established in Theorem~\ref{restricted-weak Forelli--Rudin at CH}~(ii).

The unboundedness at the point $(p,q)=\left(\frac{1}{2-c+b},-\frac{1}{a}\right)$ in parts (a.1) and (b.1) follows from Theorem~\ref{Forelli--Rudin threshold}~(i) and (ii).

Similarly, the unboundedness at the point
$(p,q)=\left(\frac{1}{3+a+b-c},1\right)$
in parts (b.1) and (d.1) follows from  Proposition~\ref{20260710prop01}.

$\bullet$ Next, the Forelli--Rudin phenomena for weak-type estimates asserted in parts (c.2) and (d.2) follow directly from Theorem~\ref{Forelli--Rudin threshold}~(i) and (iii).

$\bullet$ Finally, combining Theorem~\ref{restricted-weak Forelli--Rudin at CH}~(ii) with Theorem~\ref{restricted-weak Forelli--Rudin at CV} yields parts (a.2), (b.2), (c.3), and (d.3).
\end{proof}

\vspace{0.1cm}

\subsection{Treatment of Main Case IV}

Recall that \emph{Main Case IV} refers to the case where the vertical--horizontal intersection $\calP_{\textnormal{V--H}}$ lies in the region
$$
\left\{ (p,q):  \frac{1}{p} < 3+a+b-c, \; \frac{1}{q} > c-a-b-2, \; \frac{1}{q}<\frac{1}{p}+c-a-b-2 \right\};
$$
see Figure \ref{Fig7}. Recall that as a consequence of \cite[Theorem~1.1]{ZZ2022},  the strong-type range in this case is given by
$$
(p,q) \in \left\{ 1\le p,q\le\infty: 
\frac{1}{q}>\frac{1}{p}+c-a-b-2,\;
\frac{1}{p}<b+1,\;
\frac{1}{q}>-a
\right\}.
$$
\vspace{0.1cm}

We are ready to state our main result.

\begin{thm} \label{mainsubcaseIV}
Let $a, b, c \in \R$ satisfy \eqref{maincase} with $1<c<\min\{a+2,b+2\}$. Then the following statements hold. 
\begin{enumerate}
    \item [(a)] If $0<2-c+b<1/2$,
    then 
    \begin{enumerate}
        \item [(1)] {\bf (Weak--type bounds)} 
    $T_{a, b, c}$ and $S_{a, b, c}$ map $L^p(\D)$ boundedly into $L^{q, \infty}(\D)$ if
        \[
        (p, q) \in \left\{ 1 \le p, q \le \infty: \; \frac{1}{q}=\frac{1}{p}+c-a-b-2, \; \frac{1}{p}<b+1, \; \frac{1}{q}>-a\right\}.
        \]
        or
        \[
        	0 \le \frac{1}{p} < 2-c+b,
        	\quad 
        	q=-\frac{1}{a}.
        \]
        While $T_{a, b, c}$ and $S_{a, b, c}$ map $L^{p}(\D)$ unboundedly into $L^{q, \infty}(\D)$ if
    	\[
    		p=\frac{1}{2-c+b},
    		\quad
    		q=-\frac{1}{a},
    	\]
    	or 
    	\[
    		p=\frac{1}{b+1},
    		\quad
    		c-1-a \le \frac{1}{q} \le 1.
    	\]
        
        \item [(2)] {\bf (Restricted weak-type bounds)}
        $T_{a, b, c}$ and $S_{a, b, c}$ map $L^{p,1}(\D)$ boundedly into $L^{q, \infty}(\D)$ if
    	\[
    		p=\frac{1}{2-c+b},
    		\quad
    		q=-\frac{1}{a},
    	\]
    	or 
    	\[
    		p=\frac{1}{b+1},
    		\quad
    		c-1-a \le \frac{1}{q} \le 1.
    	\]
    \end{enumerate}
    
    \medskip
    
    \item [(b)] If $1/2 \le 2-c+b<1$,
    then 
    \begin{enumerate}
        \item [(1)] {\bf (Weak--type bounds)} 
    $T_{a, b, c}$ and $S_{a, b, c}$ map $L^p(\D)$ boundedly into $L^{q, \infty}(\D)$ if
        \[
        (p, q) \in \left\{ 1 \le p, q \le \infty: \; \frac{1}{q}=\frac{1}{p}+c-a-b-2, \; \frac{1}{p}<b+1, \; \frac{1}{q}>-a\right\}.
        \]
        or
        \[
        	0 \le \frac{1}{p} < 2-c+b,
        	\quad 
        	q=-\frac{1}{a}.
        \]
        $T_{a, b, c}$ and $S_{a, b, c}$ map $L^{p}(\D)$ unboundedly into $L^{q, \infty}(\D)$ if
    	\[
    		p=\frac{1}{b+1},
    		\quad
    		c-1-a \le \frac{1}{q} \le 1.
    	\]
        
        \item [(2)] {\bf (Weak-type Forelli--Rudin phenomenon)}
        At the point $(p, q)=\left(\frac{1}{2-c+b}, -\frac{1}{a}\right)$, $T_{a, b, c}$ maps $L^{\frac{1}{2-c+b}}(\D)$ boundedly into $L^{-\frac{1}{a}, \infty}(\D)$ while $S_{a, b, c}$ fails.
        
        \item [(3)] {\bf (Restricted weak-type bounds)}
        $T_{a, b, c}$ and $S_{a, b, c}$ map $L^{p,1}(\D)$ boundedly into $L^{q, \infty}(\D)$ if
    	\[
    		p=\frac{1}{2-c+b},
    		\quad
    		q=-\frac{1}{a}
    	\]
    	or 
    	\[
    		p=\frac{1}{b+1},
    		\quad
    		c-1-a \le \frac{1}{q} \le 1.
    	\]
    \end{enumerate}

\end{enumerate}

\end{thm}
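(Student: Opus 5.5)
The plan is to assemble Theorem~\ref{mainsubcaseIV} from the endpoint results of Sections~\ref{Sec04}--\ref{Sec07}, exactly as for Main Cases I--III. First we check which hypotheses hold. Under \eqref{maincase} with $1<c<\min\{a+2,b+2\}$ we have $c<b+2$, so $2-c+b>0$, and $c<a+2$ gives $c-a-1<1$. Also $-a>2-c+b$, because $c-a-b-2>0$, and $-a<1$. Hence both \eqref{conditionP1} (with $a<0$, $c>1$, $0<2-c+b<1$) and \eqref{conditionP2} hold. This means both $\calP_{\textnormal{C-H}}$ and $\calP_{\textnormal{C-V}}$ are active. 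The boundary $\partial\Omega_{\mathrm{bd}}$ then consists of four pieces: the open critical segment $\calP_{\textnormal{C-H}}\calP_{\textnormal{C-V}}$, the horizontal segment $\{0\le 1/p<2-c+b,\ q=-1/a\}$, the vertical segment $\{p=1/(b+1),\ c-1-a\le 1/q\le 1\}$, and the two corner points.

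For the weak-type bounds in (a.1) and (b.1), Proposition~\ref{20260527prop02} handles the open critical segment. For the horizontal segment, Theorem~\ref{restricted-weak Forelli--Rudin at CH}~(ii) gives restricted weak type at $\calP_{\textnormal{C-H}}$. For $1/p<2-c+b$ we have the embedding $L^p(\D)\hookrightarrow L^{\frac{1}{2-c+b},1}(\D)$ on the finite-measure disc, so weak type follows with the same target $L^{-1/a,\infty}$. This also covers $p=\infty$, and there $c-b-2\neq0$, so Proposition~\ref{20260527prop01} gives it too. Failure along the vertical segment comes from Proposition~\ref{20260710prop01}.

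At $\calP_{\textnormal{C-H}}$ itself we use Theorem~\ref{Forelli--Rudin threshold}:
\begin{itemize}
\item parts (i) and (ii) give failure for both operators when $0<2-c+b<1/2$;
\item parts (i) and (iii) give the Forelli--Rudin dichotomy (b.2) when $1/2\le 2-c+b<1$.
\end{itemize}

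For the restricted weak-type assertions in (a.2) and (b.3), the point $\calP_{\textnormal{C-H}}$ is again Theorem~\ref{restricted-weak Forelli--Rudin at CH}~(ii). The point $\calP_{\textnormal{C-V}}$ is Theorem~\ref{restricted-weak Forelli--Rudin at CV}. The remaining vertical points with $1/q>c-1-a$ follow from the embedding $L^{\frac{1}{c-1-a},\infty}(\D)\hookrightarrow L^{q,\infty}(\D)$ for $q<\frac{1}{c-1-a}$ on $\D$, which is the same device used in the proof of Theorem~\ref{mainsubcaseII}.

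The main point needing care is the bookkeeping: confirming that \eqref{conditionP1} and \eqref{conditionP2} are genuinely satisfied, in particular $a<0$ and $c-a-1\le 1$, and that every boundary point is covered by one of these results. None of the steps requires a new estimate.
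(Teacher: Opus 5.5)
Your proposal is correct and follows the paper's route. The paper also uses Proposition~\ref{20260527prop02} on the open critical segment, and Theorem~\ref{restricted-weak Forelli--Rudin at CH}~(ii) together with $L^p(\D)\hookrightarrow L^{\frac{1}{2-c+b},1}(\D)$ on the horizontal segment. It then uses Proposition~\ref{20260710prop01} for the vertical line, Theorem~\ref{Forelli--Rudin threshold} at $\calP_{\textnormal{C-H}}$, and Theorems~\ref{restricted-weak Forelli--Rudin at CH}~(ii) and~\ref{restricted-weak Forelli--Rudin at CV} for the restricted weak-type claims. Your explicit checks of \eqref{conditionP1} and \eqref{conditionP2}, and of the embedding $L^{\frac{1}{c-1-a},\infty}(\D)\hookrightarrow L^{q,\infty}(\D)$ along the vertical segment, fill in steps the paper leaves implicit.
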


We refer the reader to Figure~\ref{Fig19} below for an illustration of Main Case~IV.

\begin{figure}[ht]
\centering

\begin{minipage}[t]{0.48\textwidth}
\centering
\begin{tikzpicture}[scale=3.3]
    \def\xA{0.85}
    \def\yB{0.15}

    \def\xV{0.65}
    \def\yH{0.5}

    \def\xright{1.35}
    \def\ybottom{-0.15}

    \coordinate (B) at (0,\yB);
    \coordinate (A) at (\xA,1);
    \coordinate (PVH) at (\xV,\yH);
	 \fill[orange!20, opacity=0.3] (0,\yB) -- (\xA,\yB) -- (\xA,1) -- cycle; 

    \draw[->, thick]
        (-0.08,0) -- (\xright+0.08,0) node[right] {$1/p$};
    \draw[->, thick]
        (0,\ybottom-0.03) -- (0,1.12) node[above] {$1/q$};

    \draw[thick] (0,0) rectangle (1,1);
    \draw[thick] (1,0) -- (1,-0.025) node[below] {$1$};
    \draw[dashed] (0.5,1) -- (0.5,-0.025) node[below] {$1/2$};
    \draw[thick] (0,1) -- (-0.025,1) node[left] {$1$};

    \draw[black, dashed]
        (\xA,\ybottom) -- (A);
    \draw[black, dashed]
        (B) -- (\xright,\yB);
    \draw[black, dashed]
        (B) -- (\yH-\yB,\yH);
    \draw[black, dashed]
        (A) -- (\xV, \xV+\yB);

    \draw[green!55!black, line width=3pt, opacity=0.18,
          line cap=round, shorten <=2.2pt, shorten >=2.2pt]
        (\yH-\yB,\yH) -- (1, \yH);

    \draw[orange!85!black, line width=3pt, opacity=0.22,
          line cap=round, shorten <=2.2pt, shorten >=2.2pt]
        (\xV,\ybottom) -- (\xV, \xV+\yB);

    \draw[blue, line width=3pt, opacity=0.32,
          line cap=round, shorten <=2.2pt, shorten >=2.2pt]
        (\yH-\yB,\yH) -- (\xV, \xV+\yB);
        
    \draw[blue, line width=3pt, opacity=0.32,
          line cap=round, shorten <=2.2pt, shorten >=2.2pt]
        (0, \yH) -- (\yH-\yB, \yH);
        
    \draw[red, line width=3pt, opacity=0.5,
          line cap=round, shorten <=2.2pt, shorten >=2.2pt]
        (\xV,1) -- (\xV, \xV+\yB);

    \fill[red] (\xV, \xV+\yB) circle (1.05pt);
    \fill[red] (\yH-\yB, \yH) circle (1.05pt);

    \fill[black] (\xV, \yH) circle (0.55pt);
    \node[below] at (\xV, \yH)
        {\tiny $\calP_{\textnormal{V-H}}$};

	\node[font=\bfseries\large, orange!90!black] at (0.75,0.25) {IV};

    \node[below] at (0.35,0.28)
        {\tiny \eqref{20260710line02}};
    \node[right] at (0.82,0.60)
        {\tiny \eqref{20260710line01}};

    \node[left=3pt] at (B)
        {\tiny  $\bigl(0,c-a-b-2\bigr)$};
    \node[above=2pt] at (A)
        {\tiny $\bigl(3+a+b-c,1\bigr)$};
\end{tikzpicture}

\vspace{0.05cm}
{\small (a)}
\end{minipage}
\begin{minipage}[t]{0.48\textwidth}
\centering
\begin{tikzpicture}[scale=3.3]
    \def\xA{0.85}
    \def\yB{0.15}

    \def\xV{0.75}
    \def\yH{0.75}

    \def\xright{1.35}
    \def\ybottom{-0.15}

    \coordinate (B) at (0,\yB);
    \coordinate (A) at (\xA,1);
    \coordinate (PVH) at (\xV,\yH);
	 \fill[orange!20, opacity=0.3] (0,\yB) -- (\xA,\yB) -- (\xA,1) -- cycle; 

    \draw[->, thick]
        (-0.08,0) -- (\xright+0.08,0) node[right] {$1/p$};
    \draw[->, thick]
        (0,\ybottom-0.03) -- (0,1.12) node[above] {$1/q$};

    \draw[thick] (0,0) rectangle (1,1);
    \draw[thick] (1,0) -- (1,-0.025) node[below] {$1$};
    \draw[dashed] (0.5,1) -- (0.5,-0.025) node[below] {$1/2$};
    \draw[thick] (0,1) -- (-0.025,1) node[left] {$1$};

    \draw[black, dashed]
        (\xA,\ybottom) -- (A);
        \draw[black, dashed]
        (B) -- (\xright,\yB);
    \draw[black, dashed]
        (B) -- (\yH-\yB,\yH);
    \draw[black, dashed]
        (A) --(\xV, \xV+\yB);

    \draw[green!55!black, line width=3pt, opacity=0.18,
          line cap=round, shorten <=2.2pt, shorten >=2.2pt]
        (\yH-\yB,\yH) -- (1, \yH);

    \draw[orange!85!black, line width=3pt, opacity=0.22,
          line cap=round, shorten <=2.2pt, shorten >=2.2pt]
        (\xV,\ybottom) -- (\xV, \xV+\yB);

    \draw[blue, line width=3pt, opacity=0.32,
          line cap=round, shorten <=2.2pt, shorten >=2.2pt]
        (\yH-\yB,\yH) -- (\xV, \xV+\yB);
        
    \draw[blue, line width=3pt, opacity=0.32,
          line cap=round, shorten <=2.2pt, shorten >=2.2pt]
        (0, \yH) -- (\yH-\yB, \yH);
        
    \draw[red, line width=3pt, opacity=0.5,
          line cap=round, shorten <=2.2pt, shorten >=2.2pt]
        (\xV,1) -- (\xV, \xV+\yB);

    \fill[red] (\xV, \xV+\yB) circle (1.05pt);
    \fill[gray] (\yH-\yB, \yH) circle (1.05pt);

    \fill[black] (\xV, \yH) circle (0.55pt);
    \node[below] at (\xV, \yH)
        {\tiny $\calP_{\textnormal{V-H}}$};

	\node[font=\bfseries\large, orange!90!black] at (0.5,0.5) {IV};

    \node[below] at (0.35,0.28)
        {\tiny \eqref{20260710line02}};
    \node[right] at (0.82,0.60)
        {\tiny \eqref{20260710line01}};

    \node[left=3pt] at (B)
        {\tiny  $\bigl(0,c-a-b-2\bigr)$};
    \node[above=2pt] at (A)
        {\tiny $\bigl(3+a+b-c,1\bigr)$};
\end{tikzpicture}

\vspace{0.05cm}
{\small (b)}
\end{minipage}

\caption{\small
Weak-type and restricted weak-type behavior in Main Case~IV. The blue segments and points represent weak-type bounds, the red segments and points represent restricted weak-type bounds only. The gray points represent the Forelli--Rudin phenomenon for weak--type estimates. As usual, the green and orange lines represent the horizontal cut-off line~\eqref{horizontalcut} and the vertical cut-off line~\eqref{verticalcut}, respectively.}
\label{Fig19}
\end{figure}

\begin{proof}
Each case is obtained by combining the endpoint estimates proved above with the corresponding threshold and restricted weak-type estimates.

$\bullet$ The weak-type boundedness assertions in parts~(a.1) and~(b.1) follow by the same argument as in the proof of Theorem~\ref{mainsubcaseIII}~(a.1).
The failure of weak-type bounds along the vertical cut-off line in both parts follows from Proposition~\ref{20260710prop01}.
The additional failure at $(p,q)=\left(\frac{1}{2-c+b},-\frac{1}{a}\right)$ in part~(a.1) follows from parts~(i) and~(ii) of Theorem~\ref{Forelli--Rudin threshold}.

$\bullet$ Next, the weak-type Forelli--Rudin phenomenon asserted in part (b.2) follows directly from Theorem~\ref{Forelli--Rudin threshold}~(i) and (iii).

$\bullet$ Finally, combining Theorem~\ref{restricted-weak Forelli--Rudin at CH}~(ii) with Theorem~\ref{restricted-weak Forelli--Rudin at CV} yields parts (a.2) and (b.3).
\end{proof}

\vspace{0.1cm}

\subsection{Treatment of Main Case V}\label{Sec10.6}

Recall that \emph{Main Case V} refers to the case where the vertical--horizontal intersection $\calP_{\textnormal{V--H}}$ lies in the region
$$
\left\{ (p,q): \frac{1}{p} < 3+a+b-c, \; \frac{1}{q} > c-a-b-2, \; \frac{1}{q} \ge \frac{1}{p}+c-a-b-2 \right\};
$$
see Figure \ref{Fig7}. By \cite[Theorem~1.1]{ZZ2022}, the strong-type range in this case is characterized by
$$
(p,q) \in \left\{ 1 \le p,q \le \infty: \; \frac{1}{p}<b+1, \; \frac{1}{q}>-a\right\}.
$$

\vspace{0.1cm}

Our main result in this subsection is stated as follows.  

\begin{thm} \label{mainsubcaseV}
Let $a, b, c \in \R$ satisfy \eqref{maincase} with $c \le 1$. Then the following statements hold. 
\begin{enumerate}
    \item [(a)] If $c=1$,
    then 
    \begin{enumerate}
        \item [(1)] {\bf (Weak--type bounds)} 
    $T_{a, b, c}$ and $S_{a, b, c}$ map $L^p(\D)$ boundedly into $L^{q, \infty}(\D)$ if
        \[
        	0 \le \frac{1}{p} < b+1,
        	\quad 
        	q=-\frac{1}{a}.
        \]
        While $T_{a, b, c}$ and $S_{a, b, c}$ map $L^{p}(\D)$ unboundedly into $L^{q, \infty}(\D)$ if
    	\[
    		p=\frac{1}{b+1},
    		\quad
    		-a \le \frac{1}{q} \le 1.
    	\]
        
        \item [(2)] {\bf (Restricted weak-type bounds)}
        $T_{a, b, c}$ and $S_{a, b, c}$ map $L^{p,1}(\D)$ boundedly into $L^{q, \infty}(\D)$ if 
    	\[
    		p=\frac{1}{b+1},
    		\quad
    		-a < \frac{1}{q} \le 1.
    	\]
    	
    	\item [(3)] {\bf (Restricted weak-type Forelli--Rudin phenomenon)}
        At the point $(p, q)=\left(\frac{1}{b+1}, -\frac{1}{a}\right)$, $T_{a, b, c}$ maps $L^{\frac{1}{b+1},1}(\D)$ boundedly into $L^{-\frac{1}{a}, \infty}(\D)$ while $S_{a, b, c}$ fails.
    \end{enumerate}
    
    \medskip
    
    \item [(b)] If $c<1$,
    then 
    \begin{enumerate}
        \item [(1)] {\bf (Weak--type bounds)} 
    $T_{a, b, c}$ and $S_{a, b, c}$ map $L^p(\D)$ boundedly into $L^{q, \infty}(\D)$ if
        \[
        	0 \le \frac{1}{p} < b+1,
        	\quad 
        	q=-\frac{1}{a}.
        \]
        
        \item [(2)] {\bf (Restricted weak-type bounds)}
        $T_{a, b, c}$ and $S_{a, b, c}$ map $L^{p,1}(\D)$ boundedly into $L^{q, \infty}(\D)$ if
    	\[
    		p=\frac{1}{b+1},
    		\quad
    		-a \le \frac{1}{q} \le 1.
    	\]
    \end{enumerate}

\end{enumerate}

\end{thm}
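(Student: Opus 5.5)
The proof assembles earlier results, in the same way as Theorems~\ref{mainsubcaseI}--\ref{mainsubcaseIV}. In Main Case V we have $c\le 1$. Hence $\calP_{\textnormal{V-H}}=(b+1,-a)$ lies on or above the critical line \eqref{criticalline}, and $\partial\Omega_{\mathrm{bd}}$ consists of two pieces: the horizontal segment $\{0\le 1/p\le b+1,\ 1/q=-a\}$ and the vertical segment $\{1/p=b+1,\ -a\le 1/q\le 1\}$. Note also that \eqref{maincase} together with $c\le1$ forces $-1<a,b<0$ and $a+b<-1$, so that $-1/a<1/(b+1)$. We therefore work along these two segments separately and single out their common corner $\calP_{\textnormal{V-H}}$.

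\emph{Weak-type bounds.} For (a.1) and (b.1), take $0\le 1/p<b+1$ and $q=-1/a$. Choose $p_0$ with $1/p<1/p_0<b+1$, or simply $p_0=1/(b+1)$, and invoke the restricted weak-type estimate at $\calP_{\textnormal{V-H}}$: Theorem~\ref{restricted-weak Forelli--Rudin at VH} when $c<1$, and Theorem~\ref{Forelli--Rudin Threshold restricted weak-type}(ii) for $T$ when $c=1$. Combined with the embedding $L^p(\D)\hookrightarrow L^{p_0,1}(\D)$ for $p>p_0$ (note $|\D|<\infty$), this gives the weak-type bound for $T_{a,b,c}$.

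For $S_{a,b,1}$ this route is unavailable, since its restricted weak-type estimate fails at $\calP_{\textnormal{V-H}}$. Instead I would apply Theorem~\ref{restricted-weak Forelli--Rudin at VH} at a nearby point, or equivalently run the sparse/Carleson argument of Section~\ref{Sec08} at $(1/p_0,-a)$ with $1/p<1/p_0<b+1$, where we are strictly to the left of the vertical line. Replacing $b$ by a slightly smaller effective exponent $b'$ (using $(1-|w|^2)^b\le(1-|w|^2)^{b'}$) moves us into condition \eqref{conditionP3} with $c<1$. This perturbation step is the one I expect to require the most care. The unboundedness on the vertical segment in (a.1) is Proposition~\ref{20260710prop01}.

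\emph{Restricted weak-type bounds on the vertical segment.} Take $p=1/(b+1)$ and $-a<1/q\le 1$ (case c=1), or $-a\le 1/q\le1$ (case c<1). When $c<1$, the corner point is given by Theorem~\ref{restricted-weak Forelli--Rudin at VH}. For the points strictly above the corner I would use the embedding $L^{-1/a,\infty}(\D)\hookrightarrow L^{q,\infty}(\D)$ for $q<-1/a$, as in the proof of Theorem~\ref{mainsubcaseII}(a.2). When $c=1$ this embedding argument fails for $S$, because it has no corner estimate. In that case I would prove directly that $\int_E S_{a,b,1}\one_F\lesssim |E|^{1-1/q}|F|^{b+1}$. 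The plan is to use the two-sided sparse bound of Section~\ref{Sec08} with $a$ replaced by $a'=1/q-1>a$, which is legitimate because $(1-|z|^2)^{a}=(1-|z|^2)^{a'}(1-|z|^2)^{a-a'}$ and the extra factor only improves the $z$-sum after restricting to $Q_J$. The Carleson embedding \eqref{20260620eq20} then applies with a strictly positive margin $2-c+a'+b>0$ versus the borderline case.

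\emph{Forelli--Rudin phenomenon.} Part (a.3) is exactly Theorem~\ref{Forelli--Rudin Threshold restricted weak-type}: part (i) gives the failure for $S$ and part (ii) gives the bound for $T$.
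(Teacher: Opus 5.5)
Your handling of the routine parts is correct. Every statement for $c<1$, and every statement for $T_{a,b,1}$, follows from the corner estimates (Theorem~\ref{restricted-weak Forelli--Rudin at VH} and Theorem~\ref{Forelli--Rudin Threshold restricted weak-type}(ii)) via the embeddings $L^p\hookrightarrow L^{\frac{1}{b+1},1}$ and $L^{-\frac1a,\infty}\hookrightarrow L^{q,\infty}$. The failures come from Proposition~\ref{20260710prop01} and Theorem~\ref{Forelli--Rudin Threshold restricted weak-type}(i).

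The gap is in the two bounds for $S_{a,b,1}$ that cannot be read off from a corner. These are the only new content of the theorem.

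\textbf{Weak type on the horizontal segment.} The majorization $(1-|w|^2)^b\le(1-|w|^2)^{b'}$ only produces $S_{a,b',1}$, which still has $c=1$. The triple $(a,b',1)$ satisfies \eqref{conditionP4}, not \eqref{conditionP3}. Its corner $(b'+1,-a)$ is its own $\calP_{\textnormal{C-H-V}}$, where $S_{a,b',1}$ fails by Theorem~\ref{Forelli--Rudin Threshold restricted weak-type}(i). So this perturbation makes no progress.

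What is missing is that $c$ must be lowered together with $b$. Since $|1-z\overline w|\ge 1-|w|\gtrsim 1-|w|^2$,
\[
\frac{(1-|w|^2)^b}{|1-z\overline w|}\lesssim\frac{(1-|w|^2)^{b-\varepsilon}}{|1-z\overline w|^{1-\varepsilon}} .
\]
The triple $(a,b-\varepsilon,1-\varepsilon)$ has the same value of $c-a-b-2$ but $c<1$, so it satisfies \eqref{conditionP3}. Choosing $\varepsilon$ with $1/p<b+1-\varepsilon$, Theorem~\ref{restricted-weak Forelli--Rudin at VH} at $(b+1-\varepsilon,-a)$ gives the bound; this is the paper's argument. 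Your fallback of rerunning Section~\ref{Sec08} directly at $(1/p_0,-a)$ can also be made to work. Keep the exponent $1/p_0$ on $f_J$, and use the strict inequality $1+a+1/p_0<2+a+b$ to choose $\rho$ in Step~4. But that is not what your stated mechanism does.

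\textbf{Restricted weak type above the corner.} The choice $a'=1/q-1>a$ is wrong in both respects. The target $|E|^{1-1/q}$ requires $1+a'=1-1/q$, i.e.\ $a'=-1/q$, and $1/q>-a$ then forces $a'<a$.
\begin{itemize}
\item With $a'>a$, the factor $(1-|z|^2)^{a-a'}$ on $Q_{I_1}^{\textnormal{up}}$, $I_1\in\calD_{k_1}(J)$, is $\simeq 2^{k_1(a'-a)}|J|^{a-a'}$. This grows in $k_1$ and destroys the $z$-sum.
\item With $a'<a$, the localized bound $(1-|z|^2)^{a-a'}\lesssim |J|^{a-a'}$ keeps the power $|Q_J|^{\frac{3+a+b}{2}}$ while lowering the $E$-exponent to $1+a'$. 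The Carleson step then closes with margin $a-a'>0$. Take any $a'\in(-1,a)$ with $-a'\le 1/q$ and then embed; this also covers $1/q=1$.
\end{itemize}
The margin you cite, $2-c+a'+b=1+b-1/q$, is in fact negative for $a'=-1/q$, because $1/q>-a>1+b$.

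The paper avoids this bookkeeping by the same trick on the $z$-side. Since $1-|z|^2\lesssim|1-z\overline w|$, one has $S_{a,b,1}f\lesssim S_{a-\varepsilon,b,1-\varepsilon}f$ for $f\ge 0$. The triple $(a-\varepsilon,b,1-\varepsilon)$ satisfies \eqref{conditionP3} once $-a+\varepsilon<1/q$. Theorem~\ref{restricted-weak Forelli--Rudin at VH} followed by $L^{\frac{1}{\varepsilon-a},\infty}\hookrightarrow L^{q,\infty}$ finishes.

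In both places, the idea you are missing is that the slack must be taken out of the kernel $|1-z\overline w|^{-1}$. That is what moves the triple off the critical configuration $c=1$.
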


We refer the reader to Figure~\ref{Fig20} below for an illustration of Main Case~V.

\begin{figure}[ht]
\centering

\begin{minipage}[t]{0.48\textwidth}
\centering
\begin{tikzpicture}[scale=3.3]
    \def\xA{0.85}
    \def\yB{0.15}

    \def\xV{0.5}
    \def\yH{0.65}

    \def\xright{1.35}
    \def\ybottom{-0.15}

    \coordinate (B) at (0,\yB);
    \coordinate (A) at (\xA,1);
    \coordinate (PVH) at (\xV,\yH);
	 \fill[blue!20, opacity=0.3]   (0,\yB) -- (0,1) -- (\xA,1) -- cycle;  

    \draw[->, thick]
        (-0.08,0) -- (\xright+0.08,0) node[right] {$1/p$};
    \draw[->, thick]
        (0,\ybottom-0.03) -- (0,1.12) node[above] {$1/q$};

    \draw[thick] (0,0) rectangle (1,1);
    \draw[thick] (1,0) -- (1,-0.025) node[below] {$1$};
    \draw[thick] (0,1) -- (-0.025,1) node[left] {$1$};

    \draw[black, dashed]
        (\xA,\ybottom) -- (A);
    \draw[black, dashed]
        (B) -- (\xright,\yB);
    \draw[black, dashed]
        (B) -- (A);

    \draw[green!55!black, line width=3pt, opacity=0.18,
          line cap=round, shorten <=2.2pt, shorten >=2.2pt]
        (\xV, \yH) -- (1, \yH);

    \draw[orange!85!black, line width=3pt, opacity=0.22,
          line cap=round, shorten <=2.2pt, shorten >=2.2pt]
        (\xV,\ybottom) -- (\xV, \yH);
        
    \draw[blue, line width=3pt, opacity=0.32,
          line cap=round, shorten <=2.2pt, shorten >=2.2pt]
        (0, \yH) -- (\xV, \yH);
        
    \draw[red, line width=3pt, opacity=0.5,
          line cap=round, shorten <=2.2pt, shorten >=2.2pt]
        (\xV,1) -- (\xV, \yH);

    \fill[gray] (\xV, \yH) circle (1.05pt);

    \node[below] at (\xV, \yH)
        {\tiny $\calP_{\textnormal{C-H-V}}$};

	\node[font=\bfseries\large, blue!80!black] at (0.15,0.85) {V};

    \node[below] at (0.35,0.28)
        {\tiny \eqref{20260710line02}};
    \node[right] at (0.82,0.60)
        {\tiny \eqref{20260710line01}};

    \node[left=3pt] at (B)
        {\tiny  $\bigl(0,c-a-b-2\bigr)$};
    \node[above=2pt] at (A)
        {\tiny $\bigl(3+a+b-c,1\bigr)$};
\end{tikzpicture}

\vspace{0.05cm}
{\small (a)}
\end{minipage}
\begin{minipage}[t]{0.48\textwidth}
\centering
\begin{tikzpicture}[scale=3.3]
    \def\xA{0.85}
    \def\yB{0.15}

    \def\xV{0.35}
    \def\yH{0.65}

    \def\xright{1.35}
    \def\ybottom{-0.15}

    \coordinate (B) at (0,\yB);
    \coordinate (A) at (\xA,1);
    \coordinate (PVH) at (\xV,\yH);
	 \fill[blue!20, opacity=0.3]   (0,\yB) -- (0,1) -- (\xA,1) -- cycle;  

    \draw[->, thick]
        (-0.08,0) -- (\xright+0.08,0) node[right] {$1/p$};
    \draw[->, thick]
        (0,\ybottom-0.03) -- (0,1.12) node[above] {$1/q$};

    \draw[thick] (0,0) rectangle (1,1);
    \draw[thick] (1,0) -- (1,-0.025) node[below] {$1$};
    \draw[thick] (0,1) -- (-0.025,1) node[left] {$1$};

    \draw[black, dashed]
        (\xA,\ybottom) -- (A);
        \draw[black, dashed]
        (B) -- (\xright,\yB);
    \draw[black, dashed]
        (B) -- (A);

    \draw[green!55!black, line width=3pt, opacity=0.18,
          line cap=round, shorten <=2.2pt, shorten >=2.2pt]
        (\xV, \yH) -- (1, \yH);

    \draw[orange!85!black, line width=3pt, opacity=0.22,
          line cap=round, shorten <=2.2pt, shorten >=2.2pt]
        (\xV,\ybottom) -- (\xV, \yH);
        
    \draw[blue, line width=3pt, opacity=0.32,
          line cap=round, shorten <=2.2pt, shorten >=2.2pt]
        (0, \yH) -- (\xV, \yH);
        
    \draw[red, line width=3pt, opacity=0.5,
          line cap=round, shorten <=2.2pt, shorten >=2.2pt]
        (\xV,1) -- (\xV, \yH);

    \fill[red] (\xV, \yH) circle (1.05pt);

    \node[below] at (\xV, \yH)
        {\tiny $\calP_{\textnormal{V-H}}$};

	\node[font=\bfseries\large, blue!80!black] at (0.15,0.85) {V};

    \node[below] at (0.35,0.28)
        {\tiny \eqref{20260710line02}};
    \node[right] at (0.82,0.60)
        {\tiny \eqref{20260710line01}};

    \node[left=3pt] at (B)
        {\tiny  $\bigl(0,c-a-b-2\bigr)$};
    \node[above=2pt] at (A)
        {\tiny $\bigl(3+a+b-c,1\bigr)$};
\end{tikzpicture}

\vspace{0.05cm}
{\small (b)}
\end{minipage}

\caption{\small
Weak-type and restricted weak-type behavior in Main Case~V. The blue segments and points represent weak-type bounds, the red segments and points represent restricted weak-type bounds only. The gray points represent the restricted weak-type Forelli--Rudin phenomenon. As usual, the green and orange lines represent the horizontal cut-off line~\eqref{horizontalcut} and the vertical cut-off line~\eqref{verticalcut}, respectively.}
\label{Fig20}
\end{figure}

\begin{proof}
Each statement follows from the endpoint estimates established earlier, together with the corresponding threshold and restricted weak-type results.

$\bullet$ For the weak-type bounds in (a.1), it suffices to show that
\[
	\|S_{a,b,1}f\|_{L^{-\frac{1}{a},\infty}(\D)} \lesssim \|f\|_{L^{p}(\D)}
\]
for $0\leq1/p<b+1$ and every nonnegative $f$.
For such $p$, choose $\varepsilon>0$ such that
\[
	0\leq\frac{1}{p}<b+1-\varepsilon<b+1.
\]
Since
\[
	|1-z\overline{w}|
	\geq 1-|z||w|
	\geq 1-|w|
	\gtrsim 1-|w|^2,
\]
we have
\[
	\frac{1-|w|^2}{|1-z\overline{w}|}\lesssim 1.
\]
Hence, for any positive function $f$,
\begin{align}\label{remain case1}
	S_{a,b,1}f(z)
	=&(1-|z|^2)^a \int_{\D} \frac{(1-|w|^2)^b}{|1-z\overline{w}|} f(w)\,dA(w)\nonumber\\
	\lesssim&(1-|z|^2)^a \int_{\D} \frac{(1-|w|^2)^{b-\varepsilon}}{|1-z\overline{w}|^{1-\varepsilon}} f(w)\,dA(w)\nonumber\\
	=&S_{a,b-\varepsilon,1-\varepsilon}f(z).
\end{align}
Note that the triple $(a,b-\varepsilon,1-\varepsilon)$ satisfies condition~\eqref{conditionP3}.
Hence, by Theorem~\ref{restricted-weak Forelli--Rudin at VH}, we obtain
\begin{equation}\label{remain case2}
	\|S_{a,b-\varepsilon,1-\varepsilon}f\|_{L^{-\frac{1}{a},\infty}(\D)}
	\lesssim \|f\|_{L^{\frac{1}{b+1-\varepsilon},1}(\D)}
	\lesssim \|f\|_{L^{p}(\D)}.
\end{equation}
Combining \eqref{remain case1} and \eqref{remain case2} yields the weak-type boundedness in (a.1). On the other hand, the failure of weak-type boundedness in (a.1) follows from Proposition~\ref{20260710prop01}.

$\bullet$ Next, for part (a.2), it suffices to show that, for each $-a<1/q\leq 1$ and each positive function $f$,
\[
	\|S_{a,b,1}f\|_{L^{q,\infty}(\D)}
	\lesssim \|f\|_{L^{\frac{1}{b+1},1}(\D)}.
\]
For such $q$, choose $\varepsilon>0$ such that
\[
	-a<-a+\varepsilon<\frac{1}{q}\leq 1.
\]
Then, for each positive function $f$,
\begin{equation}\label{remain case3}
	S_{a,b,1}f(z) \lesssim S_{a-\varepsilon,b,1-\varepsilon}f(z).
\end{equation}
Moreover, the triple $(a-\varepsilon,b,1-\varepsilon)$ satisfies condition~\eqref{conditionP3}.
Hence, by Theorem~\ref{restricted-weak Forelli--Rudin at VH}, we obtain
\begin{equation}\label{remain case4}
	\|S_{a-\varepsilon,b,1-\varepsilon}f\|_{L^{q,\infty}(\D)}
	\lesssim \|S_{a-\varepsilon,b,1-\varepsilon}f\|_{L^{-\frac{1}{a-\varepsilon},\infty}(\D)}
	\lesssim \|f\|_{L^{\frac{1}{b+1},1}(\D)}.
\end{equation}
Combining \eqref{remain case3} and \eqref{remain case4} completes the proof of (a.2).

$\bullet$ Third, the restricted weak-type Forelli--Rudin phenomenon in part (a.3) follows directly from Theorem~\ref{Forelli--Rudin Threshold restricted weak-type}.

$\bullet$ Finally, since
\[
	L^{t,\infty}(\D) \hookrightarrow L^{s,\infty}(\D),
	\qquad
	L^{t}(\D) \hookrightarrow L^{s,1}(\D)
\]
for $0<s<t\leq\infty$, the weak-type estimate on the horizontal cut-off line~\eqref{horizontalcut} in (b.1) and the restricted weak-type estimate on the vertical cut-off line~\eqref{verticalcut} in (b.2) follow from the restricted weak-type estimate at $\left(\frac{1}{b+1}, -\frac{1}{a}\right)$, which is obtained in Theorem~\ref{restricted-weak Forelli--Rudin at VH}.
\end{proof}

\bigskip 

\section{Analysis of the Minor Case}\label{Sec11}

We next discuss the minor case
\begin{equation} \tag{{\bf Minor}} \label{minorcase-final}
	a=-1,\quad b>-1, \quad \textrm{and} \quad 0\leq2-c+b<1.
\end{equation} 
Note that for fixed $b, c \in \R$ satisfying the condition \eqref{minorcase}, by Reduction I in Section \ref{20260830subsec01}, we have
\[
	T_{-1, b, c}1(\cdot) \simeq (1-|\cdot|^2)^{-1} \notin L^{q, \infty}(\D)
\]
for $q>1$.
Hence, it suffices to characterize for which $1 \leq p \leq \infty$ we have
\[
	T_{-1, b, c} \quad \textrm{or} \quad S_{-1, b, c}: L^p(\D) \to L^{1, \infty}(\D)
\]
is bounded.

Here, we divide the minor case into two subcases according to the relative position of the intersection points $\calP_{\textnormal{C-H}}$ and $\calP_{\textnormal{V-H}}$.
\begin{enumerate}
    \item[$\bullet$] \textit{Minor Case I:} 
    \[
	    c>1;
    \]
    \item[$\bullet$] \textit{Minor Case II:} 
    \[
	    c\leq1;
    \]
\end{enumerate}
We refer the reader to Figure~\ref{Minor: Fig1} below for a visualization of these cases.

\begin{figure}[H]
\centering
\begin{tikzpicture}[scale=4]
\def\yh{0.28}
\def\xc{0.72}
\def\xright{1.15}
\def\ybottom{-0.15}
  
\draw[->, thick] (-0.08,0) -- (\xright+0.08,0) node[right] {$1/p$};
\draw[->, thick] (0,\ybottom-0.03) -- (0,1.12) node[above] {$1/q$};
\draw[thick] (0,0) rectangle (1,1);
\draw[thick] (1,0) -- (1,-0.025) node[below] {$1$};
\draw[thick] (0,1) -- (-0.025,1) node[left] {$1$};
\draw[blue, dashed, very thick] 
(0,\yh) -- (\xc,1)
node[midway, above, sloped, yshift=1pt] {\small critical line};
\draw[green, very thick] (0,1) -- (\xc,1);
\draw[magenta, very thick] (\xc,1) -- (1,1);
\fill[green] (\xc,1) circle (.5pt);

\node[font=\bfseries\large, magenta!85!black] at (0.85,1.1) {I};
\node[font=\bfseries\large, green!45!black] at (0.4,1.1) {II};
\end{tikzpicture}
\caption{\small The two subcases of the \textit{Minor Case}.}
\label{Minor: Fig1}
\end{figure}

\subsection{Reduction I for Minor Case: Failure of the restricted weak-type estimates to the right of $\calP_{\textnormal{C-H}}$ or $\calP_{\textnormal{V-H}}$}

Denote 
\begin{equation}\label{Minor: p*}
	\frac{1}{p_*}=\min\{2-c+b,b+1\}.
\end{equation}
In this subsection, we show that the operators $T_{-1,b,c}$ and $S_{-1,b,c}$ map $L^{p, 1}(\D)$ unboundedly into $L^{1,\infty}(\D)$ for every $1\le p<p_*$.

\begin{figure}[H]
\centering
\begin{tikzpicture}[scale=2.75]

    \tikzset{
        axis/.style={->, thick},
        box/.style={thick},
        crit/.style={blue, very thick},
        vcut/.style={orange!55!black, very thick},
    }

    \begin{scope}[shift={(0,0)}]
        \def\xA{0.72}
        \def\yB{0.28}
        \def\yh{0.85}

        \draw[axis] (-0.06,0) -- (1.18,0) node[right] {$1/p$};
        \draw[axis] (0,-0.06) -- (0,1.18) node[above] {$1/q$};
        \draw[box] (0,0) rectangle (1,1);
        \draw (1,0) -- (1,-0.022) node[below] {$1$};
        \draw (0,1) -- (-0.022,1) node[left] {$1$};

        \coordinate (B) at (0,\yB);
        \coordinate (A) at (\xA,1);
        \draw[crit] (B) -- (A);
        \draw[vcut] (\yh,0) -- (\yh,1);
        \draw[red, very thick] (A) -- (1,1);
        \fill[black] (A) node[above] {$1/p_*$} circle (.5pt);

        \node[font=\scriptsize] at (0.2, 0.65){\eqref{criticalline}};
        \node[font=\scriptsize] at (0.73,0.5){\eqref{verticalcut}};
        \node[align=center] at (0.5,-0.32)
    {\scriptsize The case when $c>1$};
    \end{scope}

    \begin{scope}[shift={(1.95,0)}]
        \def\xA{0.72}
        \def\yB{0.28}
        \def\yh{0.72}

        \draw[axis] (-0.06,0) -- (1.18,0) node[right] {$1/p$};
        \draw[axis] (0,-0.06) -- (0,1.18) node[above] {$1/q$};
        \draw[box] (0,0) rectangle (1,1);
        \draw (1,0) -- (1,-0.022) node[below] {$1$};
        \draw (0,1) -- (-0.022,1) node[left] {$1$};

        \coordinate (B) at (0,\yB);
        \coordinate (A) at (\xA,1);
        \draw[crit] (B) -- (A);
        \draw[vcut] (\yh,0) -- (\yh,1);
        \draw[red, very thick] (A) -- (1,1);
        \fill[black] (A) node[above] {$1/p_*$} circle (.5pt);

        \node[font=\scriptsize] at (0.2, 0.65) {\eqref{criticalline}};
        \node[font=\scriptsize] at (0.85,0.5) {\eqref{verticalcut}};
		  \node[align=center] at (0.5,-0.32)
        {\scriptsize The case when $c=1$};
    \end{scope}

    \begin{scope}[shift={(4,0)}]
        \def\xA{0.72}
        \def\yB{0.28}
        \def\yh{0.55}

        \draw[axis] (-0.06,0) -- (1.18,0) node[right] {$1/p$};
        \draw[axis] (0,-0.06) -- (0,1.18) node[above] {$1/q$};
        \draw[box] (0,0) rectangle (1,1);
        \draw (1,0) -- (1,-0.022) node[below] {$1$};
        \draw (0,1) -- (-0.022,1) node[left] {$1$};

        \coordinate (B) at (0,\yB);
        \coordinate (A) at (\xA,1);
        \draw[crit] (B) -- (A);
        \draw[vcut] (\yh,0) -- (\yh,1);
        \draw[red, very thick] (\yh,1) -- (1,1);
        \fill[black] (\yh,1) node[above] {$1/p_*$} circle (.5pt);

        \node[font=\scriptsize] at (0.2, 0.65) {\eqref{criticalline}};
        \node[font=\scriptsize] at (0.7,0.5) {\eqref{verticalcut}};
       \node[align=center] at (0.5,-0.32)
        {\scriptsize The case when $c<1$};
    \end{scope}
\end{tikzpicture}
\caption{\small Relative positions of the critical line and the vertical cut-off line along the horizontal line $1/q=1$: the blue line represents the critical line, the orange line represents the vertical cut-off line, the red line represents failure of the restricted weak-type estimates.}
\label{Minor: Fig2}
\end{figure}

\begin{prop}\label{Minor: Reduction I}
Let $b, c \in \R$ satisfy the condition \eqref{minorcase}.
Then the following statements hold.
\begin{enumerate}
	\item[(i)] Assume $2-c+b<1/p \le 1$.
		Then we have $T_{-1, b, c}$ and $S_{-1, b, c}$ are unbounded from $L^{p,1}(\D)$ to $L^{1, \infty}(\D)$.
	\item[(ii)] Assume $b+1<1/p \le 1$.
		Then we have $T_{-1, b, c}$ and $S_{-1, b, c}$ are unbounded from $L^{p,1}(\D)$ to $L^{1, \infty}(\D)$.
\end{enumerate}
\end{prop}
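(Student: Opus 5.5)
It suffices in both parts to disprove the restricted weak-type bound for $T_{-1,b,c}$. The pointwise inequality $|T_{-1,b,c}f|\le S_{-1,b,c}(|f|)$, combined with the positivity of $S$ on characteristic functions, then transfers the failure to $S_{-1,b,c}$. In each part we will test on characteristic functions $\one_{E}$. We aim to show that
$$
\|T_{-1,b,c}\one_E\|_{L^{1,\infty}(\D)}/|E|^{1/p}
$$
is unbounded along a family of sets $E$.

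For part (i), where $2-c+b<1/p\le 1$, we will use annuli near the boundary together with the mean-value property. Take $E=A_N=\{1-2^{-N}\le |w|^2<1-2^{-N-1}\}$, so that $|A_N|\simeq 2^{-N}$. Radial averaging gives
$$
T_{-1,b,c}\one_{A_N}(z)=(1-|z|^2)^{-1}\int_{A_N}(1-|w|^2)^b\frac{dA(w)}{(1-z\overline w)^c}.
$$
This is not simply constant, so it is better to expand in the Taylor series as in Proposition~\ref{reductionII}. Only the $m=0$ term survives for a radial set, which gives
$$
T_{-1,b,c}\one_{A_N}(z)=(1-|z|^2)^{-1}|A_N|_b\simeq (1-|z|^2)^{-1}2^{-N(b+1)}.
$$
This is far too small, so radial sets alone will not work. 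We therefore switch to the construction from Step~I of Proposition~\ref{reductionII}, which uses an oscillating factor. Restricted weak type, however, requires indicator functions, not $w^N\one_{A_N}$.

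The fix is to use instead a single Carleson box, $E=Q_{I,\varepsilon}$ with $|I|=2^{-k}$. For $z$ in the shadow region $Q_I\setminus Q_I^{\rm up}$ near $I$, Lemma~\ref{20260530lem01} keeps the argument of the kernel coherent. This gives
$$
|T_{-1,b,c}\one_E(z)|\gtrsim (1-|z|^2)^{-1}\,2^{-2k}\,2^{-kb}\,2^{kc}=(1-|z|^2)^{-1}2^{k(c-2-b)}.
$$
Integrating the distribution function over the dyadic layers $1-|z|^2\simeq 2^{-j}$ with $j\ge k$, each of measure $\simeq 2^{-j}\cdot 2^{-k}$ inside $Q_I$, we find a level $\lambda\simeq 2^{j}2^{k(c-2-b)}$ attained on a set of measure $\simeq 2^{-j-k}$. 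The sum over $j$ then produces a logarithmic gain, so that
$$
\|T\one_E\|_{L^{1,\infty}}\gtrsim 2^{k(c-3-b)}\cdot(\text{possibly }k).
$$
Comparing with $|E|^{1/p}\simeq 2^{-2k/p}$, the ratio is $\gtrsim 2^{k(2/p-(3+b-c))}$. This needs care, because the natural scaling for $a=-1$ involves the boundary measure. If the single-box computation only reaches the threshold $1/p=(3+b-c)/2$, we will replace one box by a union of $2^{k}$ boxes along a full generation, with random signs replaced by the positive kernel's dominant part. Alternatively, we will rely on the pairing argument below.

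A cleaner alternative for both parts is duality against the horizontal-line structure. Restricted weak type $(p,1)$ for $T_{-1,b,c}$ combined with the trivial estimate at other points would, by off-diagonal Marcinkiewicz interpolation (as in Section~\ref{Sec10.1}, Case III), yield strong bounds $L^{p_1}\to L^{q_1}$ for some $(1/p_1,1/q_1)$ with $1/q_1>1$ excluded. This route is blocked because $1/q=1$ is the top edge. We will therefore aim for direct testing. For part (ii), where $b+1<1/p$, we will use $E=\{1-|w|^2<\varepsilon\}$ together with the mean-value computation from Reduction~I. This gives
$$
T_{-1,b,c}\one_E(z)=(1-|z|^2)^{-1}|E|_b\simeq(1-|z|^2)^{-1}\varepsilon^{b+1}.
$$
Since $(1-|z|^2)^{-1}$ has $L^{1,\infty}$ norm $\simeq 1$ on all of $\D$, we obtain
$$
\|T\one_E\|_{L^{1,\infty}}/|E|^{1/p}\gtrsim \varepsilon^{b+1-1/p}\to\infty .
$$
This settles part (ii).

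The main obstacle is therefore part (i): producing a family of indicator test sets, adapted to the complex kernel's coherence (Lemma~\ref{20260530lem01}), whose $L^{1,\infty}$ gain matches exactly the exponent $2-c+b$. We will first try $E=\{2^{-N}<1-|w|^2<2^{-N+1}\}\cap\{|\arg w|<2^{-N}\}$ and evaluate on the full boundary-adjacent region, using that $1/|1-z\overline w|^{c}$ with $c>1$ makes the mass concentrate.
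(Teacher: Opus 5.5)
Your part (ii) is correct, and it takes a different route from the paper. You test on $E_\varepsilon=\{1-|w|^2<\varepsilon\}$, and the mean-value identity gives $T_{-1,b,c}\one_{E_\varepsilon}(z)\simeq\varepsilon^{b+1}(1-|z|^2)^{-1}$. The ratio is then $\simeq\varepsilon^{b+1-1/p}\to\infty$, so the bound fails even on characteristic functions. The paper instead uses the single function $(1-|w|^2)^{-(b+1)}\in L^{p,1}(\D)$, whose image under $T_{-1,b,c}$ diverges near the origin.

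Part (i), however, has a genuine gap.
\begin{itemize}
\item Your single-box estimate is correct, but it yields exactly $\|T_{-1,b,c}\one_{Q_{I,\varepsilon}}\|_{L^{1,\infty}(\D)}\simeq 2^{k(c-3-b)}$. There is no logarithmic gain, because $L^{1,\infty}$ takes a supremum over levels rather than a sum over layers.
\item It therefore only disproves the bound for $1/p>(3+b-c)/2$. Since $(3+b-c)/2-(2-c+b)=(c-1-b)/2>0$, and radial annuli only reach $1/p>b+1$, the range $2-c+b<1/p\le\min\{b+1,(3+b-c)/2\}$ is not covered. This range is nonempty whenever $c>1$.
\item Your last suggestion, a box at scale $2^{-N}$, is the same single box again.
\item The fallback, taking all $2^k$ boxes of one generation with positive signs, is only announced. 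The crude bound for the non-nearest boxes is of order $\sum_{m\ge1}m^{-c}$ times the nearest box's contribution, which is not small. So no lower bound on a set of proportional measure follows. One would need, for instance, to keep only every $M$-th arc with $M$ large (using $c>1$) and then redo the level-set count.
\end{itemize}

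The missing idea is that the statement concerns boundedness $L^{p,1}(\D)\to L^{1,\infty}(\D)$, so you are not restricted to indicators. You discarded the oscillating test function $w^N\one_{A_N}$ from Step I of Reduction II precisely because of this. The paper takes $f_N(w)=w^N$ and argues as follows.
\begin{itemize}
\item $\|f_N\|_{L^{p,1}(\D)}\simeq N\,B(N/2,1+1/p)\simeq N^{-1/p}$.
\item Orthogonality of the monomials gives exactly $T_{-1,b,c}f_N(z)=C_N\,z^N(1-|z|^2)^{-1}$, with $C_N=\frac{\Gamma(N+c)}{\Gamma(c)N!}B(N+1,b+1)\simeq N^{c-2-b}$.
\item Testing on $\{|z|^2>1-1/N\}$ gives $\|z^N(1-|z|^2)^{-1}\|_{L^{1,\infty}(\D)}\gtrsim 1$.
\end{itemize}
The ratio is therefore $\gtrsim N^{c-2-b+1/p}\to\infty$, exactly when $1/p>2-c+b$. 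Your rejected $w^N\one_{A_N}$, with $A_N$ at depth $\simeq 1/N$, works for the same reason. The failure for $S_{-1,b,c}$ then follows from $|T_{-1,b,c}f|\le S_{-1,b,c}|f|$.
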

\begin{proof}[Proof of Proposition~\ref{Minor: Reduction I}, $(i)$]
For $N\in\N$, let
\[
	f_N(w)=w^N.
\]
Then
\[
	|\{w\in\D: |f_N|>t\}|
	=\left|\left\{w\in\D: |w|^N>t\right\}\right|
	\simeq 1-t^{\frac{2}{N}}.
\]
Hence, 
\begin{align*}
	\|f_N\|_{L^{p,1}}
	\simeq&\int_0^\infty |\{w\in\D: |f_N|>t\}|^{\frac{1}{p}} dt
	\simeq\int_0^1 (1-t^{\frac{2}{N}})^{\frac{1}{p}} dt\\
	=&\frac{N}{2}\int_0^1 s^{\frac{N}{2}-1}(1-s)^{\frac{1}{p}} ds
	\simeq NB\left(\frac{N}{2},1+\frac{1}{p}\right),
\end{align*}
where $B(\cdot, \cdot)$ is the Beta function. By Stirling's formula, we have
\begin{equation}\label{Minor Reduction II: fN}
	\|f_N\|_{L^{p,1}(\D)} \simeq N \cdot N^{-1-\frac{1}{p}}=N^{-\frac{1}{p}}.
\end{equation}

Since,
\[
	\frac{1}{(1-z\overline{w})^c} = \sum_{m=0}^{\infty}\frac{\Gamma(m+c)}{\Gamma(c)m!}z^m\overline{w}^{\,m},
\]
we have
\begin{equation}\label{Minor Reduction II: T}
\begin{aligned}
	T_{-1, b, c} f_N (z) 
	=&(1 - |z|^2) ^{-1} \int_{\D} \frac {(1-|w|^{2}) ^{b}}{(1 - z\overline{w}) ^{c}} w^N dA(w)\\
	=&(1 - |z|^2) ^{-1} \sum_{m=0}^{\infty}\frac{\Gamma(m+c)}{\Gamma(c)m!}z^m \int_{\D} (1-|w|^{2}) ^{b} w^N \overline{w}^m dA(w)\\
	=&C_N\frac{z^N}{1-|z|^2},
\end{aligned}
\end{equation}
where 
\[
	 C_N=\frac{\Gamma(N+c)}{\Gamma(c)N!} \int_{\D} (1-|w|^{2}) ^{b} |w|^{2N} dA(w)=\frac{\Gamma(N+c)}{\Gamma(c)N!}B(N+1,b+1).
\]
By Stirling's formula, we have
\begin{equation}\label{Minor Reduction II: CN}
	C_N \simeq N^{c-1}N^{-b-1}=N^{c-2-b}.
\end{equation}
Moreover, we claim that 
\begin{equation}\label{Minor Reduction II: func}
	\left\|\frac{z^N}{1-|z|^2}\right\|_{L^{1,\infty}(\D)} \gtrsim 1.
\end{equation}
Indeed, let 
\[
	E_N=\left\{z\in\D: |z|^2>1-\frac{1}{N}\right\}.
\]
Then we have
\[
	|E_N| \simeq \frac{1}{N}.
\]
Moreover, for each $z\in E_N$, we have
\[
	|z|^N \geq \left(1-\frac{1}{N}\right)^{\frac{N}{2}} \gtrsim 1, \qquad 1-|z|^2 \leq \frac{1}{N}.
\]
Hence, we obtain 
\[
	\frac{|z|^N}{1-|z|^2} \gtrsim N
\]
for each $z\in E_N$.
Then
\[
	\left\|\frac{z^N}{1-|z|^2}\right\|_{L^{1,\infty}(\D)}
	\gtrsim N|E_N| 
	\gtrsim 1.
\]
Combining \eqref{Minor Reduction II: T}, \eqref{Minor Reduction II: CN} and \eqref{Minor Reduction II: func} we have
\[
	\|T_{-1, b, c} f_N\|_{L^{1,\infty}(\D)} \gtrsim N^{c-2-b}.
\]
Since $1/p>2-c+b$, by \eqref{Minor Reduction II: fN}, we obtain
\[
	\frac{\|T_{-1, b, c} f_N\|_{L^{1,\infty}(\D)}}{\|f_N\|_{L^{p,1}(\D)}} 
	\gtrsim N^{c-2-b+\frac{1}{p}}
	\to \infty,
	\qquad
	N\to\infty.
\]
This completes the proof.
\end{proof}

\begin{proof}[Proof of Proposition~\ref{Minor: Reduction I}, $(ii)$]
It suffices to prove that $T_{-1,b,c}$ is not of restricted weak-type $(p,1)$ for $1/p>1+b$.
We claim that 
\[
	f(w)=(1-|w|^2)^{-(b+1)}
\]
belongs to $L^{p,1}(\D)$.
Indeed, since $b>-1$, we have $|\{w\in\D: |f(w)|>t\}|=1$ for $0<t\leq1$ and
\[
	|\{w\in\D: |f(w)|>t\}|
	=\left|\left\{w\in\D: 1-|w|^2<t^{-\frac{1}{b+1}}\right\}\right|
	\simeq t^{-\frac{1}{b+1}}
\]
for $t>1$.
It follows from $1/p>b+1$ that
\[
	\|f\|_{L^{p,1}}
	\simeq \int_0^\infty |\{w\in\D: |f|>t\}|^{\frac{1}{p}} dt
	\simeq 1 + \int_1^\infty t^{-\frac{1}{p(b+1)}} dt
	<\infty.
\]

On the other hand, let $r>0$ be sufficiently small (only depending on $c$) such that for any $z \in \D_r:=\left\{z \in \D: |z| \le r\right\}$, one has
\[
	\left|\arg(1-z\overline{w}) \right| \le \frac{\pi}{10(|c|+1)}, \qquad \textrm{for any} \quad w \in \D. 
\]
Then, for all $z \in \D_r$, one has 
\[
	|T_{-1,b,c} f (z)|
	\gtrsim (1-|z|^2)^{-1}\int_{\D}\frac{(1-|w|^2)^{-1}}{|1 - z\overline{w}| ^{c}} dA(w)
	\gtrsim_r \int_{\D}(1-|w|^2)^{-1}dA(w)=+\infty.
\]
In particular, the above computation shows that  $T_{-1,b,c} f \notin L^{1,\infty}({\D})$, which completes the proof. 
\end{proof}

\subsection{Reduction II for Minor Case: Failure of the weak-type estimates at $\calP_{\textnormal{V-H}}$}

\begin{prop}\label{Minor: Reduction II}
Let $b, c \in \R$ satisfy the condition \eqref{minorcase}.
Then
\[
	T_{-1, b, c}, \; S_{-1, b, c}: L^{\frac{1}{b+1}}(\D) \not\to L^{1, \infty}(\D).
\]
\end{prop}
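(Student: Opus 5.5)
The plan is to construct explicit test functions, treating the cases $-1<b<0$ and $b=0$ separately. If $b>0$, then $1/(b+1)<1$ and the space $L^{\frac{1}{b+1}}(\D)$ is only quasi-Banach; I would handle that range by the same radial construction as for $b<0$, as in the footnote to Proposition~\ref{20260710prop01}. By the pointwise bound $|T_{-1,b,c}f|\le S_{-1,b,c}(|f|)$, it suffices to disprove the estimate for $T_{-1,b,c}$.

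\emph{Case $-1<b<0$.} I would copy the construction from the proof of Proposition~\ref{20260710prop01}. Pick $\delta>0$ with $(b+1)(1+\delta)<1$, and set
$$
f(w)=(1-|w|^2)^{-(b+1)}\Bigl(1+\log\tfrac{1}{1-|w|^2}\Bigr)^{-(b+1)(1+\delta)}.
$$
The computation \eqref{Reduction I: b<0,f} shows that $f\in L^{\frac{1}{b+1}}(\D)$. Since $f$ is radial, the mean-value identity from Reduction~I gives, for every $z\in\D$,
$$
T_{-1,b,c}f(z)=(1-|z|^2)^{-1}\int_{\D}(1-|w|^2)^{-1}\Bigl(1+\log\tfrac{1}{1-|w|^2}\Bigr)^{-(b+1)(1+\delta)}dA(w).
$$
This integral is $+\infty$ because $(b+1)(1+\delta)<1$. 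Hence $T_{-1,b,c}f\notin L^{1,\infty}(\D)$. One could equally use the small-disc argument of Proposition~\ref{20260710prop01}.

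\emph{Case $b=0$, so $p=1$.} Here radial test functions are useless: normalizing $N\one_{\{1-1/N<|w|^2<1\}}$ in $L^1$ only produces $(1-|z|^2)^{-1}$, which has bounded $L^{1,\infty}$ norm. I would instead use localized atoms. For $I\in\calD_k$, take the truncated tent $Q_{I,\varepsilon}$ from \eqref{20260601eq02}, with $\varepsilon=\varepsilon(c)$ as in Lemma~\ref{20260530lem01}, and set
$$
f_k=|Q_{I,\varepsilon}|^{-1}\one_{Q_{I,\varepsilon}},
$$
so that $\|f_k\|_{L^1}=1$. Arguing as in the proof of \eqref{20260531eq03}, the argument of $(1-z\overline w)^{-c}$ is essentially constant for $w\in Q_{I,\varepsilon}$ and $z\in Q_I$ with $1-|z|^2\le 2^{-k}$. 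This gives
$$
|T_{-1,0,c}f_k(z)|\gtrsim (1-|z|^2)^{-1}|I|^{-c}=(1-|z|^2)^{-1}2^{kc}.
$$
Now take $\lambda\simeq 2^{kc}\cdot 2^{k}$. The level set then contains $\{z\in Q_I: 1-|z|^2<2^{kc}/\lambda\}$, whose measure is about $|I|\cdot 2^{kc}/\lambda$. Therefore
$$
\lambda\,|\{|T_{-1,0,c}f_k|>\lambda\}|\gtrsim 2^{k(c-1)}.
$$
In the Minor Case with $b=0$ we have $c>b+1=1$, so this lower bound tends to infinity as $k\to\infty$, which gives the failure.

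The main obstacle is the $b=0$ case. There one has to verify the lower bound on the kernel uniformly over $z$ deep inside $Q_I$, down to depth $2^{kc}/\lambda\ll|I|$. I would obtain this by reusing estimates (i)--(iv) in the proof of Lemma~\ref{20260530lem01}; they hold for all $z\in Q_I$ with $1-|z|^2\le 2^{-k}$, not just for $z\in A_N$. The $b<0$ case is routine.
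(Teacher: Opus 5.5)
Your argument for $-1<b<0$ is the paper's: it uses the same log-corrected radial $f$. You detect the divergence through the radial mean-value identity rather than small-disc positivity, which is an equivalent way to see it. Since $T_{-1,b,c}f(z)$ is not an absolutely convergent integral, this is best phrased through the truncations $f\one_{\{|w|<R\}}$. For those, your identity gives exactly $A_R(1-|z|^2)^{-1}$ with $A_R\to\infty$.

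For $b=0$ you take a genuinely different route. The paper builds nothing new there. Since $b=0$ forces $c>1$, the point $1/p=1$ satisfies $1/p>2-c+b$, so Proposition~\ref{Minor: Reduction I}(i) already gives the failure of $L^{1,1}=L^1\to L^{1,\infty}$. That proof tests on the monomials $w^N$, on which the kernel acts diagonally by orthogonality. Specifically, $T_{-1,0,c}w^N=C_N z^N(1-|z|^2)^{-1}$ with $C_N\simeq N^{c-2}$, while $\|w^N\|_{L^1}\simeq N^{-1}$, so the ratio is $\gtrsim N^{c-1}$.

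Your $L^1$-normalized bumps on $Q_{I,\varepsilon}$ are the spatial counterpart. Phase coherence of the kernel replaces exact orthogonality, and you get $2^{k(c-1)}$, the same rate with $N\sim 2^k$. Your computation is correct, and the obstacle you flag is not really there. For $w,w'\in Q_{I,\varepsilon}$ one has $|1-z\overline{w'}|\ge 1-|w'|\ge(1-\varepsilon)2^{-k}$ for every $z\in\D$, so the phase control of Lemma~\ref{20260530lem01} holds on all of $\D$. With $\lambda\simeq 2^{k(c+1)}$ the level set already contains a fixed fraction of $Q_I$, so no control deep inside the tent is needed. The paper's route is shorter given the earlier proposition; yours is self-contained and localizes the obstruction to a single Carleson box.

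The one incorrect point is your aside on $b>0$: the log-corrected radial function cannot work there. Membership $f\in L^{1/(b+1)}$ forces the logarithmic exponent $\gamma$ to satisfy $\gamma>b+1$. Divergence of $\int_{\D}(1-|w|^2)^{-1}\bigl(1+\log\frac{1}{1-|w|^2}\bigr)^{-\gamma}dA(w)$ needs $\gamma\le 1$. These are incompatible once $b\ge 0$, which is exactly why you needed $(b+1)(1+\delta)<1$.

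The footnote to Proposition~\ref{20260710prop01} that you cite concerns the interpolation argument for $b=0$, not this construction. The paper simply sets $b>0$ aside, since then $p=1/(b+1)<1$ lies outside $1\le p\le\infty$. If you want that range anyway, test on $\one_{\{|w|<\rho\}}$. Its $L^{1/(b+1)}$ quasi-norm is $\rho^{2(b+1)}$, while $T_{-1,b,c}\one_{\{|w|<\rho\}}\simeq \rho^2(1-|z|^2)^{-1}$ for small $\rho$. The ratio is therefore $\simeq\rho^{-2b}\to\infty$.
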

\begin{proof}
Without loss of generality, we may restrict ourselves to the case $b\le 0$. Otherwise, \eqref{verticalcut} lies outside the hypersingular regime $\Omega_{\mathcal H}$.

Since $0\leq2-c+b<1$, by Proposition \ref{Minor: Reduction I}, it suffices to prove that $T_{-1,b,c}$ is not of weak-type $(1/(b+1),1)$ for $b<0$.
Hence, take $\del>0$ such that $(b+1)(1+\del)<1$.
Consider the function
\[
	f(w):=(1-|w|^2)^{-(b+1)}\left(1+\log\frac{1}{1-|w|^2}\right)^{-(b+1)(1+\del)}.
\]
By \eqref{Reduction I: b<0,f}, $f \in L^{\frac{1}{b+1}}(\D)$.
Moreover, $T_{-1, b, c} f$ is not well-defined on $\D_r:=\left\{z \in \D: |z| \le r\right\}$ for $r>0$ sufficiently small.
Hence, 
\[
	T_{-1, b, c}: L^{\frac{1}{b+1}}(\D) \not\to L^{1, \infty}(\D).
\]
This completes the proof.
\end{proof}

\subsection{Reduction III for Minor Case: Restricted weak-type estimates on $\calP_{\textnormal{C-H}}$ and $\calP_{\textnormal{V-H}}$ for $c\neq1$ and $0<2-c+b<1$}
In this subsection, we show that the operators $T_{-1,b,c}$ and $S_{-1,b,c}$ are of restricted weak-type $(p_*,1)$ for $c\neq1$ and $0<2-c+b<1$.
The endpoint case $c=1$ is precisely where the restricted weak-type Forelli--Rudin phenomenon occurs.
We refer the reader to the first and third figures in Figure \ref{Minor: Fig2}.
\begin{prop}\label{Minor: Reduction III}
Let $b, c \in \R$ satisfy the condition \eqref{minorcase} with $c\neq1$ and $0<2-c+b<1$.
Define $p_*$ as in \eqref{Minor: p*}.
Then we have
\[
	T_{-1, b, c}, \; S_{-1, b, c}: L^{p_*,1}(\D) \to L^{1, \infty}(\D).
\]
\end{prop}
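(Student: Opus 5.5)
It suffices to treat $S_{-1,b,c}$, since $|T_{-1,b,c}f|\le S_{-1,b,c}(|f|)$. We split according to the sign of $c-1$, since $1/p_*=\min\{2-c+b,b+1\}$ equals $2-c+b$ when $c>1$ and equals $b+1$ when $c<1$. In both cases we reduce, as in \textbf{\textsl{Step 2}} of the proof of Theorem~\ref{restricted-weak Forelli--Rudin at CH}~(ii), to the bilinear restricted estimate
\[
\int_E S_{-1,b,c}\one_F(z)\,dA(z)\lesssim |F|^{1/p_*},
\]
for all measurable $E,F\subseteq\D$. The target exponent of $|E|$ is $0$, since $1/q=1$. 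Taking $E=E_\lambda$ then gives $\lambda|E_\lambda|\lesssim |F|^{1/p_*}$.

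\emph{Case $c>1$} (the point $\calP_{\textnormal{C-H}}=(2-c+b,1)$). We follow Theorem~\ref{restricted-weak Forelli--Rudin at CH}~(ii) with $a=-1$.
\begin{itemize}
\item Apply the sparse domination of \textbf{\textsl{Step 3}} there, now to the weight $(1-|z|^2)^{-1}$ in place of $(1-|z|^2)^{-(2-c+b)}$. Since $c-a-b-2=c-b-1$, the factor $W_\eta$ is not needed.
\item The resulting inner sum is $\sum_k 2^{k}\min\{|Q_J\cap E|,2^{-k}|Q_J|\}$. It produces the borderline bound $|Q_J|\bigl(1+\log\frac{|Q_J|}{|Q_J\cap E|}\bigr)$ rather than a power.
\item We absorb the logarithm into $(|Q_J\cap E|/|Q_J|)^{-\epsilon'}$. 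The outer coefficient is, by Lemma~\ref{Q_J cap F}, $\lesssim (|Q_J\cap F|/|Q_J|)^{\sigma}$ with $2-c+b<\sigma<\min\{1,b+1\}$.
\item This reduces matters to a Carleson sum $\sum_J e_J^{-\epsilon'}f_J^{\sigma}|Q_J|^{\,1+\frac{c-b-2}{2}\cdot(\dots)}$. The sizes must be tracked: the kernel gives $|J|^{-c}$ against $|Q_J\cap F|_b$, leaving a positive power $|J|^{2-c+b}$ after normalization.
\item We then run the layer-cake argument of \textbf{\textsl{Step 5}} over the level sets of $f_J$ only. Maximal disjoint arcs give $\sum_{J\in\mathcal G_t}|J|^{2(\cdots)}\lesssim (|F|/t)^{\rho}$, as in Section~\ref{Sec08}, \textbf{\textsl{Step 4}}, with $\rho$ chosen strictly between $2-c+b$ and the decay exponent. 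Integrating in $t$ then gives $|F|^{2-c+b}$.
\end{itemize}

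\emph{Case $c<1$} (the point $\calP_{\textnormal{V-H}}=(b+1,1)$). We follow Theorem~\ref{restricted-weak Forelli--Rudin at VH} with $a=-1$. The condition $0<2-c+b<1$ with $c<1$ forces $-1<b<0$.
\begin{itemize}
\item Use the double sparse domination of \textbf{\textsl{Step 2}} there.
\item The $w$-sum is bounded by \eqref{20260620eq11}, namely $|Q_J\cap F|^{1+b}|Q_J|^{-b}$.
\item The $z$-sum, now $\sum_{k_1}2^{k_1}\min\{|Q_J\cap E|,2^{-k_1}|Q_J|\}$, is again logarithmic, and we bound it crudely by $|Q_J|\log(e|Q_J|/|Q_J\cap E|)\lesssim |Q_J|$. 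The crude bound suffices here because we only need $|E|^0$.
\item This leaves $\sum_J f_J^{1+b}|Q_J|^{1+\frac{1-c+b}{2}}$ with $1-c+b>0$, the extra power coming from $c<1$.
\item This is exactly the Carleson embedding \eqref{20260620eq20} with exponent $1+b$ in place of $2+a+b$ and $1+\frac{2-c+a+b}{2}$ replaced accordingly. Its proof in Section~\ref{Sec08}, \textbf{\textsl{Step 4}}, goes through verbatim, with $\rho$ chosen in $(1+b,\,2-c+b)$; the needed inequality $1+b<2-c+b$ is precisely $c<1$. The result is $\lesssim|F|^{1+b}$.
\end{itemize}

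\emph{Main obstacle.} The main difficulty is the logarithmic loss from the $z$-sum at $a=-1$. In the case $c<1$ we avoid it cheaply, since we only need $|E|^0$ and we have the slack $1-c+b>0$. In the case $c>1$ the slack is only $2-c+b>0$, so the logarithm must be traded against part of the $|J|$-decay. If this fails, we would first try a comparison argument as in the proof of Theorem~\ref{mainsubcaseV}: bound $S_{-1,b,c}\one_F$ by $S_{-1,b-\epsilon',c-\epsilon''}\one_F$ using $(1-|w|^2)/|1-z\overline w|\lesssim1$, and reduce to parameters where the estimate is a power estimate.
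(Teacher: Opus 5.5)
Your proposal has a genuine gap at the first step. At $q=1$, the bilinear inequality $\int_E S_{-1,b,c}\one_F\,dA\lesssim|F|^{1/p_*}$ for all $E$ is, with $E=\D$, just the $L^1$ bound $\|S_{-1,b,c}\one_F\|_{L^1(\D)}\lesssim|F|^{1/p_*}$. That bound is false for every $F$ with $|F|>0$. In the Minor Case $c>1+b>0$, so $|1-z\overline w|^{-c}\ge 2^{-c}$ and
\[
S_{-1,b,c}\one_F(z)\;\ge\;2^{-c}(1-|z|^2)^{-1}\int_F(1-|w|^2)^b\,dA(w)\notin L^1(\D).
\]
The Step 2 duality in Theorem~\ref{restricted-weak Forelli--Rudin at CH}~(ii) works only because there the target exponent $1/(2-c+b)$ exceeds $1$, so a positive power $|E|^{c-1-b}$ is available.

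The failure shows up in your sparse sums. In both cases the $z$-sum $\sum_k 2^k\min\{|Q_J\cap E|,2^{-k}|Q_J|\}$ is not logarithmic but divergent: every term with $k>k_0$ equals $|Q_J|$. Your ``crude bound'' $\log(e|Q_J|/|Q_J\cap E|)\lesssim1$ in the case $c<1$ is also false as $|Q_J\cap E|/|Q_J|\to0$. Your fallback comparison $S_{-1,b,c}\lesssim S_{-1,b-\epsilon',c-\epsilon''}$ only changes the $w$-side and keeps $a=-1$. The obstruction sits entirely in the factor $(1-|z|^2)^{-1}$ together with the endpoint $q=1$, so the comparison does not help.

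The missing idea is the one you explicitly discarded (``the factor $W_\eta$ is not needed''): split off a small part of the $z$-weight. Choose $0<\varepsilon<c-1-b$ and write $S_{-1,b,c}f=W_\varepsilon\,S_{-1+\varepsilon,b,c}f$, where $W_\varepsilon(z)=(1-|z|^2)^{-\varepsilon}\in L^{1/\varepsilon,\infty}(\D)$. Lorentz--H\"older then gives
\[
\|S_{-1,b,c}f\|_{L^{1,\infty}(\D)}\lesssim\|S_{-1+\varepsilon,b,c}f\|_{L^{1/(1-\varepsilon),\infty}(\D)}.
\]
The triple $(-1+\varepsilon,b,c)$ lies in the Main Case, since $c-(-1+\varepsilon)-b-2=c-1-b-\varepsilon\in(0,1)$. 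It satisfies \eqref{conditionP1} if $c>1$ and \eqref{conditionP3} if $c<1$. Theorems~\ref{restricted-weak Forelli--Rudin at CH}~(ii) and~\ref{restricted-weak Forelli--Rudin at VH} then give restricted weak type $(p_*,1/(1-\varepsilon))$ directly. The target exponent is now strictly above $1$, and all the sparse/Carleson work you were trying to redo is already contained in those theorems. This is also exactly where $c\neq1$ is used: for $c=1$ the reduced triple satisfies \eqref{conditionP4}, where $S$ fails.
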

\begin{proof}
It suffices to show $S_{-1,b,c}$ is of restricted weak-type $(p_*,1)$ for $c\neq1$.
Since $0<2-c+b<1$, we have
\[
	0<c-1-b<1.
\]
Take $0<\varepsilon<c-1-b$ and define
\[
	a^*=-1+\varepsilon>-1.
\]
Then we have
\[
	S_{-1, b, c}f(z)=W_\varepsilon(z) S_{a^*,b,c}f(z),
\]
where $W_\varepsilon(z)=(1-|z|^2)^{-\varepsilon}$.
By Lorentz--H\"older inequality, we have
\[
	\|S_{-1, b, c}f\|_{L^{1,\infty}(\D)} 
	\lesssim \|W_\varepsilon\|_{L^{\frac{1}{\varepsilon},\infty}(\D)} \|S_{a^*,b,c}f\|_{L^{-\frac{1}{a^*},\infty}(\D)}
	\lesssim \|S_{a^*,b,c}f\|_{L^{-\frac{1}{a^*},\infty}(\D)}.
\]
Note that the triple $(a^*,b,c)$ satisfies either condition \eqref{conditionP1} or \eqref{conditionP3}.
Due to Theorem~\ref{restricted-weak Forelli--Rudin at CH} (ii) and Theorem~\ref{restricted-weak Forelli--Rudin at VH}, $S_{a^*,b,c}$ is of restricted weak-type $(p_*,-1/a^*)$ with $c\neq1$.
This completes the proof.
\end{proof}

\subsection{Reduction IV for Minor Case: Weak-type estimates on horizontal cut-off line to the left of $\calP_{\textnormal{C-H}}$ and $\calP_{\textnormal{V-H}}$}
In this subsection, we show that the operators $T_{-1,b,c}$ and $S_{-1,b,c}$ map $L^p(\D)$ boundedly into $L^{1,\infty}(\D)$ for every $p_*<p\leq\infty$.

\begin{prop}\label{Minor: Reduction IV}
Let $b, c \in \R$ satisfy the condition \eqref{minorcase}.
Then for each 
\[
	0 \le \frac{1}{p}<\min\{2-c+b,b+1\},
\]
we have
\[
	T_{-1, b, c}, \; S_{-1, b, c}: L^{p}(\D) \to L^{1, \infty}(\D).
\]
\end{prop}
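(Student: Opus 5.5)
The plan is to reduce Proposition~\ref{Minor: Reduction IV} to a restricted weak-type estimate at a point strictly to the right of $(1/p,1)$ on the line $1/q=1$. Then Lorentz embeddings give the strong-to-weak bound. Concretely, fix $0\le 1/p<1/p_*=\min\{2-c+b,b+1\}$ and pick $\varepsilon>0$ small so that $1/p<1/p_*-\varepsilon$. Since $|\D|<\infty$, we have $L^p(\D)\hookrightarrow L^{s,1}(\D)$ whenever $1/s>1/p$. So it suffices to find some $s$ with $1/p<1/s<1/p_*$ such that $S_{-1,b,c}$ is of restricted weak type $(s,1)$. It is enough to treat $S_{-1,b,c}$, because $|T_{-1,b,c}f|\le S_{-1,b,c}|f|$.

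To produce such an $s$, I would lower the parameters, as in the proof of Theorem~\ref{mainsubcaseV}, (a.1). The elementary bound $(1-|w|^2)/|1-z\overline w|\lesssim 1$ gives, for $f\ge0$,
\[
S_{-1,b,c}f\lesssim S_{-1,b-\varepsilon,c-\varepsilon}f .
\]
The new triple $(-1,b',c')=(-1,b-\varepsilon,c-\varepsilon)$ has $2-c'+b'=2-c+b$ and $b'+1=b+1-\varepsilon$. It is still in the Minor Case, provided $b'>-1$, which holds for $\varepsilon$ small. Its value of $1/p_*$ is $\min\{2-c+b,\,b+1-\varepsilon\}$.

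The case split follows the position of the new triple.

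\textbf{Case A:} $0<2-c+b<1$ and $c'\neq1$. Proposition~\ref{Minor: Reduction III} applies to $(-1,b',c')$. It yields restricted weak type $(p_*',1)$ with $1/p_*'=\min\{2-c+b,b+1-\varepsilon\}$, and this exceeds $1/p$ by the choice of $\varepsilon$. We may take $\varepsilon$ so that $c'=c-\varepsilon\ne1$, since only one value is excluded.

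\textbf{Case B:} $2-c+b=0$. Then $1/p_*=0$ and the range $0\le 1/p<0$ is empty, so there is nothing to prove.

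Hence Case A covers everything. A subtlety is when $c>1$ but $c-\varepsilon\le1$, or when $c<1$. Here Proposition~\ref{Minor: Reduction III} still applies because it only requires $c'\neq1$ and $0<2-c'+b'<1$.

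The main obstacle I expect is checking that the hypotheses of Proposition~\ref{Minor: Reduction III} really hold for the shifted triple. That proposition itself relies on $(a^*,b',c')$ satisfying \eqref{conditionP1} or \eqref{conditionP3}, which in turn needs \eqref{maincase} for $(a^*,b',c')$. In particular, one needs $0<c'-a^*-b'-2<1$, that is $0<c-1-b-\varepsilon'<1$ for the auxiliary $\varepsilon'$ used there, and $a^*<0$. These are open conditions, so they survive small perturbations; I would simply record them.

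If this route became awkward (for instance, when $1/p_*=b+1$ and the triple falls into \eqref{conditionP3}), the fallback is a direct route. Write $S_{-1,b,c}=W_\varepsilon S_{-1+\varepsilon,b,c}$ and apply Lorentz--H\"older. Then invoke the strong/weak bounds of Proposition~\ref{20260527prop02} or the restricted weak bounds of Theorems~\ref{restricted-weak Forelli--Rudin at CH}(ii) and~\ref{restricted-weak Forelli--Rudin at VH} for the main-case triple $(-1+\varepsilon,b,c)$, at the point $(1/p,\,1-\varepsilon)$, which lies in its closed admissible region.
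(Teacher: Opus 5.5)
Your argument is correct, but it is assembled differently from the paper's.

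The paper shifts $a$. It writes $S_{-1,b,c}=W_\varepsilon S_{-1+\varepsilon,b,c}$, applies Lorentz--H\"older, and quotes the weak-type bounds on the horizontal cut-off line for the Main-Case triple $(-1+\varepsilon,b,c)$. These come from Theorem~\ref{mainsubcaseIII}(a.1),(c.1) when $c>1$ and from Theorem~\ref{mainsubcaseV}(a.1),(b.1) when $c\le1$. This is essentially your fallback.

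You instead shift $(b,c)\mapsto(b-\varepsilon,c-\varepsilon)$ via the pointwise kernel comparison. This stays inside the Minor Case, leaves $2-c+b$ unchanged, moves $c$ off the value $1$, and costs only $\varepsilon$ in $b+1$. Proposition~\ref{Minor: Reduction III} and $L^p(\D)\hookrightarrow L^{p_*',1}(\D)$ then close the argument.

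The underlying ingredients are the same in both routes, only ordered differently: Lorentz--H\"older with $W_\varepsilon$, Theorems~\ref{restricted-weak Forelli--Rudin at CH}(ii) and~\ref{restricted-weak Forelli--Rudin at VH}, kernel lowering, and Lorentz embeddings. Your version has two merits:
\begin{itemize}
\item It treats $c=1$ and $c\neq1$ uniformly.
\item It makes clear that the perturbation is needed only at $c=1$. For $c\neq1$, Proposition~\ref{Minor: Reduction III} at $p_*$ together with $p>p_*$ already suffices. At $c=1$, the operator $S_{-1,b,1}$ genuinely fails restricted weak type at $p_*$.
\end{itemize}
The paper's version simply reuses the Main-Case classification it has already assembled.

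One caveat about your fallback as literally stated. When $c=1$, the triple $(-1+\varepsilon,b,1)$ satisfies \eqref{conditionP4}, so neither Theorem~\ref{restricted-weak Forelli--Rudin at CH}(ii) (which needs $c>1$) nor Theorem~\ref{restricted-weak Forelli--Rudin at VH} (which needs $c<1$) applies. Proposition~\ref{20260527prop02} concerns the critical line, not the horizontal one, so it does not help either. The paper covers this case through Theorem~\ref{mainsubcaseV}(a.1), whose proof is exactly your kernel-lowering trick.
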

\begin{proof}
The same argument as in Proposition \ref{Minor: Reduction III} can apply.
Combining the Lorentz--Hölder inequality with Theorem~\ref{mainsubcaseIII} (a.1), (c.1) and Theorem~\ref{mainsubcaseV} (a.1), (b.1) proves this proposition.
\end{proof}

\subsection{Analysis of the Minor Case: Weak-type Bounds at the Critical--Horizontal Intersection $\calP_{\textnormal{C-H}}$ on the left of vertical cut-off line and the Forelli--Rudin Phenomenon at the level of weak-type estimates}

The goal of this subsection is to study the weak-type bounds at the critical--horizontal intersection 
\[ \calP_{\textnormal{C-H}}=(2-c+b,1), \]
where the critical line \eqref{criticalline} meets the horizontal cut-off line \eqref{horizontalcut}, and which lies strictly to the left of the vertical-horizontal intersection $\calP_{\textnormal{V-H}}$. 
Equivalently, we have the following condition:
\begin{equation} \tag{${\bf P_1}'$} \label{Minor: conditionP1}
\begin{cases}
	b,c \in \R \quad \textrm{satisfy \eqref{minorcase}}; \\
	c>1.
\end{cases}
\end{equation}
We refer the reader to Figure~\ref{Minor: Fig3} below for a visualization of this case.

\begin{figure}[H]
\centering
\begin{tikzpicture}[scale=4]
\def\yh{0.28}
\def\xc{0.72}
\def\xright{1.15}
\def\ybottom{-0.15}
  
\draw[->, thick] (-0.08,0) -- (\xright+0.08,0) node[right] {$1/p$};
\draw[->, thick] (0,\ybottom-0.03) -- (0,1.12) node[above] {$1/q$};
\draw[thick] (0,0) rectangle (1,1);
\draw[thick] (1,0) -- (1,-0.025) node[below] {$1$};
\draw[thick] (0,1) -- (-0.025,1) node[left] {$1$};
\draw[blue, dashed, very thick] 
(0,\yh) -- (\xc,1)
node[midway, above, sloped, yshift=1pt] {\small critical line};
\draw[thick, green, very thick] (0,1) -- (\xc,1);
\draw[thick, magenta, very thick] (\xc,1) -- (1,1);
\draw[orange!90!black, line width=3pt, opacity=0.18, line cap=round, shorten <=2.2pt, shorten >=2.2pt] (0.85, 0) -- (0.85, 1);

\fill[black] (\xc, 1) circle (.4pt);
\node[above] at (0.68, 1) {\scriptsize $\calP_{\textnormal{C-H}}$};

\fill[black] (0.85, 1) circle (.4pt);
\node[below] at (.85, 1) {\scriptsize $\calP_{\textnormal{V-H}}$};

\node[font=\bfseries\large, magenta!85!black] at (0.85,1.1) {I};
\node[font=\bfseries\large, green!45!black] at (0.4,1.1) {II};
\end{tikzpicture}
\caption{\small $\calP_{\textnormal{V-H}}$ belongs to Minor Case I with $c>1$.}
\label{Minor: Fig3}
\end{figure}

\begin{thm}\label{Minor: weak Forelli--Rudin}
Let $b, c \in \R$ satisfy \eqref{Minor: conditionP1}. 
Then the following statements hold.
\begin{enumerate}
	\item[(i)] The Forelli--Rudin type operator $S_{-1,b,c}$ is unbounded from $L^{\frac{1}{2-c+b}}(\D)$ to $L^{1,\infty}(\D)$.
	\item[(ii)] If $0\leq2-c+b<1/2$, the Forelli--Rudin type operator $T_{-1,b,c}$ is unbounded from $L^{\frac{1}{2-c+b}}(\D)$ to $L^{1,\infty}(\D)$.
	\item[(iii)] If $1/2 \le 2-c+b<1$, the Forelli--Rudin type operator $T_{-1,b,c}$ is bounded from $L^{\frac{1}{2-c+b}}(\D)$ to $L^{1,\infty}(\D)$.
\end{enumerate}
Here, in the borderline case $2-c+b=0$, the space $L^{\frac{1}{2-c+b}}(\D)$ is understood as $L^\infty(\D)$.
\end{thm}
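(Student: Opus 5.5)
The plan is to rerun the three arguments from the proof of Theorem~\ref{Forelli--Rudin threshold} with $a=-1$. Throughout, the target space is $L^{1,\infty}(\D)$ and the weight is $(1-|z|^2)^{-1}$. None of those arguments used $a>-1$ in an essential way. They used only the bound $|A_N|^{-a}\simeq 2^{Na}$ and the exponent relation $-a>2-c+b$, and here both hold: $|A_N|\simeq 2^{-N}$, and $1>2-c+b$ is part of \eqref{minorcase}.

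For (i), I take the same test functions $f_N=\one_{\{1/2<|w|^2<1-2^{-N}\}}(1-|w|^2)^{-(2-c+b)}$, or simply $\one_{\{1/2<|w|^2<1-2^{-N}\}}$ when $2-c+b=0$. Their norms satisfy $\|f_N\|^{1/(2-c+b)}\simeq N$. For $z\in A_N$ I use the sets $E_z$ again, and the same computation (it needs only $c>1$) gives $S_{-1,b,c}f_N(z)\gtrsim 2^{N}N$. Hence $\|S_{-1,b,c}f_N\|_{L^{1,\infty}}\gtrsim N2^N|A_N|\simeq N$, while $\|f_N\|\simeq N^{2-c+b}$. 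The ratio is at least $N^{c-1-b}\to\infty$, since $2-c+b<1$.

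For (ii), I reuse the Rademacher construction of Section~\ref{20260603sec01} verbatim. Lemma~\ref{20530lem01}-type geometry and Lemma~\ref{20260531eq30} (with $2^{-Na}$ replaced by $2^{N}$) require only $c>1$ and $\eta>-1/2$. I choose $\eta=(c-2-b)(1+\tau)>-1/2$, which is possible because $2-c+b<1/2$. The uniform $L^{1/(2-c+b)}$ bound is unchanged. Khintchine's fourth-moment inequality, Claims I and II, and the Fubini selection then produce $\delta_N$ with $|E_{N,\delta_N}|\gtrsim 2^{-N}$ and $|T f|\gtrsim 2^{N}N^{\eta+1/2}$ on that set. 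This gives $\|T_{-1,b,c}f_N\|_{L^{1,\infty}}\gtrsim N^{\eta+1/2}\to\infty$. The borderline case $2-c+b=0$ is handled by taking $\eta=0$ in $L^\infty$.

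For (iii), I factor $T_{-1,b,c}f=(1-|z|^2)^{-1}T_{0,b,c}f$. \textsl{Step 1} of the proof of Theorem~\ref{Forelli--Rudin threshold}~(iii) applies unchanged, because it involves neither $a$ nor the main-case condition. It uses $c>1$, $b/(c-1-b)>-1$ and $1/(c-1-b)\ge 2$, and these follow from $1/2\le 2-c+b<1$. That step gives $\|T_{0,b,c}f\|_{H^{p}}\lesssim\|f\|_{L^{p}}$ with $p=1/(2-c+b)\in(1,2]$. I then apply Lemma~\ref{weakHardy} with $q=1<p$ to get $\|M_1 h\|_{L^{1,\infty}}\lesssim\|h\|_{H^p}$, and this concludes (iii).

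The main point to check is in (ii). Lemma~\ref{20260531eq30} was stated under \eqref{conditionP1}, which includes the main-case condition $a>-1$. I need to verify that its proof uses only $c>1$ and the factor $(1-|z|^2)^a\simeq 2^{-Na}$ on $A_N$, and that this remains valid for $a=-1$. I expect it does.
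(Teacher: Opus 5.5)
Your proposal is correct and follows the paper's proof. In parts (i) and (ii) you rerun the main-case arguments with $a=-1$, exactly as the paper does; the paper likewise only observes that Lemma~\ref{20260531eq30} continues to hold with $(1-|z|^2)^{-1}\simeq 2^{N}$ on $A_N$. For (iii), the paper writes $T_{-1,b,c}f=(1-|z|^2)^{-\varepsilon}T_{-1+\varepsilon,b,c}f$ with $0<\varepsilon<c-1-b$ and combines Lorentz--H\"older with Theorem~\ref{Forelli--Rudin threshold}~(iii) used as a black box, whereas you apply Step~1 and Lemma~\ref{weakHardy} directly with $q=1<p=\frac{1}{2-c+b}$. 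Your route is equally valid, since that lemma only requires $q<p$.
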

\begin{proof}[Proof of Theorem \ref{Minor: weak Forelli--Rudin} $(i)$]
We first consider the case $0<2-c+b<1$. For sufficiently large integer $N\ge 1$, denote 
\[
	f_N(w):=\frac{\one_{\{1/2<|w|^2<1-2^{-N}\}}(w)}{(1-|w|^2)^{2-c+b}}, \qquad w \in \D.
\]
By \eqref{20260529eq00}, we have
\[
	\|f_N\|_{L^{\frac{1}{2-c+b}}(\D)}
	\simeq N^{2-c+b}.
\]
Set
\[
	A_N:=\{z\in\D:\ 1-2^{-N}<|z|^2<1-2^{-N-1}\}, 
\]
and for each fixed $z\in A_N$, define
\[
	E_z:=\left\{w\in\D:\ 2(1-|z|^2)<1-|w|^2<2^{-1},\ |\arg z-\arg w|<1-|w|^2\right\}.
\]
By \eqref{20260530eq01}, we have
for any $z \in A_N$,
\begin{align*}
    S_{-1,b,c} f_N(z)
    &=(1 - |z|^2) ^{-1} \int_{\D} \frac {(1-|w|^{2}) ^{b}}{|1 - z\bar{w}|^{c}} f_N(w) dA(w)  \\
    & \ge (1 - |z|^2) ^{-1} \int_{E_z} \frac {(1-|w|^{2}) ^{b}}{|1 - z\bar{w}|^{c}} \cdot \frac{1}{(1-|w|^2)^{2-c+b}} dA(w)  \\
    &\gtrsim (1 - |z|^2) ^{-1} \int_{E_z} \frac{1}{(1-|w|^2)^2} dA(w).
\end{align*}
Moreover,
\[
    \int_{E_z} \frac{1}{(1-|w|^2)^2} dA(w)
    \simeq \int_{2^{-1/2}}^{\sqrt{1-2(1-|z|^2)}} \frac{r}{(1-r^2)^2}
    \left(\int_{\arg z-(1-r^2)}^{\arg z+(1-r^2)} d\theta  \right) dr \gtrsim \log\left(\frac{1}{1-|z|^2}\right).
\]
Hence,
\[
	S_{-1,b,c} f_N(z) 
	\gtrsim  (1 - |z|^2) ^{-1} \log\left(\frac{1}{1-|z|^2}\right) 
	\simeq  N2^{N}, \qquad z \in A_N.
\]
Therefore,
\begin{align*}
    \|S_{-1,b,c} f_N\|_{L^{1,\infty}(\D)}
    &\gtrsim  N2^{N} \left| \left\{z\in\D: |S_{-1,b,c} f_N(z)| \gtrsim N2^{N}\right\} \right| \\
    &\gtrsim  N2^{N}|A_N| 
    \simeq N2^{N} \cdot 2^{-N} =N.
\end{align*}
It follows from $0<2-c+b<1$ that
\[
    \frac{\|S_{-1,b,c} f_N\|_{L^{1,\infty}(\D)}}{\|f_N\|_{L^{\frac{1}{2-c+b}}(\D)}}
    \gtrsim \frac{N}{N^{2-c+b}}=N^{c-b-1}  \to \infty
\]
as $N\to\infty$.

\medskip

Finally, we consider the case $2-c+b=0$. The same argument above applies, with the only modification that we take
\[
f_N(w):=\one_{\{1/2<|w|^2<1-2^{-N}\}}(w).
\]
Then $\|f_N\|_{L^\infty(\D)}=1$, while the preceding lower bound becomes 
\[
\|S_{-1,b,c}f_N\|_{1,\infty}\gtrsim N.
\]
Therefore, $S_{-1,b,c}:L^\infty(\D)\to L^{1,\infty}(\D)$ fails to be bounded.
\end{proof}

\begin{proof}[Proof of Theorem \ref{Minor: weak Forelli--Rudin} $(ii)$]
The proof of this statement is almost the same as the proof of Theorem~\ref{Forelli--Rudin threshold} (ii).
We point out the differences as follows.

For the case $0<2-c+b<1/2$, define $Q_{I,\varepsilon}, \delta, \eta, \calI_N$ as in Section \ref{20260831subsec01} and \ref{20260603sec01}.
Set 
\[
	f_{N,\delta,\varepsilon,\eta}(z)
	:=\sum_{I\in\calI_N}
	\delta(I)\,\one_{Q_{I,\varepsilon}}(z)
	(1-|z|^2)^{c-2-b}
	\left(1+\log \frac{1}{1-|z|^2}\right)^{\eta}
\]
as in \eqref{20260603eq20}.
Then
\[
	\sup_{N \ge 1} \sup_{\delta \in \Delta_N}\|f_{N,\delta, \varepsilon, \eta}\|_{L^{\frac{1}{2-c+b}}(\D)}<\infty.
\]
Note that Lemma~\ref{20260531eq30} also holds for $a,b,c\in\R$ satisfying \eqref{minorcase-final}.
Hence, the same argument as Section 5.3 tells us there exists $\del_N\in\Delta_N$ such that 
\[
	\left|(T_{-1,b,c}f_{N,\del_N,\varepsilon,\eta})(z)\right|
	\gtrsim
	H_{N,\varepsilon,\eta}(z)^{1/2}
	\gtrsim
	2^{N}N^{\eta+\frac{1}{2}},
\]
for every $z\in E_{N,\del_N,\varepsilon,\eta}$.
Moreover, 
\[
	\left|E_{N,\del_N,\varepsilon,\eta}\right|\gtrsim 2^{-N}.
\]
Combining the above estimates, we obtain
\begin{align*}
	\left\|T_{-1,b,c}f_{N,\del_N,\varepsilon,\eta}\right\|_{L^{1,\infty}(\D)}
	&\gtrsim 2^{N}N^{\eta+\frac{1}{2}} \left|\left\{z\in\D:\left|\left(T_{-1,b,c}f_{N,\del_N,\varepsilon,\eta}\right)(z)\right| \gtrsim 2^{N}N^{\eta+\frac{1}{2}}\right\}\right|  \\
	&\gtrsim 2^{N}N^{\eta+\frac{1}{2}}
	\left|E_{N,\del_N,\varepsilon,\eta}\right| \\
	&\gtrsim N^{\eta+\frac{1}{2}}.
\end{align*}
Since $\eta>-1/2$, we have
\[
	\lim_{N\to\infty}
	\left\|T_{-1,b,c}f_{N,\del_N,\varepsilon,\eta}\right\|_{L^{1,\infty}(\D)}
	=+\infty.
\]
This proves the desired unboundedness of
$T_{-1,b,c}:L^{\frac{1}{2-c+b}}(\D)\to L^{1,\infty}(\D)$ under the assumption $0<2-c+b<1/2$.

\medskip

The proof of the case $2-c+b=0$ is a simple modification of the preceding argument. Indeed, one only needs to replace the space $L^{\frac{1}{2-c+b}}(\D)$ by $L^\infty(\D)$ and take $\eta=0$ in the above construction. The remaining details are left to the interested reader.
\end{proof}

\begin{proof}[Proof of Theorem \ref{Minor: weak Forelli--Rudin} $(iii)$]
The same argument as in Proposition \ref{Minor: Reduction III} applies.
Combining the Lorentz--Hölder inequality and Theorem~\ref{Forelli--Rudin threshold} (iii) proves this Theorem.
\end{proof}

\subsection{Analysis of the Minor Case: Restricted Weak-type Bounds at the Critical--Horizontal--Vertical Intersection $\calP_{\textnormal{C-H-V}}$ and the Forelli--Rudin Phenomenon at the level of restricted weak-type estimates}

The goal of this subsection is to study the restricted weak-type bounds at the common point where the critical line, the horizontal cut-off line, and the vertical cut-off line meet.
Equivalently, we have the following condition:
\begin{equation} \tag{${\bf P_2}'$} \label{Minor: conditionP2}
\begin{cases}
	b,c \in \R \quad \textrm{satisfy \eqref{minorcase}}; \\
	c=1.
\end{cases}
\end{equation}
We refer the reader to Figure~\ref{Minor: Fig4} below for a visualization of this case.

\begin{figure}[H]
\centering
\begin{tikzpicture}[scale=4]
\def\yh{0.28}
\def\xc{0.72}
\def\xright{1.15}
\def\ybottom{-0.15}
  
\draw[->, thick] (-0.08,0) -- (\xright+0.08,0) node[right] {$1/p$};
\draw[->, thick] (0,\ybottom-0.03) -- (0,1.12) node[above] {$1/q$};
\draw[thick] (0,0) rectangle (1,1);
\draw[thick] (1,0) -- (1,-0.025) node[below] {$1$};
\draw[thick] (0,1) -- (-0.025,1) node[left] {$1$};
\draw[blue, dashed, very thick] 
(0,\yh) -- (\xc,1)
node[midway, above, sloped, yshift=1pt] {\small critical line};
\draw[thick, green, very thick] (0,1) -- (\xc,1);
\draw[thick, magenta, very thick] (\xc,1) -- (1,1);\draw[orange!90!black, line width=3pt, opacity=0.18, line cap=round, shorten <=2.2pt, shorten >=2.2pt] (\xc, 0) -- (\xc, 1);

\fill[black] (\xc, 1) circle (.4pt);
\node[above] at (0.68, 1) {\scriptsize $\calP_{\textnormal{C-H-V}}$};

\node[font=\bfseries\large, magenta!85!black] at (0.85,1.1) {I};
\node[font=\bfseries\large, green!45!black] at (0.4,1.1) {II};
\end{tikzpicture}
\caption{\small $\calP_{\textnormal{V-H}}$ belongs to Minor Case II with $c=1$.}
\label{Minor: Fig4}
\end{figure}
\begin{thm}\label{Minor: restricted weak Forelli--Rudin}
Let $b, c \in \R$ satisfy \eqref{Minor: conditionP2}. 
Then the following statements hold.
\begin{itemize}
	\item[(i)] The Forelli--Rudin type operator $S_{-1,b,c}$ is unbounded from $L^{\frac{1}{b+1},1}(\D)$ to $L^{1,\infty}(\D)$.
	\item[(ii)] The Forelli--Rudin type operator $T_{-1,b,c}$ is bounded from $L^{\frac{1}{b+1},1}(\D)$ to $L^{1,\infty}(\D)$.
\end{itemize}
\end{thm}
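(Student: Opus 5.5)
For part (i) I will rerun the construction from the proof of Theorem~\ref{Forelli--Rudin Threshold restricted weak-type}~(i) with $a=-1$. Take $F_\varepsilon=\{z\in\D:\varepsilon<1-|z|^2<2\varepsilon\}$, so that $|F_\varepsilon|\simeq\varepsilon$. The estimate $|1-z\overline w|\le 4\varepsilon+\theta$ on the sector $E_\varepsilon(z)$ uses neither $a$ nor $b$. It therefore gives, for $z\in F_\varepsilon$,
\[
S_{-1,b,1}\one_{F_\varepsilon}(z)\gtrsim \varepsilon^{-1+b}\cdot\varepsilon\log\frac1\varepsilon=\varepsilon^{b}\log\frac1\varepsilon .
\]
Consequently
\[
\|S_{-1,b,1}\one_{F_\varepsilon}\|_{L^{1,\infty}(\D)}\gtrsim \varepsilon^{b}\log\frac1\varepsilon\cdot|F_\varepsilon|\simeq \varepsilon^{b+1}\log\frac1\varepsilon .
\]
Dividing by $|F_\varepsilon|^{b+1}\simeq\varepsilon^{b+1}$ leaves $\log(1/\varepsilon)\to\infty$, which is (i). This step is routine.

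For part (ii) I will follow the Hardy-space argument of Section~\ref{20260831subsec54}, and the main task is to handle the endpoint $q=1$. With $a=-1$, Lemma~\ref{weakHardy} would require an exponent $r>1$ and would give $\|(1-|z|^2)^{-1}h\|_{L^{1,\infty}}\lesssim\|h\|_{H^r}$. So the two-step scheme still works as long as we can find $r$ with $1<r<\frac1{b+1}$. That is possible exactly when $b+1<1$, i.e.\ $b<0$. In the minor case with $c=1$ we have $0\le 2-c+b<1$, which means $-1\le b<0$, and $b>-1$. Hence $b\in(-1,0)$ and such an $r$ exists.

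Fix $1<r<\frac1{b+1}$ and write $T_{-1,b,1}f=(1-|z|^2)^{-1}T_{0,b,1}f$.

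\textbf{Step 1.} I will prove $\|T_{0,b,1}f\|_{H^r}\lesssim\|f\|_{L^{\frac1{b+1},1}}$ exactly as in \eqref{202606701eq03}. Duality with $g\in H^{r'}$ gives $\langle T_{0,b,1}f,g\rangle_{\T}=\int_\D f\,\overline{(1-|w|^2)^bg}\,dA$. Lorentz--H\"older then bounds this by $\|f\|_{L^{\frac1{b+1},1}}\,\|M_{-1/b}g\|_{L^{-1/b,\infty}}$. Finally, Lemma~\ref{weakHardy} applies with $q=-1/b<r'$, which holds since $r<\frac1{b+1}$.

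\textbf{Step 2.} I will apply Lemma~\ref{weakHardy} again, with $q=1<p=r$ and $h=T_{0,b,1}f$. This yields $\|T_{-1,b,1}f\|_{L^{1,\infty}}\lesssim\|T_{0,b,1}f\|_{H^r}$.

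Combining the two steps gives (ii).

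The only point that needs care is confirming that Lemma~\ref{weakHardy} is valid at $q=1$. Its proof uses only Chebyshev's inequality on circles and the integrability of $(1-r^2)^{-p/q}$ away from the boundary. Both are fine for $0<q<p$, so the endpoint $q=1$ poses no obstacle. The Hardy duality \eqref{Hardyduality} also needs $1<r<\infty$, which our choice of $r$ guarantees.
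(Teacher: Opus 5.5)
Your part (i) is the paper's argument. Your part (ii) is correct but takes a different route from the paper. The paper does not rerun the Hardy-space argument at $a=-1$. Instead, exactly as in Proposition~\ref{Minor: Reduction III}, it:
\begin{itemize}
\item factors $T_{-1,b,1}f=(1-|z|^2)^{-\varepsilon}\,T_{-1+\varepsilon,b,1}f$;
\item uses $(1-|z|^2)^{-\varepsilon}\in L^{1/\varepsilon,\infty}(\D)$ together with Lorentz--H\"older into $L^{1,\infty}(\D)$;
\item invokes the main-case endpoint result, Theorem~\ref{mainsubcaseV}~(a.3) (i.e.\ Theorem~\ref{Forelli--Rudin Threshold restricted weak-type}~(ii)), for the shifted triple $(-1+\varepsilon,b,1)$, which satisfies \eqref{conditionP4} once $0<\varepsilon<-b$.
\end{itemize}

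Both routes rest on the same fact, $b\in(-1,0)$. For you it is what makes room for an exponent $1<r<\frac{1}{b+1}$. For the paper it is what lets the shifted triple land in Main Case~V with $c=1$.

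Your version is more self-contained. It avoids the auxiliary parameter $\varepsilon$ and the extra H\"older step into the non-normable space $L^{1,\infty}(\D)$. It also makes explicit that the Hardy-space mechanism of Section~\ref{20260831subsec54} works verbatim at the endpoint $q=1$. In fact Lemma~\ref{weakHardy} is already stated for all $0<q<p<\infty$, so the extra check you flag at $q=1$ is not needed.

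The paper's version is shorter on the page because it recycles an endpoint theorem already proved. It does so through the same Forelli--Rudin factorization used throughout the Minor Case.
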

\begin{proof}[Proof of Theorem \ref{Minor: restricted weak Forelli--Rudin} $(i)$]
Since $c=1$, it suffices to find a sequence of sets $\{F_\varepsilon\}_{\varepsilon>0}$ such that
\[
	\lim_{\varepsilon\to0}\frac{\|S_{-1,b,1} \chi_{F_{\varepsilon}}\|_{L^{1,\infty}(\D)}}{|F_\varepsilon|^{b+1}} = \infty.
\]
For $\varepsilon>0$, let 
\[
	F_{\varepsilon}=\{z\in\D: \varepsilon<1-|z|^2<2\varepsilon\}.
\]
Hence, $|F_{\varepsilon}|=\varepsilon$.
For each fixed $z\in F_{\varepsilon}$, define
\[
	E_{\varepsilon}(z)=\left\{w\in\D: \varepsilon<1-|w|^2<2\varepsilon, 0<\arg z - \arg w<\frac{1}{10}\right\}.
\]
Then $E_{\varepsilon}(z) \subset F_{\varepsilon}$.
Thus for any $z\in F_{\varepsilon}$, we have
\begin{align*}
	S_{-1,b,1} \chi_{F_{\varepsilon}}(z)
	=&(1-|z|^2)^{-1} \int_{\D} \frac{(1-|w|^2)^b}{|1-z\overline{w}|} \chi_{F_{\varepsilon}}(w) dA(w)\\
	\gtrsim& \varepsilon^{b-1} \int_{E_{\varepsilon}(z)} \frac{1}{|1-z\overline{w}|} dA(w)\\
	\gtrsim& \varepsilon^{b-1} \int_{\sqrt{1-2\varepsilon}}^{\sqrt{1-\varepsilon}}\int_{0}^{\frac{1}{10}} \frac{\rho}{\left|1-|z|\rho e^{-i\theta}\right|} d\theta d\rho.
\end{align*}
Note that
\begin{align*}
	\left|1-|z|\rho e^{-i\theta}\right|
	=&\left|1-|z|\rho+|z|\rho-|z|\rho e^{-i\theta}\right|\\
	\leq&(1-|z|\rho)+|z|\rho\left|1-e^{-i\theta}\right|\\
	\leq&(1-|z|)+|z|(1-\rho)+\left|1-e^{-i\theta}\right|\\
	\leq&(1-|z|^2)+(1-\rho^2)+\theta\\
	\leq&4\varepsilon+\theta.
\end{align*}
Hence, 
\begin{align*}
	S_{-1,b,1} \chi_{F_{\varepsilon}}(z)
	\gtrsim& \varepsilon^{b-1} \int_{\sqrt{1-2\varepsilon}}^{\sqrt{1-\varepsilon}}\int_{0}^{\frac{1}{10}} \frac{\rho}{4\varepsilon+\theta} d\theta d\rho\\
	=& \varepsilon^{b}\int_{0}^{\frac{1}{10}} \frac{1}{4\varepsilon+\theta} d\theta.
\end{align*}
For sufficiently small $\varepsilon>0$, we have
\[
	S_{-1,b,1} \chi_{F_{\varepsilon}}(z) \gtrsim \varepsilon^{b}\log\frac{1}{\varepsilon}.
\]
Therefore, 
\[
	\|S_{-1,b,1} \chi_{F_{\varepsilon}}\|_{L^{1,\infty}(\D)}
	\gtrsim \left(\varepsilon^{b}\log\frac{1}{\varepsilon}\right) |F_{\varepsilon}|
	=\varepsilon^{b+1}\log\frac{1}{\varepsilon}.
\]
Furthermore, we have
\[
	\frac{\|S_{-1,b,1} \chi_{F_{\varepsilon}}\|_{L^{1,\infty}(\D)}}{|F_\varepsilon|^{b+1}} 
	\gtrsim \log\frac{1}{\varepsilon} 
	\to \infty
\]
as $\varepsilon\to0$.
This completes the proof.
\end{proof}

\begin{proof}[Proof of Theorem \ref{Minor: restricted weak Forelli--Rudin} $(ii)$]
The same argument as in Proposition \ref{Minor: Reduction III} applies.
Combining the Lorentz--Hölder inequality and Theorem~\ref{mainsubcaseV} (a.3) proves this Theorem.
\end{proof}

\bigskip 

\section{Analysis of the Minor Case: Putting all pieces together} \label{Sec12}

In this section, we put together the building blocks developed in the
preceding section to obtain a complete description of the weak-type and
restricted weak-type behavior of the Forelli--Rudin type operators
$T_{a,b,c}$ and $S_{a,b,c}$ in the \emph{Minor Case}
\eqref{minorcase-final}.
In this case, conditions \eqref{eq:weak} and
\eqref{eq:restricted-weak} are equivalent to
\[
\begin{cases}
a,b,c\in\R \quad \textrm{satisfy \eqref{minorcase-final}},\\
1/2 \le 2-c+b<1,\qquad c>1,
\end{cases}
\qquad
\begin{cases}
a,b,c\in\R \quad \textrm{satisfy \eqref{minorcase-final}},\\
c=1,
\end{cases}
\]
respectively.
Thus, the weak-type and restricted weak-type Forelli--Rudin phenomena
occur at $\calP_{\mathrm{C-H}}$ and
$\calP_{\mathrm{C-H-V}}$, respectively.

Similarly to the argument in the \emph{Main case} \eqref{maincase}, apart from the corresponding Forelli--Rudin pairs, the operators
$T_{a,b,c}$ and $S_{a,b,c}$ have identical boundedness behavior.
The sharpness result in Section \ref{Sec12.1} shows that neither operator can be of
restricted weak type $(p,q)$ when $q>1$.
Consequently, it remains only to analyze the line $q=1$.

In the Minor Case, the strong-type region is empty.
Therefore, we can no longer determine whether an intersection point is active by checking whether it lies on $\partial\Omega_{\mathrm{bd}}$. 
In fact, the only intersection points relevant in the Minor Case are
\[
\calP_{\mathrm{C-H}}=(2-c+b,1)
\qquad\text{and}\qquad
\calP_{\mathrm{V-H}}=(b+1,1),
\]
and which intersection point is active is determined by the relative positions of these two intersection points.
More precisely, $\calP_{\mathrm{C-H}}$ is active when $c>1$,
the two intersections coincide at
$\calP_{\mathrm{C-H-V}}$ when $c=1$, and
$\calP_{\mathrm{V-H}}$ is active when $c<1$.
The corresponding endpoint behavior is summarized in
Figure~\ref{MinorSummaryFig}.

\begin{figure}[ht]
\centering
\begin{tikzpicture}[
    >=Latex,
    every path/.style={line cap=round, line join=round},
    branch/.style={very thick, -{Latex[length=2.2mm]}},
    case label/.style={font=\small, align=left},
    endpoint card/.style={
        draw=gray!45,
        fill=gray!4,
        rounded corners=2pt,
        line width=.65pt,
        text width=4.20cm,
        minimum height=5.55cm,
        inner sep=7pt,
        align=center
    }
]

\definecolor{minorI}{RGB}{116,76,170}
\definecolor{minorII}{RGB}{36,139,122}
\definecolor{triple}{RGB}{205,116,35}
\definecolor{vhblue}{RGB}{45,105,190}

\draw[branch,minorI]
    (-1.30,2.95)
    .. controls (-1.80,3.32) and (-2.55,3.32) ..
    (-3.05,2.95);

\node[case label,anchor=east] at (-3.22,2.95)
    {\textcolor{minorI}{\large\bfseries I}\quad $c>1$\\[-1pt]
     \hspace*{1.25em}
     {\scriptsize $\calP_{\mathrm{C-H}}$ is active}};

\draw[branch,minorII]
    (1.30,2.95)
    .. controls (1.80,3.32) and (2.55,3.32) ..
    (3.05,2.95);

\node[case label,anchor=west] at (3.22,2.95)
    {\textcolor{minorII}{\large\bfseries II}\quad $c\le1$\\[-1pt]
     \hspace*{1.25em}
     {\scriptsize $c=1$:
     $\calP_{\mathrm{C-H-V}}$ is active}\\[-1pt]
     \hspace*{1.25em}
     {\scriptsize $c<1$:
     $\calP_{\mathrm{V-H}}$ is active}};

\node[
    circle,
    draw=gray!55,
    fill=gray!7,
    line width=.8pt,
    minimum size=2.70cm,
    inner sep=2pt,
    align=center
] at (0,2.95)
    {\textsc{Minor Case}\\[-1pt]
     {\scriptsize only the line $q=1$ remains,}\\[-2pt]
     {\scriptsize and split according to the}\\[-2pt]
     {\scriptsize relative position of $\calP_{\mathrm{C-H}}$}\\[-2pt]
     {\scriptsize and $\calP_{\mathrm{V-H}}$}};

\node[endpoint card] at (-4.80,-2.35)
{
    \textcolor{minorI}{\large\bfseries C--H}\\[-1pt]
    {\scriptsize
    $\calP_{\mathrm{C-H}}=(2-c+b,1)$}\\[-1pt]
    {\scriptsize $c>1$}\\[3pt]

    {\color{gray!32}\rule{3.35cm}{.35pt}}\\[2pt]

    {\scriptsize\bfseries At weak-type level}\\[-1pt]
    \textcolor{red!75!black}
        {\scriptsize $S$: unbounded}\\[-1pt]
    {\scriptsize
        $T$: unbounded if
        $0\leq2-c+b<\frac12$}\\[-1pt]
    \textcolor{blue!75!black}
        {\scriptsize
        $T$: bounded if
        $\frac12\le2-c+b<1$}\\[3pt]

    {\scriptsize\bfseries At restricted weak-type level}\\[-1pt]
    \textcolor{blue!75!black}
        {\scriptsize
        $T,S$: bounded if $0<2-c+b<1$}\\[-1pt]
    \textcolor{red!75!black}
        {\scriptsize
        $T,S$: unbounded if $2-c+b=0$}\\[3pt]

    \textcolor{minorI}
        {\scriptsize\bfseries Weak-type F--R phenomenon}\\[-1pt]
    {\tiny when $1/2 \le 2-c+b<1$}
};

\node[endpoint card] at (0,-2.35)
{
    \textcolor{triple}{\large\bfseries C--H--V}\\[-1pt]
    {\scriptsize
    $\calP_{\mathrm{C-H-V}}=(b+1,1)$}\\[-1pt]
    {\scriptsize $c=1$}\\[3pt]

    {\color{gray!32}\rule{3.35cm}{.35pt}}\\[2pt]

    {\scriptsize\bfseries At weak-type level}\\[-1pt]
    \textcolor{red!75!black}
        {\scriptsize $T$ and $S$: unbounded}\\[5pt]

    {\scriptsize\bfseries At restricted weak-type level}\\[-1pt]
    \textcolor{blue!75!black}
        {\scriptsize $T$: bounded}\\[-1pt]
    \textcolor{red!75!black}
        {\scriptsize $S$: unbounded}\\[7pt]

    \textcolor{triple}
        {\scriptsize\bfseries Restricted weak-type}\\[-1pt]
    \textcolor{triple}
        {\scriptsize\bfseries F--R phenomenon}
};

\node[endpoint card] at (4.80,-2.35)
{
    \textcolor{vhblue}{\large\bfseries V--H}\\[-1pt]
    {\scriptsize
    $\calP_{\mathrm{V-H}}=(b+1,1)$}\\[-1pt]
    {\scriptsize $c<1$}\\[3pt]

    {\color{gray!32}\rule{3.35cm}{.35pt}}\\[2pt]

    {\scriptsize\bfseries At weak-type level}\\[-1pt]
    \textcolor{red!75!black}
        {\scriptsize $T$ and $S$: unbounded}\\[5pt]

    {\scriptsize\bfseries At restricted weak-type level}\\[-1pt]
    \textcolor{blue!75!black}
        {\scriptsize $T$ and $S$: bounded}\\[7pt]

    \textcolor{gray!70!black}
        {\scriptsize no Forelli--Rudin phenomenon\\[-1pt]
        at this endpoint}
};

\end{tikzpicture}

\caption{Summary of the Minor Cases and the corresponding active
intersection points.}
\label{MinorSummaryFig}
\end{figure}

Before proceeding to the proofs, we summarize the conclusions.
The following statements give the complete picture in the Minor Case.

\begin{itemize}
    \item If $q>1$, neither $T_{a,b,c}$ nor $S_{a,b,c}$ is
    of restricted weak type $(p,q)$ for any $1\le p\le\infty$.
    Hence neither operator is of weak type $(p,q)$.

    \item On the line $q=1$, both operators are of weak type
    $(p,1)$ whenever
    \[
        0\le\frac1p<
        \min\{2-c+b,b+1\}.
    \]
    At the active endpoint, both operators fail to be of weak type,
    except when
    \[
        c>1
        \qquad\text{and}\qquad
        \frac12\le2-c+b<1,
    \]
    in which case $T_{a,b,c}$ is of weak type while
    $S_{a,b,c}$ fails.

    \item If $2-c+b=0$, neither operator is of restricted weak type
    $(p,1)$ for any $1\le p\le\infty$.
    Otherwise, both operators are of restricted weak type at the
    active endpoint when $c\ne1$.
    When $c=1$, $T_{a,b,c}$ is of restricted weak type at
    $\calP_{\mathrm{C-H-V}}$, whereas $S_{a,b,c}$ fails.
    To the right of the active endpoint, both operators fail to be
    of restricted weak type.
\end{itemize}

\subsection{Sharpness of the Estimates in the Minor Cases}\label{Sec12.1}

By Proposition \ref{Minor: Reduction I}, the sharpness in the minor cases follows from the following proposition.
\begin{prop}
Let $a,b,c$ satisfy \eqref{minorcase}.
If $1\leq p \leq \infty$ and $1<q\leq\infty$, then neither $T_{a,b,c}$ nor $S_{a,b,c}$ is of restricted weak type $(p,q)$.
In particular, neither operator is of weak type $(p,q)$.
\end{prop}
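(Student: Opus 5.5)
The plan is to exploit the fact that in the Minor Case $a=-1$. Hence for any nonnegative $f$ with $f\not\equiv 0$ we expect $T_{-1,b,c}f$, and likewise $S_{-1,b,c}f$, to behave like $(1-|z|^2)^{-1}$ times something bounded below near the boundary. It suffices to test on a single characteristic function. I will take $E=\D$, so $f=\one_{\D}=1$, which lies in every $L^{p,1}(\D)$ and satisfies the restricted condition at $p=\infty$ as well.

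By the radial averaging computation in Reduction~I (Section~\ref{20260830subsec01}), since $b>-1$ we have
\[
T_{-1,b,c}1(z)=(1-|z|^2)^{-1}\int_{\D}(1-|w|^2)^b\,dA(w)\simeq (1-|z|^2)^{-1}.
\]
Since $S_{-1,b,c}1\ge |T_{-1,b,c}1|$ pointwise, the same lower bound holds for $S$.

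Next I compute the distribution function. For $\lambda>1$ large,
\[
|\{z:(1-|z|^2)^{-1}>\lambda\}|\simeq \lambda^{-1}.
\]
For $1<q<\infty$ this gives
\[
\lambda\,|\{|T1|>\lambda\}|^{1/q}\simeq \lambda^{1-1/q}\to\infty .
\]
For $q=\infty$, $T1$ is unbounded. Thus $T_{-1,b,c}1\notin L^{q,\infty}(\D)$ for every $1<q\le\infty$.

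This contradicts any restricted weak-type $(p,q)$ estimate. For $p<\infty$ the estimate $\|T\one_E\|_{L^{q,\infty}}\lesssim |E|^{1/p}$ with $E=\D$ would give a finite bound. For $p=\infty$ the characteristic-function formulation in Section~\ref{Sec02} gives the same. Weak type implies restricted weak type, because $\one_E\in L^p$ with norm $|E|^{1/p}$, so the failure of weak type follows.

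There is essentially no obstacle. The only point to check is that the constant $\int_{\D}(1-|w|^2)^b\,dA(w)$ is finite and positive, which uses $b>-1$. One also needs $q>1$ strictly, since at $q=1$ the function $(1-|z|^2)^{-1}$ lies in $L^{1,\infty}$. Proposition~\ref{Minor: Reduction I} is only needed to combine with this for the full sharpness picture, not for this statement itself.
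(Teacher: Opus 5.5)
Your proof is correct and is essentially the paper's argument. The paper also tests on $\one_{\D}$ and computes $T_{-1,b,c}\one_{\D}(z)=\frac{1}{(b+1)(1-|z|^2)}$ exactly (so your ``$\simeq$'' is in fact an equality up to the constant $1/(b+1)$). It observes that this function is not in $L^{q,\infty}(\D)$ for any $1<q\le\infty$, and transfers the failure to $S_{-1,b,c}$ via the pointwise bound $|T_{-1,b,c}\one_{\D}|\le S_{-1,b,c}\one_{\D}$.
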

\begin{proof}
It suffices to test the restricted weak-type estimate on $\one_{\D}$.
Indeed, an easy computation yields
\[
	T_{-1,b,c}\one_{\D}(z) = \frac{1}{1-|z|^2} \int_{\D}(1-|w|^2)^b\,dA(w)
	= \frac{1}{(b+1)(1-|z|^2)}.
\]
This function does not belong to $L^{q,\infty}(\D)$ for any $1<q\leq\infty$.
Hence $T_{-1,b,c}$ fails to be of restricted weak type $(p,q)$ for every $1\leq p\leq\infty$.
The same conclusion holds for $S_{-1,b,c}$, since $
\bigl|T_{-1,b,c}\one_{\D}\bigr| \leq S_{-1,b,c}\one_{\D}$. 
\end{proof}

It therefore remains to prove the classifications on the line
$q=1$ stated above.

\subsection{Treatment of Minor Case I}

Recall that \emph{Minor Case I} refers to the case where the vertical--horizontal intersection $\calP_{\textnormal{V--H}}$ lies in the region
$$
\left\{ (p,q): \frac{1}{p} > 2-c+b, \; q=1 \right\};
$$
see Figure \ref{Minor: Fig1}. 

\vspace{0.1cm}

Our main result is the following. 

\begin{thm} \label{minorsubcaseI}
Let $a, b, c \in \R$ satisfy \eqref{minorcase-final} with $c>1$. Then the following statements hold. 
\begin{enumerate}
	 \item [(a)] If $2-c+b=0$, then neither $T_{a,b,c}$ nor $S_{a,b,c}$ is of weak type or restricted weak type $(p,1)$ for any $1\le p\le\infty$.
    \item [(b)] If $0<2-c+b<1/2$,
    then 
    \begin{enumerate}
        \item [(1)] {\bf (Weak--type bounds)} 
       $T_{a, b, c}$ and $S_{a, b, c}$ map $L^p(\D)$ boundedly into $L^{1, \infty}(\D)$ if
        \[
        0 \le \frac{1}{p} < 2-c+b,
        \]
        while at point $\left(\frac{1}{2-c+b}, 1\right)$, $T_{a, b, c}$ and $S_{a, b, c}$ map $L^{\frac{1}{2-c+b}}(\D)$ unboundedly into $L^{1, \infty}(\D)$.
        
        \item [(2)] {\bf (Restricted weak-type bounds)}
        $T_{a, b, c}$ and $S_{a, b, c}$ map $L^{\frac{1}{2-c+b},1}(\D)$ boundedly into $L^{1, \infty}(\D)$, while $T_{a, b, c}$ and $S_{a, b, c}$ map $L^{p,1}(\D)$ unboundedly into $L^{1, \infty}(\D)$ if
        \[
        2-c+b < \frac{1}{p} \le 1.
        \]
    \end{enumerate}
    
    \medskip
    
    \item [(c)] If $1/2 \le 2-c+b<1$,
    then 
    \begin{enumerate}
        \item [(1)] {\bf (Weak--type bounds)} 
       $T_{a, b, c}$ and $S_{a, b, c}$ map $L^p(\D)$ boundedly into $L^{1, \infty}(\D)$ if
        \[
        0 \le \frac{1}{p} < 2-c+b.
        \]
        
        \item [(2)] {\bf (Weak-type Forelli--Rudin phenomenon)}
        At the point $(p, q)=\left(\frac{1}{2-c+b}, 1\right)$, $T_{a, b, c}$ maps $L^{\frac{1}{2-c+b}}(\D)$ boundedly into $L^{1, \infty}(\D)$ while $S_{a, b, c}$ fails.
        
        \item [(3)] {\bf (Restricted weak-type bounds)}
        $T_{a, b, c}$ and $S_{a, b, c}$ map $L^{\frac{1}{2-c+b},1}(\D)$ boundedly into $L^{1, \infty}(\D)$, while $T_{a, b, c}$ and $S_{a, b, c}$ map $L^{p,1}(\D)$ unboundedly into $L^{1, \infty}(\D)$ if
        \[
        2-c+b < \frac{1}{p} \le 1.
        \]
    \end{enumerate}
\end{enumerate}
\end{thm}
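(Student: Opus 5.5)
The proof will be a bookkeeping argument that assembles the Minor-Case building blocks of Section~\ref{Sec11}, in the same way Theorems~\ref{mainsubcaseI}--\ref{mainsubcaseV} were deduced. Throughout, $a=-1$ and $c>1$. Since $c>1$, we have $2-c+b<b+1$, so the quantity $p_*$ from \eqref{Minor: p*} satisfies $1/p_*=2-c+b$. By the sharpness proposition of Section~\ref{Sec12.1}, only the line $q=1$ needs to be examined.

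\textbf{Part (a).} Suppose $2-c+b=0$. For every $p<\infty$ we have $1/p>0=2-c+b$, so Proposition~\ref{Minor: Reduction I}(i) rules out restricted weak type $(p,1)$, and hence also weak type, for both operators. For $p=\infty$, weak type fails for $S_{-1,b,c}$ by Theorem~\ref{Minor: weak Forelli--Rudin}(i) and for $T_{-1,b,c}$ by part (ii) of the same theorem, in the borderline $L^\infty$ case.

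Restricted weak type at $(\infty,1)$ must also be excluded. I would argue as in Section~\ref{20260603sec02}. The test functions from the proof of Theorem~\ref{Minor: weak Forelli--Rudin}(ii) with $\eta=0$ have the form $\one_{F^+}-\one_{F^-}$. Suppose $T_{-1,b,c}$ were uniformly bounded on characteristic functions into $L^{1,\infty}$. Since $L^{1,\infty}$ is only a quasi-normed space, the quasi-triangle inequality still gives $\|T(\one_{F^+}-\one_{F^-})\|_{L^{1,\infty}}\lesssim 1$. This contradicts the lower bound $N^{1/2}$ obtained there. The same conclusion then holds for $S_{-1,b,c}$ by domination, since $|T_{-1,b,c}f|\le S_{-1,b,c}(|f|)$.

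\textbf{Parts (b) and (c).} Now $0<2-c+b<1$.
\begin{itemize}
\item The weak-type bounds for $0\le 1/p<2-c+b$ follow from Proposition~\ref{Minor: Reduction IV}.
\item The restricted weak-type bound at $(p_*,1)$ follows from Proposition~\ref{Minor: Reduction III}, which applies because $c\neq1$.
\item Failure of restricted weak type for $2-c+b<1/p\le1$ follows from Proposition~\ref{Minor: Reduction I}(i).
\item At the endpoint $\bigl(\frac1{2-c+b},1\bigr)$, Theorem~\ref{Minor: weak Forelli--Rudin}(i) gives failure for $S_{-1,b,c}$ in both parts. Theorem~\ref{Minor: weak Forelli--Rudin}(ii) gives failure for $T_{-1,b,c}$ when $0<2-c+b<1/2$, which completes (b.1). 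Theorem~\ref{Minor: weak Forelli--Rudin}(iii) gives boundedness of $T_{-1,b,c}$ when $1/2\le 2-c+b<1$, which yields the Forelli--Rudin phenomenon in (c.2).
\end{itemize}

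The only step that is not pure citation is the restricted weak-type failure at $p=\infty$ in part (a). There I expect the main care to be needed in checking that the probabilistic construction transfers to $a=-1$ with the $L^{1,\infty}$ quasi-norm.
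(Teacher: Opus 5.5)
Your proposal is correct and follows the paper's proof essentially step for step. Parts (b) and (c) rest on the same citations. In part (a), the restricted weak-type failure at $(\infty,1)$ uses the same $\eta=0$ test functions $\one_{F^+}-\one_{F^-}$. Your use of the quasi-triangle inequality in $L^{1,\infty}(\D)$ is the right touch there. The argument of Section~\ref{20260603sec02} relied on the normability \eqref{20260528eq01}, which is only available for $q>1$, while here $q=1$.

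The one minor difference concerns $1\le p<\infty$ in part (a). The paper invokes Proposition~\ref{reductionII}(1), since $2-c+b=0$ with $a=-1$ is exactly $c=a+b+3$. You instead use Proposition~\ref{Minor: Reduction I}(i) with the test functions $w^N$, and both routes are valid.
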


We refer the reader to Figure~\ref{Fig25} below for an illustration of Minor Case~I.

\begin{figure}[ht]
\centering

\begin{minipage}[t]{0.48\textwidth}
\centering
\begin{tikzpicture}[scale=3.3]
    \def\xA{0.4}
    \def\yB{0.6}

    \def\xV{0.9}
    \def\yH{1}

    \def\xright{1.15}
    \def\ybottom{-0.15}

    \coordinate (B) at (0,\yB);
    \coordinate (A) at (\xA,1);
    \coordinate (PVH) at (\xV,\yH);

    \draw[->, thick]
        (-0.08,0) -- (\xright+0.08,0) node[right] {$1/p$};
    \draw[->, thick]
        (0,\ybottom-0.03) -- (0,1.12) node[above] {$1/q$};

    \draw[thick] (0,0) rectangle (1,1);
    \draw[thick] (1,0) -- (1,-0.025) node[below] {$1$};
    \draw[dashed] (0.5,1) -- (0.5,-0.025) node[below] {$1/2$};
    \draw[thick] (0,1) -- (-0.025,1) node[left] {$1$};

   \draw[blue, very thick]
        (0,1) -- (A);
   \draw[black, dashed]
        (B) -- (A);
   \draw[violet, very thick]
        (1,1) -- (A);

    \draw[orange!85!black, line width=3pt, opacity=0.22, line cap=round, shorten <=2.2pt, shorten >=2.2pt] (\xV,\ybottom) -- (\xV, 1);
    
    \draw[violet!50, line width=5pt, opacity=0.18, line cap=round, shorten <=2.2pt, shorten >=2.2pt] (1,1) -- (A);

    \draw[violet!60, line width=3pt, opacity=0.28, line cap=round, shorten <=2.2pt, shorten >=2.2pt] (1,1) -- (A);

    \fill[red] (A) circle (1.05pt);

    \fill[black] (\xV, \yH) circle (0.55pt);
    \node[below] at (\xV, \yH)
        {\tiny $\calP_{\textnormal{V-H}}$};

    \node[above=2pt] at (A)
        {\tiny $\bigl(2-c+b,1\bigr)$};
\end{tikzpicture}

\vspace{0.05cm}
{\small (b)}
\end{minipage}
\hfill
\begin{minipage}[t]{0.48\textwidth}
\centering
\begin{tikzpicture}[scale=3.3]
    \def\xA{0.6}
    \def\yB{0.4}

    \def\xV{0.9}
    \def\yH{1}

    \def\xright{1.15}
    \def\ybottom{-0.15}

    \coordinate (B) at (0,\yB);
    \coordinate (A) at (\xA,1);
    \coordinate (PVH) at (\xV,\yH);

    \draw[->, thick]
        (-0.08,0) -- (\xright+0.08,0) node[right] {$1/p$};
    \draw[->, thick]
        (0,\ybottom-0.03) -- (0,1.12) node[above] {$1/q$};

    \draw[thick] (0,0) rectangle (1,1);
    \draw[thick] (1,0) -- (1,-0.025) node[below] {$1$};
    \draw[dashed] (0.5,1) -- (0.5,-0.025) node[below] {$1/2$};
    \draw[thick] (0,1) -- (-0.025,1) node[left] {$1$};

   \draw[blue, very thick]
        (0,1) -- (A);
   \draw[black, dashed]
        (B) -- (A);
   \draw[violet, very thick]
        (1,1) -- (A);

    \draw[orange!85!black, line width=3pt, opacity=0.22,
          line cap=round, shorten <=2.2pt, shorten >=2.2pt]
        (\xV,\ybottom) -- (\xV, 1);

    \draw[violet!50, line width=5pt, opacity=0.18, line cap=round, shorten <=2.2pt, shorten >=2.2pt] (1,1) -- (A);

    \draw[violet!60, line width=3pt, opacity=0.28, line cap=round, shorten <=2.2pt, shorten >=2.2pt] (1,1) -- (A);

    \fill[gray] (A) circle (1.05pt);

    \fill[black] (\xV, \yH) circle (0.55pt);
    \node[below] at (\xV, \yH)
        {\tiny $\calP_{\textnormal{V-H}}$};

    \node[above=2pt] at (A)
        {\tiny $\bigl(2-c+b,1\bigr)$};
\end{tikzpicture}

\vspace{0.05cm}
{\small (c)}
\end{minipage}

\caption{\small
Weak-type and restricted weak-type behavior in Minor Case~I. The blue segments and points represent weak-type bounds, the red points represent restricted weak-type bounds only. The violet segments represent failure even of restricted weak-type bounds and the gray points represent the weak-type Forelli--Rudin phenomenon. As usual, the orange lines represent the vertical cut-off line~\eqref{verticalcut}.}
\label{Fig25}
\end{figure}

\begin{proof}
$\bullet$ By Proposition~\ref{reductionII}(i), it remains only to consider the endpoint $(p,q)=(\infty,1)$. Applying the argument from the proof of Theorem~\ref{restricted-weak Forelli--Rudin at CH}(i) to the test functions constructed in the proof of Theorem~\ref{Minor: weak Forelli--Rudin}(ii) with $\eta=0$, we conclude that neither $T_{a,b,c}$ nor $S_{a,b,c}$ is of restricted weak type $(\infty,1)$. Hence, part~(a) follows.

$\bullet$ Part~(b.1) is a consequence of Proposition~\ref{Minor: Reduction IV}, together with parts~(i) and (ii) of Theorem~\ref{Minor: weak Forelli--Rudin}. 
The restricted weak-type estimate asserted in part~(b.2) follows from Proposition~\ref{Minor: Reduction III}, while the failure of restricted weak-type estimate follows from part~(i) of Proposition~\ref{Minor: Reduction I}.

$\bullet$ Part~(c.1) follows from Propositions~\ref{Minor: Reduction IV}.
Part~(c.2) is a direct consequence of parts~(i) and (iii) of Theorem~\ref{Minor: weak Forelli--Rudin}.
Combining Proposition~\ref{Minor: Reduction III} and part~(i) of Proposition~\ref{Minor: Reduction I} yields part~(c.3).
\end{proof}

\subsection{Treatment of Minor Case II}

Recall that \emph{Minor Case II} refers to the case where the vertical--horizontal intersection $\calP_{\textnormal{V--H}}$ lies in the region
$$
\left\{ (p,q): \frac{1}{p} \le 2-c+b, \; q=1 \right\};
$$
see Figure \ref{Minor: Fig1}. 

\vspace{0.1cm}

Our main result is the following. 

\begin{thm} \label{minorsubcaseII}
Let $a, b, c \in \R$ satisfy \eqref{minorcase-final} with $c \le 1$. Then the following statements hold. 
\begin{enumerate}
    \item [(a)] If $c=1$,
    then 
    \begin{enumerate}
        \item [(1)] {\bf (Weak--type bounds)} 
       $T_{a, b, c}$ and $S_{a, b, c}$ map $L^p(\D)$ boundedly into $L^{1, \infty}(\D)$ if
        \[
        0 \le \frac{1}{p} < b+1,
        \]
        while at point $\left(\frac{1}{b+1}, 1\right)$, $T_{a, b, c}$ and $S_{a, b, c}$ map $L^{\frac{1}{b+1}}(\D)$ unboundedly into $L^{1, \infty}(\D)$.
        
        \item [(2)] {\bf (Restricted weak-type Forelli--Rudin phenomenon)}
        At the point $(p, q)=\left(\frac{1}{b+1}, 1\right)$, $T_{a, b, c}$ maps $L^{\frac{1}{b+1},1}(\D)$ boundedly into $L^{1, \infty}(\D)$ while $S_{a, b, c}$ fails.
        
		  \item [(3)] {\bf (Restricted weak-type bounds)} 
		  $T_{a, b, c}$ and $S_{a, b, c}$ map $L^{p,1}(\D)$ unboundedly into $L^{1, \infty}(\D)$ if
        \[
        b+1 < \frac{1}{p} \le 1.
        \]
    \end{enumerate}
    
    \medskip
    
    \item [(b)] If $c<1$,
    then 
    \begin{enumerate}
        \item [(1)] {\bf (Weak--type bounds)} 
       $T_{a, b, c}$ and $S_{a, b, c}$ map $L^p(\D)$ boundedly into $L^{1, \infty}(\D)$ if
        \[
        0 \le \frac{1}{p} < b+1,
        \]
		  while at point $\left(\frac{1}{b+1}, 1\right)$, $T_{a, b, c}$ and $S_{a, b, c}$ map $L^{\frac{1}{b+1}}(\D)$ unboundedly into $L^{1, \infty}(\D)$.
        
        \item [(2)] {\bf (Restricted weak-type bounds)}
        $T_{a, b, c}$ and $S_{a, b, c}$ map $L^{\frac{1}{b+1},1}(\D)$ boundedly into $L^{1, \infty}(\D)$, while $T_{a, b, c}$ and $S_{a, b, c}$ map $L^{p,1}(\D)$ unboundedly into $L^{1, \infty}(\D)$ if
        \[
        b+1 < \frac{1}{p} \le 1.
        \]
    \end{enumerate}
\end{enumerate}
\end{thm}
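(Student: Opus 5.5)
The plan is to assemble Theorem~\ref{minorsubcaseII} from the Minor Case reductions of Section~\ref{Sec11}, exactly as in the proof of Theorem~\ref{minorsubcaseI}. Since $c\le 1$ and $a=-1$, we have $2-c+b\ge b+1$. Hence $1/p_*=\min\{2-c+b,b+1\}=b+1$, and the active endpoint is $\calP_{\mathrm{V-H}}=(b+1,1)$, which is $\calP_{\mathrm{C-H-V}}$ when $c=1$. Note also that $2-c+b>0$ automatically here, because $b>-1$ and $c\le 1$. The degenerate case $2-c+b=0$ therefore cannot occur, although we must still check the hypothesis $0<2-c+b<1$ whenever Proposition~\ref{Minor: Reduction III} is invoked.

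We treat the weak-type bounds first. For $0\le 1/p<b+1=1/p_*$, both $T_{-1,b,c}$ and $S_{-1,b,c}$ are of weak type $(p,1)$ by Proposition~\ref{Minor: Reduction IV}; this gives the positive parts of (a.1) and (b.1). At the endpoint $(1/(b+1),1)$, weak type fails for both operators by Proposition~\ref{Minor: Reduction II}. That proposition was proved for the whole Minor Case, and its reduction step for $b>0$ again uses Proposition~\ref{Minor: Reduction I}.

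For the restricted weak-type statements, failure for $b+1<1/p\le 1$ is exactly Proposition~\ref{Minor: Reduction I}(ii), which gives (a.3) and the negative half of (b.2). When $c<1$, the positive half of (b.2) comes from Proposition~\ref{Minor: Reduction III} with $p_*=1/(b+1)$, which applies because $c\ne 1$ and $0<2-c+b<1$. When $c=1$, the Forelli--Rudin dichotomy (a.2) is precisely Theorem~\ref{Minor: restricted weak Forelli--Rudin}: part (i) gives the failure for $S_{-1,b,1}$, and part (ii) gives the boundedness for $T_{-1,b,1}$.

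The main point needing care is the $c=1$ positive result, which rests on Theorem~\ref{Minor: restricted weak Forelli--Rudin}(ii). That result lowers $a=-1$ to $a^*=-1+\varepsilon$ via $W_\varepsilon\in L^{1/\varepsilon,\infty}(\D)$ and Lorentz--H\"older, and then appeals to Theorem~\ref{mainsubcaseV}(a.3). For this to be legitimate, the triple $(a^*,b,1)$ must satisfy \eqref{conditionP4}, i.e.\ $a^*+b<-1$, which holds once $\varepsilon<-b$. Such an $\varepsilon>0$ exists only if $b<0$. If $b\ge 0$, then $b+1\ge 1$ and the point $(b+1,1)$ is $(1,1)$ or outside $[0,1]^2$. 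For $b>0$ the endpoint statements are vacuous, and for $b=0$ the remaining assertions follow from the same propositions as above. I would note this case split explicitly at the start of the proof.
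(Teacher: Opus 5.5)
Your proposal is correct and is essentially the paper's own proof: the same assembly of Propositions~\ref{Minor: Reduction IV}, \ref{Minor: Reduction II}, \ref{Minor: Reduction I}(ii), \ref{Minor: Reduction III} and Theorem~\ref{Minor: restricted weak Forelli--Rudin}, each matched to the same parts of the statement. Your closing case split on $b\ge 0$ is unnecessary: the Minor Case condition $2-c+b<1$ together with $c\le 1$ already forces $-1<b<c-1\le 0$, so the choice $0<\varepsilon<-b$ is always available.
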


We refer the reader to Figure~\ref{Fig26} below for an illustration of Minor Case~II.

\begin{figure}[ht]
\centering

\begin{minipage}[t]{0.48\textwidth}
\centering
\begin{tikzpicture}[scale=3.3]
    \def\xA{0.6}
    \def\yB{0.4}

    \def\xV{0.6}
    \def\yH{1}

    \def\xright{1.15}
    \def\ybottom{-0.15}

    \coordinate (B) at (0,\yB);
    \coordinate (A) at (\xA,1);
    \coordinate (PVH) at (\xV,\yH);

    \draw[->, thick]
        (-0.08,0) -- (\xright+0.08,0) node[right] {$1/p$};
    \draw[->, thick]
        (0,\ybottom-0.03) -- (0,1.12) node[above] {$1/q$};

    \draw[thick] (0,0) rectangle (1,1);
    \draw[thick] (1,0) -- (1,-0.025) node[below] {$1$};
    \draw[thick] (0,1) -- (-0.025,1) node[left] {$1$};

   \draw[blue, very thick]
        (0,1) -- (A);
   \draw[black, dashed]
        (B) -- (A);
   \draw[violet, very thick]
        (1,1) -- (A);

    \draw[orange!85!black, line width=3pt, opacity=0.22,
          line cap=round, shorten <=2.2pt, shorten >=2.2pt]
        (\xV,\ybottom) -- (\xV, 1);

    \draw[violet!50, line width=5pt, opacity=0.18, line cap=round, shorten <=2.2pt, shorten >=2.2pt] (1,1) -- (A);

    \draw[violet!60, line width=3pt, opacity=0.28, line cap=round, shorten <=2.2pt, shorten >=2.2pt] (1,1) -- (A);

    \fill[gray] (A) circle (1.05pt);

    \node[below] at (\xV, \yH)
        {\tiny $\calP_{\textnormal{V-H}}$};

    \node[above=2pt] at (A)
        {\tiny $\bigl(2-c+b,1\bigr)$};
\end{tikzpicture}

\vspace{0.05cm}
{\small (a)}
\end{minipage}
\hfill
\begin{minipage}[t]{0.48\textwidth}
\centering
\begin{tikzpicture}[scale=3.3]
    \def\xA{0.6}
    \def\yB{0.4}

    \def\xV{0.4}
    \def\yH{1}

    \def\xright{1.15}
    \def\ybottom{-0.15}

    \coordinate (B) at (0,\yB);
    \coordinate (A) at (\xA,1);
    \coordinate (PVH) at (\xV,\yH);

    \draw[->, thick]
        (-0.08,0) -- (\xright+0.08,0) node[right] {$1/p$};
    \draw[->, thick]
        (0,\ybottom-0.03) -- (0,1.12) node[above] {$1/q$};

    \draw[thick] (0,0) rectangle (1,1);
    \draw[thick] (1,0) -- (1,-0.025) node[below] {$1$};
    \draw[thick] (0,1) -- (-0.025,1) node[left] {$1$};

   \draw[blue, very thick]
        (0,1) -- (\xV,1);
   \draw[black, dashed]
        (B) -- (A);
   \draw[violet, very thick]
        (1,1) -- (\xV,1);

    \draw[orange!85!black, line width=3pt, opacity=0.22,
          line cap=round, shorten <=2.2pt, shorten >=2.2pt]
        (\xV,\ybottom) -- (\xV, 1);

    \draw[violet!50, line width=5pt, opacity=0.18, line cap=round, shorten <=2.2pt, shorten >=2.2pt] (1,1) -- (\xV,1);

    \draw[violet!60, line width=3pt, opacity=0.28, line cap=round, shorten <=2.2pt, shorten >=2.2pt] (1,1) -- (\xV,1);

    \fill[red] (\xV,1) circle (1.05pt);

    \node[below] at (\xV, \yH)
        {\tiny $\calP_{\textnormal{V-H}}$};

    \node[above=2pt] at (A)
        {\tiny $\bigl(2-c+b,1\bigr)$};
\end{tikzpicture}

\vspace{0.05cm}
{\small (b)}
\end{minipage}

\caption{\small
Weak-type and restricted weak-type behavior in Minor Case~II. The blue segments and points represent weak-type bounds, while the red point represents restricted weak-type bounds only. The violet segments represent failure even of restricted weak-type bounds, and the gray points represent the restricted weak-type Forelli--Rudin phenomenon. As usual, the orange lines represent the vertical cut-off line~\eqref{verticalcut}.}
\label{Fig26}
\end{figure}

\begin{proof}
$\bullet$ Part~(a.1) is a consequence of Proposition~\ref{Minor: Reduction IV}, together with Proposition~\ref{Minor: Reduction II}. 
The restricted weak-type Forelli--Rudin phenomenon asserted in part~(a.2) follows from Theorem~\ref{Minor: restricted weak Forelli--Rudin}, while the failure of restricted weak-type estimate in part~(a.3) follows from part~(ii) of Proposition~\ref{Minor: Reduction I}.

$\bullet$ Part~(b.1) follows from Propositions~\ref{Minor: Reduction IV} and Proposition~\ref{Minor: Reduction II}.
Part~(b.2) is a consequence of Proposition~\ref{Minor: Reduction III} and part~(ii) of Proposition~\ref{Minor: Reduction I}.
\end{proof}

\bigskip 

\section{Further remark: extension to higher dimensions}\label{Sec14}

In the final section of the paper, we explain how the approach developed here extends to Forelli--Rudin operators on the unit ball in $\mathbb{C}^n$. 

Let $\B_n$ denote the unit ball in $\C^n$. In this setting, the Forelli--Rudin operators $T_{a,b,c}^{(n)}$ and $S_{a,b,c}^{(n)}$ are defined, respectively, by
$$
T_{a,b,c}^{(n)}f(z):=(1-|z|^2)^a\int_{\B_n}\frac{(1-|w|^2)^b}{(1-\langle z,w\rangle)^c}f(w)\,dV(w),
$$
and
$$
S_{a,b,c}^{(n)}f(z):=(1-|z|^2)^a\int_{\B_n}\frac{(1-|w|^2)^b}{|1-\langle z,w\rangle|^c}f(w)\,dV(w),
$$
where $dV$ denotes the normalized volume measure on $\B_n$. By \cite[Theorem~1.1]{ZZ2022}, the critical line in this setting is given by
\begin{equation} \tag{CL$_n$} \label{higher-dimensional-critical-line}
	\frac{1}{q}=\frac{1}{p}+c-a-b-(n+1),
\end{equation}
whereas the vertical and horizontal cut-off lines remain unchanged and are given by \eqref{verticalcut} and \eqref{horizontalcut}, respectively. 

Since the arguments developed in this paper are largely dyadic in nature, the passage from $\D$ to $\B_n$ requires only natural adjustments to the underlying geometry, while the main mechanisms remain unchanged. In view of the many cases and endpoint configurations considered above, it is more convenient to formulate their higher-dimensional analogues through a concise dictionary of the relevant dimension-dependent quantities, followed by a discussion of the corresponding modifications to the proofs. The resulting correspondence between the dimension-dependent quantities in the two settings is summarized in Table~\ref{higher-dimensional-dictionary} below.

\begin{table}[ht]
\centering
\renewcommand{\arraystretch}{1.35}
\begin{tabular}{c|c}
\hline
Unit-disc setting & Unit-ball setting \\ \hline
$c-a-b-2$ & $c-a-b-(n+1)$ \\
$2-c+b$ & $n+1-c+b$ \\
$c-a-1$ & $c-a-n$ \\
$c-2-b$ & $c-(n+1)-b$ \\
$c-2$ & $c-(n+1)$ \\
$3+a+b-c$ & $n+2+a+b-c$ \\
$c>1$, $c=1$, or $c<1$ & $c>n$, $c=n$, or $c<n$, respectively \\ \hline
\end{tabular}
\caption{Correspondence between the dimension-dependent quantities in the unit-disc and unit-ball settings.}
\label{higher-dimensional-dictionary}
\end{table}

Accordingly, the three distinguished intersection points are given by
\begin{align*}
\calP_{\textnormal{C-H}}^{(n)}&:=\bigl(n+1-c+b,-a\bigr),\\
\calP_{\textnormal{C-V}}^{(n)}&:=\bigl(b+1,c-a-n\bigr),\\
\calP_{\textnormal{V-H}}^{(n)}&:=\bigl(b+1,-a\bigr).
\end{align*}
In particular, the critical, vertical, and horizontal lines meet at a common point if and only if $c=n$.

With these modifications, the higher-dimensional analogues of the results established above follow from the same arguments, subject to the modifications summarized in Table~\ref{higher-dimensional-dictionary}. We therefore omit a case-by-case repetition and leave the routine details to the interested reader. In particular, the Forelli--Rudin phenomenon persists in higher dimensions at both the weak-type and restricted weak-type levels. Moreover, the weak-type Forelli--Rudin threshold remains
$$
\frac{1}{p}=\frac{1}{2},
\qquad\textnormal{or equivalently,}\qquad
p=2,
$$
independently of the dimension $n$. This threshold is independent of the dimension because the exponent $1/2$ arises from Khintchine's inequality in the construction of the counterexample and from the endpoint $p'=2$ in \cite[Theorem~4.41]{Zhu2005} used in the argument. Finally, for completeness, we conclude this section by recording several important modifications in the higher-dimensional proofs that require some additional care.

\subsection{Dyadic structure in $\B_n$}

Recall that the probabilistic construction of the counterexample used in the proof of Theorem~\ref{Forelli--Rudin threshold}, (ii), as developed in Sections~\ref{20260831subsec01} and~\ref{20260603sec01}, relies on a Whitney-type decomposition of $\D$ associated with a fixed dyadic system $\calD$ on $\T$. The higher-dimensional analogue of this decomposition is standard; see, e.g., \cite{RTW2017}. For the reader's convenience, we recall the construction below in a form adapted to the present setting.

\begin{defn} 
A collection of sets  
\[ 
	\mathcal{D}=\{\mathfrak B_{k,i}\subset\partial\B_n: k,i\in\N ,1\leq i\leq M(k)\} 
\] 
is called a \emph{dyadic system} on $\partial\B_n$ if it satisfies the following conditions: 
\begin{itemize} 
	\item[(i)] for each integer $k\geq 1$, $\{\mathfrak B_{k,i}\}_{i=1}^{M(k)}$ is a disjoint covering of $\partial\B_n$, i.e., 
	\begin{itemize} 
		\item[(a)] $\partial\B_n=\bigcup\limits_{i=1}^{M(k)}\mathfrak B_{k,i}$ and  
		\item[(b)] for all $1\leq i\neq j\leq M(k)$, $\mathfrak B_{k,i}\cap\mathfrak B_{k,j}=\emptyset$. 
	\end{itemize} 
	\item[(ii)] if integers $k\geq l\geq1$, then either $\mathfrak B_{k,i}\subset\mathfrak B_{l,j}$ or $\mathfrak B_{k,i}\cap\mathfrak B_{l,j}=\emptyset$. 
\end{itemize} 
Note that such a system on $\partial\B_n$ exists since $(\partial\B_n,\rho,\sigma)$ is a space of homogeneous type and one can use the Hyt\"onen--Kairema decomposition \cite{HK2012} to guarantee the existence of such a dyadic system on $\partial\B_n$. Here, $\rho(x,y):=\sqrt{|1-\langle x,y\rangle|}$, $x,y\in\partial\B_n$, refers to the Cauchy metric on $\partial\B_n$, and $\sigma$ denotes the normalized surface measure on $\partial\B_n$. 

Finally, for each $k\in\N$, denote the $k$-th layer of $\mathcal{D}$ by  
\[
	\mathcal{D}_k=\{\mathfrak B_{k,i}:1\leq i\leq M(k)\}.
\] 
\end{defn} 
 
For any $x\in\partial\B_n$ and $0<r<1$, denote  
\[ 
	B_\rho(x,r)=\{y\in\partial\B_n:\rho(x,y)<r\}. 
\] 
It is a well-known fact that 
\begin{equation}\label{ballarea} 
	\sigma(B_\rho(x,r))\simeq r^{2n}. 
\end{equation}  
 
\begin{defn} 
A dyadic system $\mathcal{D}$ on $\partial\B_n$ is said to be \emph{associated with a triple of positive constants $(\gamma,\kappa_0,\kappa_1)$} if there exists a collection of points 
$$ 
\mathcal{P}:=\{x_{k,i}\in\partial\B_n:k\in\N,\ 1\leq i\leq M(k)\} 
$$ 
such that 
\begin{equation}\label{eq:dyadic-to-ball} 
	B_\rho(x_{k,i},\kappa_0\gamma^{k/2}) 
	\subseteq\mathfrak B_{k,i} 
	\subseteq B_\rho(x_{k,i},\kappa_1\gamma^{k/2}) 
\end{equation} 
for every $k\in\N$ and $1\leq i\leq M(k)$. 
 
The elements of $\mathcal{D}$ are referred to as \emph{dyadic cubes}, and $x_{k,i}$ is called the \emph{center} of the dyadic cube $\mathfrak B_{k,i}$. 
\end{defn} 
 
The following result on $\partial\B_n$ is a consequence of \cite[Theorem~4.1]{HK2012}. 
 
\begin{prop} 
There are constants $\gamma,\kappa_0,\kappa_1>0$ such that 
\begin{itemize} 
	\item[(i)] There is a dyadic system $\mathcal{D}$ on $\partial\B_n$ associated with the triple $(\gamma,\kappa_0,\kappa_1)$ on $\partial\B_n$. 
	\item[(ii)] There is a finite collection of dyadic systems $\{\mathcal{D}_t\}_{t=1}^K$ on $\partial\B_n$ associated with the triple $(\gamma,\kappa_0,\kappa_1)$ on $\partial\B_n$. In addition, for every ball $B\subset\partial\B_n$, there exist $1 \leq t \leq K$ and $\mathfrak B\in\mathcal{D}_t$ such that  
		\[ 
			B\subset\mathfrak B, 
			\qquad\textrm{and}\qquad  
			\operatorname{diam}_\rho{\mathfrak B}\lesssim\operatorname{diam}_\rho{B}, 
		\] 
		where the implicit constant depends only on $\gamma,\kappa_0,\kappa_1$. 
\end{itemize} 
\end{prop}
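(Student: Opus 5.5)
The statement is a consequence of Hytönen--Kairema \cite[Theorem~4.1]{HK2012}, so the plan is to check that $(\partial\B_n,\rho,\sigma)$ meets its hypotheses and then translate the conclusions into the form \eqref{eq:dyadic-to-ball}.

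\textbf{Step 1: $\rho$ is a genuine metric.} First I will verify that $\rho(x,y)=|1-\langle x,y\rangle|^{1/2}$ is a metric on $\partial\B_n$. This is classical (see, e.g., Rudin's \emph{Function Theory in the Unit Ball}), and it amounts to the triangle inequality for $|1-\langle x,y\rangle|^{1/2}$ on the sphere. Even a quasi-metric would suffice, since \cite{HK2012} works in quasi-metric spaces.

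\textbf{Step 2: geometric doubling.} Next, \eqref{ballarea} shows that $\sigma$ is a doubling measure with respect to $\rho$. Consequently $(\partial\B_n,\rho)$ is geometrically doubling: a ball of radius $2r$ contains at most $C$ disjoint balls of radius $r/2$, because each of them has measure $\simeq r^{2n}$.

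\textbf{Step 3: part (i).} Hytönen--Kairema's Theorem~2.2 gives a parameter $\delta\in(0,1)$ and constants $0<c_0\le C_0$ with $12C_0\delta\le c_0$. For every $k$ it produces cubes $Q^k_\alpha$ with centers $x^k_\alpha$ that are nested, partition the space at each generation, and satisfy
\[
B(x^k_\alpha,c_0\delta^k)\subseteq Q^k_\alpha\subseteq B(x^k_\alpha,C_0\delta^k).
\]
Setting $\gamma=\delta^2$, $\kappa_0=c_0$ and $\kappa_1=C_0$ turns this exactly into \eqref{eq:dyadic-to-ball}.

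Two small points need checking here.
\begin{itemize}
\item Because $\partial\B_n$ is bounded, the generations $k\le k_0$ contain a single cube, namely the whole sphere. I will start the indexing at the first genuinely nontrivial generation and shift $k$ accordingly; this only changes $\kappa_0$ and $\kappa_1$ by bounded factors.
\item Since $\partial\B_n$ has finite measure, each generation $\mathcal D_k$ is finite, so the enumeration $1\le i\le M(k)$ makes sense.
\end{itemize}

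\textbf{Step 4: part (ii), adjacent systems.} Theorem~4.1 of \cite{HK2012} supplies finitely many dyadic systems $\mathcal D_1,\dots,\mathcal D_K$, all with the same parameters, such that every ball $B(x,r)$ lies in some cube $Q\in\mathcal D_t$ with $\operatorname{diam}Q\le Cr$. Since $\operatorname{diam}_\rho B\simeq r$ (the sphere is connected, so balls are not degenerate), this is the stated conclusion.

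The one case not covered directly is that of balls whose radius is comparable to $\operatorname{diam}\partial\B_n$. These are handled trivially by taking $\mathfrak B=\partial\B_n$, which is the top-level cube.

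I expect the only real care point to be Step 3: normalising the indexing and the constants so that the same triple $(\gamma,\kappa_0,\kappa_1)$ serves all $K$ systems at once. Hytönen--Kairema construct the adjacent systems with uniform $\delta$, $c_0$ and $C_0$, so this should follow directly.
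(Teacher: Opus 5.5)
Your proposal is correct and takes the same route as the paper, which simply derives the statement from \cite[Theorem~4.1]{HK2012}. Your checks that $\rho$ is a metric and that $\sigma$ is doubling, together with the relabelling $\gamma=\delta^2$, are exactly what that citation tacitly relies on. One bookkeeping fix is needed in Step~3: shift the indexing so that $\mathcal D_1=\{\partial\B_n\}$ in all $K$ systems (e.g.\ start at a level $k_0$ with $c_0\delta^{k_0}>\operatorname{diam}_\rho\partial\B_n$) rather than at the first nontrivial generation. Otherwise Step~4 cannot use $\partial\B_n$ as a cube for balls of radius comparable to the diameter.
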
 
 
Now we define Carleson-type boxes in $\B_n$. Assume $\mathcal{D}$ is a dyadic system associated with a triple $(\gamma,\kappa_0,\kappa_1)$. For $k\in\N$ and a dyadic cube $\mathfrak B\in\mathcal{D}_k$, the Carleson box associated with $\mathfrak B$ is defined by 
\[ 
	Q_{\mathfrak B}:=\left\{z\in\B_n:\frac{z}{|z|}\in\mathfrak B,\;1-\gamma^k<|z|<1\right\}. 
\] 
Assume $x$ is the center of the dyadic cube $\mathfrak B$. For $0<\varepsilon<1$, we have
$$
B_\rho(x,\varepsilon\kappa_0\gamma^{k/2})\subset\mathfrak B.
$$
Denote 
\[ 
	Q_{\mathfrak B,\varepsilon}:=\left\{z\in\B_n:\frac{z}{|z|}\in B_\rho\left(x,\varepsilon\kappa_0\gamma^{k/2}\right),\;1-\gamma^k<|z|<1-(1-\varepsilon)\gamma^k\right\}. 
\] 
As a consequence of \eqref{ballarea}, we have  
\[ 
	|Q_{\mathfrak B,\varepsilon}|\simeq\varepsilon^{2n+1}\gamma^{(n+1)k}. 
\] 
For each $N\in\N$, let 
\[ 
	A_N=\{z\in\B_n:1-\gamma^N\leq|z|<1-\gamma^{N+1}\}. 
\] 

We also record the following important geometric lemma, which is the higher-dimensional analogue of Lemma~\ref{20260530lem01}.

\begin{lem}
Let $N\geq1$ be sufficiently large. Furthermore, let $1\leq k\leq N$ and $\mathfrak B\in\calD_k$. Then, for every $\delta>0$, there exists $\varepsilon=\varepsilon(\delta)>0$, independent of the choices of $\mathfrak B$, $N$, and $k$, such that
$$
\sup_{\substack{z\in A_N\cap Q_{\mathfrak B}\\ w,w'\in Q_{\mathfrak B,\varepsilon}}}
\left|
\arg(1-\langle z,w\rangle)-\arg(1-\langle z,w'\rangle)
\right|
\leq\delta.
$$
\end{lem}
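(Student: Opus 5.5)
The plan is to follow the proof of Lemma~\ref{20260530lem01}: first record the two-sided size estimates for the relevant quantities, then bound the relative difference of $1-\langle z,w\rangle$ and $1-\langle z,w'\rangle$ by a multiple of $\varepsilon$.

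\emph{Step 1: size estimates.} Fix $\mathfrak B\in\calD_k$ with center $x$, and take $z\in A_N\cap Q_{\mathfrak B}$ and $w,w'\in Q_{\mathfrak B,\varepsilon}$. By definition,
\[
1-|z|\simeq\gamma^N\le\gamma^k,
\qquad
(1-\varepsilon)\gamma^k\le 1-|w|<\gamma^k .
\]
The radial projections satisfy $z/|z|\in\mathfrak B\subseteq B_\rho(x,\kappa_1\gamma^{k/2})$ and $w/|w|\in B_\rho(x,\varepsilon\kappa_0\gamma^{k/2})$.

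\emph{Step 2: bound for $|1-\langle z,w\rangle|$.} Write $\zeta=z/|z|$ and $\xi=w/|w|$. Splitting
\[
1-\langle z,w\rangle=(1-|z||w|)+|z||w|\,(1-\langle\zeta,\xi\rangle)
\]
gives the upper bound $|1-\langle z,w\rangle|\lesssim\gamma^k+\rho(\zeta,\xi)^2\lesssim\gamma^k$. For this we use the quasi-triangle inequality for the Cauchy metric $\rho$, with the implicit constant depending only on $\kappa_1$. For the lower bound, the real part satisfies
\[
\operatorname{Re}(1-\langle z,w\rangle)\ge 1-|z||w|\ge 1-|w|\ge(1-\varepsilon)\gamma^k .
\]
Hence $|1-\langle z,w\rangle|\ge(1-\varepsilon)\gamma^k$, and the same holds with $w'$ in place of $w$.

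\emph{Step 3: bound for $|w-w'|$.} Here the ball geometry enters, and this is the step I expect to need the most care. In $\C^n$ the anisotropic structure means that $|\xi-\xi'|$ is only controlled by $\rho(\xi,\xi')\lesssim\varepsilon\gamma^{k/2}$. A naive estimate of $|\langle z,w-w'\rangle|$ through $|w-w'|$ would therefore only give $\varepsilon\gamma^{k/2}$, which is too weak. Instead I would work with the quantity directly, writing
\[
\langle z,w'-w\rangle=\bigl(1-\langle z,w\rangle\bigr)-\bigl(1-\langle z,w'\rangle\bigr).
\]
I would then estimate
\[
\bigl|\langle\zeta,\xi\rangle-\langle\zeta,\xi'\rangle\bigr|
\]
using the classical inequality
\[
\bigl|\,|1-\langle\zeta,\xi\rangle|^{1/2}-|1-\langle\zeta,\xi'\rangle|^{1/2}\bigr|\le|1-\langle\xi,\xi'\rangle|^{1/2},
\]
sharpened to the full-modulus version. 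The sharpened version is the elementary identity
\[
\langle\zeta,\xi\rangle-\langle\zeta,\xi'\rangle=\langle\zeta-\xi,\xi-\xi'\rangle+\langle\xi,\xi-\xi'\rangle .
\]
In this identity, $|\langle\xi,\xi-\xi'\rangle|=|1-\langle\xi',\xi\rangle|\lesssim\varepsilon^2\gamma^k$, while $|\zeta-\xi|\lesssim\gamma^{k/2}$ and $|\xi-\xi'|\lesssim\varepsilon\gamma^{k/2}$, so the first term is $\lesssim\varepsilon\gamma^k$. The radial part contributes $\lesssim\bigl||w|-|w'|\bigr|\le\varepsilon\gamma^k$. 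Altogether,
\[
|\langle z,w-w'\rangle|\le C_2\,\varepsilon\gamma^k .
\]

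\emph{Step 4: conclusion.} Combining Steps 2 and 3,
\[
\left|\frac{1-\langle z,w\rangle}{1-\langle z,w'\rangle}-1\right|
=\frac{|\langle z,w'-w\rangle|}{|1-\langle z,w'\rangle|}
\le\frac{C_2\,\varepsilon}{1-\varepsilon}.
\]
The constant $C_2$ depends only on $n,\gamma,\kappa_0,\kappa_1$. Choosing $\varepsilon$ small enough that $C_2\varepsilon/(1-\varepsilon)$ is small keeps the ratio in a small disc around $1$, so its argument is at most $\delta$. This gives the lemma, uniformly in $\mathfrak B$, $N$ and $k$.
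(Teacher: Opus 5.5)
Your proof is correct and follows the same route as the paper. The paper records the size bounds $(1-\varepsilon)\gamma^k\le|1-\langle z,w\rangle|\lesssim\gamma^k$ and the key estimate $|\langle z,w-w'\rangle|\lesssim\varepsilon\gamma^k$ as ``a direct computation'' and then bounds the ratio by $C\varepsilon/(1-\varepsilon)$; your decomposition $\langle\zeta,\xi-\xi'\rangle=\langle\zeta-\xi,\xi-\xi'\rangle+(1-\langle\xi,\xi'\rangle)$, together with the radial term, correctly supplies the estimate the paper leaves implicit and explains why a bound on $|w-w'|$ alone would not suffice in $\B_n$.
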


\begin{proof}
The proof of this result is similar to that of Lemma~\ref{20260530lem01}. Let $1\leq k\leq N$ and $\mathfrak B\in\mathcal{D}_k$. For $z\in A_N\cap Q_{\mathfrak B}$ and $w,w'\in Q_{\mathfrak B,\varepsilon}$, a direct computation gives 
\begin{itemize}
	\item[(i)] $\gamma^{N+1}\leq1-|z|^2\leq2\gamma^N$;
	\item[(ii)] $(1-\varepsilon)\gamma^k\leq1-|w|^2\leq2\gamma^k$;
	\item[(iii)] $(1-\varepsilon)\gamma^k\leq|1-\langle z,w\rangle|\lesssim\gamma^k$;
	\item[(iv)] $|\langle z,w-w'\rangle|\lesssim\varepsilon\gamma^k$.
\end{itemize}
Consequently,
$$
\left|
\frac{1-\langle z,w\rangle}{1-\langle z,w'\rangle}-1
\right|
=\frac{|\langle z,w-w'\rangle|}{|1-\langle z,w'\rangle|}
\lesssim\frac{\varepsilon}{1-\varepsilon}.
$$
By choosing $\varepsilon=\varepsilon(\delta)>0$ sufficiently small, we obtain
$$
\left|
\arg(1-\langle z,w\rangle)
-\arg(1-\langle z,w'\rangle)
\right|
\leq\delta,
$$
which gives the desired claim.
\end{proof}

Using the dyadic structure described above, the corresponding argument on $\B_n$ follows in the same way as in the case of $\D$, after making the parameter substitutions listed in Table~\ref{higher-dimensional-dictionary}.

\subsection{Weak Hardy estimates in higher dimensions}

The second important ingredient that requires some care is the weak-type estimate for Hardy functions. This estimate plays an essential role in establishing the Forelli--Rudin phenomenon at both the weak-type and restricted weak-type levels. Since this part of the argument is complex-analytic rather than dyadic, its extension from $\D$ to $\B_n$ requires some additional explanation. We first illustrate the necessary modifications by presenting the higher-dimensional counterpart of the argument in Section~\ref{20260831subsec10}, which proves the weak-type estimate in Theorem~\ref{Forelli--Rudin threshold}, (iii).

Let $d\sigma$ denote the normalized surface measure on $\partial\B_n$. For $0<p<\infty$, the Hardy space $H^p(\B_n)$ consists of all holomorphic functions $h$ on $\B_n$ such that
$$
\|h\|_{H^p(\B_n)}:=\sup_{0<r<1}\left(\int_{\partial\B_n}|h(r\zeta)|^p\,d\sigma(\zeta)\right)^{1/p}<+\infty.
$$
For $1<p<\infty$, we shall use the standard duality
$$
\|h\|_{H^p(\B_n)}\simeq\sup_{\|g\|_{H^{p'}(\B_n)}=1}\left|\langle h,g\rangle_{\partial\B_n}\right|,
\qquad \frac1p+\frac1{p'}=1.
$$
Moreover, for $\alpha>-1$ and $0<p<\infty$, the weighted Bergman space $A^p_\alpha(\B_n)$ consists of all holomorphic functions $h$ on $\B_n$ such that
$$
\|h\|_{A^p_\alpha(\B_n)}:=\left(\int_{\B_n}|h(z)|^p(1-|z|^2)^\alpha\,dV(z)\right)^{1/p}<+\infty.
$$

We now consider the higher-dimensional analogue of Theorem~\ref{Forelli--Rudin threshold}, (iii). Under the corresponding assumptions by Table \ref{higher-dimensional-dictionary}, we have
$$
1/2 \leq n+1-c+b<-a<1.
$$
Moreover, since $\calP_{\textnormal{C-H}}^{(n)}$ lies strictly to the left of the vertical cut-off line, we have $c>n$. Consequently,
$$
1<\frac{1}{n+1-c+b}\leq2,
\qquad
2\leq\frac{1}{c-n-b}<+\infty.
$$
As in Subsection~\ref{20260831subsec10}, write
$$
T_{a,b,c}^{(n)}f(z)=(1-|z|^2)^aT_{0,b,c}^{(n)}f(z).
$$
We first establish the higher-dimensional analogue of \eqref{20260602eq01}:
\begin{equation} \label{higher-dimensional-Hardy-bound}
\left\|T_{0,b,c}^{(n)}f\right\|_{H^{\frac{1}{n+1-c+b}}(\B_n)}
\lesssim \|f\|_{L^{\frac{1}{n+1-c+b}}(\B_n)}.
\end{equation}

Recall that every $g\in\mathbb H(\B_n)$ has a homogeneous expansion
$$
g(z)=\sum_{k=0}^{\infty}g_k(z),
$$
where $g_k$ is a homogeneous polynomial of degree $k$. The fractional radial derivative on $\B_n$ is defined by
$$
R^{\alpha,t}g(z):=\sum_{k=0}^{\infty}
\frac{\Gamma(n+1+\alpha)\Gamma(n+1+k+\alpha+t)}
{\Gamma(n+1+\alpha+t)\Gamma(n+1+k+\alpha)}g_k(z);
$$
see \cite[p.~18]{Zhu2005}. Taking $\alpha=-1$ and $t=c-n>0$, we obtain
\begin{equation} \label{higher-dimensional-radial-derivative}
R^{-1,c-n}g(z)=\sum_{k=0}^{\infty}
\frac{\Gamma(n)\Gamma(k+c)}
{\Gamma(c)\Gamma(n+k)}g_k(z).
\end{equation}

For $0<r<1$, set $g_r(z):=g(rz)$ and
$$
\bigl(T_{0,b,c}^{(n)}f\bigr)_r(z):=T_{0,b,c}^{(n)}f(rz).
$$
The kernel expansion
$$
\frac{1}{(1-\langle z,w\rangle)^c}
=\sum_{\alpha\in\mathbb N_0^n}
\frac{\Gamma(c+|\alpha|)}{\Gamma(c)\alpha!}z^\alpha\overline{w}^{\,\alpha},
$$
together with the orthogonality identity
$$
\int_{\partial\B_n}\zeta^\alpha\overline{\zeta}^{\,\beta}\,d\sigma(\zeta)
=\begin{cases}
\displaystyle\frac{\Gamma(n)\alpha!}{\Gamma(n+|\alpha|)},&\alpha=\beta;\\[0.3cm]
0,&\alpha\neq\beta,
\end{cases}
$$
gives
\begin{equation} \label{higher-dimensional-pairing}
\left\langle\bigl(T_{0,b,c}^{(n)}f\bigr)_r,g\right\rangle_{\partial\B_n}
= \int_{\B_n}(1-|w|^2)^bf(w)
\overline{R^{-1,c-n}g_r(w)}\,dV(w).
\end{equation}
Thus, by H\"older's inequality,
\begin{align*}
\left|\left\langle\bigl(T_{0,b,c}^{(n)}f\bigr)_r,g\right\rangle_{\partial\B_n}\right|
&\leq \|f\|_{L^{\frac{1}{n+1-c+b}}(\B_n)}
\left\|R^{-1,c-n}g_r\right\|_
{A^{\frac{1}{c-n-b}}_{\frac{b}{c-n-b}}(\B_n)}.
\end{align*}
Since
$$
\frac{1}{c-n-b}\geq2
\qquad\textnormal{and}\qquad
\frac{c-n}{c-n-b}-1=\frac{b}{c-n-b}>-1,
$$
by \cite[Theorem~4.41]{Zhu2005} with $\alpha=-1$ and $t=c-n$ there, we deduce that
$$
\left\|R^{-1,c-n}g_r\right\|_
{A^{\frac{1}{c-n-b}}_{\frac{b}{c-n-b}}(\B_n)}
\lesssim \|g_r\|_{H^{\frac{1}{c-n-b}}(\B_n)}
\leq \|g\|_{H^{\frac{1}{c-n-b}}(\B_n)}.
$$
It follows that
$$
\left|\left\langle\bigl(T_{0,b,c}^{(n)}f\bigr)_r,g\right\rangle_{\partial\B_n}\right|
\lesssim \|f\|_{L^{\frac{1}{n+1-c+b}}(\B_n)}
\|g\|_{H^{\frac{1}{c-n-b}}(\B_n)},
$$
uniformly in $0<r<1$. Taking the supremum over all $g$ with
$$
\|g\|_{H^{\frac{1}{c-n-b}}(\B_n)}=1,
$$
and then over $0<r<1$, proves \eqref{higher-dimensional-Hardy-bound}.

It remains to record the higher-dimensional version of the weak Hardy estimate.

\begin{lem} \label{weakHardy-higher-dimensional}
Let $0<q<p<\infty$. For each $h\in H^p(\B_n)$, define
$$
M_qh(z):=(1-|z|^2)^{-\frac1q}h(z).
$$
Then
$$
\|M_qh\|_{L^{q,\infty}(\B_n)}
\lesssim \|h\|_{H^p(\B_n)}.
$$
\end{lem}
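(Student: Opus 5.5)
The plan is to repeat the proof of Lemma~\ref{weakHardy} essentially verbatim, using polar coordinates in $\B_n$ in place of those in $\D$. The only inputs are the definition of the Hardy norm as a supremum of integral means over the spheres $r\partial\B_n$ and Chebyshev's inequality on each sphere. Holomorphy enters only through the fact that these means are bounded by $\|h\|_{H^p(\B_n)}$.

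Fix $\lambda>0$, assume $h\not\equiv 0$, and set
$$
E_\lambda:=\{z\in\B_n:\ |h(z)|>\lambda(1-|z|^2)^{1/q}\}, \qquad A:=\|h\|_{H^p(\B_n)}/\lambda.
$$
For each $0<r<1$, Chebyshev's inequality with respect to $d\sigma$ gives
$$
\sigma\bigl(\{\zeta\in\partial\B_n:\ r\zeta\in E_\lambda\}\bigr)\le \min\Bigl\{1,\ \frac{\int_{\partial\B_n}|h(r\zeta)|^p\,d\sigma(\zeta)}{\lambda^p(1-r^2)^{p/q}}\Bigr\}\le \min\bigl\{1,\ A^p(1-r^2)^{-p/q}\bigr\}.
$$
Integrating in polar coordinates, $dV=2n\,r^{2n-1}\,dr\,d\sigma$, yields
$$
|E_\lambda|\lesssim \int_0^1 \min\bigl\{1,A^p(1-r^2)^{-p/q}\bigr\}\,r^{2n-1}\,dr\le \int_0^1\min\bigl\{1,A^p(1-r^2)^{-p/q}\bigr\}\,r\,dr,
$$
since $r^{2n-1}\le r$ for $0<r<1$. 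This is exactly the one-variable integral estimated in the proof of Lemma~\ref{weakHardy}, so the rest of that argument applies unchanged.

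There are two cases. If $A\ge1$, then $\lambda^q|E_\lambda|\lesssim\lambda^q\le\|h\|_{H^p(\B_n)}^q$. If $0<A<1$, split the integral at $1-r^2=A^q$. On $\{1-r^2>A^q\}$, the hypothesis $p/q>1$ makes the integral converge and gives $\lesssim A^p\cdot (A^q)^{1-p/q}=A^q$. On $\{1-r^2\le A^q\}$, the integral is $\lesssim A^q$ trivially. In both cases $|E_\lambda|\lesssim A^q$, that is, $\lambda^q|E_\lambda|\lesssim\|h\|^q_{H^p(\B_n)}$, and taking the supremum over $\lambda>0$ gives the claim.

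I do not expect a genuine obstacle. The only points to check are that the implicit constant depends only on $n,p,q$, and that the condition $q<p$ is used exactly once, for the convergence of $\int (1-r^2)^{-p/q}\,r\,dr$ away from the boundary layer.
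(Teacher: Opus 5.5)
Your proposal is correct and follows the paper's proof essentially verbatim. Both use Chebyshev's inequality on each sphere $r\partial\mathbb{B}_n$ and polar coordinates; the paper also implicitly uses $r^{2n-1}\le r$. Both then split at $1-r^2=A^q$.

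One small correction to your commentary: $q<p$ is not needed for convergence, since on $\{1-r^2>A^q\}$ the integrand $(1-r^2)^{-p/q}$ is bounded by $A^{-p}$. What $q<p$ buys is the bound $\int_{A^q}^1 u^{-p/q}\,du\le \frac{q}{p-q}(A^q)^{1-p/q}$, so that this piece is governed by its lower endpoint. For $p\le q$ you would only get $A^p$ or $A^q\log(1/A)$ instead of $A^q$.
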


\begin{proof}
The proof is parallel. For $\lambda>0$, set
$$
E_\lambda:=\{z\in\B_n:|M_qh(z)|>\lambda\}
=\left\{z\in\B_n:|h(z)|>\lambda(1-|z|^2)^{\frac1q}\right\}.
$$
If $h\equiv0$, there is nothing to prove. Otherwise, put
$$
A:=\frac{\|h\|_{H^p(\B_n)}}{\lambda}.
$$
For each $0<r<1$, by Chebyshev's inequality, we have
\begin{align*}
\sigma\left(\left\{\zeta\in\partial\B_n:
|h(r\zeta)|>\lambda(1-r^2)^{\frac1q}\right\}\right)
&\leq \min\left\{1,
\frac{\|h\|_{H^p(\B_n)}^p}
{\lambda^p(1-r^2)^{\frac pq}}\right\}\\
&= \min\left\{1,A^p(1-r^2)^{-\frac pq}\right\}.
\end{align*}
Using polar coordinates with real dimension $2n$, we therefore obtain
$$
|E_\lambda| \leq 2n\int_0^1
\min\left\{1,A^p(1-r^2)^{-\frac pq}\right\}
r^{2n-1}\,dr.
$$
If $A\geq1$, then $|E_\lambda|\leq1\leq A^q$. Suppose next that $0<A<1$ and set
$$
r_A:=\sqrt{1-A^q}, 
$$
and we estimate 
$$
|E_\lambda| \lesssim_n A^p\int_0^{r_A}(1-r^2)^{-\frac pq}r\,dr
+\int_{r_A}^1r\,dr \lesssim_{n,p,q}A^q.
$$
Thus, in either case,
$$
\lambda^q|E_\lambda| \lesssim \|h\|_{H^p(\B_n)}^q.
$$
Taking the supremum over $\lambda>0$ proves the desired estimate.
\end{proof}

Applying Lemma~\ref{weakHardy-higher-dimensional} with
$$
p=\frac{1}{n+1-c+b}, \qquad q=-\frac{1}{a},
$$
and using $n+1-c+b<-a$, we conclude that
\begin{align*}
\left\|T_{a,b,c}^{(n)}f\right\|_{L^{-\frac1a,\infty}(\B_n)}
&=\left\|(1-|\cdot|^2)^aT_{0,b,c}^{(n)}f\right\|_{L^{-\frac1a,\infty}(\B_n)}\\
&\lesssim \left\|T_{0,b,c}^{(n)}f\right\|_{H^{\frac{1}{n+1-c+b}}(\B_n)}\\
&\lesssim \|f\|_{L^{\frac{1}{n+1-c+b}}(\B_n)}.
\end{align*}
This gives the required higher-dimensional modification of the proof of Theorem~\ref{Forelli--Rudin threshold}, (iii).

\medskip

Finally, at the common intersection $\calP_{\textnormal{C-H-V}}^{(n)}$, we have $c=n$ and hence $R^{-1,c-n}=R^{-1,0}=I$. Equivalently, the kernel $(1-\langle z,w\rangle)^{-n}$ has the form of the Cauchy--Szeg\H{o} kernel on $\B_n$, and \eqref{higher-dimensional-pairing} reduces to
$$
\left\langle T_{0,b,n}^{(n)}f,g\right\rangle_{\partial\B_n}
=\int_{\B_n}(1-|w|^2)^bf(w)\overline{g(w)}\,dV(w).
$$
The higher-dimensional counterpart of the argument in Section~\ref{20260831subsec54} then follows from Lorentz--H\"older's inequality and Lemma~\ref{weakHardy-higher-dimensional} in exactly the same way. We leave the details to the interested reader.

\end{document}